
\documentclass[10pt, a4paper, reqno]{amsart}

\usepackage{amssymb, amsmath, amsthm}
\usepackage{enumitem}
\usepackage[all]{xy}
\usepackage{mathtools}
\usepackage{extarrows}
\usepackage[normalem]{ulem}
\usepackage{stackrel}
\usepackage{tikz}
\usepackage[english]{babel}

\usepackage[colorlinks=true, pdfstartview=FitV, linkcolor=blue, citecolor=blue, urlcolor=blue]{hyperref}

\colorlet{lightlightgray}{gray!20}
\colorlet{light1.5gray}{gray!35}

\newtheorem{theorem}{Theorem}[section]
\newtheorem*{theorem*}{Theorem}

\newtheorem*{maintheorem*}{Main Theorem}
\newtheorem{lemma}[theorem]{Lemma}
\newtheorem{lemmaAndDefinition}[theorem]{Lemma and Definition}
\newtheorem{corollary}[theorem]{Corollary}
\newtheorem{proposition}[theorem]{Proposition}

\newtheorem*{question*}{Question}

\theoremstyle{remark}
\newtheorem{remark}[theorem]{Remark}
\newtheorem{example}[theorem]{Example}

\theoremstyle{definition}
\newtheorem{definition}[theorem]{Definition}

\setlist[1]{labelindent=\parindent, leftmargin=*}


\DeclareMathOperator{\pr}{pr}
\DeclareMathOperator{\Aut}{Aut}

\DeclareMathOperator{\Bir}{Bir}
\DeclareMathOperator{\id}{id}

\DeclareMathOperator{\Spec}{Spec}

\DeclareMathOperator{\dom}{dom}

\DeclareMathOperator{\Mor}{Mor}

\DeclareMathOperator{\car}{char}

\DeclareMathOperator{\Hilb}{Hilb}

\DeclareMathOperator{\lociso}{lociso}
\DeclareMathOperator{\fib}{-fib}

\def\dashmapsto{\mapstochar\dashrightarrow}


\newcommand{\OO}{\mathcal{O}}

\newcommand{\QQ}{\mathbb{Q}}
\newcommand{\PP}{\mathbb{P}}

\newcommand{\NN}{\mathbb{N}}
\renewcommand{\AA}{\mathbb{A}}

\newcommand{\kk}{\textbf{k}}

\newcommand{\set}[2]{\left\{\,#1 \ | \ #2\,\right\}}
\newcommand{\Bigset}[2]{\left\{\,#1 \ \Big| \ #2\,\right\}}

\frenchspacing


\title[Algebraic Families of Birational Transformations]
{The Structure of Algebraic Families of Birational Transformations}
\author[A. Regeta, C. Urech, \and I. van Santen]
{Andriy Regeta, Christian Urech, \and Immanuel van Santen}
%
%
\address{\noindent Institut f\"{u}r Mathematik, Friedrich-Schiller-Universit\"{a}t Jena, \newline
	\indent Ernst-Abbe-Platz 2, DE-07737, Germany}
\email{andriyregeta@gmail.com}

\address{Departement Mathematik, 
	ETH Zürich,\newline
	\indent Rämistrasse 101, CH-8092 Zürich, Switzerland}
\email{christian.urech@gmail.com}

\address{Departement Mathematik und Informatik, 
	Universit\"at Basel,\newline
	\indent Spiegelgasse 1, CH-4051 Basel, Switzerland}
\email{immanuel.van.santen@math.ch}

\newcounter{claim}[theorem]
\renewcommand{\theclaim}{\arabic{claim}}

\newenvironment{claim}{\vspace{1.75\medskipamount} 
\refstepcounter{claim}\par\noindent\textbf{Claim \theclaim.}}{}

\begin{document}

\subjclass[2020]{14E07, 14L30}
\keywords{Families of birational transformations, rational group actions}

\setcounter{tocdepth}{1}

\begin{abstract}
	We give a description of the algebraic families of birational transformations 
	of an algebraic variety $X$. As an application,
	we show that the morphisms to $\Bir(X)$ given by algebraic families
	satisfy a Chevalley type result and a certain fibre-dimension formula.
	Moreover, we show that the algebraic subgroups of $\Bir(X)$ are exactly
	the closed finite-dimensional subgroups with finitely many components.
	We also study algebraic families of birational transformations preserving
	a fibration.
	This builds on previous work of Blanc-Furter \cite{BlFu2013Topologies-and-str},
	Hanamura~\cite{Ha1987On-the-birational-}, and Ramanujam~\cite{Ra1964A-note-on-automorp}.
\end{abstract}

\maketitle

\tableofcontents

\section{Introduction}

To an irreducible algebraic variety $X$ over an algebraically closed field $\kk$ one associates its group of birational transformations $\Bir(X)$. In order to better understand and study $\Bir(X)$, it is of great interest to equip $\Bir(X)$ with additional algebraic structures that reflect families of birational transformations. In 
\cite{BlFu2013Topologies-and-str}, the longstanding open question whether $\Bir(\mathbb{P}^n)$ has the structure of an ind-group is answered negatively; this is in contrast to $\Aut(X)$, which is a group scheme if $X$ is projective \cite{MaOo1967Representability-o} and an ind-group if $X$ is affine \cite{FuKr2018On-the-geometry-of}. 

On the other hand, in 
\cite{BlFu2013Topologies-and-str}, the authors show several useful results about the algebraic and topological structure of  $\Bir(\mathbb{P}^n)$. In particular, they prove that the algebraic structure can be described by a countable family of varieties that is in a certain way universal. However, their methods only apply to $\Bir(\mathbb{P}^n)$ and the structure of $\Bir(X)$ for arbitrary $X$ remained poorly understood (see \cite{Bl2017Algebraic-structur} for a survey). The goal of this article is to change this. 

We give a suitable description of the algebraic structure of $\Bir(X)$ and we develop various insightful tools to study it, which in practice turn out to be as useful as the structure of an ind-group. Indeed, in the forthcoming article \cite{ReUrSa2024Groups-of-Biration}, the algebraic structure of $\Bir(X)$ will be exploited to show that the variety $\PP^n$ is uniquely determined up to birational equivalence among all varieties by the abstract group structure of $\Bir(\PP^n)$, as well as to show that all Borel subgroups of maximal solvable length $2n$ in $\Bir(\PP^n)$ are conjugate. 

An \emph{algebraic family} of birational transformations of $X$ parametrized by a variety $V$ is a $V$-birational map
$\theta \colon V \times X \dashrightarrow V \times X$
inducing an isomorphism between open dense subsets of $V\times X$ that both surject onto $V$ under the first projection. This induces a map 
$\rho_{\theta} \colon V \to \Bir(X)$ given by
$v \mapsto (x \dashmapsto \theta(v, x))$, which is called a \emph{morphism}
and whose image is called an \emph{algebraic subset} of $\Bir(X)$.
The \emph{Zariski topology} on $\Bir(X)$ is the finest topology such that all the morphisms are continuous. This point of view was first introduced by D\'emazure~\cite{De1970Sous-groupes-algeb} and further discussed by Serre \cite{Se2009Le-groupe-de-Cremo} and Blanc and Furter \cite{BlFu2013Topologies-and-str}. In another direction, Hanamura studied in \cite{Ha1987On-the-birational-} the algebraic structure on $\Bir(X)$ given by considering only flat families of birational transformations. In this case, $\Bir(X)$ can be identified with an open subset of the Hilbert scheme of $X\times X$. However, considering only flat families is very restrictive and not compatible with the group structure. In the present article, we will always work with the Zariski topology and the algebraic structure defined by D\'emazure. 

\medskip

In a first step, we prove the following result, which generalizes the description of the Zariski topology of $\Bir(\PP^n)$ and of all morphisms
to $\Bir(\PP^n)$ given in  \cite[Section~2]{BlFu2013Topologies-and-str} (see also \cite{HaMo2024Open-loci-of-ideal}) to arbitrary varieties $X$:

\begin{theorem}[Lemma~\ref{Lem.Family_factorizing_through_H_d},
	Corollary~\ref{Cor.Inductive-limit-topology}]
	\label{Thm.Main}
	Let $X$ be an irreducible variety. There exists a countable sequence of
	varieties $H_d$ and morphisms $\pi_d\colon H_d\to\Bir(X)$ for $d\geq 1$ such that the following is satisfied:
	\begin{enumerate}[left=0pt]
		\item \label{Main.Thm2} The morphisms $\pi_d$ are closed maps
		and the Zariski topology on $\Bir(X)$ 
		is  the inductive-limit topology with respect to the filtration 
		by the closed algebraic subsets 
		$\pi_1(H_1)\subseteq \pi_2(H_2)\subseteq \cdots \subseteq \Bir(X)$.
		\item \label{Main.Thm3} Let $V$ be a variety and $\rho\colon V\to\Bir(X)$ be a morphism. Then there exists an 
		open covering $(V_i)_{i\in I}$ of $V$ such that for each $i$ the restriction of $\rho$ to $V_i$ factors through a morphism of varieties $V_i\to H_{d_i}$ for some $d_i\geq 1$.
	\end{enumerate}
\end{theorem}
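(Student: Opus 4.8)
The plan is to encode each birational transformation by its graph and to build the $H_d$ as parameter spaces of such graphs inside a Chow variety, following the strategy of Blanc--Furter for $\PP^n$ but replacing their explicit polynomial description by the graph construction. Since $\Bir(X)$ depends only on the birational class of $X$, I would first replace $X$ by a projective model $\bar X$ and fix a closed embedding $\bar X\times\bar X\into\PP^M$. To $f\in\Bir(X)$ I attach its graph $\Gamma_f\subseteq\bar X\times\bar X$, the closure of the graph of $f$ over its domain: this is an integral subvariety of dimension $n=\dim X$ whose two projections to $\bar X$ are birational, and $f\mapsto\Gamma_f$ is injective. Setting $\deg f:=\deg\Gamma_f$ in $\PP^M$, I would define $H_d$ to be the parameter space, inside the Chow variety $\mathrm{Chow}(\bar X\times\bar X)$ of $n$-cycles of degree $\le d$, of those cycles that are graphs of birational maps, padding the lower-degree graphs as in the $\PP^n$ case so that a single variety $H_d$ absorbs all degrees $\le d$ and carries a universal family $\Theta_d\colon H_d\times X\dashrightarrow H_d\times X$. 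The induced morphism $\pi_d:=\rho_{\Theta_d}$, the nestedness of the $\pi_d(H_d)$, and the exhaustion $\bigcup_d\pi_d(H_d)=\Bir(X)$ (every $f$ has finite degree) then follow by construction.

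For part~\eqref{Main.Thm3} (the Lemma), given a morphism $\rho=\rho_\theta\colon V\to\Bir(X)$ I would form the relative graph $\Gamma_\theta\subseteq V\times\bar X\times\bar X$, a subvariety proper over $V$ whose fibre over general $v$ is $\Gamma_{\rho(v)}$. By generic flatness and the flattening stratification of $\Gamma_\theta\to V$ there are only finitely many Hilbert polynomials among the fibres, so the degrees of the fibres --- and hence the graph degrees $\deg\rho(v)$ --- are bounded by a single integer $d$ for all $v\in V$. Thus $\rho(V)\subseteq\pi_d(H_d)$ set-theoretically, and it remains to lift $\rho$ to a morphism of varieties into $H_d$. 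On a suitable affine (trivializing) open cover $(V_i)_{i\in I}$ of $V$ I would write the family of graphs by bihomogeneous equations of bounded degree with coefficients in $\OO(V_i)$ --- this is exactly the step where the cover is forced, just as clearing denominators is forced in the $\PP^n$ case --- obtaining classifying morphisms $\psi_i\colon V_i\to H_d$ with $\pi_d\circ\psi_i=\rho|_{V_i}$.

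Part~\eqref{Main.Thm2} (the Corollary) I would then deduce formally. First, each $\pi_d$ is closed: since $H_d$ and the ambient Chow variety are proper, for closed $A\subseteq H_d$ the corresponding set of graphs is closed, so $\pi_e^{-1}(\pi_d(A))$ is closed in every $H_e$; combined with part~\eqref{Main.Thm3} this shows $\rho^{-1}(\pi_d(A))$ is closed for every morphism $\rho$, i.e.\ $\pi_d(A)$ is Zariski-closed. In particular the $\pi_d(H_d)$ are closed, nested and exhaustive. Finally, by definition $C\subseteq\Bir(X)$ is Zariski-closed iff $\rho^{-1}(C)$ is closed for every morphism $\rho$; using the cover from part~\eqref{Main.Thm3} this is equivalent to $\pi_d^{-1}(C)$ being closed for all $d$, that is, to $C\cap\pi_d(H_d)$ being closed for all $d$, which is precisely the inductive-limit topology attached to the filtration.

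The hard part will be concentrated in the construction of $H_d$ and in the factorization step. On the one hand, one must realize $H_d$ as a single variety carrying a genuine universal family while correctly accommodating the degree-drop phenomenon, where the graph of a special member of a family has strictly smaller degree than the generic one; this is what forces the Blanc--Furter-type padding and is delicate for arbitrary $X$. On the other hand, over the non-flat locus the scheme-theoretic fibres of $\Gamma_\theta\to V$ differ from the honest graphs $\Gamma_{\rho(v)}$ by embedded or superfluous components, so producing the morphisms $\psi_i$ requires reconciling the naive total graph with an actual algebraic family of cycles and checking that the resulting classifying map to the Chow variety is a morphism in all characteristics --- the usual seminormality subtleties of the Chow functor. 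I expect this reconciliation, rather than the formal topological argument in part~\eqref{Main.Thm2}, to be the technical heart of the proof.
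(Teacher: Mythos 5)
Your overall architecture (a countable exhaustive family plus the formal deduction of the inductive-limit topology in part~(1)) matches the paper, but the core of your plan --- parametrizing birational maps by their graph cycles in a Chow variety --- breaks at exactly the step the theorem is about, namely factorization through $H_d$ on an \emph{open cover} of $V$ rather than on a dense open subset. The assignment $v \mapsto \Gamma_{\rho(v)}$ is not continuous at degeneration points: along a one-parameter family, the limit of the graphs is $\Gamma_{\rho(v_0)}$ \emph{plus} excess components, so the honest graphs near $v_0$ cannot be cut out by bihomogeneous equations of bounded degree with coefficients regular at $v_0$; the ``clearing denominators'' step has no analogue for graphs. If instead you pad, i.e.\ send $v_0$ to the limit cycle, you must produce a classifying morphism $V \to \mathrm{Chow}$ from the non-flat family $\Gamma_\theta \to V$ over \emph{all} of $V$, for an arbitrary parameter variety $V$ in arbitrary characteristic --- and this is precisely where the Chow functor genuinely fails (well-defined families of cycles require seminormal parameter spaces in characteristic $0$, and worse in characteristic $p$); it is not a technicality one can expect to push through. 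This is why the paper uses the graph picture (Hanamura's Hilbert-scheme embedding) only to obtain \emph{rational} factorizations after shrinking the base, via generic flatness and Lemma~\ref{Lem.open} (see Lemma~\ref{Lem.Hilb}); it is the wrong tool for the open-cover statement. A further concrete error: your closedness argument for $\pi_d$ starts from ``$H_d$ and the ambient Chow variety are proper,'' but $H_d$ is not proper in either construction --- being the graph of a birational map is not a closed condition (your own degenerating families leave it), just as the paper's $H_d$ is only locally closed in $W_d$; the paper instead proves closedness through an incidence correspondence inside the proper ambient spaces $W_D \times W_d$, resp.\ $W_m \times \bar{F}$ (Lemma~\ref{Lem.H_d}).

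The missing idea is to parametrize \emph{formulas} rather than cycles: fix an embedding $X \subseteq \PP^n$ and let $H_d$ consist of tuples $f = (f_0, \ldots, f_n)$ of degree-$d$ elements of the homogeneous coordinate ring $\kk[x_0, \ldots, x_n]/I(X)$, compatible with $I(X)$, such that $\psi_f$ is birational (Definition~\ref{Def.W_dH_d}). Degeneration is then absorbed by common factors --- the tuple specializes regularly even where the graph degree drops --- which simultaneously yields the nesting $\pi_d(H_d) \subseteq \pi_{d+1}(H_{d+1})$ (multiply by a linear form) and makes the factorization in part~(2) elementary: near a point $(a_0, p_0) \in \lociso(\theta)$ one dehomogenizes, writes $\theta$ with a common denominator over an affine chart of $A$, and rehomogenizes to obtain $\rho_0 \colon A_0 \to H_d$ with $\pi_d \circ \rho_0 = \rho_\theta|_{A_0}$ (Lemma~\ref{Lem.Family_factorizing_through_H_d}) --- with no flatness, no Chow functor, and no characteristic restriction. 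The remaining nontrivial inputs are that $H_d$ is locally closed, which requires a uniform bound on the degrees of inverses \cite{HaSi2017Bounds-on-degrees-} together with an openness-of-the-\'etale-locus argument, and that the tautological family over $H_d$ is indeed an algebraic family (Lemma~\ref{Lem.Constr_rational_action}); your sketch has no substitute for either.
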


Point~\eqref{Main.Thm3} shows that all morphisms to $\Bir(X)$ can be recovered from the countably many morphisms $\pi_d$.
Hence, this structure is essentially 
as powerful as the one given by an ind-group
(Remark~\ref{Rem.Similar_as_ind-group}).
Theorem~\ref{Thm.Main} 
is the starting point to show a series of results, which were, to the best of our knowledge, open up to now.
For instance, we show that a Chevalley type result holds for morphisms:

\begin{theorem}[Corollary~\ref{Cor.Constructible_images}]\label{thm:chevalley}
	The image of a {constructible subset under a} 
	morphism to $\Bir(X)$ is {again} constructible.
\end{theorem}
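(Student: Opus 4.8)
The plan is to reduce the statement to the individual maps $\pi_d$ and then run a Noetherian induction. Let $\rho\colon V\to\Bir(X)$ be a morphism and $C\subseteq V$ a constructible subset. By Theorem~\ref{Thm.Main}\eqref{Main.Thm3} there is an open covering $(V_i)_{i\in I}$ of $V$ with $\rho|_{V_i}=\pi_{d_i}\circ f_i$ for morphisms of varieties $f_i\colon V_i\to H_{d_i}$; since the underlying space of $V$ is Noetherian, hence quasi-compact, I may pass to a finite subcover $V_1,\dots,V_n$. Then $\rho(C)=\bigcup_{i=1}^n \pi_{d_i}\bigl(f_i(C\cap V_i)\bigr)$, and each $f_i(C\cap V_i)$ is constructible in $H_{d_i}$ by the classical theorem of Chevalley for morphisms of varieties. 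As a finite union of constructible sets is constructible — and all of these sets lie in $\pi_D(H_D)$ for $D=\max_i d_i$ by the filtration in Theorem~\ref{Thm.Main}\eqref{Main.Thm2}, so the notion is unambiguous — it suffices to prove that \emph{for each $d$ the image under $\pi_d$ of a constructible subset of $H_d$ is constructible in $\Bir(X)$}.

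To prove this I first reduce to a clean geometric situation. Writing a constructible $D\subseteq H_d$ as a finite union of locally closed subsets and using that both images and constructibility are compatible with finite unions, I may assume that $D$ is locally closed, hence open and dense in $W:=\overline{D}$; decomposing $W$ into irreducible components, I may further assume $W$ irreducible. Because $W$ is closed in $H_d$ and $\pi_d$ is a closed map by Theorem~\ref{Thm.Main}\eqref{Main.Thm2}, the restriction $\pi:=\pi_d|_W\colon W\to\Bir(X)$ is again closed, and so is its further restriction to any closed subvariety of $W$. I now induct on $\dim W$, the case $\dim W=0$ being trivial since then $\pi(D)$ is a single closed point.

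The inductive step rests on the following \emph{Key Lemma}: $\pi(D)$ contains a nonempty subset $U$ that is open in $\pi(W)$. Granting this, write $\pi(W)\setminus U=\pi(W)\cap Z$ with $Z\subseteq\Bir(X)$ closed. Then $\pi^{-1}(Z)$ is closed in $W$ and proper (as $U\neq\emptyset$), hence of dimension $<\dim W$, and one checks directly that $\pi(D)=U\cup\pi\bigl(D\cap\pi^{-1}(Z)\bigr)$. Here $U$ is locally closed in $\Bir(X)$, while $D\cap\pi^{-1}(Z)$ is constructible in the lower-dimensional closed subvariety $\pi^{-1}(Z)$, whose image is constructible by the induction hypothesis applied to the closed map $\pi|_{\pi^{-1}(Z)}$. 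Thus $\pi(D)$ is constructible, completing the induction.

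The Key Lemma is where the real content lies and is the step I expect to be hardest. Since $W$ is irreducible, $D$ dense and $\pi$ continuous with closed image, $\pi(D)$ is automatically dense in the irreducible space $\pi(W)$; as a dense \emph{constructible} subset of an irreducible space always contains a relatively open subset, the Key Lemma is essentially equivalent to the constructibility of $\pi(D)$ itself and cannot be obtained by purely topological reasoning. The genuine difficulty is that $\pi_d$ is not injective — distinct points of $W$ may define the same birational map — so that $\pi(W)$ is a true quotient of $W$ rather than a subvariety. The plan is to exploit the family $\theta\colon W\times X\dashrightarrow W\times X$ underlying $\pi$: one shows that the relation $R=\{(w,w')\in W\times W\mid \theta_w=\theta_{w'}\}$ is a closed (or at least constructible) subvariety, by spreading out and comparing the two families of rational maps, and then uses a Rosenlicht-type generic-quotient construction to endow a dense open subset of $\pi(W)$ with the structure of a variety for which $\pi$ becomes a morphism. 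Classical Chevalley applied to this morphism produces a relatively open subset of $\pi(W)$ inside $\pi(D)$, and the closedness of $\pi_d$ guarantees that the resulting quotient topology agrees with the topology induced from $\Bir(X)$, so that this subset is genuinely open in $\pi(W)$. Controlling this quotient — that is, taming the non-injectivity of $\pi_d$ — is the main obstacle.
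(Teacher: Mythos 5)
Your global architecture is the same as the paper's: reduce to an irreducible source, induct on dimension, and let everything rest on the statement that the image contains a nonempty subset that is open in its closure. That statement is exactly Claim~1 inside the paper's proof of Corollary~\ref{Cor.Constructible_images}, and your inductive step granting it is correct (modulo decomposing $\pi^{-1}(Z)\cap W$ into irreducible components again); your preliminary reduction to the maps $\pi_d$ via Theorem~\ref{Thm.Main} is also fine, though it buys nothing, since the difficulty you then face --- non-injectivity of $\pi_d$ --- is untouched by it. The problem is that you do not prove your Key Lemma; you only sketch a plan for it, and that plan is precisely where the entire content of the paper's proof lives, so this is a genuine gap rather than a complete alternative proof.

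Concretely, two things in the sketch do not go through as stated. First, ``Rosenlicht-type generic-quotient construction'': Rosenlicht's theorem is about orbits of algebraic group actions, and the equivalence relation $R=\{(w,w'): \pi_d(w)=\pi_d(w')\}$ (which is indeed closed, by Proposition~\ref{Prop.Prel}\eqref{Prop.Prel_diagonal}) is not induced by a group action, so there is no off-the-shelf result to invoke; producing a generic quotient here, \emph{together with} the fact that the quotient still maps to $\Bir(X)$ by a morphism of the paper's kind, is essentially equivalent to the machinery the paper builds in Sections~\ref{Sec.Morphisms_Hilbert} and~\ref{Sec.Porperties_of_morphisms}: Hanamura's embedding of $\Bir(X)$ into the Hilbert scheme of $X\times X$, the injective \emph{rationally universal} morphisms $\iota_p\colon\Hilb_p\to\Bir(X)$ (Corollary~\ref{Cor.Universal_property_Hilb}), and their packaging in Proposition~\ref{Prop.Nice_Parametrization_closed_alg_subset}, which yields an injective morphism $\eta\colon U\to\Bir(X)$ that is a homeomorphism onto an open dense subset of $\overline{\rho(V)}$. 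With that in hand, the paper gets the open subset inside the image not by Chevalley on a quotient but by rational universality: the given morphism factors as $\eta\circ f$ for a dominant rational map $f$, so the image of $f$ contains a dense open subset of $U$. Second, your transfer of openness from the quotient back to $\pi(W)$ is too quick: the equivalence ``quotient topology $=$ induced topology'', which does follow from closedness of $\pi_d|_W$, holds for subsets saturated in \emph{all} of $W$, whereas a generic quotient $q\colon W_0\to Q$ lives only on a dense open $W_0\subseteq W$, and $q^{-1}(Q_0)$ is saturated only inside $W_0$; equivalence classes may leave $W_0$, so the image of a $q$-open set need not be open in $\pi(W)$. The paper neutralizes exactly this by choosing $W$ minimal with $\rho(W)=Z$ and performing an explicit saturation step (replacing $U$ by $\eta^{-1}(Z\setminus\rho(W\setminus W'))$) in the proof of Proposition~\ref{Prop.Nice_Parametrization_closed_alg_subset}. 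In short: you have correctly reconstructed the skeleton and correctly located the hard point, but the hard point is the proof, and it is missing.
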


One of the main ingredients to show Theorem~\ref{thm:chevalley} is the observation that for any closed irreducible algebraic subset $Z$
of $\Bir(X)$ there exists a morphism to $\Bir(X)$ with image 
in $Z$ that induces
a homeomorphism onto an open dense subset of $Z$
(Proposition~\ref{Prop.Nice_Parametrization_closed_alg_subset}). 
The proof uses Hanamura's description of $\Bir(X)$ by the Hilbert scheme (see Section~\ref{Sec.Morphisms_Hilbert}).

If $G$ is an algebraic group and $\rho\colon G\to\Bir(X)$ a morphism that is also a group homomorphism, we call the image of $\rho$ an \emph{algebraic subgroup} of $\Bir(X)$. We will see that algebraic subgroups are always closed
(see Corollary~\ref{Cor.algebraic_subset_group_is_closed}). 
On the other hand, we prove:

\begin{theorem}[Proposition~\ref{Prop.Ramanujam_gen}]
	Let $G\subset\Bir(X)$ be a finite-dimensional, closed, connected subgroup.
	Then $G$ has a unique structure of an algebraic group.
\end{theorem}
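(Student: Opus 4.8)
The plan is to reduce the statement to Weil's theorem on birational group laws (the ``group chunk'' theorem), feeding it a genuine variety produced by the parametrization results of the previous sections, and then to identify the resulting algebraic group with $G$ itself. Throughout I would use the basic closure properties of the family structure: composing a morphism to $\Bir(X)$ with a fixed element of $\Bir(X)$, or inverting it, again yields a morphism, and the product $(v,w)\mapsto \rho(v)\sigma(w)$ of two morphisms $\rho,\sigma$ is a morphism.

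First I would show that $G$ is irreducible. Since $G$ is finite-dimensional and closed, it is contained in a single $\pi_d(H_d)$ by the inductive-limit description of the Zariski topology (Theorem~\ref{Thm.Main}\eqref{Main.Thm2}); as $\pi_d$ is a closed morphism from the variety $H_d$, the closed subset $\pi_d^{-1}(G)$ has finitely many irreducible components, so $G$ is a finite union of irreducible closed algebraic subsets. The closure properties above make left and right translations $L_g,R_g$ and inversion homeomorphisms of $G$, and they make the product of two irreducible algebraic subsets irreducible. The standard identity-component argument then applies: the component $G^{\circ}$ through $\id$ is carried into itself by $L_g,R_g$ for $g\in G^{\circ}$ and by inversion, so it is an open and closed subgroup of finite index; since $G$ is connected, $G=G^{\circ}$ is irreducible.

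Next, I would apply Proposition~\ref{Prop.Nice_Parametrization_closed_alg_subset} to the closed irreducible algebraic subset $G$, obtaining an irreducible variety $V$ (which I normalize) and a morphism $\rho\colon V\to\Bir(X)$ inducing a homeomorphism onto a dense open $U\subseteq G$. Transporting the group law of $G$ through $\rho$ gives a rational map $V\times V\dashrightarrow V$; associativity and the existence of generic inverses are inherited from $G$, so $(V,\cdot)$ is a variety equipped with a birational group law, and Weil's regularization theorem produces an algebraic group $H$ and a birational map $V\dashrightarrow H$ intertwining the two rational group laws. Composing the inverse $H\dashrightarrow V$ with $\rho$ yields a rational map $H\dashrightarrow\Bir(X)$ that is a morphism on a dense open $W\subseteq H$ and a homomorphism wherever defined. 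I would then extend it to a morphism $\tilde f\colon H\to\Bir(X)$ by the usual spreading-out formula $\tilde f(g)=\tilde f(gh)\,\tilde f(h)^{-1}$ for $h\in W\cap g^{-1}W$, using the closure properties to check well-definedness and that $\tilde f$ is a morphism. Being a group homomorphism, $\tilde f$ has image an algebraic subgroup, hence closed (Corollary~\ref{Cor.algebraic_subset_group_is_closed}); this image contains the dense subset $U$ and lies in $\overline U=G$, so it equals $G$. A translation argument using injectivity of $\tilde f|_W$ forces $\ker\tilde f=\{\id\}$, and homogeneity (translates of $W$ cover $H$, translates of $U$ cover $G$) shows $\tilde f\colon H\to G$ is a bijective homeomorphism. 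Transporting the algebraic structure of $H$ along $\tilde f$ equips $G$ with the desired structure of an algebraic group.

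For uniqueness I would observe that the nice parametrization $\rho\colon V\to U\subseteq G$ is intrinsic to $G\subseteq\Bir(X)$, and that a morphism to $\Bir(X)$ landing in a closed algebraic subgroup factors through it as a morphism of varieties; hence for any two compatible structures $G_1,G_2$ the map $\rho$ is a birational morphism onto each, so the identity is a birational group homomorphism $G_1\dashrightarrow G_2$, which between smooth algebraic groups (by Zariski's main theorem) is an isomorphism. I expect the main obstacle to be the regularization step together with the extension of the rational homomorphism $H\dashrightarrow\Bir(X)$ to an everywhere-defined morphism and the verification that it is injective: this is where the compatibility of the family structure with composition and inversion must be used carefully, and it is the precise point at which the present setting generalizes Ramanujam~\cite{Ra1964A-note-on-automorp}.
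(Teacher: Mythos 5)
Your overall architecture matches the paper's proof of Proposition~\ref{Prop.Ramanujam_gen} quite closely: a nice parametrization of a dense open subset $U\subseteq G$ via Proposition~\ref{Prop.Nice_Parametrization_closed_alg_subset}, a birational group law fed into the group-chunk theorem, extension of the resulting rational homomorphism to a morphism $H\to\Bir(X)$, and then kernel, homeomorphism, and uniqueness arguments. However, your very first step contains a genuine error: closedness plus finite dimension does \emph{not} imply that $G$ is contained in a single $\pi_d(H_d)$ --- the inductive-limit topology gives no such containment, and the paper explicitly poses as open whether a finite-dimensional irreducible closed subset of $\Bir(X)$ must be algebraic. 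The algebraicity of $G$ genuinely requires the group structure: any two irreducible algebraic subsets of a closed connected subgroup lie in a common one (Corollary~\ref{Cor.Closed_subgroup}, which rests on the connectedness of the fibres of $\pi_d$, Lemma~\ref{Lem.fibres_of_pi_d}), giving an ascending exhausting chain of closed irreducible algebraic subsets (Corollary~\ref{Cor.Chain_irreducible_alg_subsets}) that stabilizes because a proper inclusion of closed irreducible algebraic subsets strictly raises the dimension (Corollaries~\ref{Cor.Krull_dimensionI} and~\ref{Cor.Krull_dimensionII}); this is exactly Corollary~\ref{Cor.Parametr_finite_diml_group}, which you could have cited but cannot shortcut the way you do.

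The second, and central, gap is your sentence ``transporting the group law of $G$ through $\rho$ gives a rational map $V\times V\dashrightarrow V$.'' That the composition $(u_1,u_2)\mapsto u_1\circ u_2$, read through the parametrization, is a \emph{rational map of varieties} is precisely the hard point: a homeomorphism onto its image does not transport regularity, and in characteristic $p$ a Frobenius-twisted parametrization is the standard way this fails. The paper spends the bulk of the proof here: it chooses a Ramanujam point (Lemma~\ref{Lem.FinitelyManyPoints}), forms the orbit morphism $\alpha\colon U\to X^n$, and proves generic injectivity of $\mathrm{d}\alpha$ (Lemma~\ref{Lem.Gen_immersion}; in positive characteristic via the quotient by the foliation $\ker(\mathrm{d}\alpha)$, with the \emph{rational universality} of $\eta$ used to rule out inseparable factorizations), so that $\alpha$ becomes an open embedding and composition is realized as the birational $U$-map obtained by restricting $\kappa_n$ to $U\times\alpha(U)$. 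Your step could alternatively be closed by applying the rational universality of $\eta$ (part of the statement of Proposition~\ref{Prop.Nice_Parametrization_closed_alg_subset}) to the composition morphism $(u_1,u_2)\mapsto\eta(u_1)\circ\eta(u_2)$ restricted over the open set where it lands in $\eta(U)$, together with closedness of the diagonal (Proposition~\ref{Prop.Prel}) --- but your proposal invokes neither mechanism, so as written the key step is unsupported. The remaining steps are sound in substance and parallel the paper, with minor differences of route: the paper uses the group-germ results of SGA~3 where you cite Weil, and extends the family via Demazure's rational-action criterion (Corollary~\ref{Cor.Rational_action_is_family}) rather than your translation formula $\tilde f(g)=\tilde f(gh)\tilde f(h)^{-1}$, which would additionally require a gluing argument to see that the extended map is a morphism to $\Bir(X)$.
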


We define the \emph{dimension} of a subset $S\subset\Bir(X)$, 
following Ramanujam, as the supremum over all $d$ such that there exists an injective morphism $V\to\Bir(X)$ of a variety $V$ of dimension $d$ with image in $S$. We show that this definition coincides with the topological Krull dimension
for closed algebraic subsets:

\begin{theorem}[Corollary~\ref{Cor.Krull_dimensionII}]
	Let $Z\subset\Bir(X)$ be a closed algebraic subset. Then its dimension 
	is the maximal length of a strictly descending chain of irreducible closed subsets of $Z$. 
\end{theorem}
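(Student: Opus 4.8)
The plan is to compare three quantities attached to $Z$: the Ramanujam dimension $\dim_{\mathrm R}Z$ defined above, the Krull dimension $\dim_{\mathrm K}Z$ appearing in the statement, and, for an irreducible closed algebraic subset, the geometric dimension of a parametrising variety. First I would reduce to the case that $Z$ is irreducible. Writing $Z=\overline{\rho(V)}$ for a morphism $\rho$ from a variety $V$ with finitely many irreducible components $V_1,\dots,V_r$, we get $Z=\bigcup_i\overline{\rho(V_i)}$, a finite union of irreducible closed subsets, so $Z$ has finitely many irreducible components $Z_1,\dots,Z_s$ and $\dim_{\mathrm K}Z=\max_i\dim_{\mathrm K}Z_i$. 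For the Ramanujam dimension, any injective morphism $V'\to\Bir(X)$ with image in $Z$ has, by Theorem~\ref{thm:chevalley}, constructible image whose irreducible closure is contained in a single $Z_i$; hence $\dim_{\mathrm R}Z=\max_i\dim_{\mathrm R}Z_i$ as well, and it suffices to treat irreducible $Z$.

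So assume $Z$ irreducible. By Proposition~\ref{Prop.Nice_Parametrization_closed_alg_subset} there is a morphism $\rho\colon V\to\Bir(X)$ with image in $Z$ inducing a homeomorphism $V\simto U$ onto an open dense subset $U\subseteq Z$; in particular $\rho$ is injective. This already gives $\dim_{\mathrm R}Z\ge\dim V$, since $\rho$ is an injective morphism of a $\dim V$-dimensional variety with image in $Z$, and $\dim_{\mathrm K}U=\dim_{\mathrm K}V=\dim V$ by transporting chains along $V\simto U$ and using that the Krull dimension of a variety is its geometric dimension. To pass from $U$ to $Z$ I would show $\dim_{\mathrm K}Z=\dim_{\mathrm K}U$: the inequality $\dim_{\mathrm K}U\le\dim_{\mathrm K}Z$ is immediate by taking closures in $Z$, while for the reverse one must show that a maximal chain $Z=Y_0\supsetneq\cdots\supsetneq Y_n$ survives intersection with the dense open $U$. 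This is the point where $Z$ is used not as an arbitrary topological space but as one locally modelled on varieties through the factorisations of Theorem~\ref{Thm.Main}\eqref{Main.Thm3}; applying Proposition~\ref{Prop.Nice_Parametrization_closed_alg_subset} to the $Y_i$, which are themselves closed algebraic subsets, reduces the chain to one of honest varieties, where removing the proper closed subset $Z\setminus U$ classically does not drop the dimension. This yields $\dim_{\mathrm K}Z=\dim V\le\dim_{\mathrm R}Z$.

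It remains to prove $\dim_{\mathrm R}Z\le\dim_{\mathrm K}Z$. Given any injective morphism $\psi\colon V'\to\Bir(X)$ with image in $Z$, Theorem~\ref{thm:chevalley} shows $\psi(V')$ is constructible, hence contains an open dense subset of its irreducible closure $Y=\overline{\psi(V')}\subseteq Z$. Using the local factorisation of $\psi$ through some $H_d$ from Theorem~\ref{Thm.Main}\eqref{Main.Thm3} together with the parametrisation of $Y$ from Proposition~\ref{Prop.Nice_Parametrization_closed_alg_subset}, the comparison $\dim V'\le\dim_{\mathrm K}Y$ reduces to the classical fibre-dimension theorem for a dominant morphism of varieties whose generic fibre is a single point, by injectivity of $\psi$; since $Y\subseteq Z$ forces $\dim_{\mathrm K}Y\le\dim_{\mathrm K}Z$, we obtain $\dim V'\le\dim_{\mathrm K}Z$, whence $\dim_{\mathrm R}Z\le\dim_{\mathrm K}Z$ and finally $\dim_{\mathrm R}Z=\dim V=\dim_{\mathrm K}Z$. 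The main obstacle is precisely this control of dimension along an injective morphism into the non-Noetherian space $\Bir(X)$: one cannot argue purely topologically, and the two essential devices are the descent to the finite-type models $H_d$ and to the parametrising varieties, so that the fibre-dimension theorem becomes available, and the verification that the abstract Krull dimension of an irreducible closed algebraic subset agrees with the geometric dimension of its model, which simultaneously furnishes the dense-open invariance used in the second step.
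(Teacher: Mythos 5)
Your decomposition into the two inequalities is sound, and your half $\dim_{\mathrm R}Z\le\dim_{\mathrm K}Z$ is essentially the paper's own argument for ``$\dim Z\le D$'' (restrict $\psi$ to the preimage of $\eta(U_Y)$, which is nonempty because $\psi(V')$ is dense in $Y$ while $\eta(U_Y)$ is open dense; use rational universality to obtain an injective rational map into $U_Y$; then push a maximal chain of the variety $U_Y$ into $Y$ via the homeomorphism, strictness being preserved since $\overline{\eta(U_i)}\cap\eta(U_Y)=\eta(U_i)$). The genuine gap is in the other half, the inequality $\dim_{\mathrm K}Z\le\dim_{\mathrm K}U$ for the dense open $U$. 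Intersecting a chain $Z=Y_0\supsetneq\cdots\supsetneq Y_n$ with $U$ can destroy it: a member $Y_i$ may lie entirely in $Z\setminus U$, and even when all $Y_i\cap U$ are nonempty, strictness is lost whenever $Y_i\setminus Y_{i+1}\subseteq Z\setminus U$. Your proposed repair --- applying Proposition~\ref{Prop.Nice_Parametrization_closed_alg_subset} to each $Y_i$ --- does not produce ``a chain of honest varieties'': the parametrizations of the $Y_i$ are constructed independently, their images are open subsets of the respective $Y_i$ with no inclusion relations among them, so there is no configuration in which removing the proper closed subset $Z\setminus U$ ``classically does not drop the dimension''. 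The paper avoids this issue altogether: its direction $D\le\dim Z$ is Corollary~\ref{Cor.Krull_dimensionI} (a strict inclusion of irreducible closed algebraic subsets strictly decreases the Ramanujam dimension), which rests on Remark~\ref{Rem.Nice_parametrization} (i.e.\ $\dim W=\dim Z$, via the fibre-dimension formula of Corollary~\ref{Cor.Fibre-dimension-formula}) and on Corollary~\ref{Cor.Image_morphism_finite_diml} ($\dim\overline{\rho(V)}\le\dim V$, proved with Ramanujam points); none of these enters your sketch for this half.

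A fix close to your framework does exist, but it is not the mechanism you describe: pull the chain back along the \emph{closed surjective} morphism $\rho\colon W\to Z$ of Proposition~\ref{Prop.Nice_Parametrization_closed_alg_subset}, choosing inductively an irreducible component $W_{i+1}$ of $W_i\cap\rho^{-1}(Y_{i+1})$ that surjects onto $Y_{i+1}$ (possible since $\rho|_{W_i}$ is closed, the components are finite in number, and $Y_{i+1}$ is irreducible); strictness follows from $\rho(W_{i+1})=Y_{i+1}\subsetneq Y_i=\rho(W_i)$. This yields a strict chain of irreducible closed subsets of the variety $W$ of length $n$, whence $n\le\dim W=\dim W'=\dim U$ ($W'$ open dense in the irreducible $W$, and $\lambda\colon W'\to U$ finite surjective), which is the dimension of your parametrizing variety --- note that naively taking $\rho^{-1}(Y_i)$ without selecting nested irreducible components fails, since strictly decreasing chains of reducible closed subsets of a variety have unbounded length. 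Two smaller points: in the reduction to irreducible $Z$ you should invoke that the supremum in the definition of dimension may be computed with irreducible parameter varieties (Chevalley is not what makes $\overline{\psi(V')}$ irreducible); and you use without justification that every irreducible closed subset $Y_i$ of $Z$ is algebraic, as Proposition~\ref{Prop.Nice_Parametrization_closed_alg_subset} requires --- this holds because $Z\subseteq\pi_d(H_d)$ for some $d$ (Lemma~\ref{Lem.Family_factorizing_through_H_d} and Corollary~\ref{Cor.Inductive-limit-topology}), so any closed subset of $Z$ is the image under the closed morphism $\pi_d$ of a closed subvariety of $H_d$; this identification is also what reconciles the theorem's chains of irreducible closed subsets with the chains of irreducible closed \emph{algebraic} subsets in Corollary~\ref{Cor.Krull_dimensionII}, and it needs saying.
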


Furthermore, we prove the following result about the fibre-dimension of morphisms:

\begin{theorem}[Corollary~\ref{Cor.Fibre-dimension-formula}]
	Let $\rho \colon V \to \Bir(X)$ be a morphism
	with irreducible $V$.
	Then there is an open dense subset $U \subseteq V$ such that
	\[
	\dim_u (U \cap \rho^{-1}(\rho(u))) = 
	\dim V - \dim \overline{\rho(V)}
	\quad \quad \textrm{for all $u \in U$} \, ,
	\]
	where $\dim_u$ denotes the local dimension at $u$.
\end{theorem}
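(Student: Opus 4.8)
The plan is to reduce the statement to the classical fibre-dimension theorem for a dominant morphism of irreducible varieties, by lifting $\rho$ to an honest morphism of varieties whose fibres coincide with those of $\rho$ and whose target has dimension $\dim\overline{\rho(V)}$.

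First I would set $Z:=\overline{\rho(V)}$. Since $\rho$ is continuous and $V$ is irreducible, $Z$ is a closed irreducible algebraic subset, and by Corollary~\ref{Cor.Krull_dimensionII} its dimension $\dim\overline{\rho(V)}$ equals its topological Krull dimension $d_Z$. By Proposition~\ref{Prop.Nice_Parametrization_closed_alg_subset} there is a morphism $\sigma\colon W\to\Bir(X)$ with image in $Z$ inducing a homeomorphism onto an open dense subset $Z^{\circ}\subseteq Z$; as Krull dimension is a topological invariant and $Z^{\circ}$ is open dense in the irreducible $Z$, this forces $\dim W=d_Z$. Replacing $V$ by the open dense subset $\rho^{-1}(Z^{\circ})$ — which is nonempty because the dense set $\rho(V)$ meets the open dense set $Z^{\circ}$, and which changes neither $\dim V$ nor $\overline{\rho(V)}$ — I may assume $\rho(V)\subseteq Z^{\circ}$. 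Then $f:=\sigma^{-1}\circ\rho\colon V\to W$ is a well-defined continuous map, and because $\sigma^{-1}$ is a bijection its set-theoretic fibres are exactly the fibres of $\rho$: one has $f(u)=f(u')\iff\rho(u)=\rho(u')$.

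The heart of the argument, and the step I expect to be the main obstacle, is to upgrade $f$ from a continuous map to a morphism of varieties after shrinking $V$ to an open dense subset. For this I would use the Hilbert scheme description from Section~\ref{Sec.Morphisms_Hilbert}: the algebraic family $\theta\colon V\times X\dashrightarrow V\times X$ defining $\rho$ has a graph whose closure is a closed subscheme $\Gamma\subseteq V\times X\times X$, and by generic flatness the projection $\Gamma\to V$ is flat over an open dense subset $V_0\subseteq V$. Over $V_0$ this realizes $\rho|_{V_0}$ as an honest morphism $\tilde\rho\colon V_0\to\Hilb(X\times X)$, $v\mapsto[\overline{\Gamma_{\rho(v)}}]$. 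Since, from its construction, the nice parametrization $\sigma$ factors through a locally closed immersion $W\into\Hilb(X\times X)$ whose image consists of the graph-closures of the elements of $Z^{\circ}$, one has $\tilde\rho(V_0)\subseteq\sigma(W)$ by compatibility of the two Hilbert-point assignments; composing with the inverse of this immersion then exhibits $f|_{V_0}=\sigma^{-1}\circ\tilde\rho$ as a morphism of varieties $V_0\to W$. The delicate points here are matching the scheme structures on the two families of graph closures and ensuring the flat locus is dense, both of which are supplied by generic flatness together with the Hilbert-scheme formalism of Section~\ref{Sec.Morphisms_Hilbert}.

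Finally I would invoke the classical fibre-dimension theorem. The morphism $f\colon V_0\to W$ is dominant, because $\sigma^{-1}(\rho(V_0))$ is dense in $W$, and $\dim V_0=\dim V$ while $\dim W=d_Z$. Hence there is an open dense subset $U\subseteq V_0$, which is then open dense in $V$, such that $\dim_u f^{-1}(f(u))=\dim V-d_Z$ for all $u\in U$. As the fibres of $f$ coincide with the fibres of $\rho$ over $V_0$ and $U$ is open, this yields $\dim_u\bigl(U\cap\rho^{-1}(\rho(u))\bigr)=\dim_u f^{-1}(f(u))=\dim V-\dim\overline{\rho(V)}$ for every $u\in U$, which is the desired formula.
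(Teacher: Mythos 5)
Your overall strategy---lifting $\rho$ through the nice parametrization of $Z=\overline{\rho(V)}$ to a dominant morphism of varieties $f\colon V_0\to W$ with the same fibres as $\rho$, and then quoting the classical fibre-dimension theorem---is sound and genuinely different from the paper's proof, which instead takes a Ramanujam point (Lemma~\ref{Lem.FinitelyManyPoints}) to produce an orbit morphism $U\to X^n$ whose fibres coincide with those of $\rho$, and then compares $\dim\eta(U)$ with $\dim\overline{\rho(V)}$ via Lemma~\ref{Lem.Kraft}, Corollary~\ref{Cor.Decomp_of_morphism_weak} and Corollary~\ref{Cor.Image_morphism_finite_diml}. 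However, as written your argument has a genuine logical gap: you invoke Corollary~\ref{Cor.Krull_dimensionII} to identify $\dim Z$ with its Krull dimension and to get $\dim W=d_Z$, but in the paper's logical order that corollary lies \emph{downstream} of the statement you are proving: its proof rests on Corollary~\ref{Cor.Krull_dimensionI}, which uses Remark~\ref{Rem.Nice_parametrization}, which is itself deduced from Corollary~\ref{Cor.Fibre-dimension-formula}. So this step is circular. Moreover, your purely topological justification is false in this generality: for an irreducible topological space, the Krull dimension of an open dense subset can be strictly smaller than that of the whole space (a chain of irreducible closed subsets of $Z$ may, except for its top member, lie entirely in $Z\setminus Z^{\circ}$), so ``$Z^{\circ}$ open dense in the irreducible $Z$'' does not force $\dim W=d_Z$. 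The repair is short and uses only material proved before the corollary: $\dim W\leq\dim Z$ holds by the very definition of dimension, since $\sigma$ is an injective morphism with image in $Z$; and $\dim Z=\dim\overline{\sigma(W)}\leq\dim W$ by Corollary~\ref{Cor.Image_morphism_finite_diml}. Together $\dim W=\dim Z$, which is all your final paragraph needs.

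Concerning the step you flag as the main obstacle, upgrading $f=\sigma^{-1}\circ\rho$ to a morphism: your Hilbert-scheme derivation overstates what generic flatness delivers. Flatness of $\overline{\Gamma}\to V$ over $V_0$ alone does not guarantee that the fibre $\overline{\Gamma}_v$ equals $\overline{\Gamma_{\rho(v)}}$; one also needs the graph to be fibrewise dense in the flat limit, which is exactly what Lemma~\ref{Lem.open} supplies in the paper's Lemma~\ref{Lem.Hilb}. In addition, the Hilbert-scheme formalism presupposes $X$ projective, so one must first conjugate by a birational map to a projective model, as in Corollary~\ref{Cor.Decomp_of_morphism_weak}. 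Neither point is fatal, but the entire paragraph is in fact unnecessary: Proposition~\ref{Prop.Nice_Parametrization_closed_alg_subset} already asserts that $\sigma$ (the map $\eta$ there) is \emph{rationally universal}, so once you have arranged $\rho(V)\subseteq\sigma(W)$ you obtain for free a rational map $f\colon V\dashrightarrow W$ with $\rho=\sigma\circ f$ on a dense open $V_0$; injectivity of $\sigma$ identifies the fibres of $f$ with those of $\rho$, and your density argument shows $f$ is dominant. With these two corrections---replacing the circular appeal to Corollary~\ref{Cor.Krull_dimensionII} by Corollary~\ref{Cor.Image_morphism_finite_diml}, and the Hilbert-scheme detour by rational universality---your proof is complete, and it is arguably more direct than the paper's Ramanujam-point computation, at the cost of being less self-contained about where the key finiteness comes from.
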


In Section~\ref{Sec.fibration}, we consider the following situation. Let $\pi\colon X\to Y$ be a dominant morphism with integral geometric generic fibre. Denote by $\Bir(X, \pi)\subset\Bir(X)$ the subgroup of birational transformations that induce a birational transformation on the base $Y$ and by $\Bir(X/Y)$ the subgroup of $\pi$-invariant transformations. Then $\Bir(X/Y)$ acts  by birational transformations on the geometric generic fibre 
$X_K$ of $\pi$, where $K$ is the algebraic closure of the 
function field $\kk(Y)$. 
We therefore obtain a homomorphism $\Bir(X/Y)\to\Bir(X_{K})$.

\begin{theorem}[Proposition~\ref{Prop.Birational_maps_preserving_a_fibration},
	\ref{Prop.Pull-back_continuous}]
	The homomorphisms $\Bir(X, \pi)\to\Bir(Y)$ and $\Bir(X/Y)\to\Bir(X_K)$ are continuous. 
\end{theorem}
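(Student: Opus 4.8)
The plan is to prove both continuity statements by the same mechanism: unwinding the definition of the Zariski topology as the \emph{finest} topology making every morphism continuous. By this universal (final-topology) property, a map out of $\Bir(X)$, or out of a subset of it carrying the induced topology, is continuous as soon as its precomposition with every morphism is continuous. So to treat the homomorphism $\Phi\colon\Bir(X,\pi)\to\Bir(Y)$ (resp.\ $\Psi\colon\Bir(X/Y)\to\Bir(X_K)$) I would fix an arbitrary morphism $\rho\colon V\to\Bir(X)$ whose image lies in $\Bir(X,\pi)$ (resp.\ $\Bir(X/Y)$) and show that the composite is again continuous; since morphisms are continuous, it suffices to realize the composite, after passing to an open covering of $V$ as in Theorem~\ref{Thm.Main}\eqref{Main.Thm3}, by an honest algebraic family. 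Writing $\rho=\rho_\theta$ for an algebraic family $\theta\colon V\times X\dashrightarrow V\times X$, I would work throughout with the induced $V$-birational self-map, i.e.\ the birational self-map $\theta_\eta$ of $X_{\kk(V)}$ over $\kk(V)$ at the generic point of $V$, and reconstruct the relevant family on $Y$ (resp.\ on $X_K$) from it.

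For $\Phi$, the key point is descent from fibres to the generic fibre. Since every $\rho(v)\in\Bir(X,\pi)$, for a dense set of closed $v$ the rational map $(\id_V\times\pi)\circ\theta\colon V\times X\dashrightarrow V\times Y$ factors through $\id_V\times\pi$ on the fibre over $v$. I would upgrade this fibrewise statement to the generic fibre by a spreading-out argument: a rational function on $V\times X$ whose restriction to general fibres of $\id_V\times\pi$ is constant along the fibre descends to $V\times Y$, because the generic fibre of $\id_V\times\pi$ is the base change of the fibration $\pi$, hence is geometrically integral by hypothesis, so its field of constants is exactly $\kk(V\times Y)=\kk(V)(Y)$. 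This shows $\theta_\eta$ preserves the subfield $\kk(V)(Y)\subseteq\kk(V\times X)$, i.e.\ $\theta$ descends to a $V$-birational self-map $\bar\theta$ of $V\times Y$. The associated morphism $\bar\rho\colon V\to\Bir(Y)$ realizes $\Phi\circ\rho$, giving continuity.

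For $\Psi$ the same descent (now with $\pi$-invariance, $\pi\circ\rho(v)=\pi$) shows that $\theta_\eta$ is $\pi_{\kk(V)}$-invariant, i.e.\ a birational self-map of $X_{\kk(V)}$ over $Y_{\kk(V)}$. Passing to the generic fibre of $\pi_{\kk(V)}$, which is $X$ over $\kk(V\times Y)=\kk(V)(Y)$, and then base-changing along the inclusion $\kk(V)(Y)\subseteq \overline{\kk(Y)}(V)=K(V)=\kk(V_K)$ (using $K=\overline{\kk(Y)}$ and $V_K=V\times_{\kk}K$), I obtain a $V_K$-birational self-map of $V_K\times_K X_K$, hence a $K$-morphism $\tilde\rho\colon V_K\to\Bir(X_K)$. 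The composite $\Psi\circ\rho$ then factors as $V\to V_K\xrightarrow{\tilde\rho}\Bir(X_K)$, where the first map is the continuous base-change of points $V(\kk)\to V_K(K)$; since $\tilde\rho$ is a morphism, the composite is continuous. I would additionally verify that $\tilde\rho$ specializes correctly, i.e.\ that for a $\kk$-point $v$ the value $\tilde\rho$ of the corresponding $K$-point is the action of $\rho(v)$ on $X_K$.

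I expect the main obstacle to be precisely the descent from the fibrewise condition to the generic fibre, which is the step where the hypothesis of integral geometric generic fibre is indispensable: it is what guarantees that $\kk(V\times Y)$ is the full field of constants of the generic fibre of $\id_V\times\pi$, so that functions and maps descend without acquiring spurious constants or a multivalued base action. A secondary difficulty, specific to $\Psi$, is the careful bookkeeping of the two successive base changes $V\rightsquigarrow X_{\kk(V)}\rightsquigarrow X_K$ at the level of function fields, together with the specialization compatibility needed to identify $\tilde\rho$ with the genuine action on $X_K$. Finally, I would keep in mind that the descended family may only exist after shrinking $V$, so the argument naturally produces the construction on an open covering, in line with Theorem~\ref{Thm.Main}\eqref{Main.Thm3}, which is exactly what the final-topology reduction allows.
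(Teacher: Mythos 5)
There is a genuine gap, and it occurs at the same place in both halves of your argument: your descent constructions only produce families over a \emph{dense open subset} of the parameter variety $V$, while continuity requires them over \emph{every} point of $V$. For $\Bir(X,\pi)\to\Bir(Y)$, your function-field descent is fine as far as it goes (and correctly identifies where integrality of the geometric generic fibre enters: a rational function constant on general fibres of $\id_V\times\pi$ is algebraic over $\kk(V\times Y)$, hence lies in it), but it only yields a $V$-birational self-map $\bar\theta$ of $V\times Y$, with no control that $\lociso(\bar\theta)$ surjects onto $V$ — i.e.\ that $\bar\theta$ is an algebraic family parametrized by $V$ rather than by some dense open $V_0\subsetneq V$. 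Your closing remark conflates this shrinking with the open coverings of Theorem~\ref{Thm.Main}\eqref{Main.Thm3}: those coverings cover \emph{all} of $V$, a dense open subset does not. Nor can one patch this by Noetherian induction on $V\setminus V_0$: knowing $(\Phi\circ\rho)^{-1}(F)$ is closed in $V_0$ and closed in the complement $V'$ does not make it closed in $V$, since its closure may acquire points of $V'$, and excluding them is precisely the continuity statement being proved — the argument is circular. The paper's proof of Proposition~\ref{Prop.Birational_maps_preserving_a_fibration} is engineered exactly to avoid this: it reduces to \emph{normal} $V$, produces a set-theoretic descent $\vartheta\colon W_1\to W_2$ between subsets of $V\times Y$ that \emph{surject onto $V$} (Claim~\ref{Claim.Settheoretic_map}), proves $\vartheta$ is rational via Lemma~\ref{Lem.HPrational} (where Property~\hyperlink{Eq.NiceFibration}{($\ast$)} enters through the Stein factorization), and upgrades it to an isomorphism using normality, flatness, and Zariski's main theorem (Lemma~\ref{Lem.Abstract_bijection_isomorphism}). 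The surjectivity of $W_1,W_2$ onto $V$ is the whole point, and it is absent from your construction.

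For $\Bir(X/Y)\to\Bir(X_K)$ the paper itself documents that your step fails as stated: Example~\ref{Exa.Lociso_too_small} exhibits a family $\theta$ whose pull-back $\theta_K$ is \emph{not} an algebraic family over all of $V\times K$, because $\lociso(\theta_K)$ does not surject onto $V\times K$; so your claim that base change yields a morphism $\tilde\rho\colon V_K\to\Bir(X_K)$ is wrong in general. The repair (Lemma~\ref{Lem.Lociso_of_pull_back_family}) shows $\theta_K$ is a family over an open subset of $V\times K$ containing all $\kk$-rational points of $V$ — which is enough since the composite factors through $V(\kk)$ — and this again uses that the image of $\lociso(\theta)$ in $V\times Y$ surjects onto $V$. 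Moreover, the continuity of $V(\kk)\to (V\times K)(K)$, which you dismiss as an obvious base change of points, is a genuine lemma proved via a Galois-invariance descent argument (Lemma~\ref{Lem.Map-k-rational-K-rational_points}). So while your overall architecture (final-topology reduction, descent at the generic fibre, factoring through $V(\kk)\to V_K(K)$) parallels the paper's, the missing surjectivity over the parameter space is a substantive gap in both halves, not a routine shrinking that the topology absorbs.
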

In fact, we prove slightly more: if $\rho \colon V \to \Bir(X)$
is a morphism, then the composition with $\Bir(X, \pi) \to \Bir(Y)$
yields a morphism to $\Bir(Y)$.

\subsection*{A functorial approach?}
In this article, we focused on developing practical tools  to study $\Bir(X)$ and its algebraic structure. However, we hope that our results can serve as a first step towards a more conceptual study of the algebraic structure of $\Bir(X)$. 

	The definition of a morphism to $\Bir(X)$ extends also to a scheme $X$ over a fixed base scheme $S$
	(for the definition of a birational transformation in this context, the reader may consult 
	e.g.~\cite[\S9.7]{GoWe0Algebraic-geometryI}):
	Let $V$ be an $S$-scheme. An \emph{algebraic family  of birational transformations of $X$
		parametrized by $V$} is a birational transformation $\theta \colon V \times_S X \dasharrow V \times_S X$
		that induces an isomorphism $U_1 \xrightarrow{\sim} U_2$ 
		on schematically dense open subsets $U_1, U_2$ of $V \times_S X$
	such that for every $S$-morphism $V' \to V$ the pull-backs 
	\[
		V' \times_V U_1 \quad \textrm{and} \quad 
		V' \times_V U_2
	\]
	are schematically dense in $V' \times_V V \times_S X = V' \times_S X$.
	Then we have
	an associated map from the $S$-morphisms $\Mor_S(V', V)$ to the group of birational 
	transformations  $\Bir_{V'}(V' \times_S X)$ of $V' \times_S X$ over $V'$ 
	that defines a natural transformation between
	the contravariant functors $V' \mapsto \Mor_S(V', V)$ and 
	$V' \mapsto \Bir_{V'}(V' \times_S X)$,
	which we call a \emph{morphism}. This approach potentially leads to a systematic treatment
	of the group of birational transformations of $X$ by generalizing our results to this setting.
	In this paper, we do not adopt this point of view. However, it could be an interesting path to pursue, and it would be desirable to at least generalize our results to the setting of arbitrary fields within this functorial approach.

\subsection*{Acknowledgements}
	The authors would like to thank Jefferson Baudin, Fabio Bernasconi, and
	J\'er\'emy Blanc for interesting discussions.
	The first author is supported by DFG, project number 509752046 and the
	second author was partially supported by the SNSF grant 200020\_192217 ``Geometrically ruled surfaces''.

\section{Preliminaries}

In the following, we introduce the basic notions, we use allover the article.
All varieties, morphisms and rational maps are defined over an algebraically closed field $\kk$. Here, 
a \emph{variety}
is a (not necessarily irreducible) reduced, separated scheme of finite type over $\kk$.
Throughout the whole paper $X$ denotes an irreducible variety 
and $\Bir(X) = \Bir_{\kk}(X)$ 
denotes the group of birational transformations of $X$.

\medskip

	Let $X'$ be another irreducible variety, and let $S \subseteq \Bir(X)$, $T \subseteq \Bir(X')$
	be closed subsets. We say that a map $\eta \colon S \to T$ \emph{preserves algebraic families},
	if for every morphism $\rho \colon V \to \Bir(X)$ with image in $S$, the composition
	$\eta \circ \rho$ is a morphism to $\Bir(Y)$. Note, if $\eta \colon S \to T$ preserves
	algebraic families, then it is continuous. For example, conjugation with a birational
	map $X \dashrightarrow X'$ yields a group isomorphism $\Bir(X) \to \Bir(X')$ that preserves
	algebraic families.

	We endow $\Bir(X) \times \Bir(X)$ with the induced topology of $\Bir(X \times X)$
	under the injective group homomorphism
	$(\varphi_1, \varphi_2) \mapsto ((x_1, x_2) \dashmapsto 
	(\varphi_1(x_1), \varphi_2(x_2)))$ and get the following basic properties:

	\begin{proposition} $ $
		\label{Prop.Prel}
		\begin{enumerate}
		\item \label{Prop.Prel_product_closed} The subgroup $\Bir(X) \times \Bir(X)$ of $\Bir(X \times X)$ is closed and the induced topology on $\Bir(X) \times \Bir(X)$ is the finest topology
		such that all maps $\rho_1 \times \rho_2 \colon V \to \Bir(X) \times \Bir(X)$
		are continuous for all morphisms $\rho_1, \rho_2 \colon V \to \Bir(X)$.
		Moreover, the product maps $\rho_1 \times \rho_2$ are exactly the morphisms to
		$\Bir(X \times X)$ with image in $\Bir(X) \times \Bir(X)$.
		\item \label{Prop.Prel_mult_inv_cont} The composition and the inversion 
		of birational transformations 
		\[
			\begin{array}{rcl}
				\Bir(X) \times \Bir(X) & \to & \Bir(X) \\
				(\varphi, \psi) & \mapsto & \varphi \circ \psi
			\end{array} \, , 
			\quad \quad \quad 
			\begin{array}{rcl}
				\Bir(X) & \to & \Bir(X) \\
				\varphi & \mapsto & \varphi^{-1}
			\end{array}
		\]
		preserve algebraic families.
		\item \label{Prop.Prel_diagonal} The diagonal $\Delta \subseteq \Bir(X) \times \Bir(X)$ 
		is closed and points in $\Bir(X)$ are closed.
		\item \label{Prop.Prel_Closure_product} For subsets $S_1, S_2 \subseteq \Bir(X)$ the 
			closure of $S_1 \times S_2$
			 in $\Bir(X) \times \Bir(X)$ is equal to the product 
			 $\overline{S_1} \times \overline{S_2}$.
		\item \label{Prop.Prel_connected_prod} For connected (irreducible) 
			subsets $S_1, S_2 \subseteq \Bir(X)$
			the product $S_1 \times S_2$ is a connected (irreducible) 
			subset of $\Bir(X) \times \Bir(X)$.
	\end{enumerate}
\end{proposition}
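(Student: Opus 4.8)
The plan is to isolate one elementary geometric lemma and derive all five statements from it together with the universal property defining the Zariski topology. The key tool is the following comparison principle: if $\theta,\theta'$ are two algebraic families of birational transformations parametrized by the same variety $V$, then $\{\,v\in V : \theta(v,\cdot)=\theta'(v,\cdot)\,\}$ is closed in $V$. To prove it I would pass to a common open dense domain $U\subseteq V\times X$ on which both families are morphisms. Here I would use the observation that a nonempty open subset of the irreducible fibre $\{v\}\times X$ is dense, so the intersection of the two domains still meets every fibre in a dense open (this is also why each $\theta(v,\cdot)$ is automatically birational, the fibres of the domain being nonempty open, hence dense). The equalizer $E=\{\,u\in U : \theta(u)=\theta'(u)\,\}$ is closed in $U$ by separatedness of $V\times X$, and since each fibre $U_v$ is irreducible, the two families agree over $v$ exactly when $U_v\subseteq E$. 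Because $\pr_V\colon V\times X\to V$ is flat of finite type, hence open, the set $\pr_V(U\setminus E)$ is open, and its complement is the desired coincidence locus. The same equalizer/open-projection mechanism works verbatim with $X$ replaced by any fixed fibre, a flexibility I will exploit below.

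For part~\eqref{Prop.Prel_product_closed} I would first establish the ``Moreover'' clause. Given a morphism $\sigma\colon V\to\Bir(X\times X)$ whose image lies in the subgroup, the defining family $\Theta$ has each of its two $X$-components generically independent of one factor, since every $\sigma(v)$ is a product; a rational map generically independent of a coordinate descends along the projection forgetting that coordinate, which yields two families on $V\times X$ and hence morphisms $\rho_1,\rho_2$ with $\sigma=\rho_1\times\rho_2$. The converse is immediate by forming the product family $U_1\times_V U_2\xrightarrow{\sim}U_1'\times_V U_2'$. The topological statement is then a formal juggling of universal properties: product maps are exactly the morphisms into the subgroup, so the induced topology and the finest topology making all product maps continuous have the same closed sets. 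Finally, closedness of the subgroup is a variant of the comparison lemma: for $\sigma$ as above, $\sigma(v)$ is a product precisely when $\pr_1\circ\sigma(v)$ is independent of the second coordinate and $\pr_2\circ\sigma(v)$ of the first; each independence is the coincidence of $\Theta$ with itself after duplicating a factor, hence a closed condition by the mechanism above applied on $V\times X\times X\times X$.

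Part~\eqref{Prop.Prel_mult_inv_cont} is where I expect the real work. Inversion is painless: if $\theta$ realizes $\rho$, then the $V$-birational inverse $\theta^{-1}$ is again an algebraic family and realizes $v\mapsto\rho(v)^{-1}$. For composition, using part~\eqref{Prop.Prel_product_closed} I may assume the incoming morphism is a product $\rho_1\times\rho_2$, and the natural candidate realizing $v\mapsto\rho_1(v)\circ\rho_2(v)$ is the composite $\theta_1\circ\theta_2$ inside the group $\Bir_V(V\times X)$. The hard part will be checking that this composite is a genuine algebraic family, i.e.\ that its domain meets every fibre; this does not follow from the group structure alone. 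It follows instead from fibrewise dominance: over any $v$ the map $\rho_2(v)$ is birational, hence dominant, so it carries the generic point of $\{v\}\times X$ into the fibrewise dense domain of $\theta_1$, forcing the domain of $\theta_1\circ\theta_2$ to be dense in every fibre. Applying the same argument to $\theta_2^{-1}\circ\theta_1^{-1}$ handles the image, so $\theta_1\circ\theta_2$ is an algebraic family whose associated morphism is $v\mapsto\rho_1(v)\circ\rho_2(v)$.

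The remaining parts are formal consequences. For part~\eqref{Prop.Prel_diagonal}, points are closed because $\rho^{-1}(\varphi)=\{\,v : \rho(v)=\varphi\,\}$ is the comparison locus of $\rho$ with the constant family at $\varphi$; and the diagonal is closed because, after part~\eqref{Prop.Prel_product_closed} reduces us to product maps, $(\rho_1\times\rho_2)^{-1}(\Delta)=\{\,v : \rho_1(v)=\rho_2(v)\,\}$ is again such a locus. For part~\eqref{Prop.Prel_Closure_product}, the slice inclusions $\varphi\mapsto(\varphi,\psi_0)$ and $\psi\mapsto(\varphi_0,\psi)$ are continuous (composing with any morphism gives a product map, continuous by part~\eqref{Prop.Prel_product_closed}), so a two-step closure argument yields first $\overline{S_1}\times S_2\subseteq\overline{S_1\times S_2}$ and then $\overline{S_1}\times\overline{S_2}\subseteq\overline{S_1\times S_2}$; the reverse inclusion holds because $\overline{S_1}\times\overline{S_2}$ is closed, its preimage under every product map being $\rho_1^{-1}(\overline{S_1})\cap\rho_2^{-1}(\overline{S_2})$. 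Part~\eqref{Prop.Prel_connected_prod} follows the textbook pattern: write $S_1\times S_2$ as a union of the slices $\{s_1\}\times S_2$ and $S_1\times\{s_2\}$, which are continuous images of $S_1,S_2$ and therefore connected, resp.\ irreducible; for irreducibility, if $S_1\times S_2=A\cup B$ with $A,B$ closed then each slice lies in one of them, and $\{\,s_2 : S_1\times\{s_2\}\subseteq A\,\}$ is closed as an intersection of slice-preimages, so irreducibility of $S_2$ forces $S_1\times S_2\subseteq A$ or $\subseteq B$.
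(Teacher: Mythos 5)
Your proposal is correct, and it takes a genuinely different route from the paper: for parts \eqref{Prop.Prel_product_closed}--\eqref{Prop.Prel_diagonal} and for the irreducibility half of \eqref{Prop.Prel_connected_prod} the paper gives no argument of its own, referring instead to the proofs in \cite{PaRi2013Some-remarks-about}, and it argues directly only \eqref{Prop.Prel_Closure_product} and the connectedness half of \eqref{Prop.Prel_connected_prod}, via the slice maps $\epsilon_{\varphi}, \eta_{\varphi}$ (asserted there to be closed topological embeddings; your treatment of these two parts is essentially the paper's, except that you get by with mere continuity of the slices, which suffices and is marginally cheaper). What your approach buys is a self-contained proof organized around one comparison lemma --- equalizer closed in a common fibrewise-dense domain, fibres irreducible, the projection $V \times X \to V$ open --- and all your uses of it (points, the diagonal, coincidence loci of families, and, after duplicating a factor, closedness of the subgroup $\Bir(X) \times \Bir(X)$) are sound; likewise your two genuinely geometric inputs, the descent of the two components of a family with image in the subgroup, and the fibrewise-dominance argument showing that $\dom(\theta_1 \circ \theta_2)$ meets every fibre densely, are exactly where the content lies and are correct. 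If you write this out in full, three points deserve explicit sentences: (i) in the ``Moreover'' clause, fibrewise independence of a coordinate must be upgraded to independence as a rational map before you can descend --- apply your own mechanism to $(v, x_1, x_2, x_2') \dashmapsto F_1(v, x_1, x_2)$ versus $F_1(v, x_1, x_2')$, whose agreement locus is closed in a fibrewise-dense domain and fibrewise dense, and note also that each factor of a birational product map is itself birational (compose with a generic slice of the inverse); (ii) to promote the descended, respectively composed, fibrewise-birational $V$-rational maps to genuine algebraic families you need the fibrewise-birational-implies-family step, which is precisely the paper's Lemma~\ref{Lem.Constr_rational_action} (stated there for irreducible $V$, but extended by restricting to components); and (iii) the ``formal juggling'' identifying the induced topology with the finest topology making product maps continuous is not purely formal: one direction needs the subgroup to be closed, so that an arbitrary morphism $\sigma \colon V \to \Bir(X \times X)$ can be restricted to the closed subvariety $\sigma^{-1}(\Bir(X) \times \Bir(X))$, where your ``Moreover'' clause turns it into a product map --- so establish closedness before, not after, the topological statement.
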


\begin{proof} 
	\eqref{Prop.Prel_product_closed}-\eqref{Prop.Prel_diagonal}: 
	This can be found in the proofs of~\cite[Propositions~4, 6, Remark~5, and Lemma~7]{PaRi2013Some-remarks-about}.

	\eqref{Prop.Prel_Closure_product}: This follows from the fact that 
	$\epsilon_{\varphi}, \eta_{\varphi} \colon \Bir(X) \to \Bir(X) \times \Bir(X)$ given by 
	$\epsilon_{\varphi}(\psi) = (\psi, \varphi)$,  $\eta_{\varphi}(\psi) = (\varphi, \psi)$
	are both closed topological embeddings
	for all $\varphi \in \Bir(X)$. Indeed, $\overline{S_1} \times \overline{S_2}$ is closed in
	$\Bir(X) \times \Bir(X)$, and if $Z$ is closed in $\Bir(X) \times \Bir(X)$ and contains
	$S_1 \times S_2$, then 
	$\overline{S_1} \subseteq \varepsilon_{\varphi}^{-1}(Z)$
	for all $\varphi \in S_2$ and hence $\overline{S_1} \times S_2  \subseteq Z$.
	Therefore, $\overline{S_2} \subseteq \eta_{\varphi}^{-1}(Z)$ for all
	$\varphi \in \overline{S_1}$, whence
	$\overline{S_1} \times \overline{S_2} \subseteq Z$.

	\eqref{Prop.Prel_connected_prod}: 
	For the irreducibility, see~\cite[Corollary~10]{PaRi2013Some-remarks-about}.
	For the connectedness, this follows again from the fact that 
	$\epsilon_{\varphi}, \eta_{\varphi} \colon \Bir(X) \to \Bir(X) \times \Bir(X)$ 
	are closed topological embeddings. Indeed:
	assume $A, B \subseteq S_1 \times S_2$ are 
	closed disjoint subsets such their union is equal to $S_1 \times S_2$.
	Then for every $\varphi \in S_1$ we have that either $\{ \varphi \} \times S_2$
	lies in $A$ or in $B$. Since this is also the case for $S_1 \times \{ \varphi \}$,
	$\varphi \in S_2$, we get that either $A$ or $B$ is equal to $S_1 \times S_2$.
\end{proof}

An immediate consequence of 
Proposition~\ref{Prop.Prel}\eqref{Prop.Prel_mult_inv_cont}\eqref{Prop.Prel_Closure_product} 
is that the closure of any subgroup of $\Bir(X)$ is again a subgroup.

\begin{remark}
	\label{Rem.InclusioAut_BIr}
	Let $X$ be an irreducible variety. On $\Aut(X)$ we have a similar topology to the Zariski-topology
	on $\Bir(X)$ that we call Zariski-topology as well. Note that the natural inclusion $\Aut(X) \to \Bir(X)$ is continuous. However, it is unclear, if a morphism $A \to \Bir(X)$ with image in 
	$\Aut(X)$ comes from an $A$-isomorphism $A \times X \to A \times X$. 
\end{remark}

We finish this section with the following notion of dimension due to 
Ramanujamn~\cite{Ra1964A-note-on-automorp} 
adapted for the groups of birational transformations:

\begin{definition}
	Let $S \subseteq \Bir(X)$ be any subset. We define its \emph{dimension} by
	\[
		\dim S \coloneqq \sup \Bigset{d \in \NN_0}{
		\begin{array}{l}
		\textrm{there is an injective morphism $\rho \colon V \to \Bir(X)$} \\
			\textrm{such that $d = \dim V$ and $\rho(V) \subseteq S$}
		\end{array}
		}
	\] 
	The supremum does not change if we assume in addition that $V$ is irreducible.
\end{definition}

\section{\texorpdfstring{Morphisms to $\Bir(X)$ and the Hilbert scheme}
{Morphisms to Bir(X) and the Hilbert scheme}}
\label{Sec.Morphisms_Hilbert}

In the following section we study $\Bir(X)$ using the Hilbert scheme of $X \times X$
as in~\cite{Ha1987On-the-birational-}. This will enable us in Section~\ref{Sec.Porperties_of_morphisms}
to find for every closed irreducible algebraic subset of $\Bir(X)$ 
an open dense subset that admits a nice parametrization, 
see~Proposition~\ref{Prop.Nice_Parametrization_closed_alg_subset}. More precisely, 
we construct in this section countably many morphisms with pair-wise disjoint images that cover 
$\Bir(X)$ and satisfy a certain kind of universal property, 
see the Corollaries~\ref{Cor.Hilb_p-morph},~\ref{Cor.Universal_property_Hilb}. Using this we establish a decomposition of every morphism to $\Bir(X)$ in 
Corollary~\ref{Cor.Decomp_of_morphism_weak}.

\medskip

We start with a lemma that gives a way to construct algebraic families.
For a rational map $\varphi \colon Y \dashrightarrow Z$ we denote 
by $\lociso(\varphi) \subseteq Y$ the open (possibly empty) 
set of those points in $Y$, where $\varphi$ induces a local isomorphism.

\begin{lemma}
	\label{Lem.Constr_rational_action}
	For $i=1,2$, let $\pi_i \colon Z_i \to V$ be a flat surjective morphism of irreducible
	varieties with irreducible reduced fibres.
	Assume that $\theta \colon Z_1 \dashrightarrow Z_2$ is a rational map such that 
	its domain surjects onto $V$ and $\pi_1 = \pi_2 \circ \theta$. If the restriction 
	$\theta_v \colon Z_{1,v} \dashrightarrow Z_{2,v}$ to the fibres is birational for all $v \in V$,
	then $\theta$ is birational and $\lociso(\theta)$ surjects onto $V$. 
\end{lemma}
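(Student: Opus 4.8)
The plan is to first establish that $\theta$ is birational, and then to deduce that $\lociso(\theta)$ surjects onto $V$; the second part is where the flatness hypotheses really enter.

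\textbf{Birationality.} First I would check dominance. Write $W \coloneqq \dom(\theta)$, which surjects onto $V$ by hypothesis, so $W_v \coloneqq W \cap Z_{1,v}$ is a nonempty open subset of the irreducible variety $Z_{1,v}$, hence dense in it. For each $v$ the map $\theta_v$ is birational, in particular dominant, so $\theta(W_v) = \theta_v(W_v)$ is dense in $Z_{2,v}$; taking closures in $Z_2$ gives $\overline{\theta(W)} \cap Z_{2,v} = Z_{2,v}$ for every $v$, whence $\overline{\theta(W)} = Z_2$ and $\theta$ is dominant. Since the birationality of $\theta_v$ forces all fibres to have the same dimension (these being the relative dimensions of the flat maps $\pi_i$), the induced rational map $\theta_\eta$ on the generic fibre over the generic point $\eta$ of $V$ is a dominant map of irreducible $\kk(V)$-varieties of equal dimension, hence generically finite of degree $n = [\kk(Z_1):\kk(Z_2)] \geq 1$, and it remains to show $n = 1$.

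To do this I would use generic flatness to shrink $Z_2$ to a dense open $U_2$ such that $\theta$ restricts to a finite flat morphism $\theta^{-1}(U_2) \cap W \to U_2$ of degree $n$. As $\pi_2(U_2)$ contains a dense open of $V$, I may pick $v$ with $U_2 \cap Z_{2,v} \neq \emptyset$. Base-changing this finite flat degree-$n$ morphism along the fibre inclusion $Z_{2,v} \hookrightarrow Z_2$ yields a finite flat morphism of degree $n$, which is exactly $\theta_v$ over the dense open $U_2 \cap Z_{2,v}$ of the irreducible $Z_{2,v}$. Since $\theta_v$ is birational, this degree equals $[\kk(Z_{1,v}):\kk(Z_{2,v})] = 1$, so $n = 1$ and $\theta$ is birational.

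\textbf{Surjectivity of $\lociso(\theta)$.} Here I would invoke the fibre-wise criterion for open immersions for morphisms between two $V$-schemes that are flat and of finite presentation over $V$ (EGA~IV, 17.9.5): for the $V$-morphism $\theta \colon W \to Z_2$ and a point $z \in W_v$, if the fibre map $\theta_v$ is a local isomorphism at $z$, then $\theta$ itself is a local isomorphism at $z$. The hypotheses apply because $W$ (open in $Z_1$) and $Z_2$ are flat and of finite type, hence of finite presentation, over $V$, using precisely the flatness of $\pi_1$ and $\pi_2$. Thus it suffices, for each $v \in V$, to find $z \in W_v$ at which $\theta_v$ is a local isomorphism. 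But $\theta_v$ is birational between the irreducible varieties $Z_{1,v}$ and $Z_{2,v}$, so $\lociso(\theta_v)$ is a dense open subset of $Z_{1,v}$; as $W_v$ is also dense open in $Z_{1,v}$, the intersection $W_v \cap \lociso(\theta_v)$ is nonempty, and any point $z$ in it satisfies $\pi_1(z) = v$ and lies in $\lociso(\theta)$. Hence $\lociso(\theta)$ meets every fibre of $\pi_1$, i.e.\ surjects onto $V$.

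I expect the third paragraph to be the main obstacle: the step from a fibre-wise local isomorphism to a local isomorphism of the total map genuinely requires that \emph{both} source and target be flat over $V$, and this is exactly where the two flatness hypotheses are indispensable. The reducedness and irreducibility of the fibres are used throughout to treat the $Z_{i,v}$ as irreducible varieties, so that nonempty open subsets are dense and birational fibre maps have dense local-isomorphism loci. By contrast, the birationality in the first two paragraphs is a comparatively routine degree computation once the generic-flatness reduction is in place.
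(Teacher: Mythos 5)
Your first two paragraphs are essentially fine (with the small caveat that getting $\theta$ \emph{finite} over a dense open $U_2 \subseteq Z_2$ is a Zariski's-main-theorem fact, not a consequence of generic flatness alone, which only gives flatness), but the third paragraph — which you correctly single out as the crux — rests on a criterion that is false in the form you use it. There is no \emph{pointwise} fibre-wise criterion for open immersions in the Zariski topology. Concretely: take $V = \AA^1$, $Z_1 = Z_2 = \AA^2$ with the projections $(t,u) \mapsto t$ (flat, smooth, with irreducible reduced fibres), and $\theta(t,u) = (t,\, tu^2 + u)$. At $z = (0,0)$ the fibre map $\theta_0$ is the identity of $\AA^1$, hence a local isomorphism at $z$, and $\theta$ is even étale at $z$ (the $u$-derivative is $2tu+1$); yet for every $t \neq 0$ the two distinct points $(t,u)$ and $(t, -u - 1/t)$ have the same image, so the non-injectivity locus of $\theta$ is the dense set $\{t \neq 0\}$ and no Zariski neighbourhood of $z$ maps injectively anywhere: $z \notin \lociso(\theta)$. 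Note that flatness of source and target over $V$ — the hypothesis you identify as the essential one — holds here in the strongest possible form, so it is not what makes the step work. What the references actually provide is (a) the pointwise fibre-wise criterion for \emph{étaleness} (SGA 1, Exp.~I, Cor.~5.9; this has no injectivity content and is true), and (b) a \emph{global} criterion: if the source is flat and of finite presentation over $V$ and \emph{every} fibre map is an open immersion, then the map is an open immersion (étale plus radicial). The quantifier over all fibres is indispensable, because being a local isomorphism at $z$ requires injectivity on a Zariski neighbourhood of $z$, and such a neighbourhood meets all nearby fibres — which your hypothesis at the single fibre through $z$ cannot control. In your setting the conclusion is nonetheless true, but only because \emph{all} the $\theta_v$ are birational (in my example the fibre maps over $t \neq 0$ have degree two), and your argument never uses this.

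This is exactly how the paper's proof is organised, and how yours can be repaired. Let $E \subseteq \dom(\theta)$ be the open locus where $\theta$ is étale; by the pointwise étale criterion, $E$ meets every fibre, since $\lociso(\theta_v) \cap W_v \neq \varnothing$ — this is where your paragraph stops, prematurely. One must then verify, for \emph{every} $v$, that the whole fibre map $\theta_v|_{E_v} \colon E_v \to Z_{2,v}$ is an open immersion (an étale birational morphism is an open immersion; the paper deduces this from the locally standard étale structure, using that the fibres are irreducible and reduced), and only then apply the global criterion (SGA 1, Exp.~I, Prop.~5.7) to conclude that $\theta|_E$ is an open embedding. The global step is legitimate precisely because any two points identified by the $V$-map $\theta$ lie in the same fibre, so fibre-wise injectivity on all of $E_v$, for all $v$ at once, yields injectivity of $\theta|_E$. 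Observe finally that this route gives birationality of $\theta$ for free ($E$ is open dense in the irreducible $Z_1$ and $\theta|_E$ is an open embedding), so your separate degree computation in the first two paragraphs, while correct, becomes redundant once the argument is set up properly.
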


\begin{proof}
	Consider the subset of the domain where $\theta$ is \'etale
	\[ 
		E \coloneq \set{ z_1 \in \dom(\theta)}{\textrm{$\theta$ is \'etale at $z_1$} } \, .
	\]
	By~\cite[Exp. I, Proposition~4.5]{GrRa2003Revetements-etales}, the set $E$ is open in $\dom(\theta)$.
	For $v \in V$ and $z_1 \in Z_{1, v}$ we have that $\theta$ is \'etale at $z_1$ if and only if 
	$\theta_v$ is \'etale at $z_1$ by~\cite[Exp.~I, Corollaire~5.9]{GrRa2003Revetements-etales}.
	In particular, $E$ surjects onto $V$.

	Now, for $v \in V$ consider the open dense subset 
	$E_v \coloneqq Z_{1, v} \cap E$ of $Z_{1, v}$. 
	Then $\theta_v |_{E_v} \colon E_v \to Z_{2, v}$ is a birational, \'etale
	morphism. Using that \'etale morphisms are locally 
	standard \'etale (see e.g.~\cite[Lemma 29.36.15]{St2024The-Stacks-project}), we conclude that 
	$\theta_v |_{E_v}$ is an open immersion. Hence, $\theta |_E \colon E \to Z_2$
	is an open embedding by~\cite[Exp.~I, Proposition~5.7]{GrRa2003Revetements-etales}.
\end{proof}

The most important algebraic families of birational transformations are para\-metrized by algebraic groups
and compatible with the group multiplication.
The following definition is due to Demazure~\cite[Definition~1, p. 514]{De1970Sous-groupes-algeb}:

\begin{definition}
A rational map $\alpha \colon G \times X \dashrightarrow X$ for an algebraic group $G$ is called a
\emph{rational $G$-action} if the rational map
\begin{equation}
	\label{Eq.Rational_action}
	\theta \colon G \times X \dashrightarrow G \times X \, , \quad
	(g, x) \dashmapsto (g, \alpha(g, x)) 
\end{equation}
is dominant and the following diagram commutes
\[
	\xymatrix@=15pt{
		G \times G \times X \ar@{-->}[d]_-{\id_G \times \alpha} \ar[rrrr]^-{(g_1, g_2, x) \mapsto (g_1 g_2, x)} 
		&&&& G \times X \ar@{-->}[d]^-{\alpha} \\
		G \times X \ar@{-->}[rrrr]^-{\alpha} &&&& X
	}
\]
\end{definition}

\begin{corollary}
	\label{Cor.Rational_action_is_family}
	The rational map~\eqref{Eq.Rational_action}
	is an algebraic family of birational transformations of $X$ parametrized by $G$
	and $\rho_{\theta} \colon G \to \Bir(X)$ is a group homomorphism.
\end{corollary}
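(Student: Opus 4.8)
The plan is to verify the two assertions separately, using Lemma~\ref{Lem.Constr_rational_action} to establish that $\theta$ is an algebraic family and then the commutativity of the diagram to deduce the homomorphism property. First I would check that $\theta$ from~\eqref{Eq.Rational_action} satisfies the hypotheses of Lemma~\ref{Lem.Constr_rational_action} with $V = G$ and $Z_1 = Z_2 = G \times X$, where both projections $\pi_i$ are the first projection $\pr_G \colon G \times X \to G$. This projection is flat and surjective with fibres $\{g\} \times X \cong X$, which are irreducible and reduced since $X$ is an irreducible variety, so the structural hypotheses hold. By construction $\pr_G = \pr_G \circ \theta$, and the domain of $\theta$ surjects onto $G$ because $\theta$ is assumed dominant (its image meets a dense set of fibres, and after shrinking we may arrange the domain to surject). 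The one point requiring attention is that Lemma~\ref{Lem.Constr_rational_action} asks for the fibrewise restriction $\theta_g \colon X \dashrightarrow X$ to be birational for \emph{every} $g \in G$.

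The main obstacle is precisely this fibrewise birationality. A priori the dominance of $\theta$ only gives that $\theta_g$ is birational (indeed dominant) for $g$ in a dense open subset of $G$, not for all $g$. Here is where the group-action structure enters: I would use the commutative diagram to show that every $\theta_g$ admits an inverse, namely $\theta_{g^{-1}}$. Concretely, the diagram says $\alpha(g_1 g_2, x) = \alpha(g_1, \alpha(g_2, x))$ as rational maps; taking $g_1 = g$ and $g_2 = g^{-1}$ yields $\alpha(e, x) = \alpha(g, \alpha(g^{-1}, x))$, and one checks from the diagram with $g_1 = g_2 = e$ that $\alpha(e, -) = \id_X$. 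Thus $\alpha(g, -) \circ \alpha(g^{-1}, -) = \id_X$ as birational maps, and symmetrically for the other order, so $\theta_g$ is birational with inverse $\theta_{g^{-1}}$ for \emph{every} $g \in G$. Some care is needed to justify that composing these rational maps is legitimate and that the identities of rational maps specialize correctly to the fibre over $g$; this is the technical heart of the argument.

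With fibrewise birationality in hand, Lemma~\ref{Lem.Constr_rational_action} immediately gives that $\theta$ is birational and that $\lociso(\theta)$ surjects onto $G$. It remains to see that this makes $\theta$ an \emph{algebraic family}, i.e.\ that $\theta$ induces an isomorphism between open dense subsets of $G \times X$ each surjecting onto $G$ under $\pr_G$: the open embedding $\theta|_E \colon E \to G \times X$ produced in the proof of the lemma has domain $E \subseteq \lociso(\theta)$ surjecting onto $G$, and its image is open dense and likewise surjects onto $G$ since $\pr_G \circ \theta = \pr_G$. Finally, for the homomorphism statement, I would read off the commutative diagram directly: the identity $\alpha(g_1 g_2, x) = \alpha(g_1, \alpha(g_2, x))$ says exactly that $\rho_\theta(g_1 g_2) = \rho_\theta(g_1) \circ \rho_\theta(g_2)$ in $\Bir(X)$, where $\rho_\theta(g) = (x \dashmapsto \alpha(g,x))$. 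Combined with $\rho_\theta(e) = \id_X$ from the base case above, this shows $\rho_\theta$ is a group homomorphism, completing the proof.
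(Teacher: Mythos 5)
Your strategy coincides with the paper's---reduce to Lemma~\ref{Lem.Constr_rational_action} with $V = G$, $Z_1 = Z_2 = G \times X$, and use the group law to get birationality of every fibrewise map $\theta_g$---but the two steps you gesture at are precisely the nontrivial content, and as written they fail. First, dominance of $\theta$ does \emph{not} imply that $\dom(\theta)$ surjects onto $G$: the domain is merely a dense open subset of $G \times X$, so its projection to $G$ contains a dense open subset of $G$, but the slice $\{g\} \times X$ may miss $\dom(\theta)$ entirely for special $g$; and ``after shrinking we may arrange the domain to surject'' is backwards---shrinking can only make the projection smaller. Second, your specialization of the identity $\alpha(g_1 g_2, x) = \alpha(g_1, \alpha(g_2, x))$ at the pair $(g_1, g_2) = (g, g^{-1})$ is illegitimate: this is an equality of rational maps on $G \times G \times X$, valid only on some dense open subset, and the slice $\{(g, g^{-1})\} \times X$ has codimension $\dim G$, so it can lie entirely outside the locus of validity. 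You flag this yourself as ``the technical heart'' but supply no argument; without it you obtain birationality of $\theta_g$ only for \emph{generic} $g$, whereas Lemma~\ref{Lem.Constr_rational_action} requires it for \emph{every} $g \in G$.

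Both gaps are exactly what the paper's proof outsources to Demazure \cite{De1970Sous-groupes-algeb}: Lemme~1 (p.~515) shows that for every $g \in G$ the restriction $\theta_g \colon X \dashrightarrow X$ of $\theta$ over $g$ is a well-defined dominant rational map (in particular $\dom(\theta)$ surjects onto $G$), and (PO~2') (p.~514) gives $\theta_g \circ \theta_h = \theta_{gh}$ for \emph{all} pairs $(g,h)$, not just generic ones; together these yield $\theta_g \circ \theta_{g^{-1}} = \theta_e = \id_X$, hence birationality of every $\theta_g$, and the homomorphism property of $\rho_\theta$ is then immediate---that last step of yours is fine once the composition rule is available for all pairs. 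If you wanted to avoid the citation, the standard repair is a translation argument: the set $U \subseteq G$ of parameters $h$ over which $\theta$ is densely defined on $\{h\} \times X$ and satisfies the generic composition law contains a dense open subset of $G$; since $U \cdot U = G$ (for any $g$, the dense open sets $U$ and $gU^{-1}$ intersect), write $g = g_1 g_2$ with $g_1, g_2 \in U$ and verify, using the maximality of the domain of definition of a rational map, that $\theta_{g_1} \circ \theta_{g_2}$ is a restriction of $\theta$ over $g$. As it stands, your proposal has a genuine gap at its central step.
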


\begin{proof}
	The domain of $\theta$ surjects onto $G$ 
	and for all $g \in G$ we have that
	the restriction $\theta_g \colon X \dashrightarrow X$ of $\theta$ over $g$ is dominant
	by \cite[Lemme~1, p. 515]{De1970Sous-groupes-algeb} and using \cite[(PO 2'), p. 514]{De1970Sous-groupes-algeb} 
	we get $\theta_g \circ \theta_{h} = \theta_{gh}$ for all $g, h \in G$.
	Hence, $\theta_g$ is birational for all $g \in G$ and thus $\theta$ is an algebraic family by 
	Lemma~\ref{Lem.Constr_rational_action}; moreover, $\rho_{\theta} \colon G \to \Bir(X)$ is a group homomorphism.
\end{proof}

Let us assume for now that $X$ is projective. In this case, we can construct countably many injective morphisms that cover 
$\Bir(X)$ by using Hilbert schemes. We may see
$\Bir(X)$ as an open subset of the Hilbert scheme of $X \times X$ 
(see \cite[Proposition~1.7]{Ha1987On-the-birational-}, which also works in positive characteristic).
For every Hilbert polynomial $p \in \QQ[X]$ denote by $\Hilb_p$ the intersection 
of $\Bir(X)$ with the connected component of the Hilbert scheme corresponding to $p$
(for this we fix once and for all a closed embedding of $X \times X$ into some
$\PP^N$). Note that the $\Hilb_p$, $p \in \QQ[T]$ give a partition of $\Bir(X)$.

\begin{corollary}
	\label{Cor.Hilb_p-morph}
	Assume that $X$ is projective.
	For every $p \in \QQ[T]$, the inclusion $\iota_p \colon \Hilb_p \to \Bir(X)$ is a morphism.	
\end{corollary}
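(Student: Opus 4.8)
The plan is to exhibit an explicit algebraic family of birational transformations parametrized by $\Hilb_p$ whose associated morphism is the inclusion $\iota_p$, using the universal family over the Hilbert scheme. Write $\mathcal{Z} \subseteq \Hilb_p \times (X \times X)$ for the restriction to $\Hilb_p$ of the universal subscheme. By construction the projection $\pi \colon \mathcal{Z} \to \Hilb_p$ is flat, and by Hanamura's description~\cite[Proposition~1.7]{Ha1987On-the-birational-} each fibre $\mathcal{Z}_v$ over $v \in \Hilb_p$ is the integral subscheme $Z_v \subseteq X \times X$ that is the graph closure of the birational transformation $\varphi_v$ represented by $v$. In particular the fibres of $\pi$ are irreducible and reduced, and the two projections $X \times X \to X$ induce morphisms $q_1, q_2 \colon \mathcal{Z} \to \Hilb_p \times X$ over $\Hilb_p$ whose restrictions to each fibre are the two projections $Z_v \to X$; these are birational because $Z_v$ is the graph of the birational map $\varphi_v$.

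Since the notions of algebraic family and morphism allow a reducible parameter variety, I work over each irreducible component of $\Hilb_p$ separately and glue at the end; so for the construction I may assume $\Hilb_p$ irreducible, whence $\mathcal{Z}$ is irreducible as well. Applying Lemma~\ref{Lem.Constr_rational_action} to $q_1$ (with $\pi_1 = \pi$ and $\pi_2 = \pr_1 \colon \Hilb_p \times X \to \Hilb_p$, both flat surjective with irreducible reduced fibres) shows that $q_1$ is birational and that $\lociso(q_1)$ surjects onto $\Hilb_p$. Consequently $q_1^{-1}$ is a rational map defined on an open subset of $\Hilb_p \times X$ surjecting onto $\Hilb_p$, and I set $\theta \coloneqq q_2 \circ q_1^{-1} \colon \Hilb_p \times X \dashrightarrow \Hilb_p \times X$. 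Its domain surjects onto $\Hilb_p$ (as $q_2$ is a genuine morphism), and its fibre $\theta_v = (q_2)_v \circ (q_1)_v^{-1} = \varphi_v$ is birational for every $v$; a second application of Lemma~\ref{Lem.Constr_rational_action}, now to $\theta$ itself, shows that $\theta$ is $\Hilb_p$-birational with $\lociso(\theta)$ surjecting onto $\Hilb_p$. Hence $\theta$ is an algebraic family of birational transformations parametrized by $\Hilb_p$.

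It remains to identify the induced morphism and to glue. By construction $\rho_\theta$ sends $v \in \Hilb_p$ to $\theta_v = \varphi_v$, which is exactly the birational transformation that $v$ represents under the inclusion $\Hilb_p = \Bir(X) \cap (\text{component}) \subseteq \Bir(X)$; thus $\rho_\theta = \iota_p$. Running the construction over the various irreducible components of $\Hilb_p$ produces families that agree on overlaps, since all of them are cut out by the single universal subscheme $\mathcal{Z}$; they therefore glue to an algebraic family over $\Hilb_p$ inducing $\iota_p$, proving that $\iota_p$ is a morphism.

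The main point -- the only step that is not formal -- is the passage from fibrewise birationality to a genuine family, that is, showing that the isomorphism locus of $\theta$ surjects onto the base rather than merely being fibrewise dense. This is precisely what Lemma~\ref{Lem.Constr_rational_action} supplies, through the openness of the \'etale locus and the fibrewise criterion for \'etaleness. The remaining ingredients (flatness and integrality of the fibres of the universal family, and the gluing over components) are routine, so I expect the write-up to reduce essentially to two careful invocations of Lemma~\ref{Lem.Constr_rational_action}.
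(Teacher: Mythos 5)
Your proof is correct and follows essentially the same route as the paper's: restrict the universal family over the Hilbert scheme to $\Hilb_p$, use Hanamura's result that the two projections $q_1, q_2$ are fibrewise birational onto $X$, invoke Lemma~\ref{Lem.Constr_rational_action}, and identify $q_2 \circ q_1^{-1}$ as the algebraic family whose associated morphism is $\iota_p$. The only (harmless) deviations are that the paper applies Lemma~\ref{Lem.Constr_rational_action} directly to $q_2$ as well, rather than a second time to $\theta = q_2 \circ q_1^{-1}$, and that it leaves the reduction to irreducible components of $\Hilb_p$ implicit where you spell it out.
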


\begin{proof}
	Let $Z_p \subseteq \Hilb_p \times X \times X$ be the intersection of 
	the universal family  over the $p$-th component of the Hilbert-scheme of $X \times X$
	with $\Hilb_p \times X \times X$. Then for $i = 1, 2$, the $i$-th projection
	\[
		q_i \colon Z_p \to \Hilb_p \times X \, , \quad (f, x_1, x_2) \mapsto (f, x_i)	
	\]
	is a morphism that restricts over every $f \in \Hilb_p$ to 
	a birational morphism $Z_{p, f} \to X$ 
	(see e.g. \cite[Proof of Proposition~1.7]{Ha1987On-the-birational-}).
	By Lemma~\ref{Lem.Constr_rational_action}, $q_i$ is a birational
	morphism such that $\lociso(q_i)$ surjects onto $\Hilb_p$. Hence,
	$q_2 \circ q_1^{-1} \colon \Hilb_p \times X \dashrightarrow \Hilb_p \times X$
	is an algebraic family of birational transformations of $X$
	and the associated morphism is equal to $\iota_p$.
\end{proof}

Moreover, the morphisms $\iota_p \colon \Hilb_p \to \Bir(X)$ in Corollary~\ref{Cor.Hilb_p-morph} 
satisfy the following kind of universal property 
(which will be proven in Corollary~\ref{Cor.Decomp_of_morphism_weak}):

\begin{definition}
	A morphism $\rho \colon V \to \Bir(X)$ is called \emph{rationally universal}, if for every
	morphism $\varepsilon \colon A \to \Bir(X)$ with irreducible $A$ and 
	image inside $\rho(V)$, there exists a unique rational map
	$f \colon A \dashrightarrow V$ such that $\varepsilon = \rho \circ f$.
\end{definition}

Note that if $\rho \colon V \to \Bir(X)$ is a rationally universal morphism and if
$V' \subseteq V$ is a locally closed subset with $V' = \rho^{-1}(\rho(V'))$,  then
the restriction $\rho |_{V'} \colon V' \to \Bir(X)$ is a rationally universal morphism as well.
The following lemma from algebraic geometry will be important for the proof that 
$\iota_p$ is rationally universal. 

\begin{lemma}
	\label{Lem.open}
	Let $f \colon E \to B$ be a dominant morphism of irreducible varieties. Assume that there is an open dense $U \subseteq E$.
	Then there exists an open dense subset $B_0 \subseteq B$ such that for all $b \in B_0$ we have that
	$f^{-1}(b)  \cap U$ is dense in $f^{-1}(b)$.
\end{lemma}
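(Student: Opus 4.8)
The plan is to prove this by generic flatness combined with a dimension count on fibres. Here is the statement again for orientation: given a dominant morphism $f \colon E \to B$ of irreducible varieties and an open dense subset $U \subseteq E$, I want an open dense $B_0 \subseteq B$ such that $f^{-1}(b) \cap U$ is dense in $f^{-1}(b)$ for every $b \in B_0$. The key point is that density of $U$ in $E$ forces the complement $F \coloneqq E \setminus U$ to have strictly smaller dimension than $E$, and I want this dimension drop to persist fibrewise over a suitable open locus in $B$.

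First I would reduce the statement to a comparison of fibre dimensions. Since $E$ is irreducible and $U$ is open dense, the closed complement $F = E \setminus U$ satisfies $\dim F < \dim E$ (here $F$ may be reducible, but each of its irreducible components has dimension at most $\dim F < \dim E$). Note that $f^{-1}(b) \cap U$ is dense in $f^{-1}(b)$ precisely when no irreducible component of $f^{-1}(b)$ is entirely contained in $F$; equivalently, $\dim_x(f^{-1}(b) \cap F) < \dim_x f^{-1}(b)$ at every point $x$ of the fibre, or more crudely $\dim(f^{-1}(b) \cap F) < \dim f^{-1}(b)$ together with equidimensionality of the fibre. So the goal becomes: over an open dense $B_0$, every fibre of $f$ has pure dimension $\dim E - \dim B$, while every fibre of the restriction $f|_F \colon F \to \overline{f(F)}$ has dimension strictly smaller.

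The main technical input is generic flatness (or the theorem on the dimension of fibres, e.g.\ \cite[Lemma 29.28.2]{St2024The-Stacks-project} or the Chevalley-type semicontinuity results). Applying it to $f \colon E \to B$, there is an open dense $B_1 \subseteq B$ over which every fibre has pure dimension $e \coloneqq \dim E - \dim B$. Next I apply the same circle of ideas to each irreducible component $F_j$ of $F$: either $f(F_j)$ is not dense in $B$, in which case I discard $\overline{f(F_j)}$ (a proper closed subset) by shrinking $B$; or $f|_{F_j}$ is dominant, and then over an open dense locus the fibres of $f|_{F_j}$ have pure dimension $\dim F_j - \dim B < e$. Intersecting $B_1$ with these finitely many open dense loci yields $B_0$. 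For $b \in B_0$, the fibre $f^{-1}(b)$ is pure of dimension $e$, whereas $f^{-1}(b) \cap F = \bigcup_j (f|_{F_j})^{-1}(b)$ has dimension at most $\max_j (\dim F_j - \dim B) < e$; hence no component of $f^{-1}(b)$ sits inside $F$, so $f^{-1}(b) \cap U$ is dense in $f^{-1}(b)$.

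The step I expect to require the most care is ensuring the \emph{fibrewise} density rather than merely a global dimension inequality: I must rule out that some component of the fibre $f^{-1}(b)$ of the \emph{correct} dimension $e$ is nonetheless swallowed by $F$. This is exactly why I insist on the \emph{pure}-dimensionality of the generic fibres of $f$ (so that every component has dimension $e$) and on the strict drop $\dim(f^{-1}(b) \cap F) < e$; together these are incompatible with a component of $f^{-1}(b)$ lying in $F$. A clean way to package the purity is to invoke that $f$ is flat over an open dense $B_1$ (generic flatness), so that all fibres over $B_1$ are equidimensional of dimension $e$; the only remaining subtlety is bookkeeping over the finitely many components $F_j$, which is routine. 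Thus the whole argument is a generic-flatness reduction plus a finite intersection of open dense subsets, with the purity of generic fibres doing the essential work.
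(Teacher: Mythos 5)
Your proof is correct and follows essentially the same route as the paper: decompose $F = E \setminus U$ into components dominating $B$ and the rest, discard the images of the non-dominating ones, and use generic flatness together with the equidimensionality of fibres of flat surjective morphisms to force every component of $f^{-1}(b)$ to have dimension $\dim E - \dim B$ while $f^{-1}(b) \cap F$ has strictly smaller dimension. The only (cosmetic) difference is that you treat the dominating components of $F$ one at a time, whereas the paper groups them into a single closed set $F_0$ before applying generic flatness.
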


\begin{proof}[Proof of Lemma~\ref{Lem.open}]
	Let $F = E \setminus U$. As $E$ is irreducible, $F$ has dimension strictly smaller than $E$. Let 
	$F_0$ be the union of all irreducible components of $F$ that dominate $B$ and let $F_1$ be the union of 
	all other irreducible components of $F$. In case $F_0$ is empty, there exists an open dense subset $B_0 \subseteq B$ such that
	$f^{-1}(B_0) \subseteq U$ and hence we are done. Thus, we may assume that $F_0$ is non-empty.
	By generic flatness,
	there exists a dense open subset $B_0$ in $B$ such that 
	\[
		f^{-1}(B_0) \xrightarrow{f} B_0 \quad \textrm{and} \quad 
		F_0 \cap f^{-1}(B_0) \xrightarrow{f} B_0
	\]
	are flat and surjective, and $f(F_1) \cap B_0$ is empty. As the fibres of a 
	surjective flat morphism of irreducible varieties are equidimensional 
	(see \cite[Ch. III, Proposition~9.5]{Ha1977Algebraic-geometry}), it follows for all $b \in B_0$  that
	every irreducible component of $F_0 \cap f^{-1}(b)$ has dimension $< \dim E - \dim B$, whereas
	every irreducible component of $f^{-1}(b)$ (and hence of $U \cap f^{-1}(b)$) 
	has dimension $\dim E - \dim B$.
	Since $f^{-1}(b)$ is the union of $F_0 \cap f^{-1}(b)$ and $U \cap f^{-1}(b)$, the statement follows.
\end{proof}

\begin{lemma}
	\label{Lem.Hilb}
	Assume that $X$ is projective.
	Let $\rho \colon W \to \Bir(X)$ be a morphism for an irreducible $W$. Then there
	exists a rational map 
	$\lambda \colon W \dashrightarrow \Hilb_p$ for some $p \in \QQ[T]$ such that
	$\rho = \iota_p \circ \lambda$.
\end{lemma}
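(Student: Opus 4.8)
The plan is to produce, from the morphism $\rho \colon W \to \Bir(X)$, an algebraic family $\theta \colon W \times X \dashrightarrow W \times X$ and to realize it concretely inside $W \times X \times X$ so that we obtain a map into the Hilbert scheme. Concretely, I would take the $W$-birational map $\theta$ defining $\rho$ and consider the closure $\Gamma \subseteq W \times X \times X$ of the graph of the isomorphism $U_1 \xrightarrow{\sim} U_2$ between open dense subsets of $W \times X$. The first projection $q_1 \colon \Gamma \to W \times X$ and the ``output'' projection $q_2 \colon \Gamma \to W \times X$ are both birational morphisms, and over a general $w \in W$ the fibre $\Gamma_w \subseteq X \times X$ is the closure of the graph of the birational transformation $\rho(w) \in \Bir(X)$. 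Since $X$ is projective, each such $\Gamma_w$ is a closed subscheme of $X \times X$ and hence determines a point of the Hilbert scheme of $X \times X$, which by Hanamura's identification lies in $\Bir(X)$.

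Next I would use flatness to organize these fibres into a single rational map to a Hilbert scheme. The key point is that $q_1 \colon \Gamma \to W \times X$ is birational, so $\Gamma \to W$ is generically flat: by generic flatness there is an open dense $W_0 \subseteq W$ over which $\Gamma|_{W_0} \to W_0$ is flat with fibres the graphs $\Gamma_w$. A flat family over $W_0$ of closed subschemes of $X \times X \subseteq \PP^N$ has locally constant Hilbert polynomial, so after shrinking $W_0$ to a suitable (still dense) open subset we may assume all fibres $\Gamma_w$ share a single Hilbert polynomial $p \in \QQ[T]$. The universal property of the Hilbert scheme then yields a morphism $\lambda_0 \colon W_0 \to \Hilb_p$ classifying this flat family; and since each $\Gamma_w$ corresponds to the element $\rho(w)$ of $\Bir(X)$, we have $\iota_p \circ \lambda_0 = \rho|_{W_0}$. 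Regarding $\lambda_0$ as a rational map $\lambda \colon W \dashrightarrow \Hilb_p$, this gives the desired factorization $\rho = \iota_p \circ \lambda$.

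The main obstacle I anticipate is the verification that the scheme-theoretic fibre $\Gamma_w$ really is the graph classified as the point $\rho(w)$ of $\Bir(X)$, rather than some fibre differing by embedded or lower-dimensional components. This is exactly the subtlety that Lemma~\ref{Lem.open} is designed to handle: one must know that the open locus $U_1$ (where $\theta$ is an isomorphism, equivalently where $q_1$ is a local isomorphism) meets the generic fibre $\Gamma_w$ in a dense subset, so that the reduced fibre agrees with the closure of the graph of $\rho(w)$ and is not corrupted by the exceptional locus of $q_1$. Applying Lemma~\ref{Lem.open} to $q_1 \colon \Gamma \to W$ (or to the composite $\Gamma \to W$) with $U$ the isomorphism locus produces a dense open $W_0$ over which the fibres have the correct geometry, and flatness over a further shrinking guarantees the fibres are reduced and equal to $\Gamma_w$ scheme-theoretically.

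Finally, once $\lambda_0 \colon W_0 \to \Hilb_p$ is constructed, I would confirm the identity $\iota_p \circ \lambda = \rho$ as morphisms to $\Bir(X)$ by noting that both send $w \in W_0$ to the birational transformation with graph-closure $\Gamma_w$, hence to the same point of $\Bir(X)$; since two morphisms from an irreducible variety that agree on a dense set of points agree as morphisms (points in $\Bir(X)$ are closed by Proposition~\ref{Prop.Prel}\eqref{Prop.Prel_diagonal}), this gives $\rho = \iota_p \circ \lambda$ as claimed. The uniqueness of $p$ follows because the Hilbert polynomial is determined by the generic fibre $\Gamma_w$, which is intrinsic to $\rho$.
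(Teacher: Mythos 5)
Your proposal is correct and takes essentially the same approach as the paper: both consider the closure of the graph of the family in $W \times X \times X$, combine generic flatness with Lemma~\ref{Lem.open} to obtain an open dense $W' \subseteq W$ over which the projection is flat and the graph locus is dense in each fibre, and then invoke the classifying morphism to the Hilbert scheme (Hanamura's Proposition~2.2 in the paper) to get $\lambda$ with $\rho|_{W'} = \iota_p \circ \lambda$. The reducedness/embedded-component subtlety you rightly flag is exactly what the paper absorbs into Hanamura's notion of a flat family, handled by the same appeal to Lemma~\ref{Lem.open}.
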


\begin{proof}
	Let $\theta$ be the 
	algebraic family of birational transformations of $X$ 
	with $\rho_{\theta} = \rho$. Consider the graph
	\[
		\Gamma_{\theta} = 
		\set{(w, x_1, x_2) \in \lociso(\theta) \times X}{\theta(w, x_1) = (w, x_2)} 
		\subseteq \lociso(\theta) \times X \, ,
	\]
	and let $\overline{\Gamma_{\theta}}$ be the closure inside $W \times X \times X$.
	Then there is an open dense subset $W' \subseteq W$
	such that the projection $\pi \colon \overline{\Gamma_{\theta}} \to W$ is flat over $W'$ and
	$\Gamma \cap \pi^{-1}(w')$ is dense in $\pi^{-1}(w')$ 
	for all $w' \in W'$
	(see Lemma~\ref{Lem.open}). Hence, $\pi^{-1}(W') \to W'$ is a flat family over 
	$W'$ in the sense of \cite[Definition~2.1]{Ha1987On-the-birational-} and thus
	there exists a Hilbert polynomial $p \in \QQ[T]$ and a morphism 
	$\lambda \colon W' \to \Hilb_p$ such that $\pi^{-1}(W') \to W'$ is the pull-back
	of the universal family over $\Hilb_p$ via $\lambda$ 
	(see \cite[Proposition~2.2]{Ha1987On-the-birational-}). This shows that 
	$\rho |_{W'} = \iota_p \circ \lambda$.
\end{proof}

\begin{corollary}
	\label{Cor.Universal_property_Hilb}
	Assume that $X$ is projective.
	Then $\iota_p \colon \Hilb_p \to \Bir(X)$ is rationally universal
	for all $p \in \QQ[T]$.
\end{corollary}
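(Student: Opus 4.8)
The plan is to show that $\iota_p \colon \Hilb_p \to \Bir(X)$ satisfies the rational universal property, namely that any morphism $\varepsilon \colon A \to \Bir(X)$ with irreducible $A$ and image inside $\iota_p(\Hilb_p)$ factors uniquely (as a rational map) through $\iota_p$. The key tool is Lemma~\ref{Lem.Hilb}, which already produces, for an arbitrary morphism from an irreducible variety, a factorization through \emph{some} $\Hilb_q$. The work is to pin down that $q = p$ and to establish uniqueness.

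First I would apply Lemma~\ref{Lem.Hilb} to $\varepsilon \colon A \to \Bir(X)$ to obtain a Hilbert polynomial $q \in \QQ[T]$ and a rational map $\lambda \colon A \dashrightarrow \Hilb_q$ with $\varepsilon = \iota_q \circ \lambda$ on an open dense subset $A' \subseteq A$. Now the crucial point is that the image of $\varepsilon$ lies in $\iota_p(\Hilb_p)$, so for every $a \in A'$ the birational transformation $\varepsilon(a) \in \Bir(X)$ lies in both $\Hilb_p$ and $\Hilb_q$ (viewing $\Bir(X)$ inside the Hilbert scheme of $X \times X$). Since the $\Hilb_p$, indexed by the Hilbert polynomial $p \in \QQ[T]$, form a \emph{partition} of $\Bir(X)$ coming from the connected components of the Hilbert scheme, the graph of $\varepsilon(a)$ has a single well-defined Hilbert polynomial, forcing $q = p$. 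Thus $\lambda \colon A \dashrightarrow \Hilb_p$ and $\varepsilon = \iota_p \circ \lambda$ as required for existence.

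Next I would address uniqueness of the rational map $f = \lambda$. Suppose $f_1, f_2 \colon A \dashrightarrow \Hilb_p$ both satisfy $\varepsilon = \iota_p \circ f_i$. On the open dense subset where both are defined we have $\iota_p(f_1(a)) = \varepsilon(a) = \iota_p(f_2(a))$, and since $\iota_p$ is the inclusion of $\Hilb_p$ into $\Bir(X)$ and is therefore injective on points, we get $f_1(a) = f_2(a)$ as points of $\Hilb_p$ for all $a$ in a dense set. Because $\Hilb_p$ is separated and $A$ is irreducible (so the locus of agreement is closed and dense), the two rational maps coincide. This gives uniqueness and completes the verification of rational universality.

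The main subtlety to be careful about is the transition from the set-theoretic factorization on points to the scheme-theoretic statement that $\lambda$ is genuinely a morphism of varieties $A' \to \Hilb_p$; this is exactly what Lemma~\ref{Lem.Hilb} delivers via the flat-family description and Hanamura's representability result \cite[Proposition~2.2]{Ha1987On-the-birational-}, so I would lean on that lemma rather than re-constructing the morphism by hand. A second point worth checking is that the partition of $\Bir(X)$ by the $\Hilb_p$ is indeed induced by distinct connected components of the Hilbert scheme, so that membership in $\iota_p(\Hilb_p)$ versus $\iota_q(\Hilb_q)$ for $q \neq p$ is genuinely disjoint; this was recorded just before Corollary~\ref{Cor.Hilb_p-morph} and is what rules out any ambiguity in the value of the Hilbert polynomial.
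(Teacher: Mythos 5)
Your proposal is correct and follows essentially the same route as the paper's own (very terse) proof: apply Lemma~\ref{Lem.Hilb} to get a factorization through some $\Hilb_q$, use the disjointness of the images $\iota_p(\Hilb_p)$ and $\iota_q(\Hilb_q)$ for $p \neq q$ to conclude $q = p$, and use the injectivity of $\iota_p$ to obtain uniqueness of the rational map. Your write-up simply fills in the details the paper leaves implicit.
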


\begin{proof}
	We use Lemma~\ref{Lem.Hilb}, the fact that $\iota_p(\Hilb_p)$ and $\iota_q(\Hilb_q)$
	are disjoint for distinct $p, q \in \QQ[T]$, and the injectivity of $\iota_p$.
\end{proof}

\begin{corollary}
	\label{Cor.Decomp_of_morphism_weak}
	Let $\rho \colon W \to \Bir(X)$ be a morphism with irreducible $W$. Then there
	exists a dominant rational map $\lambda \colon W \dashrightarrow U$
	and an injective rationally universal 
	morphism $\eta \colon U \to \Bir(X)$ such that $\rho = \eta \circ \lambda$.
\end{corollary}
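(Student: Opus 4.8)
The plan is to reduce the general case to the projective case, where Corollary~\ref{Cor.Universal_property_Hilb} provides the rationally universal morphisms $\iota_p$ that I need. The statement asserts that an arbitrary morphism $\rho \colon W \to \Bir(X)$ factors, up to a dominant rational map, through an injective rationally universal morphism. The preceding results (Lemma~\ref{Lem.Hilb}, Corollaries~\ref{Cor.Hilb_p-morph} and~\ref{Cor.Universal_property_Hilb}) already deliver exactly this when $X$ is projective and when we are content with $\iota_p \colon \Hilb_p \to \Bir(X)$ itself, so the two issues to overcome are (i) dropping the projectivity hypothesis on $X$, and (ii) extracting from $\iota_p$ a suitable $\eta$ whose source is the right target $U$ for the factorization.

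\medskip

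First I would reduce to $X$ projective. Since $X$ is an irreducible variety, it is birational to a projective variety $\overline{X}$, and conjugation by this birational map $X \dashrightarrow \overline{X}$ induces a group isomorphism $\Bir(X) \to \Bir(\overline{X})$ that preserves algebraic families (as noted in the Preliminaries). In particular it is a homeomorphism identifying morphisms to $\Bir(X)$ with morphisms to $\Bir(\overline{X})$, and it carries injective rationally universal morphisms to injective rationally universal morphisms. Thus it suffices to prove the statement for $\overline{X}$, and I may henceforth assume $X$ is projective.

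\medskip

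Now assume $X$ projective. By Lemma~\ref{Lem.Hilb} there is a Hilbert polynomial $p \in \QQ[T]$ and a rational map $\lambda_0 \colon W \dashrightarrow \Hilb_p$ with $\rho = \iota_p \circ \lambda_0$; after shrinking $W$ to a dense open subset we may take $\lambda_0$ to be a morphism, and $\iota_p$ is injective and rationally universal by Corollaries~\ref{Cor.Hilb_p-morph} and~\ref{Cor.Universal_property_Hilb}. The issue is that $\lambda_0$ need not be dominant onto $\Hilb_p$, whereas the statement requires a \emph{dominant} rational map $\lambda \colon W \dashrightarrow U$. To fix this I would set $U$ to be the closure of the image $\overline{\lambda_0(W)}$ inside $\Hilb_p$, replace it by a dense open subset along which things are well-behaved, and let $\lambda \colon W \dashrightarrow U$ be $\lambda_0$ with restricted target, so that $\lambda$ is dominant by construction. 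I then define $\eta \coloneqq \iota_p |_U$. Since $U$ is a locally closed subset of $\Hilb_p$ and $\iota_p$ is injective, $\eta$ is injective and a morphism; and because $\iota_p$ is rationally universal, its restriction to the locally closed $U$ is again rationally universal (this is exactly the remark recorded just after the definition of rational universality, using $U = \iota_p^{-1}(\iota_p(U))$, which holds by injectivity of $\iota_p$). With these choices $\rho = \iota_p \circ \lambda_0 = (\iota_p|_U) \circ \lambda = \eta \circ \lambda$ on the dense open subset where $\lambda_0$ is defined, as required.

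\medskip

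The main obstacle I anticipate is a bookkeeping one rather than a conceptual one: ensuring that $U = \iota_p^{-1}(\iota_p(U))$ so that the restriction remains rationally universal, and more delicately, making sure that restricting the \emph{target} of a morphism of varieties to a locally closed subset containing the image genuinely yields a morphism (and that $\eta$ so obtained is a morphism to $\Bir(X)$ in the sense of the paper). Injectivity of $\iota_p$ makes the set-theoretic identity $U = \iota_p^{-1}(\iota_p(U))$ automatic, which is the clean input that makes the restriction argument go through; the remaining care lies in the compatibility of restricting targets with the definition of morphism, which I expect to handle by invoking the universal property already established rather than by manipulating families directly.
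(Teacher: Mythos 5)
Your proposal is correct and is essentially the paper's own proof: reduce to projective $X$ via a conjugation that preserves algebraic families, apply Lemma~\ref{Lem.Hilb} to obtain $\lambda$ with $\rho = \iota_p \circ \lambda$, take $U$ to be the closure of the image of $\lambda$ in $\Hilb_p$, and set $\eta = \iota_p |_U$, whose rational universality follows from the remark after the definition of rational universality because injectivity of $\iota_p$ makes $U = \iota_p^{-1}(\iota_p(U))$ automatic. The only (harmless) deviation is your optional shrinking of $U$ to a dense open subset, which the paper avoids by simply keeping $U$ closed in $\Hilb_p$.
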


\begin{proof}
	We may and will assume that $X$ is projective.
	Now, we apply Lemma~\ref{Lem.Hilb} to $\rho \colon W \to \Bir(X)$
	in order to obtain a rational map $\lambda \colon W \dashrightarrow \Hilb_p$ with 
	$\rho = \iota_p \circ \lambda$ for some $p \in \QQ[T]$. We let now $U$ 
	be the closure of the image of $\lambda$ in $\Hilb_p$, and we let 
	$\eta$ be the restriction of $\iota_p$ to $U$.
\end{proof}

\section{\texorpdfstring{An exhaustive family of morphisms to $\Bir(X)$}
{An exhaustive family of morphisms to Bir(X)}}
\label{Sec.ExhaustiveFamily}

The next results (until Corollary~\ref{Cor.Inductive-limit-topology}) 
generalize~\cite[\S2.1-2.3]{BlFu2013Topologies-and-str} from 
$\PP^n$ to any irreducible projective variety $X$. We follow the general strategy 
from~\cite{BlFu2013Topologies-and-str}. However, some steps require some non-trivial adaptations.

More precisely, we construct a countable family of morphisms to $\Bir(X)$ such that
their images form an exhaustive chain of closed irreducible algebraic subsets
and $\Bir(X)$ carries the inductive-limit topology with respect to these images, 
see Corollary~\ref{Cor.Inductive-limit-topology}. As an application, we show among other things 
that closed subsets in $\Bir(X)$ have only countably many irreducible components, 
see Corollary~\ref{Cor.connected_components}, and that
every closed connected subgroup of $\Bir(X)$ can be exhausted by an ascending 
chain of closed irreducible algebraic subsets, see Corollary~\ref{Cor.Chain_irreducible_alg_subsets}.

\medskip

We fix once and for all a non-degenerate closed embedding $X \subseteq \PP^n$
(i.e.~$X$ is not contained in any hyperplane),
and we denote by $I(X)$ the homogeneous ideal in $\kk[x_0, \ldots, x_n]$ generated by
those homogeneous polynomials that vanish on $X$. Moreover, we consider
the  homogeneous coordinate ring associated to $X$
\[
	\kk[x_0, \ldots, x_n] / I(X) = 
	\bigoplus_{d \geq 0} \kk[x_0, \ldots, x_n]_d / I(X)_d \, ,
\]
where
\[
	I(X)_d \coloneqq I(X) \cap \kk[x_0, \ldots, x_n]_d \, ,	
\]
and $\kk[x_0, \ldots, x_n]_d$ is the vector space of homogeneous polynomials of degree $d$.

\begin{definition}
	\label{Def.W_dH_d}
	Fix an integer $d \geq 1$. Denote 
	$P_d = \PP((\kk[x_0, \ldots, x_n]_d / I(X)_d)^{n+1})$.
	Let $W_d \subseteq P_d$ be the closed subvariety of those
	$f = (f_0, \ldots, f_n) \in P_d$ 
	such that $r(f_0, \ldots, f_n) = 0$ in $\kk[x_0, \ldots, x_n] / I(X)$
	for all homogeneous $r \in I(X)$. Then, every $f = (f_0, \ldots, f_n) \in W_d$
	defines a rational map 
	\[
		\psi_f \colon X \dashrightarrow X \, , \quad [a_0: \cdots: a_n] \dashmapsto 
		[f_0(a_0, \ldots, a_n) : \cdots : f_n(a_0, \ldots, a_n)] \, .
	\]
	Conversely, every rational map $X \dashrightarrow X$ is of the above form
	for some $f \in W_d$.
	Moreover, let $H_d \subseteq W_d$ be the subset of those $(n+1)$-tuples $f = (f_0, \ldots, f_n)$
	such that $\psi_f \colon X \dashrightarrow X$ is birational, and denote by
	$\pi_d \colon H_d \to \Bir(X)$ the map $f \mapsto \psi_f$.
\end{definition}

Obviously the maps $\pi_d \colon H_d \to \Bir(X)$ depend on the choice of the embedding of 
$X$ into $\PP^n$.

\begin{lemma}
	\label{Lem.H_d}
	With the notation of Definition~\ref{Def.W_dH_d} the following holds:
	\begin{enumerate}[left=0pt]
		\item \label{Eq.H_d_locally_closed} The set $H_d$ is locally closed in $W_d$.
		\item \label{Eq.H_d_family} The assignment
				\[
					\theta \colon H_d \times X \dasharrow H_d \times X \, , \quad
					(f, x) \dashmapsto (f, \psi_f(x))
				\] 
				defines an algebraic family of birational transformations of $X$ parametrized by $H_d$. 
				In particular, 
				$\pi_d = \rho_{\theta} \colon H_d \to \Bir(X)$ is a morphism.
		\item \label{Eq.closed} If $F \subseteq H_d$ is closed, then 
		 		$\pi_m^{-1}(\pi_d(F))$ is closed in $H_m$ for all $m \geq 1$.
	\end{enumerate}
\end{lemma}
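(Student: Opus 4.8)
I would treat the three statements separately, dispatching the two essentially formal ones first and isolating the geometric content in \eqref{Eq.H_d_locally_closed}. For \eqref{Eq.H_d_family} the plan is to read the family off Lemma~\ref{Lem.Constr_rational_action}. On the open set $\mathcal U = \{(f,x) \in H_d \times X : (f_0(x),\dots,f_n(x)) \neq 0\}$ the assignment $(f,x) \mapsto (f, \psi_f(x))$ is a morphism to $H_d \times X$ over $H_d$ — the image lands in $X$ precisely because of the defining equations of $W_d$ — and $\mathcal U$ is dense in $H_d \times X$ and surjects onto $H_d$, since for each $f \in P_d$ at least one $f_i$ is nonzero modulo $I(X)_d$, hence does not vanish identically on $X$. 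Applying Lemma~\ref{Lem.Constr_rational_action} to each irreducible component of $H_d$, with $\pi_1 = \pi_2$ the projection $H_d \times X \to H_d$ (flat and surjective with irreducible reduced fibre $X$) and with $\theta_f = \psi_f$ birational by the very definition of $H_d$, yields that $\theta$ is birational and that $\lociso(\theta)$ surjects onto $H_d$. Thus $\theta$ is an algebraic family and $\pi_d = \rho_\theta$ is a morphism; this step is formal once \eqref{Eq.H_d_locally_closed} guarantees that $H_d$ is a variety.

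For \eqref{Eq.closed} I would argue by properness and the relation ``defining the same birational map''. Fix $d, m \ge 1$ and consider inside $W_d \times W_m$ the set $Z = \{(f,g) : f_i g_j - f_j g_i = 0 \text{ in } \kk[x_0,\dots,x_n]/I(X) \text{ for all } i,j\}$, which is closed because the conditions are bihomogeneous and which parametrizes exactly the pairs with $\psi_f = \psi_g$ as rational maps. Given $F \subseteq H_d$ closed, let $\overline F$ be its closure in $W_d$, so $F = \overline F \cap H_d$, and set $T = Z \cap (\overline F \times W_m)$, a closed subset of $W_d \times W_m$. Since $W_d$ is projective, the projection $\pr_2 \colon W_d \times W_m \to W_m$ is proper, so $\pr_2(T)$ is closed in $W_m$. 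The key point is that $\pr_2(T) \cap H_m = \pi_m^{-1}(\pi_d(F))$: if $g \in H_m$ admits some $f' \in \overline F$ with $(f',g) \in Z$, then $\psi_{f'} = \psi_g$ is birational, so $f' \in H_d$ and hence $f' \in \overline F \cap H_d = F$; the reverse inclusion is immediate. Therefore $\pi_m^{-1}(\pi_d(F)) = \pr_2(T) \cap H_m$ is closed in $H_m$.

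The real work is \eqref{Eq.H_d_locally_closed}. Here I would form the universal graph $\Gamma \subseteq W_d \times X \times X$, the closure of $\{(f,x,\psi_f(x)) : (f,x) \in \mathcal U\}$, together with its two proper projections $p, q \colon \Gamma \to W_d \times X$ onto source and target; over a point $f$ the fibre $\Gamma_f$ is birational to $X$ via $p_f$, and $\psi_f$ is birational precisely when $q_f \colon \Gamma_f \to X$ is birational. I would then stratify $W_d$ by generic flatness (using Lemma~\ref{Lem.open} to control how the generic behaviour propagates to nearby fibres) so that on each stratum the formation of $q(\Gamma)$ commutes with passing to fibres; on such a stratum the locus where $\psi_f$ is dominant is open by upper semicontinuity of fibre dimension, and on the dominant locus the field-extension degree $[\kk(\Gamma_f):\kk(X)]$ becomes accessible. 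A Noetherian induction on $W_d$ then shows that the birational locus $H_d$ is constructible, and the same flatness input is meant to upgrade this to $H_d$ being open in its closure, i.e.\ locally closed.

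I expect the main obstacle to sit entirely in \eqref{Eq.H_d_locally_closed}, namely in turning ``fibrewise birational'' into a genuinely \emph{locally closed}, rather than merely constructible, condition, and two points demand care. First, in positive characteristic ``birational'' means the total degree $[\kk(\Gamma_f):\kk(X)]$ equals $1$, which is strictly stronger than the generic fibre of $q_f$ consisting of a single point; so the degree-$1$ stratum has to be cut out via the full field-extension degree and not by a naive count of preimages. Second, passing from constructibility to local closedness requires showing that birationality is an \emph{open} condition within $\overline{H_d}$, and this is exactly where the generic-flatness stratification must be combined with the semicontinuity of both the image dimension and the degree to exclude that the degree jumps, or dominance fails, on the boundary of $H_d$.
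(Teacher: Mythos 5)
Your treatments of \eqref{Eq.H_d_family} and \eqref{Eq.closed} are correct and essentially identical to the paper's: part \eqref{Eq.H_d_family} is exactly the reduction to Lemma~\ref{Lem.Constr_rational_action} via the open locus where some $f_i$ does not vanish, and part \eqref{Eq.closed} is the paper's proportionality-plus-properness argument (the paper projects from $H_m \times \bar{F}$ rather than intersecting $\pr_2(T)$ with $H_m$ afterwards, but this is the same proof; your key observation that $(f',g)\in Z$ with $g \in H_m$ forces $f' \in \bar{F} \cap H_d = F$ is precisely the point made in the paper).

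For \eqref{Eq.H_d_locally_closed}, however, your proposal has a genuine gap, and you have in fact located it yourself: everything after ``$H_d$ is constructible'' is a statement of what must be proven, not an argument. Generic flatness and Lemma~\ref{Lem.open} give you a stratification of $W_d$ on each stratum of which the fibrewise behaviour of the graph family is controlled, and Noetherian induction then yields constructibility of $H_d$ — fine. But constructibility plus stratum-wise flatness does not ``upgrade'' to local closedness: the strata are only locally closed themselves, and no semicontinuity statement for the degree $[\kk(\Gamma_f):\kk(X)]$ across strata is proven or cited. Phrases like ``semicontinuity of the image dimension and the degree to exclude that the degree jumps'' name the desired conclusion (that $\overline{H_d}\setminus H_d$ is closed) rather than derive it. There is a real argument in this direction — one can show that dominance is an open condition on $W_d$ (the non-dominant locus is constructible and stable under specialization along curves), and a conservation-of-bidegree argument for flat one-parameter limits of the universal graph shows that any \emph{dominant} specialization of birational members of an irreducible family is again birational, with multiplicities absorbing the inseparability issue you rightly flag — but none of this is in your sketch, and assembling it carefully (flat limits over smooth curves, vertical components of the limit cycle, cycle degree over the target factor) is the actual content of the lemma.

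The paper avoids all of this by a different mechanism that your proposal does not use or replace: the uniform bound of \cite[Proposition~2.2]{HaSi2017Bounds-on-degrees-}, which provides a single $D = D(d)$ such that the inverse of every birational $\psi_f$ with $f \in W_d$ is represented by some $g \in W_D$. With this, ``$\psi_f$ is birational'' becomes the conjunction of a \emph{closed} condition — membership in $\pr_2(Y)$, where $Y \subseteq W_D \times W_d$ is the closed locus where the composite tuple $h_{g,f}$ represents the identity, and $\pr_2(Y)$ is closed because $W_D$ is projective — and an \emph{open} condition, namely the locus $U_d$ of tuples $f$ such that $\psi_f$ is \'etale at some point (open by the openness of the \'etale locus, \cite[Exp.~I, Proposition~4.5, Corollaire~5.9]{GrRa2003Revetements-etales}). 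So $H_d = \pr_2(Y) \cap U_d$ is locally closed by fiat, with no stratification, no degree semicontinuity, and no Noetherian induction. In short: your \eqref{Eq.H_d_family} and \eqref{Eq.closed} are complete; your \eqref{Eq.H_d_locally_closed} is a plausible but unexecuted alternative route whose crucial step — openness of birationality in the closure — is left as an expectation, whereas the paper discharges it with an external quantitative theorem you would need either to invoke or to substitute with the full flat-limit argument.
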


\begin{proof} $ $
	\eqref{Eq.H_d_locally_closed}: We start with the following claim:
	
	\begin{claim}
		\label{Claim.etale}
		Let $U_d \subseteq W_d$ be the set of those $f = (f_0, \ldots, f_n)$
		such that there exists $x \in X$ with the property that 
		$f_i$ does not vanish in $x$ for some $i$ and 
		$\psi_f$ is \'etale at $x$. Then $U_d$ is open in $W_d$.
	\end{claim}

	\begin{proof}
		For $x \in X$ let $W_{d, x}$ be the set of those 
		$f = (f_0, \ldots, f_n) \in W_d$ such that $f_i$ does not vanish in $x$ for some $i$.
		Hence, $W_{d, x}$ is open in $W_d$. Now, consider the rational map
		\[
			\begin{array}{rcl}
			\theta \colon W_{d, x} \times X &\dashrightarrow& W_{d, x} \times X \, , \\
			(f, [a_0: \ldots: a_n]) &\dashmapsto& 
			(f, [f_0(a_0, \ldots, a_n): \ldots: f_n(a_0, \ldots, a_n)]) \, .
			\end{array}
		\]
		By construction of $W_{d, x}$ we get that $W_{d, x} \times \{x\}$ lies in the domain of $\theta$.
		For any $f \in W_{d, x}$, the following holds: 
		$\psi_f$ is \'etale at $x$ if and only
		if $\theta$ is \'etale at $(f, x)$ by 
		\cite[Exp.~I, Corollaire~5.9]{GrRa2003Revetements-etales}.
		The set of points in the domain of $\theta$ where $\theta$ is \'etale forms
		an open subset \cite[Exp.~I, Proposition~4.5]{GrRa2003Revetements-etales}.
		Hence, we conclude that the subset $U_{d, x} \subseteq W_{d, x}$ of those $f \in W_{d, x}$ such that $\psi_f$ is \'etale at $x$ forms an open subset of $W_{d, x}$.
		Since $U_d$ is the union of all $U_{d, x}$, $x \in X$, the claim follows.
	\end{proof}

	There exists $D \geq 0$ such that for all $f \in W_d$ with birational $\psi_f$
	there exists $g \in W_{D}$ such that $\psi_g = \psi_{f^{-1}}$, 
	see~\cite[Proposition~2.2]{HaSi2017Bounds-on-degrees-}.
	For $(g, f) \in W_{D} \times W_d$, we denote
	\[
		h = h_{g, f} = (h_0, \ldots, h_n) = (g_0(f_0, \ldots, f_n), \ldots, g_n(f_0, \ldots, f_n)),
	\]
	which is a well-defined element of $(\kk[x_0, \ldots, x_n]_{Dd} / I(X)_{Dd})^{n+1}$
	up to multiplication with a non-zero scalar ($h$ is possibly $0$).
	Moreover, $r(h_0, \ldots, h_n) = 0$ in $\kk[x_0, \ldots, x_n] / I(X)$ for all homogeneous $r \in I(X)$.
	Thus, in case $h$ is non-zero, we may consider $h$ as an element of $W_{dD}$.
	
	Let $Y \subseteq W_{D} \times W_d$ be the closed
	subset of those $(g, f)$ such that 
	$h_{g, f} = (h_0, \ldots, h_n)$ satisfies 
	$h_i x_j = h_j x_i$ in the vector space 
	$\kk[x_0, \ldots, x_n]_{Dd+1} / I(X)_{Dd+1}$ for all $i, j$.
	Let $\pr_2 \colon W_{D} \times W_d \to W_d$ be the projection to the second factor.
	Since $W_D$ is projective, $\pr_2(Y)$ is closed in $W_d$.
	In order to show that $H_d$ is locally closed in $W_d$, it  is enough to show
	that $H_d$ is the intersection of $\pr_2(Y)$ and $U_d$ (where $U_d$ is defined as in 
	Claim~\ref{Claim.etale}). 

	Let $(g, f) \in W_D \times W_d$ such that $f \in U_d$.
	If $h_{g, f}$ vanishes, then $\psi_f \colon X \dasharrow X$ maps
	$\dom(\psi_f)$ into the proper closed subset $V_X(g_0, \ldots, g_n)$ of $X$, 
	and thus $\psi_f$ would not be dominant, which contradicts $f \in U_d$.
	Hence, $h_{g, f}$ does not vanish.
	If moreover  $(g, f) \in Y$, then $\psi_h$ is equal to the identity on $X$
	(where $h = h_{g, f}$). Since $\psi_h = \psi_g \circ \psi_f$ we obtain that 
	$\psi_f$ is birational. This shows that
	$f \in H_d$ and therefore $\pr_2(Y) \cap U_d \subseteq H_d$.

	On the other hand, let $f \in H_d$. As $\psi_f$ is birational, it follows that $f \in U_d$,
	and there exists $g \in W_D$ such that
	$\psi_g \circ \psi_f$ represents the identity on $X$. As in the last paragraph 
	$h = h_{g, f}$ does not vanish
	and since $\psi_g \circ \psi_f = \psi_h$, we get $(g, f) \in Y$.  Hence, we have seen
	that $H_d \subseteq \pr_2(Y) \cap U_d$.

	\eqref{Eq.H_d_family}: The domain of the
	rational map 
	\[
		P_d \times X \dashrightarrow P_d \times \PP^n \, , \ 
		(f, [a_0: \cdots: a_n]) \dashmapsto 
		(f, [f_0(a_0, \ldots, a_n) : \cdots : f_n(a_0, \ldots, a_n)])
	\]
	contains those pairs such that $f_i(a_0, \ldots, a_n)$ is non-zero for some $i =0, \ldots, n$. 
	Hence, the restriction to the locally closed subset $H_d \times X$ yields a rational
	$H_d$-map $\theta \colon H_d \times X \dasharrow H_d \times X$ whose domain surjects to 
	$H_d$ (by~\eqref{Eq.H_d_locally_closed} the set $H_d$ is locally closed in $W_d$). 
	The statement follows now from Lemma~\ref{Lem.Constr_rational_action}.

	\eqref{Eq.closed} Let $\bar{F}$ be the closure of $F$ in $W_d$. We consider the closed
	subset $Z \subseteq W_m \times \bar{F}$ that is given by the pairs $(g, f)$ such that
	$g_i f_j = g_j f_i$ in $\kk[x_0, \ldots, x_n]_{md} / I(X)_{md}$ for all $i, j$. Hence, 
	for $(g, f) \in W_m \times W_d$ we have:
	$(g, f) \in Z$
	if and only if $\psi_g$ and $\psi_f$ coincide (as rational maps $X \dasharrow X$) and 
	$f \in \bar{F}$.
	Let $\pr_1 \colon H_m \times \bar{F} \to H_m$ be the projection to the first factor.
	As $\bar{F}$ is projective, $\pr_1$ is closed. In particular, $\pr_1 (Z \cap (H_m \times \bar{F}))$
	is closed in $H_m$. Moreover, 
	$\pi_m^{-1}(\pi_d(F)) = \pr_1 (Z \cap (H_m \times \bar{F}))$. Indeed, if 
	$(g, f) \in Z \cap (H_m \times \bar{F})$, then $\psi_f = \psi_g$ is birational and hence 
	$f \in \bar{F} \cap H_d = F$.
\end{proof}

\begin{lemma}
	\label{Lem.Family_factorizing_through_H_d}
	Let $A$ be a variety and let 
	$\theta \colon A \times X \dasharrow A \times X$ be
	an algebraic family of birational transformations of $X$. 
	Then $A$ admits an open affine covering $(A_i)_{i \in I}$
	such that for all $i \in I$, there exist $d_i \geq 1$ and a morphism 
	$\rho_i \colon A_i \to H_{d_i}$ such that $\rho_{\theta} |_{A_i} = \pi_{d_i} \circ \rho_i$. 
\end{lemma}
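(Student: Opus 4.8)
The plan is to work locally on $A$ and reduce everything to an explicit description of the family by homogeneous forms, following the coordinate-based strategy that this section generalizes. Since the statement only asks for an open affine covering, it suffices to produce, for every point $a_0\in A$, an affine open neighbourhood $A_0\ni a_0$ together with a $d\geq 1$ and a morphism $\rho_0\colon A_0\to H_d$ satisfying $\pi_d\circ\rho_0=\rho_\theta|_{A_0}$; the sets $A_0$ then form the required covering. Writing $\theta(a,x)=(a,\alpha(a,x))$, the family is encoded by the rational map $\alpha\colon A\times X\dashrightarrow X\subseteq\PP^n$, and the defining property of an algebraic family — that $\theta$ is an isomorphism between open dense subsets of $A\times X$ both surjecting onto $A$ — translates into two facts I will use repeatedly. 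First, the indeterminacy locus of $\alpha$ contains no entire fibre $\{a\}\times X$, since the domain meets every fibre. Second, applying the same remark to $\theta^{-1}$, the restriction $\theta_a\colon X\dashrightarrow X$ is \emph{birational} for every $a\in A$: it is dominant (its image contains a nonempty open of $X$) and its inverse, coming from $\theta^{-1}$, is dominant as well.

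Next I would represent $\alpha$ by homogeneous forms of a single degree. Recall that $X$ is projective (the standing assumption of this section). Fix an affine open $A_0\ni a_0$ and consider the line bundle $L=\alpha^*\OO_{\PP^n}(1)$ on the domain $\Omega\subseteq A_0\times X$ of $\alpha$, together with its generating sections $\alpha^*x_0,\dots,\alpha^*x_n$. Spreading these out and twisting by $\pr_X^*\OO_X(k)$ for $k\gg 0$, I would use that $A_0\times X\to A_0$ is projective over the Noetherian affine base $A_0$, so that Serre's theorem gives relative global generation after a twist, in order to realize the $\alpha^*x_i$ as global sections $F_0,\dots,F_n$ over $A_0\times X$. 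Concretely the $F_i$ are homogeneous of a common degree $d$ in the coordinates of $\PP^n$ with coefficients in $\OO(A_0)$, that is, elements of $\OO(A_0)\otimes(\kk[x_0,\dots,x_n]_d/I(X)_d)$, and they define $\alpha$ on $A_0\times X$. The decisive point is to arrange that their common zero locus on $A_0\times X$ equals the indeterminacy of $\alpha$, and in particular contains no entire fibre.

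Granting such $F_0,\dots,F_n$, the conclusion is then formal. For every $a\in A_0$ the tuple $F(a)=(F_0(a,\cdot),\dots,F_n(a,\cdot))$ is a well-defined nonzero element of $W_d$ — this is exactly the no-entire-fibre condition — and by construction $\psi_{F(a)}=\theta_a$; hence $a\mapsto F(a)$ is a morphism $\rho_0\colon A_0\to W_d$. Since each $\theta_a$ is birational, we get $F(a)\in H_d$ for all $a$, so $\rho_0$ factors through $H_d$, and $\pi_d\circ\rho_0=\rho_\theta|_{A_0}$ holds by the very definitions (Definition~\ref{Def.W_dH_d}). That $\rho_0$ is a morphism of varieties is clear because its coordinates are regular functions on $A_0$; alternatively one may apply Lemma~\ref{Lem.Constr_rational_action} to the resulting $A_0$-family to confirm that it is algebraic.

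The main obstacle is the middle step: producing, on a full neighbourhood of $a_0$, forms of one uniform degree whose base locus contains no entire fibre. The difficulty is that the minimal degree needed to express $\theta_a$ may drop at special parameters $a$, so a degree read off from the generic fibre need not specialize to a nonzero tuple at $a_0$, while naive clearing of denominators can introduce spurious base components supported on whole fibres. The way around this is precisely to work with the sheaf $L=\alpha^*\OO(1)$ itself — whose base locus is the indeterminacy of $\alpha$ and therefore meets every fibre — rather than with an ad hoc choice of forms, and to extract the $F_i$ from $L(k)$ by global generation over the affine base $A_0$. This simultaneously fixes the degree and guarantees the fibrewise non-vanishing, and it works uniformly even when $A$ is reducible, since neither $L$ nor the global-generation argument uses irreducibility of $A_0$.
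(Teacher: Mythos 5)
Your skeleton --- localize at a point $a_0 \in A$, represent the family near $a_0$ by an $(n+1)$-tuple of forms of one uniform degree $d$ with coefficients in $\OO(A_0)$, then conclude formally from the birationality of every $\theta_a$ --- is the same as the paper's, and your final paragraph is essentially correct (modulo the small verification that $\{x : F(a,x) \neq 0\}$ meets $\dom(\theta_a)$, which holds because both are nonempty open subsets of the irreducible $X$, using that $\lociso(\theta)$ surjects onto $A$). The gap sits exactly at the step you yourself call the main obstacle, and the mechanism you propose there does not close it. First, $L = \alpha^*\OO_{\PP^n}(1)$ and the sections $\alpha^* x_i$ exist only on the open set $\Omega = \dom(\alpha)$; twisting by $\pr_X^*\OO_X(k)$ does not extend them across $(A_0 \times X) \setminus \Omega$, and Serre's theorem needs a coherent sheaf on all of $A_0 \times X$. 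So you must first choose a coherent extension $\mathcal{F}$ (e.g.\ the push-forward of $q^*\OO_{\PP^n}(1)$ from the closure of the graph of $\alpha$) together with a homomorphism $\phi \colon \mathcal{F} \to \pr_X^*\OO_X(d)$, and set $F_i = \phi(\bar{s}_i)$. Second, and decisively, nothing in this construction ``guarantees the fibrewise non-vanishing'': relative global generation controls $\mathcal{F}$ (or $\mathcal{F}^\vee(d)$), not the particular tuple $(F_0, \ldots, F_n)$, and the chosen $\phi$ may degenerate over special parameters, so that all $F_i(a, \cdot)$ vanish identically for some $a \in A_0$. Your stronger demand that the common zero locus \emph{equal} the indeterminacy of $\alpha$ is moreover unattainable in general for representatives of a single degree: already on $X = V(xy - zw) \subseteq \PP^3$, the degree-$2$ representatives displayed in the remark after Lemma~\ref{Lem.fibres_of_pi_d} vanish along the whole line $\{y = z = 0\} \subseteq X$, while the indeterminacy locus of the corresponding birational transformation of the quadric is finite.

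What is missing is the paper's elementary pointwise trick, which is the entire content of its proof. Choose $p_0 \in X$ with $(a_0, p_0) \in \lociso(\theta)$ --- possible precisely because $\lociso(\theta)$ surjects onto $A$ --- write $\theta$ in an affine chart centred at $p_0$ as $(f_1/f_0, \ldots, f_n/f_0)$ with denominator $f_0$ not vanishing at $(a_0, p_0)$, homogenize to obtain $h_0, \ldots, h_n$ of degree $d$ over $\kk[A_0]$, and then \emph{shrink} $A_0$ so that $(h_0(a, p_0), \ldots, h_n(a, p_0)) \neq 0$ for every $a \in A_0$: non-vanishing at the single point $p_0$ of every fibre is an open condition on $a$ and is all that is needed, since then $\psi_{h(a)}$ agrees with the birational $\theta_a$ on a nonempty open set, whence $h(a) \in H_d$ and $\pi_d(h(a)) = \rho_{\theta}(a)$. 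Your sheaf-theoretic route can be repaired in the same way --- pick $\phi$ with $\phi \otimes \kappa(a_0, p_0) \neq 0$, where $\mathcal{F}$ is invertible near $(a_0, p_0)$ and generated there by the $\bar{s}_i$, then shrink $A_0$ --- but that reduces it to the paper's argument with extra machinery; as written, without the evaluation-at-$p_0$-and-shrink step, the proof is incomplete. (A further point of precision: your $F_i$ a priori lie in $\OO(A_0) \otimes H^0(X, \OO_X(d))$, whereas $H_d$ is built from $\kk[x_0, \ldots, x_n]_d / I(X)_d$; for $d \gg 0$ the restriction map from degree-$d$ forms on $\PP^n$ is surjective, but this needs to be said.)
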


\begin{proof}
	We fix $a_0 \in A$. By definition, there exists $p_0 \in X$ such that 
	$(a_0, p_0) \in \lociso(\theta)$.
	Choose an open affine neighborhood $A_0$ of $a_0$ in $A$. We choose coordinates
	of $\PP^n$ in such a way that $p_0 = [1:0 \ldots:0] \in \PP^n$. Let 
	$\iota \colon \AA^n \to \PP^n$ be
	the open embedding that is given by $(y_1, \ldots, y_n) \mapsto [1: y_1: \ldots: y_n]$.
	Then, there exists a rational map 
	$\lambda \colon A_0 \times \iota^{-1}(X) \dashrightarrow \iota^{-1}(X)$ that is defined
	at $(a_0, 0, \ldots, 0)$ such that
	$\pr_2 \circ \theta \circ (\id_{A_0} \times \iota |_{\iota^{-1}(X)})$ 
	is equal to $\iota |_{\iota^{-1}(X)} \circ \lambda$,
	where $\pr_2 \colon A_0 \times X \to X$ denotes the projection onto the second factor.
	Hence, there exist polynomials $f_0, \ldots, f_n \in \kk[A_0][y_1, \ldots, y_n]$ 
	such that $f_0(a_0, 0, \ldots, 0) \neq 0$ and $\lambda$ is given by
	\[
		(a, (y_1, \ldots, y_n))	\dashmapsto \left( 
			\frac{f_1(a, y_1, \ldots, y_n)}{f_0(a, y_1, \ldots, y_n)}, \ldots, 
			\frac{f_n(a, y_1, \ldots, y_n)}{f_0(a, y_1, \ldots, y_n)}
		\right)
	\]
	in a neighborhood of $(a_0, 0, \ldots, 0)$ in $A_0 \times \iota^{-1}(X)$.
	After homogenizing the $f_0, \ldots, f_n$ we may assume that there
	exist $d \geq 0$ and homogeneous polynomials $h_0, \ldots, h_n \in \kk[A_0][x_0, \ldots, x_n]$
	of degree $d$ such that $\theta$ is given by
	\[
		(a, [x_0: \ldots: x_n]) \mapsto 	
		(a, [h_0(a, x_0, \ldots, x_n): \ldots : h_n(a, x_0, \ldots, x_n)])
	\]
	in a neighborhood of $(a_0, p_0)$ in $A_0 \times X$.
	Moreover, after possibly shrinking $A_0$ we may assume that
	for every $a \in A_0$ there exists $i$ with $h_i(a, p_0) \neq 0$.
	The homogeneous polynomials $h_0, \ldots, h_n$ give rise to a morphism
	$\rho_0 \colon A_0 \to H_d$ such that $\pi_d \circ \rho_0 = \rho_{\theta} |_{A_0}$.
	Since $a_0 \in A$ was arbitrary, the statement follows.
\end{proof}

As in \cite[Corollary~2.7-2.9, Proposition~2.10]{BlFu2013Topologies-and-str}, we deduce from Lemma~\ref{Lem.H_d}
and Lemma~\ref{Lem.Family_factorizing_through_H_d} the following consequence.
In case $\kk[x_0, \ldots, x_n]/ I(X)$ is factorial, the result can be found 
in~\cite[Corollary~3.17, see also Defintion~3.3]{HaMo2024Open-loci-of-ideal}.

\begin{corollary} $ $
	\label{Cor.Inductive-limit-topology}
	\begin{enumerate}[left=0pt]
		\item A subset $A \subseteq \Bir(X)$ is closed in $\Bir(X)$ if and only if
			  $\pi_d^{-1}(A)$ is closed in $H_d$ for all $d \geq 1$.
		\item For every $d \geq 1$ the morphism 
			  $\pi_d \colon H_d \to \Bir(X)$ is closed
	\end{enumerate}
	In particular, $\Bir(X)$ carries the inductive-limit topology with respect
	to the filtration by the closed subsets $\pi_1(H_1) \subseteq \pi_2(H_2) \subseteq \ldots$
	in $\Bir(X)$. \qed
\end{corollary}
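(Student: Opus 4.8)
The plan is to argue directly from the definition of the Zariski topology on $\Bir(X)$, namely that it is the finest topology for which all morphisms are continuous; equivalently, a subset $A \subseteq \Bir(X)$ is closed exactly when $\rho^{-1}(A)$ is closed in $V$ for every morphism $\rho \colon V \to \Bir(X)$. I would prove the two numbered assertions in turn and then deduce the inductive-limit description, observing at the outset that the two genuinely substantial inputs---Lemma~\ref{Lem.H_d}\eqref{Eq.closed} and the local factorization of Lemma~\ref{Lem.Family_factorizing_through_H_d}---are already in hand, so the corollary is essentially a formal assembly.

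For the first assertion, the ``only if'' direction is immediate: since $\pi_d$ is a morphism by Lemma~\ref{Lem.H_d}\eqref{Eq.H_d_family}, it is continuous, so $\pi_d^{-1}(A)$ is closed in $H_d$ whenever $A$ is closed. For the converse, I would assume $\pi_d^{-1}(A)$ is closed in $H_d$ for every $d \geq 1$ and take an arbitrary morphism $\rho \colon V \to \Bir(X)$, coming from an algebraic family on $V \times X$. By Lemma~\ref{Lem.Family_factorizing_through_H_d} there is an open affine covering $(V_i)_{i \in I}$ of $V$ together with morphisms of varieties $\rho_i \colon V_i \to H_{d_i}$ satisfying $\rho|_{V_i} = \pi_{d_i} \circ \rho_i$. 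Hence
\[
	\rho^{-1}(A) \cap V_i = \rho_i^{-1}\bigl(\pi_{d_i}^{-1}(A)\bigr),
\]
which is closed in $V_i$ because $\pi_{d_i}^{-1}(A)$ is closed and $\rho_i$ is continuous. As closedness is a local property with respect to the open cover $(V_i)$, the set $\rho^{-1}(A)$ is closed in $V$; since $\rho$ was arbitrary, $A$ is closed in $\Bir(X)$.

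For the second assertion, let $F \subseteq H_d$ be closed; I must show $\pi_d(F)$ is closed in $\Bir(X)$. By the first assertion it suffices to check that $\pi_m^{-1}(\pi_d(F))$ is closed in $H_m$ for every $m \geq 1$, and this is precisely the content of Lemma~\ref{Lem.H_d}\eqref{Eq.closed}. Taking $F = H_d$ in particular shows that each image $\pi_d(H_d)$ is closed in $\Bir(X)$.

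It then remains to identify the Zariski topology with the inductive-limit topology of the chain. That the $\pi_d(H_d)$ exhaust $\Bir(X)$ is clear, since by Definition~\ref{Def.W_dH_d} every birational self-map of $X$ is of the form $\psi_f$ for some $f \in H_d$. The inclusion $\pi_d(H_d) \subseteq \pi_{d+1}(H_{d+1})$ is the one point needing a small geometric observation: multiplying the homogeneous representatives $(f_0, \ldots, f_n)$ of a map in $H_d$ by a linear form not vanishing on $X$---which exists because the embedding $X \subseteq \PP^n$ is non-degenerate---yields a degree-$(d+1)$ representative of the same birational map, hence an element of $H_{d+1}$. Finally, a set $A$ is closed in the inductive-limit topology exactly when $A \cap \pi_d(H_d)$ is closed in $\pi_d(H_d)$ for all $d$; if $A$ is Zariski-closed this holds trivially in the subspace topology, while conversely, if each $A \cap \pi_m(H_m)$ is closed in $\pi_m(H_m)$, then writing $\pi_m^{-1}(A) = \pi_m^{-1}\bigl(A \cap \pi_m(H_m)\bigr)$ and using continuity of the corestriction $\pi_m \colon H_m \to \pi_m(H_m)$ shows $\pi_m^{-1}(A)$ is closed, whence $A$ is Zariski-closed by the first assertion. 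I expect no real obstacle here: the essential difficulty has already been absorbed into Lemma~\ref{Lem.H_d}\eqref{Eq.closed} and Lemma~\ref{Lem.Family_factorizing_through_H_d}, and what remains is bookkeeping, the only fresh ingredient being the non-degeneracy argument securing the ascending chain.
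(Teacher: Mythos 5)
Your proposal is correct and follows exactly the route the paper intends: the paper states the corollary as an immediate consequence of Lemma~\ref{Lem.H_d}\eqref{Eq.closed} and Lemma~\ref{Lem.Family_factorizing_through_H_d} (following \cite[Corollary~2.7--2.9, Proposition~2.10]{BlFu2013Topologies-and-str}), and your assembly---the final-topology characterization of closedness for part~(1), Lemma~\ref{Lem.H_d}\eqref{Eq.closed} for part~(2), and the coherence argument for the inductive-limit topology---is precisely that deduction made explicit. Your observation that the inclusion $\pi_d(H_d) \subseteq \pi_{d+1}(H_{d+1})$ rests on multiplying representatives by a linear form, which is where the non-degeneracy of the fixed embedding $X \subseteq \PP^n$ enters, correctly identifies the one point the paper absorbs into its setup.
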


\begin{remark}	
\label{Rem.Similar_as_ind-group}
Note, if $X$ is an affine variety, then $\Aut(X)$ has a natural structure of a so-called 
ind-group,~i.e. $\Aut(X)$ can be filtered by a countable union of affine varieties 
$V_1 \subseteq V_2 \subseteq \ldots$, where $V_d$ is closed in $V_{d+1}$ for all $d$, such that 
multiplication and inversion maps are compatible with these filtrations. 
Moreover, the 
morphisms $A \to \Aut(X)$
correspond to the morphisms of varieties $A \to V_d$, where $d \geq 1$, 
see~\cite[Theorem~5.1.1.]{FuKr2018On-the-geometry-of}. Although, $\Bir(X)$ cannot have the structure
of an ind-group described above (see~\cite[Proposition~3.4]{BlFu2013Topologies-and-str}), Corollary~\ref{Cor.Inductive-limit-topology} and Lemma~\ref{Lem.Family_factorizing_through_H_d} say roughly speaking that 
$\Bir(X)$ has still a very similar structure. In fact: $\Bir(X)$ carries the inductive
limit topology of the images $V_1 \subseteq V_2 \subseteq \cdots$ of the closed morphisms 
$\pi_d \colon H_d \to \Bir(X)$
and the morphisms $A \to \Bir(X)$ correspond 
to families of morphisms $(f_i \colon A_i \to H_{d_i})_{i \in I}$ 
such that $(A_i)_{i \in I}$ is an open affine cover of $A$ and the maps
$\pi_{d_j} \circ f_j$, $\pi_{d_i} \circ f_i$ coincide on $A_i \cap A_j$ for all $i, j \in I$.
\end{remark}

Another immediate consequence of Corollary~\ref{Cor.Inductive-limit-topology} is:

\begin{corollary}
	\label{Cor.Closure_alg_subset}
	The closure of an algebraic subset of $\Bir(X)$ is algebraic.
\end{corollary}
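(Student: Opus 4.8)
The plan is to realize the closure of an algebraic subset $S = \rho(V)$ as the image of a single morphism whose source is a finite disjoint union of closed subvarieties of the spaces $H_d$. Write $\rho = \rho_\theta$ for an algebraic family $\theta$ on $V \times X$. By Lemma~\ref{Lem.Family_factorizing_through_H_d}, the variety $V$ admits an open affine cover $(V_i)_{i \in I}$ together with morphisms of varieties $\phi_i \colon V_i \to H_{d_i}$ such that $\rho|_{V_i} = \pi_{d_i} \circ \phi_i$. Since $V$ is of finite type, hence quasi-compact, I may assume $I$ finite. Writing $C_i \coloneqq \overline{\phi_i(V_i)}$ for the Zariski closure in $H_{d_i}$, I claim that $\overline{S} = \bigcup_{i \in I} \pi_{d_i}(C_i)$.

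For the inclusion $\bigcup_{i} \pi_{d_i}(C_i) \subseteq \overline{S}$, I would use that each $\pi_{d_i}$ is continuous (being a morphism, Lemma~\ref{Lem.H_d}\eqref{Eq.H_d_family}), so that $\pi_{d_i}(C_i) = \pi_{d_i}(\overline{\phi_i(V_i)}) \subseteq \overline{\pi_{d_i}(\phi_i(V_i))} = \overline{\rho(V_i)} \subseteq \overline{S}$. For the reverse inclusion I would invoke Corollary~\ref{Cor.Inductive-limit-topology}, by which each $\pi_{d_i}$ is a closed map; hence each $\pi_{d_i}(C_i)$ is closed, and since $I$ is finite the union $\bigcup_i \pi_{d_i}(C_i)$ is closed. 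As this union contains $S = \bigcup_i \pi_{d_i}(\phi_i(V_i))$, it contains $\overline{S}$.

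It then remains to exhibit $\bigcup_i \pi_{d_i}(C_i)$ as the image of a morphism. Here I would restrict the explicit family of Lemma~\ref{Lem.H_d}\eqref{Eq.H_d_family}: for a closed subvariety $C \subseteq H_d$, restricting $\theta \colon H_d \times X \dashrightarrow H_d \times X$ to $C \times X$ yields a rational $C$-map given by the same formula $(f, x) \dashmapsto (f, \psi_f(x))$, whose domain surjects onto $C$ because $\psi_f$ is dominant for every $f \in C \subseteq H_d$. Applying Lemma~\ref{Lem.Constr_rational_action} over each irreducible component of $C$ shows that this restriction is an algebraic family of birational transformations parametrized by $C$, so that $\pi_d|_C \colon C \to \Bir(X)$ is a morphism with image $\pi_d(C)$. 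Taking $W \coloneqq \bigsqcup_{i \in I} C_i$, which is a variety since $I$ is finite, the induced map $W \to \Bir(X)$ is a morphism with image $\bigcup_i \pi_{d_i}(C_i) = \overline{S}$; thus $\overline{S}$ is algebraic.

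I expect the only delicate point to be the verification that $\pi_d|_C$ is a morphism, that is, that restricting the parametrizing family to a closed subvariety of the parameter space preserves the defining properties of an algebraic family. The passage to irreducible components of $C$, which is needed to apply Lemma~\ref{Lem.Constr_rational_action} (stated for irreducible parameter spaces), together with the dominance of each $\psi_f$ to secure the surjectivity-of-domain hypothesis, is what makes this step go through for the possibly reducible $C_i$.
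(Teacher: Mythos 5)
Your proof is correct and is essentially the paper's own argument unpacked: the paper's one-line proof likewise rests on Lemma~\ref{Lem.Family_factorizing_through_H_d} combined with the closedness of the maps $\pi_d$ from Corollary~\ref{Cor.Inductive-limit-topology} (the paper uses that the images $\pi_d(H_d)$ are nested to place the whole image inside a single $\pi_d(H_d)$ and then restricts $\pi_d$ to the closed preimage of the closure, whereas you take a finite union of the pushed-forward closures $C_i$ --- a cosmetic difference). The one point where you work harder than necessary is the ``delicate'' restriction step: you do not need Lemma~\ref{Lem.Constr_rational_action} or a reduction to irreducible components at all, since the restriction of an algebraic family to a (locally) closed subset $C$ of the parameter space is automatically an algebraic family --- the defining isomorphism $U_1 \xrightarrow{\sim} U_2$ over $H_d$ restricts to an isomorphism $U_1 \cap (C \times X) \to U_2 \cap (C \times X)$ of open subsets that surject onto $C$ and are dense because their fibres are nonempty open subsets of the irreducible $X$ --- a fact the paper uses implicitly whenever it restricts $\pi_d$ to a subvariety.
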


\begin{proof}
	The image of a morphism is contained in some $\pi_d(H_d)$ by 
	Lemma~\ref{Lem.Family_factorizing_through_H_d}.
\end{proof}

In the next lemma we describe the fibres of $\pi_d \colon H_d \to \Bir(X)$.

\begin{lemma}
	\label{Lem.fibres_of_pi_d}
	Every fibre of $\pi_d \colon H_d \to \Bir(X)$ is either empty, or isomorphic to a 
	projective space.
\end{lemma}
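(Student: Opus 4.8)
The plan is to fix a $\varphi$ in the image of $\pi_d$ and identify the entire fibre $\pi_d^{-1}(\varphi)$ with the projectivization of an explicit linear subspace. Write $V \coloneqq (\kk[x_0, \ldots, x_n]_d / I(X)_d)^{n+1}$, so that $P_d = \PP(V)$ and $H_d \subseteq W_d \subseteq P_d$. Since the fibre is non-empty, I would pick $f = (f_0, \ldots, f_n) \in H_d$ with $\psi_f = \varphi$ and introduce
\[
	L \coloneqq \set{g = (g_0, \ldots, g_n) \in V}{g_i f_j = g_j f_i \textrm{ in } \kk[x_0, \ldots, x_n]_{2d}/I(X)_{2d} \textrm{ for all } i, j} \, .
\]
The defining relations are linear in $g$, so $L$ is a linear subspace of $V$ with $f \in L$, hence $L \neq 0$. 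The goal is then to show $\pi_d^{-1}(\varphi) = \PP(L)$ as subsets of $P_d$, which gives that the fibre is the projective space $\PP(L) \cong \PP^{\dim_{\kk} L - 1}$.

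The key tool is the characterization of when two tuples define the same rational map, already used in the proof of Lemma~\ref{Lem.H_d}\eqref{Eq.closed}: since $X$ is irreducible, the homogeneous coordinate ring $\kk[x_0, \ldots, x_n]/I(X)$ is an integral domain, and two nonzero $g, f \in V$ satisfy $\psi_g = \psi_f$ (as rational maps into $\PP^n$) if and only if $g_i f_j = g_j f_i$ for all $i, j$. The forward direction holds because $g_i f_j - g_j f_i$ is a regular function vanishing on the dense open where the two maps agree; the backward direction follows by picking $k$ with $f_k \neq 0$ and writing $g_i = (g_k/f_k) f_i$ in the fraction field. This equivalence immediately yields the inclusion $\pi_d^{-1}(\varphi) \subseteq \PP(L)$.

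For the reverse inclusion I would take any nonzero $g \in L$ and check that $[g] \in H_d$ with $\psi_g = \varphi$. By the equivalence, $g = c \cdot f$ with $c = g_k/f_k$ a rational function, so $\psi_g = \psi_f = \varphi$ as rational maps into $\PP^n$. Because $f \in W_d$, the map $\psi_f$ lands in $X$, and hence so does $\psi_g$: concretely, for every homogeneous $r \in I(X)$ one has $r(g_0, \ldots, g_n) = c^{\deg r} r(f_0, \ldots, f_n) = 0$ in the fraction field, whence $r(g_0, \ldots, g_n) = 0$ in the coordinate ring and $g \in W_d$. As $\psi_g = \varphi$ is birational, in fact $g \in H_d$, so $[g] \in \pi_d^{-1}(\varphi)$. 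This establishes $\PP(L) = \pi_d^{-1}(\varphi)$, and since $\PP(L)$ is a closed, reduced linear subspace of $P_d$ contained in the locally closed $H_d$, the fibre with its reduced structure is exactly the projective space $\PP(L)$.

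I expect the only genuinely delicate point to be the equivalence between ``$\psi_g = \psi_f$'' and the bilinear relations, together with the verification that nonzero elements of $L$ actually lie in $W_d$; both hinge on the integrality of the coordinate ring of the irreducible variety $X$, after which the remaining assertions are pure linear algebra.
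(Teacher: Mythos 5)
Your proof is correct and takes essentially the same route as the paper: the paper also identifies the fibre $\pi_d^{-1}(\psi_f)$ with the projective linear subspace cut out by the relations $g_i f_j = g_j f_i$, and verifies that its elements lie in $W_d$ (hence in $H_d$) using integrality of the homogeneous coordinate ring --- clearing denominators with powers of $f_0$ and $g_0$ where you work in the fraction field with $c = g_k/f_k$. The only cosmetic difference is that you correctly place the relations in degree $2d$, whereas the paper writes $d^2$ (an apparent typo), and the paper spells out explicitly the small point you gloss over, namely that $g_k \neq 0$ follows from $g \neq 0$, $f_k \neq 0$, and the domain property.
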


\begin{proof}
	Let $f = (f_0, \ldots, f_n)$ be an element of $H_d$. We may assume that $f_0$ is non-zero.
	Note that
	\[
		\Gamma \coloneqq \Bigset{g \in \PP(((\kk[x_0, \ldots, x_n] / I(X))_d)^{n+1})}{
		\begin{array}{l}
			\textrm{$g_i f_j - g_j f_i = 0$} \\
			\textrm{in $\kk[x_0, \ldots, x_n]_{d^2} / I(X)_{d^2}$} \\
			\textrm{for all $i, j$}
		\end{array}
		}	
	\]
	is a projective linear subspace and hence isomorphic to a projective space.

	Let $g \in \Gamma$. Since $f_0$ is non-zero, since not all $g_0, \ldots g_n$
	are zero, and since $g_i f_0 - g_0 f_i = 0$
	in the integral domain $\kk[x_0, \ldots, x_n] / I(X)$, we observe that $g_0$ is non-zero as well.
	For a homogeneous $r \in I(X)$ of degree $e$ we get that 
	\[
		f_0^e r(g_0, \ldots, g_n) = g_0^e f_0^e r\left(1, \frac{g_1}{g_0}, \ldots, \frac{g_n}{g_0}\right)
		= g_0^e f_0^e r\left(1, \frac{f_1}{f_0}, \ldots, \frac{f_n}{f_0}\right) = 
		g_0^e r(f_0, \ldots, f_n)
	\]
	vanishes inside $\kk[x_0, \ldots, x_n]/I(X)$, since $f \in H_d$. However, since $f_0$ is non-zero, we get that
	$r(g_0, \ldots, g_n)$ vanishes and hence $g \in W_d$.
	Since $g_i f_j - g_j f_i = 0$ for all $i, j$, it follows that $\psi_g = \psi_f$ is birational
	and thus $g \in H_d$. This shows that $\pi_d^{-1}(\psi_f) = \Gamma$ and hence this fibre
	is isomorphic to a projective space.
\end{proof}

\begin{remark}
	In case the homogeneous coordinate ring 
	$R \coloneqq \kk[x_0, \ldots, x_n] / I(X)$ of $X$
	is a unique factorization domain
	and $(f_0, \ldots, f_n) \in H_d$ is an element such that $\gcd(f_0, \ldots, f_n) = 1$,
	then $\pi_d^{-1}(\pi_d(f_0, \ldots, f_n))$ consists of one element. On the other hand, this does not need to be true in case $R$ is not a unique factorization domain, for example, take
	$X = V(xy-zw) \subseteq \PP^3$. Then the two distinct elements 
	\[
		(z(w+z), y^2, yz, y(w+z)) \quad \textrm{and} \quad
		(x(w+z), yw, zw, (w+z)w)
	\]
	of $H_2$ both induce the conjugate $\psi$ of the automorphism $[y, z, w] \mapsto [y, z, w + z]$
	of $\PP^2$ by the projection $X \dasharrow \PP^2$, $[x, y, z, w] \mapsto [y, z, w]$.
	However, $\gcd(w(w+z), y^2, yw, y(w+z)) = 1$ and $\gcd(x(w+z), yz, wz, z(w+z)) = 1$, since
	$\psi$ is not an automorphism of $X$.
\end{remark}

\begin{corollary}
	\label{Cor.Preimage_conn}
	Assume that $F \subseteq \pi_d(H_d)$ is a closed connected subset. Then 
	$\pi_d^{-1}(F)$ is a closed connected subset of $H_d$.
\end{corollary}

\begin{proof}
	Assume towards a contradiction that there exist disjoint non-empty closed subsets
	$A_0, A_1$ in $\pi_d^{-1}(F)$ such that their union is equal to $\pi_d^{-1}(F)$.
	As $\pi_d \colon H_d \to \Bir(X)$ is closed (see Corollary~\ref{Cor.Inductive-limit-topology}),
	it follows that $\pi_d(A_0)$, $\pi_d(A_1)$ are closed non-empty subsets of 
	$F$ and their union is equal to $F$. Since $F$ is connected,
	these sets must intersect, say in $\varphi \in F$. However, since
	$\pi_d^{-1}(\varphi)$ is connected 
	(it is even irreducible by Lemma~\ref{Lem.fibres_of_pi_d}) and since it is covered
	by the non-empty disjoint closed subsets $\pi_d^{-1}(\varphi) \cap A_0$,
	$\pi_d^{-1}(\varphi) \cap A_1$, we arrive at a contradiction.
\end{proof}

\begin{corollary}
	\label{Cor.connected_components}
	Let $Z \subseteq \Bir(X)$ be a closed subset. Then:
	\begin{enumerate}[left=0pt]
		\item \label{Cor.connected_components1} 
			$Z$ has only countably many connected components
		\item \label{Cor.connected_components1.5}
			The connected components of $Z$ are open in $Z$.
		\item \label{Cor.connected_components2}
			$Z$ is connected if and only if for each $z_0, z_1 \in Z$, there 
			exists a connected closed algebraic subset $V \subseteq \Bir(X)$
			such that $z_0, z_1 \in V \subseteq Z$.
	\end{enumerate}
\end{corollary}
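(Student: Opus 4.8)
The plan is to prove the three statements about a closed subset $Z \subseteq \Bir(X)$ by exploiting the inductive-limit structure provided by Corollary~\ref{Cor.Inductive-limit-topology} and the connectedness behaviour of the fibres of $\pi_d$ established in Corollary~\ref{Cor.Preimage_conn}. The guiding principle is that $Z$ is pulled back to $H_d$ via the closed morphisms $\pi_d$, where the $H_d$ are ordinary varieties, so that the familiar finiteness and openness properties of varieties can be transported back to $Z$ using closedness of $\pi_d$ and connectedness of the fibres.

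\medskip

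For~\eqref{Cor.connected_components1}, I would argue as follows. Each preimage $\pi_d^{-1}(Z)$ is closed in the variety $H_d$ by Corollary~\ref{Cor.Inductive-limit-topology}, hence is itself a (Noetherian) variety and therefore has only finitely many irreducible components, and in particular only finitely many connected components. The images under $\pi_d$ of these finitely many connected components are connected closed subsets of $Z$ (closed because $\pi_d$ is a closed map), and every point of $Z$ lies in $\pi_d(H_d)$ for some $d$, so every connected component of $Z$ is a union of images of connected components coming from the countably many $H_d$. Since there are only countably many pairs $(d, \text{component of } \pi_d^{-1}(Z))$, there are at most countably many such connected pieces, and hence at most countably many connected components of $Z$. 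The point needing care is that a connected component of $Z$ could, a priori, be assembled from infinitely many of these pieces; but this only helps the count stay countable, and does not threaten it. I expect this to be the part requiring the most bookkeeping.

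\medskip

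For~\eqref{Cor.connected_components1.5}, the plan is to show each connected component $C$ of $Z$ is closed with open complement inside $Z$. Using the inductive-limit topology (Corollary~\ref{Cor.Inductive-limit-topology}), a subset of $Z$ is open (resp.\ closed) in $Z$ exactly when its preimage under each $\pi_d|_{\pi_d^{-1}(Z)}$ is open (resp.\ closed). Now $\pi_d^{-1}(C)$ is a union of connected components of the variety $\pi_d^{-1}(Z)$: indeed, by Corollary~\ref{Cor.Preimage_conn} the preimage of a connected closed subset is connected, so each connected component of $\pi_d^{-1}(Z)$ maps into a single connected component of $Z$, and $\pi_d^{-1}(C)$ is precisely the union of those mapping into $C$. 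Since $\pi_d^{-1}(Z)$ has only finitely many connected components, each of which is open in $\pi_d^{-1}(Z)$, the set $\pi_d^{-1}(C)$ is open in $\pi_d^{-1}(Z)$ for every $d$. By the inductive-limit characterization this makes $C$ open in $Z$, as desired.

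\medskip

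For~\eqref{Cor.connected_components2}, the forward implication is formal: if $Z$ is connected, take $V = Z$ itself, which is a connected closed algebraic subset by Corollary~\ref{Cor.Closure_alg_subset} (it is already closed and algebraic). For the converse, suppose the connectivity condition holds and assume $Z = A_0 \sqcup A_1$ for disjoint non-empty closed sets $A_0, A_1$; pick $z_0 \in A_0$ and $z_1 \in A_1$ and a connected closed algebraic $V$ with $z_0, z_1 \in V \subseteq Z$. Then $V = (V \cap A_0) \sqcup (V \cap A_1)$ expresses $V$ as a disjoint union of closed subsets, both non-empty since they contain $z_0$ and $z_1$ respectively, contradicting connectedness of $V$. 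Hence $Z$ is connected. The only subtlety is ensuring that the condition ``closed algebraic subset'' is the right hypothesis to invoke; here I use that $V$ being connected as a topological subspace of $\Bir(X)$ cannot be partitioned, so the standard topological argument applies verbatim.
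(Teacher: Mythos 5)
Your proofs of parts \eqref{Cor.connected_components1} and \eqref{Cor.connected_components1.5} are correct and close to the paper's: for~\eqref{Cor.connected_components1} the counting argument is the same, and for~\eqref{Cor.connected_components1.5} you test openness against the specific morphisms $\pi_d$ via Corollary~\ref{Cor.Inductive-limit-topology}, whereas the paper tests closedness against arbitrary morphisms with image in $Z$; both are valid. (A small remark: your citation of Corollary~\ref{Cor.Preimage_conn} there is superfluous, since the fact that each connected component of $\pi_d^{-1}(Z)$ maps into a single connected component of $Z$ follows from continuity of $\pi_d$ alone.)

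Part \eqref{Cor.connected_components2}, however, contains a genuine gap, and it sits exactly at the point carrying the real content of the statement. In the forward direction you take $V = Z$ and assert that $Z$ ``is already closed and algebraic''. A closed subset of $\Bir(X)$ need \emph{not} be algebraic: an algebraic subset is by definition the image of a morphism from a variety, and by Lemma~\ref{Lem.Family_factorizing_through_H_d} (this is the proof of Corollary~\ref{Cor.Closure_alg_subset}) every algebraic subset is contained in a single $\pi_d(H_d)$, hence in particular has finite dimension. For instance $Z = \Bir(\PP^n)$ with $n \geq 2$ is closed but contains maps of arbitrarily large degree, so it lies in no $\pi_d(H_d)$ and is not algebraic; Corollary~\ref{Cor.Closure_alg_subset} only says that the closure of an algebraic subset is algebraic, not that every closed subset is algebraic. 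So the forward implication is precisely the nontrivial one: given $z_0, z_1$ in a connected closed $Z$, one must produce a connected closed \emph{algebraic} $V \subseteq Z$ through both points, i.e.\ a ``finite-level'' connected set. The paper does this by proving, via a contradiction argument with the inductive-limit topology, that there exists $d$ such that $z_0$ and $z_1$ lie in the same connected component of $Z_d \coloneqq Z \cap \pi_d(H_d)$ (if they were separated by clopen pieces in every $Z_d$, these pieces would assemble into a disconnection of $Z$ itself); that component is then closed, connected, and algebraic, being the image under $\pi_d$ of its preimage, a closed subvariety of $H_d$, and it serves as $V$. Your backward direction, the standard disconnection argument, is correct, but it is the direction the paper dismisses as clear.
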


\begin{proof}
	For $d \geq 1$, let $Z_d \subseteq \Bir(X)$ be the intersection of $Z$ with $\pi_d(H_d)$.
	Since $\pi_d^{-1}(Z_d)$ is closed in $H_d$, it follows that it consists only
	of finitely many connected components and thus the same holds for $Z_d$.

	\eqref{Cor.connected_components1}:
	As each connected component of $Z_d$ has to be contained in a connected component
	of $Z$ and since the $Z_d$, $d \geq 1$ exhaust $Z$, it follows that $Z$
	has at most countably many connected components. 

	\eqref{Cor.connected_components1.5} Let $\rho \colon A \to \Bir(X)$ be a morphism
	with image in $Z$. Every connected component of $A$ is mapped into a 
	connected component of $Z$. Hence, the preimage of every union of connected components
	of $Z$ under $\rho$ is the union of some connected components of $A$
	and therefore closed in $A$.

	\eqref{Cor.connected_components2}:
	Assume that $Z$ is connected, let $z_0, z_1 \in Z$ and let $Z_d = Z \cap \pi_d(H_d)$.
	
	\begin{claim}
		There exists $d \geq 1$ such that $z_0, z_1$
		are both contained in a connected component of $Z_d$.
	\end{claim}
	
	\begin{proof}
	Otherwise, for every $d \geq 1$
	we get closed disjoints subsets $A_{d, 0}$, $A_{d, 1}$ of $Z_d$ such that
	their union is equal to $Z_d$ and
	$z_i \in A_{d, i}$ for $i=0, 1$. Hence, $A_{d, i} \subseteq A_{d+1, i}$ for all $i=0, 1$
	and all $d \geq 1$. The sets
	$A_i = \bigcup_{d \geq 1} A_{d, i}$, $i = 0, 1$
	are disjoint, non-empty, and their union is equal to $Z$.
	Moreover, $A_i$, is closed in $Z$, since $A_i \cap Z_d = A_{d, i}$ for $i =0, 1$,
	see Corollary~\ref{Cor.Inductive-limit-topology}.
	This contradicts the connectedness of $Z$.
	\end{proof}

	This shows one implication, the other implication is clear.
\end{proof}

For closed subgroups in $\Bir(X)$ we can strengthen Corollary~\ref{Cor.connected_components}:

\begin{corollary}
	\label{Cor.Closed_subgroup}
	Let $G \subseteq \Bir(X)$ be a closed subgroup and let $G^\circ$ be the
	connected component of the identity. Then
	\begin{enumerate}[left=0pt]
		\item \label{Cor.Closed_subgroup1} $G^\circ$ has countable index in $G$, 
		and $G^\circ$ is open and closed in $G$.
		\item \label{Cor.Closed_subgroup2} 
		For all $g_0, g_1 \in G^\circ$ there exists an irreducible
		closed algebraic subset $W \subseteq \Bir(X)$
		such that $g_0, g_1 \in W \subseteq G^\circ$.
		\item \label{Cor.Closed_subgroup3} 
		Two irreducible algebraic subsets of $G^\circ$ are always contained in an
		irreducible algebraic subset of $G^\circ$.
	\end{enumerate}
\end{corollary}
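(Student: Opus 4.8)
The plan is to prove the three statements of Corollary~\ref{Cor.Closed_subgroup} by combining the group structure with the topological results already established in Corollaries~\ref{Cor.connected_components} and~\ref{Cor.Closure_alg_subset}, together with the fact that multiplication and inversion preserve algebraic families (Proposition~\ref{Prop.Prel}\eqref{Prop.Prel_mult_inv_cont}). The key initial observation is that for any irreducible (or connected) algebraic subset containing the identity, the map $W \to \Bir(X)$, $w \mapsto w$ combined with translations $g \mapsto gw$ or products $W \times W \to \Bir(X)$, $(w_1, w_2) \mapsto w_1 w_2^{-1}$ again yields algebraic subsets by Proposition~\ref{Prop.Prel}\eqref{Prop.Prel_connected_prod} and the fact that composition preserves families. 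This is the engine that lets us enlarge and connect algebraic subsets inside $G^\circ$.

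For part~\eqref{Cor.Closed_subgroup1}, I would first invoke Corollary~\ref{Cor.connected_components}\eqref{Cor.connected_components1}\eqref{Cor.connected_components1.5} directly: since $G$ is closed, it has only countably many connected components, each of which is open and closed in $G$. The component $G^\circ$ of the identity is therefore open and closed, and the cosets $gG^\circ$ partition $G$ into its connected components (a standard topological-group argument, using that left translation by $g$ is a homeomorphism — which holds because conjugation and multiplication preserve algebraic families and hence are continuous, and left translation $\psi \mapsto g\psi$ is continuous with continuous inverse $\psi \mapsto g^{-1}\psi$). Thus the index $[G : G^\circ]$ equals the number of connected components, which is countable.

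For part~\eqref{Cor.Closed_subgroup2}, the strategy is to upgrade the connected algebraic subset furnished by Corollary~\ref{Cor.connected_components}\eqref{Cor.connected_components2} to an \emph{irreducible} one. Given $g_0, g_1 \in G^\circ$, that corollary provides a connected closed algebraic subset $V$ with $g_0, g_1 \in V \subseteq G^\circ$. A connected algebraic subset need not be irreducible, so I would use the group structure to produce an irreducible one containing both points: after translating so that $g_0$ becomes the identity (replace $V$ by $g_0^{-1} V$, which is again a connected algebraic subset inside the subgroup $G^\circ$ containing $\id$ and $g_0^{-1}g_1$), I would take a chain of irreducible components of $V$ linking $\id$ to the relevant point and form suitable products of these components. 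Concretely, if $V = \bigcup_j V_j$ with the $V_j$ irreducible and the connectedness giving an overlapping chain $V_{j_0}, \dots, V_{j_k}$ from one point to the other, then the product $V_{j_0} \cdots V_{j_k} \subseteq G^\circ$ is irreducible (as a continuous image of the irreducible product variety $V_{j_0} \times \cdots \times V_{j_k}$ under the composition morphism, using Proposition~\ref{Prop.Prel}\eqref{Prop.Prel_connected_prod} and Corollary~\ref{Cor.Closure_alg_subset}) and contains both endpoints. Part~\eqref{Cor.Closed_subgroup3} then follows by the same multiplicative trick: given two irreducible algebraic subsets $W_1, W_2 \subseteq G^\circ$, one translates and multiplies them together — using part~\eqref{Cor.Closed_subgroup2} to bridge the gap between their locations via an irreducible subset — so that $W_1 \cdot (\text{bridge}) \cdot W_2$ is an irreducible algebraic subset of $G^\circ$ containing both.

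The main obstacle I anticipate is the passage from \emph{connected} to \emph{irreducible} in part~\eqref{Cor.Closed_subgroup2}, and keeping everything inside $G^\circ$. The technical care lies in verifying that the product of finitely many irreducible algebraic subsets is again an irreducible \emph{algebraic} subset: one must check that the composition map restricted to such products is a morphism with irreducible source (Proposition~\ref{Prop.Prel}\eqref{Prop.Prel_connected_prod} handles the irreducibility of the source, the preservation of families under composition handles the morphism property, and Corollary~\ref{Cor.Closure_alg_subset} ensures the closure stays algebraic). One must also confirm that these products land inside $G^\circ$ rather than some other coset, which follows because $G^\circ$ is a subgroup and all the factors contain the identity after translation. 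The chain-of-components argument requires the finiteness of the number of irreducible components of the \emph{closed} connected set $V$, which is guaranteed since $\overline{V}$ is an algebraic subset whose preimages under the $\pi_d$ are closed in the finite-type varieties $H_d$, hence have finitely many components locally.
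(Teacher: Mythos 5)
Your parts \eqref{Cor.Closed_subgroup1} and \eqref{Cor.Closed_subgroup3} are essentially correct and match the paper: \eqref{Cor.Closed_subgroup1} is exactly the citation of Corollary~\ref{Cor.connected_components}\eqref{Cor.connected_components1}\eqref{Cor.connected_components1.5}, and your bridge argument for \eqref{Cor.Closed_subgroup3} (an irreducible set containing $a^{-1}$ and $b^{-1}$ sandwiched between $W_1$ and $W_2$) is a correct, slightly streamlined variant of the paper's $A \circ S \circ T \circ B$ construction. The genuine gap is in part \eqref{Cor.Closed_subgroup2}, which is the only step requiring a new idea. After reducing to a chain $V_{j_0}, \ldots, V_{j_k}$ of irreducible components of the connected algebraic set $V$, you claim the plain product $V_{j_0} \cdots V_{j_k}$ contains both endpoints. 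This is false in general: the product contains the (translated) endpoint $\id$ only if $\id$ can be written as a product of elements, one from each component, and it contains the other endpoint $g \in V_{j_k}$ only if $\id$ is a product of elements from the earlier components; neither is automatic, since the intermediate components need not contain $\id$ and nothing forces the required inverses to lie in them.

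Concretely, take $G = \GG_a^2 \subseteq \Bir(\PP^2)$ (translations of $\AA^2$, $\car \kk \neq 2$), $V_0 = \set{(t, t^2)}{t \in \kk}$ and $V_1 = \set{(t, -t^2 + a)}{t \in \kk}$ with $a \neq 0$. Then $\id = (0,0) \in V_0$, $g = (0,a) \in V_1$, and $V_0 \cap V_1 \neq \varnothing$, so $V = V_0 \cup V_1$ is a connected closed algebraic subset of $G = G^\circ$ containing both points; but the product (here: sum) $V_0 + V_1 = \set{(s+t,\, s^2 - t^2 + a)}{s, t \in \kk}$ does not contain $(0,0)$, because $s + t = 0$ forces the second coordinate to equal $a$. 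The missing idea --- and the paper's key trick --- is to insert the \emph{inverse of an overlap point} between consecutive factors: choosing $\varphi \in V_0 \cap V_1$ and parametrizing morphisms $\rho_i \colon A_i \to \Bir(X)$ with images $V_i$, the morphism $(v, v') \mapsto \rho_0(v) \circ \varphi^{-1} \circ \rho_1(v')$ on $A_0 \times A_1$ has irreducible algebraic image containing both $g_0 = g_0 \circ \varphi^{-1} \circ \varphi$ and $g_1 = \varphi \circ \varphi^{-1} \circ g_1$; one then induces on the length $k$ of the chain, takes closures via Corollary~\ref{Cor.Closure_alg_subset}, and notes that the closure stays in the closed subgroup $G^\circ$. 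With this correction the rest of your write-up (finiteness of the number of components, the morphism property of product maps, landing in $G^\circ$) goes through as you describe.
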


\begin{proof}
	\eqref{Cor.Closed_subgroup1}: This follows from 
	Corollary~\ref{Cor.connected_components}\eqref{Cor.connected_components1},\eqref{Cor.connected_components1.5}.

	\eqref{Cor.Closed_subgroup2}: Let $g_0, g_1 \in G^\circ$. By Corollary~\ref{Cor.connected_components}\eqref{Cor.connected_components2} there exists
	a connected closed algebraic subset $V \subseteq \Bir(X)$
	such that $g_0, g_1 \in V \subseteq G^\circ$. Let $V_0, \ldots, V_k$
	be irreducible components of $V$ such that $g_0 \in V_0$, $g_1 \in V_k$ and
	$V_i \cap V_{i+1}$ is non-empty for all $i=0, \ldots, k-1$. By induction on $k$, 
	it is enough to consider the case $k = 1$. Let $\varphi \in V_0 \cap V_1$.
	Let $\rho_i \colon A_i \to \Bir(X)$ be a morphism with image equal to $V_i$ and let
	$a_i, b_i \in A_i$ with $\rho_i(a_i) = g_i$ and $\rho_i(b_i) = \varphi$ for $i = 0, 1$.
	Then 
	\[
		\rho \colon V_0 \times V_1 \to \Bir(X) \, , \quad 
		(v, v') \mapsto \rho_0(v) \circ \varphi^{-1} \circ \rho_1(v')
	\]
	is a morphism with $\rho(a_0, b_1) = g_0$ and $\rho(b_0, a_1) = g_1$.
	Thus, $W \coloneqq \overline{\rho(V_0 \times V_1)}$ is our desired irreducible closed
	algebraic subset of $\Bir(X)$ (see Corollary~\ref{Cor.Closure_alg_subset}).

	\eqref{Cor.Closed_subgroup3}:
	Let $A, B \subseteq G^\circ$ be irreducible algebraic subsets and let $a \in A$, $b \in B$.
	By~\eqref{Cor.Closed_subgroup2} there exist
	irreducible algebraic subsets $S, T \subseteq G^\circ$ with $\id_X, a^{-1} \in S$
	and $\id_X, b^{-1} \in T$. Then $A \circ S$ contains $\id_X$ and $A$,
	and $T \circ B$ contains $\id_X$ and $B$. Hence, $A \circ S \circ T \circ B$ is our desired
 	irreducible algebraic subset of $G^\circ$.
\end{proof}

\begin{corollary}
	\label{Cor.Chain_irreducible_alg_subsets}
	Let $G \subseteq \Bir(X)$ be a closed subgroup. Then there exists
	an ascending exhausting chain of closed algebraic subsets 
	$G_1 \subseteq G_2 \subseteq \cdots$ in $G$ such that the irreducible
	components of $G_i$ are pairwise disjoint and homeomorphic. 
	If moreover $G$ is connected, then the $G_i$
	can be chosen to be irreducible.
\end{corollary}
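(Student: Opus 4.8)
The plan is to build the ascending chain $G_1 \subseteq G_2 \subseteq \cdots$ out of the morphisms $\pi_d \colon H_d \to \Bir(X)$ constructed earlier, and then repair the three required features—closedness, pairwise-disjoint homeomorphic irreducible components, and (in the connected case) irreducibility—one at a time. First I would set $Z_d \coloneqq G \cap \pi_d(H_d)$; by Corollary~\ref{Cor.Inductive-limit-topology}, each $Z_d$ is a closed algebraic subset of $\Bir(X)$ (since $\pi_d$ is closed and $G$ is closed), the chain $Z_1 \subseteq Z_2 \subseteq \cdots$ is ascending, and it exhausts $G$ because the $\pi_d(H_d)$ exhaust $\Bir(X)$. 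Moreover $\pi_d^{-1}(Z_d)$ is closed in $H_d$, so by Corollary~\ref{Cor.connected_components} each $Z_d$ has only finitely many connected components; since $H_d \subseteq W_d$ sits inside a projective space, these components are themselves algebraic and I can pass to finitely many irreducible components, which is what lets me talk about irreducible components at all.

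\textbf{The key step} is to arrange the homeomorphic, pairwise-disjoint structure of the irreducible components, and this is where I expect to use the group structure decisively. The natural idea is to translate: if $Y$ is one irreducible closed algebraic subset of $G_d$ containing some fixed point, say the identity after a translation, then for any group element $g \in G_d$ the set $g \circ Y$ is again irreducible and algebraic (left multiplication by a fixed $g$ preserves algebraic families by Proposition~\ref{Prop.Prel}\eqref{Prop.Prel_mult_inv_cont}, hence is a homeomorphism), and all such translates are homeomorphic to $Y$. So the strategy is: fix representatives $g_1, \dots, g_k$ of the finitely many components of $Z_d$, replace $Z_d$ by the union $\bigcup_i g_i \circ Y_d$ of translates of a single ``large enough'' irreducible algebraic piece $Y_d \subseteq G^\circ$ passing through the identity, chosen so that these translates are disjoint and each meets the right component. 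Concretely I would use Corollary~\ref{Cor.Closed_subgroup}\eqref{Cor.Closed_subgroup2},\eqref{Cor.Closed_subgroup3} to produce, inside $G^\circ$, an irreducible closed algebraic subset $Y_d$ that exhausts $Z_d \cap G^\circ$ well enough to dominate the previous stage, and then translate it by coset representatives of $G^\circ$ in $G$ to cover all components of $G$. This simultaneously gives homeomorphic components (they are all translates of $Y_d$) and exhaustion, provided I enlarge $Y_d$ at each step so that $G_d \subseteq G_{d+1}$; nesting is achieved by taking $Y_{d+1}$ to contain $Y_d$ together with the finitely many new generators coming from $Z_{d+1}$, again via Corollary~\ref{Cor.Closed_subgroup}\eqref{Cor.Closed_subgroup3}.

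\textbf{The connected case} is then immediate and should be handled first as a warm-up: when $G = G^\circ$ is connected, there are no nontrivial cosets to translate, and Corollary~\ref{Cor.Closed_subgroup}\eqref{Cor.Closed_subgroup3} directly supplies, for the finitely many irreducible components of each $Z_d$, a single irreducible algebraic subset of $G^\circ$ containing all of them; choosing these nested (by absorbing $Y_d$ into $Y_{d+1}$) yields an irreducible exhausting chain with $G_i \coloneqq Y_i$. The disjointness and homeomorphism conditions are then vacuous. For the general case, the main obstacle is ensuring the translates $g_i \circ Y_d$ are genuinely \emph{disjoint} rather than merely a finite union: two translates can a priori overlap. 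I would resolve this by first shrinking attention to the coset partition—each component of $G$ lies in a single coset of $G^\circ$, and distinct cosets are disjoint by definition of a subgroup—so that I only translate by representatives of distinct cosets $g_i G^\circ$, guaranteeing $g_i \circ Y_d$ and $g_j \circ Y_d$ lie in disjoint cosets for $i \neq j$ and hence never meet. The finiteness of the number of relevant cosets at each stage $d$ follows from the finiteness of the component count of $Z_d$ together with Corollary~\ref{Cor.Closed_subgroup}\eqref{Cor.Closed_subgroup1}, and closedness of each $G_d$ follows because it is a finite union of translates of a closed algebraic set, using Corollary~\ref{Cor.Closure_alg_subset} and that translation is a homeomorphism.
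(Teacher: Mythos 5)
Your proposal is correct and follows essentially the same strategy as the paper: first settle the connected case by applying Corollary~\ref{Cor.Closed_subgroup}\eqref{Cor.Closed_subgroup3} inductively to a countable cover of $G^\circ$ by irreducible algebraic subsets and taking closures (Corollary~\ref{Cor.Closure_alg_subset}), then obtain the general case by translating the resulting irreducible chain in $G^\circ$ by coset representatives of $G^\circ$ in $G$, with disjointness of the components coming from distinct cosets. The only cosmetic difference is bookkeeping: the paper fixes an enumeration $s_1, s_2, \ldots$ of all (countably many) cosets and sets $G_d = \bigcup_{i=1}^d s_i G_d'$, whereas you use at stage $d$ the finitely many cosets meeting $Z_d = G \cap \pi_d(H_d)$; both work.
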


\begin{proof}
	First we treat the case, when $G$ is connected.
	Since $G$ is covered by countably many irreducible algebraic subsets 
	(see Corollary~\ref{Cor.Inductive-limit-topology}), 
	Corollary~\ref{Cor.Closed_subgroup}\eqref{Cor.Closed_subgroup3} above implies that
	there exists an ascending exhausting chain of irreducible algebraic subsets in $G$.
	Taking the closures of these subsets implies the second statement
	(here we use Corollary~\ref{Cor.Closure_alg_subset}).

	Now, $G$ is not necessarily connected anymore. Let 
	$G_1' \subseteq G_2' \subseteq \cdots$ be an ascending exhausting chain of closed 
	irreducible algebraic subsets of $G^\circ$. As $G^\circ$ has countable 
	index in $G$ (see Corollary~\ref{Cor.Closed_subgroup}\eqref{Cor.Closed_subgroup1}) 
	there exist countably many $s_1, s_2, \ldots \in G$ such that 
	$G$ is the disjoint union of the $s_i G^\circ$, $i \geq 1$. Now, 
	$G_1 \subseteq G_2 \subseteq \cdots$ is our desired ascending exhausting chain for $G$,
	where $G_d \coloneqq \bigcup_{i=1}^d s_i G_d'$ for all $d \geq 1$.
\end{proof}

Using the same idea from the proof of Corollary~\ref{Cor.Closed_subgroup}, 
we can demonstrate a structural result for certain subgroups of $\Bir(X)$ 
that are not necessarily closed. To do this, 
recall that an \emph{ind-variety} is the inductive limit $\varinjlim V_d$ 
of a countable sequence of varieties
$V_1 \subseteq V_2 \subseteq \cdots$ such that $V_d$ is closed in $V_{d+1}$
for all $d \geq 1$.

\begin{corollary}
	Let $G \subseteq \Bir(X)$ be a subgroup that is
	generated by irreducible algebraic subsets $S_i \subseteq \Bir(X)$
	with $\id_X \in S_i$ for $i \in \NN$. Then there exists
	and ind-variety $V = \varinjlim V_d$, where each $V_d$ is irreducible
	and a map $\rho \colon V \to \Bir(X)$ with image equal to $G$ such that
	$\rho|_{V_d} \colon V_d \to \Bir(X)$ is a morphism for all $d \geq 1$.
\end{corollary}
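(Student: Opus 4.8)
The plan is to build the ind-variety $V$ and the map $\rho$ by combining the generators $S_i$ into finite words, exactly mimicking the multiplication trick from the proof of Corollary~\ref{Cor.Closed_subgroup}. Since each $S_i$ is an irreducible algebraic subset containing $\id_X$, and since by Proposition~\ref{Prop.Prel}\eqref{Prop.Prel_connected_prod} a finite product of irreducible subsets is again irreducible, the natural candidate for the pieces of $V$ is the following. For each $d \geq 1$ consider the finite product
\[
	S_d \circ S_{d-1} \circ \cdots \circ S_1 \circ S_1 \circ \cdots \circ S_{d-1} \circ S_d \, ,
\]
or more economically some enumeration that guarantees every generator $S_i$ and its role in arbitrarily long words eventually appears; the point is that because $\id_X \in S_i$ for all $i$, the product over the first $d$ generators is contained in the product over the first $d+1$ generators, giving an ascending chain whose union is all of $G$. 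I would take $V_d$ to be an irreducible variety surjecting onto this $d$-th product together with the induced morphism to $\Bir(X)$.

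More precisely, first I would fix for each $i$ a morphism $\sigma_i \colon A_i \to \Bir(X)$ with image $S_i$ and irreducible $A_i$, and a point $e_i \in A_i$ with $\sigma_i(e_i) = \id_X$. Then for the $d$-th stage I would form the product variety $A_{i_1} \times \cdots \times A_{i_{m(d)}}$ over an appropriate finite list of indices (an enumeration that is monotone in $d$), and define a map to $\Bir(X)$ by composing the birational transformations, $(v_1, \ldots, v_{m(d)}) \mapsto \sigma_{i_1}(v_1) \circ \cdots \circ \sigma_{i_{m(d)}}(v_{m(d)})$. By Proposition~\ref{Prop.Prel}\eqref{Prop.Prel_mult_inv_cont} the composition map preserves algebraic families, so each such map is a morphism to $\Bir(X)$; its source is irreducible by Proposition~\ref{Prop.Prel}\eqref{Prop.Prel_connected_prod}. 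To actually get a variety $V_d$ I would replace the source by the closure of the image, which is an irreducible closed algebraic subset by Corollary~\ref{Cor.Closure_alg_subset}; alternatively I would use the Stein-type factorization supplied by Corollary~\ref{Cor.Decomp_of_morphism_weak}, so that $V_d$ maps into $\Bir(X)$ via an injective rationally universal morphism. The embeddings $e_i$ let me realize $V_d$ as a closed subvariety of $V_{d+1}$: inserting the base-points $e_i$ into the extra factors of stage $d+1$ embeds the $d$-th product into the $(d+1)$-th, and this is a closed embedding of irreducible varieties, giving the required ind-variety structure $V = \varinjlim V_d$.

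It remains to check that $\rho \colon V \to \Bir(X)$ (defined stagewise and shown to be compatible on overlaps because the insertion of $e_i$'s does not change the composed birational transformation) has image exactly $G$ and restricts to a morphism on each $V_d$. The latter holds by construction. For the image: every element of $G$ is a finite word in the $S_i^{\pm 1}$, but since $\id_X \in S_i$ and the $S_i$ generate $G$ as a \emph{group}, I must also incorporate inverses; here I would either assume the $S_i$ are symmetric (replace $S_i$ by $S_i \cup S_i^{-1}$, which is again an irreducible algebraic subset containing $\id_X$, using that inversion preserves algebraic families by Proposition~\ref{Prop.Prel}\eqref{Prop.Prel_mult_inv_cont} and that the union of two irreducibles sharing the point $\id_X$ can be re-parametrized as a single irreducible source after passing to closures), or enlarge the generating list accordingly. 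With this in place, every group word of length $\leq d$ lies in the image of some $V_{d'}$, so $\rho(V) = G$.

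The main obstacle I anticipate is bookkeeping rather than a deep difficulty: arranging a single monotone enumeration of finite words in the (symmetrized) generators so that (i) the $d$-th product contains the $(d-1)$-th via the base-points, (ii) the union exhausts all of $G$, and (iii) the resulting sources are genuinely \emph{irreducible} varieties closed in one another. Irreducibility of the products is handled by Proposition~\ref{Prop.Prel}\eqref{Prop.Prel_connected_prod}, but handling inverses cleanly — turning $S_i \cup S_i^{-1}$ into an irreducible source — is the one place requiring care, and I would resolve it by taking closures and invoking Corollaries~\ref{Cor.Closure_alg_subset} and~\ref{Cor.Closed_subgroup}\eqref{Cor.Closed_subgroup3} to merge finitely many irreducible algebraic subsets through $\id_X$ into a single irreducible algebraic subset before forming the products.
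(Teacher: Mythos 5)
Your construction follows the paper's own proof: symmetrize the generators, parametrize words in them by products of the sources $A_i$, and use base-point insertions to obtain closed embeddings between successive stages. However, two steps as written would fail. First, you must not define $V_d$ as the closure of the image (nor via the factorization of Corollary~\ref{Cor.Decomp_of_morphism_weak}): the subgroup $G$ is not assumed closed in $\Bir(X)$, so the closure of $S_{i_1} \circ \cdots \circ S_{i_{m(d)}}$ may well contain points outside $G$, and the resulting map would no longer have image equal to $G$ — for the same reason, the rationally universal morphism $\eta \colon U \to \Bir(X)$ produced by Corollary~\ref{Cor.Decomp_of_morphism_weak} has image the whole of $\eta(U)$, which need not lie in $G$. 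Moreover, after replacing the sources by closures of images you lose the base-point closed embeddings $V_d \into V_{d+1}$ that define the ind-structure. The replacement is also unnecessary: the product $A_{i_1} \times \cdots \times A_{i_{m(d)}}$ is already an irreducible variety, and the statement only asks for a map from an ind-variety whose restriction to each $V_d$ is a morphism; there is no requirement that these morphisms be injective or have closed image, so the product varieties themselves serve as the $V_d$ (this is what the paper's ``successive use of this construction'' means).

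Second, $S_i \cup S_i^{-1}$ is in general \emph{not} irreducible, and Corollary~\ref{Cor.Closed_subgroup}\eqref{Cor.Closed_subgroup3}, which you invoke to repair this, is stated for closed subgroups and is therefore not available for your $G$. The correct and simpler symmetrization — the one the paper uses — is to replace $S_i$ by $S_i \circ S_i^{-1}$: this is the image of $A_i \times A_i$ under $(v, v') \mapsto \sigma_i(v) \circ \sigma_i(v')^{-1}$, hence an irreducible algebraic subset by Proposition~\ref{Prop.Prel}\eqref{Prop.Prel_mult_inv_cont} together with the irreducibility of the source; it is symmetric, contains $\id_X$, and contains $S_i \cup S_i^{-1}$ because $\id_X \in S_i \cap S_i^{-1}$. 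This fix is already implicit in your own product machinery, so with these two corrections your argument coincides with the paper's proof.
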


\begin{proof}
	We may assume that $S_i = S_i^{-1}$ by replacing $S_i$ with 
	$S_i \circ S_i^{-1}$. 
	For $i=1, 2$ let
	$\rho_i \colon V_i \to \Bir(X)$ be a morphism from an irreducible variety $V_i$ to $\Bir(X)$
	such that there
	exists $e_i \in V_i$ with $\rho_i(e_i) = \id_X$. Then 
	\[
		\rho \colon V_1 \times V_2 \to \Bir(X) \, , \quad
		(v_1, v_2) \mapsto \rho_1(v_1) \circ \rho(v_2)
	\]
	is a morphism with image $\rho_1(V_1) \circ \rho_2(V_2)$ and 
	the closed embedding $\iota \colon V_1 \to V_1 \times V_2$ given by 
	$v_1 \mapsto (v_1, e_2)$ satisfies 
	$\rho_1 = \rho \circ \iota$. Note that $\id_X$ is contained in the
	image of $\rho$. By a successive use of this construction
	we get our desired map from an ind-variety to $\Bir(X)$.    
\end{proof}

\section{\texorpdfstring{Properties of morphisms to $\Bir(X)$}{Properties of morphisms to Bir(X)}}
\label{Sec.Porperties_of_morphisms}

The main result of this section is a nice parametrization of an open dense subset of every
irreducible closed algebraic subset of $\Bir(X)$, see 
Proposition~\ref{Prop.Nice_Parametrization_closed_alg_subset}. As an application we show among other
things that morphisms map locally closed subsets to locally closed subsets, 
see~Corollary~\ref{Cor.Constructible_images}, we provide a fibre dimension formula, 
see~Corollary~\ref{Cor.Fibre-dimension-formula}, and we prove that the dimension
of a closed algebraic subset of $\Bir(X)$ is equal to its Krull dimension, see~Corollary~\ref{Cor.Krull_dimensionII}.

\medskip

We start with a result which says that two members of 
an algebraic family of birational transformations of $X$ induce the same birational transformation if
they coincide on a certain finite subset of $X$. This enables us to study morphisms
to $\Bir(X)$ via rational maps of varieties. In order to formulate it, we introduce the following
notation: If $\theta$ is an algebraic family of birational transformations of $X$ parametrized by
a variety $V$, then 
$\theta_n \colon V \times X^n \dasharrow V \times X^n$ denotes the diagonal family on 
$X^n$ induced by $\theta$, 
i.e.~$\rho_{\theta_n} \colon V \to \Bir(X^n)$ is given by 
$v \mapsto (\rho_{\theta}(v), \ldots, \rho_{\theta}(v))$ and
\[
	\lociso(\theta_n) = \set{(v, x_1, \ldots, x_n) \in V \times X^n}{(v, x_i) 
	\in \lociso(\theta) \ \textrm{for all $i=1, \ldots, n$}} \, .	
\]

\begin{lemmaAndDefinition}
	\label{Lem.FinitelyManyPoints}
	Let $\theta$ be an algebraic family of birational transformations of $X$ parametrized by a 
	variety $V$. Then there exist $n \geq 1$  and $y \in X^n$ such that 
	$(V \times \{y\}) \cap \lociso(\theta_n) \neq \varnothing$ and the following holds:
	\begin{equation}
		\label{Eq.RamanujamPoint}
		(v, y), (v', y) \in \lociso(\theta_n) \ \textrm{and} \
		\rho_{\theta_n}(v)(y) =  \rho_{\theta_n}(v')(y) \implies 
		\rho_{\theta}(v) = \rho_{\theta}(v') \, .
	\end{equation}
	We call such a point $y \in X^n$ a \emph{Ramanujam point} for $\theta$.
\end{lemmaAndDefinition}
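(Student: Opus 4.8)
The plan is to produce the point $y = (y_1, \dots, y_n)$ by a dimension count on the ``incidence of agreement''. Write $U = \lociso(\theta) \subseteq V \times X$, which surjects onto $V$ by definition of an algebraic family, and abbreviate $\theta_v = \rho_\theta(v)$, writing $\theta_v(x)$ for the second coordinate of $\theta(v,x)$. Using the description of $\lociso(\theta_n)$ recorded before the statement, the condition $(v,y)\in\lociso(\theta_n)$ unwinds to $(v,y_i)\in U$ for all $i$, and $\rho_{\theta_n}(v)(y)=\rho_{\theta_n}(v')(y)$ to $\theta_v(y_i)=\theta_{v'}(y_i)$ for all $i$. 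I would then consider the locally closed set
\[
	W = \set{(v, v', x) \in V \times V \times X}{(v,x), (v',x) \in U, \ \theta_v(x) = \theta_{v'}(x)} \, ,
\]
which is closed in the open subset of $V\times V\times X$ where both evaluations are defined. Let $p \colon W \to V \times V$ and $q \colon W \to X$ be the projections. The crucial geometric input is that two distinct rational maps $X \dashrightarrow X$ agree only on a proper closed subset of their common (dense open) domain, since $X$ is irreducible and separated; hence for a pair with $\theta_v \neq \theta_{v'}$ the fibre $p^{-1}(v,v')$ has dimension $< \dim X$, whereas for $\theta_v = \theta_{v'}$ it has dimension $\dim X$.

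By Chevalley's upper semicontinuity of fibre dimension, the set $W^{\geq \dim X}$ of points of $W$ at which $p$ has local fibre dimension $\geq \dim X$ is closed in $W$; its image $D = p(W^{\geq \dim X}) = \set{(v,v')}{\theta_v = \theta_{v'}}$ is constructible by Chevalley's theorem, so its complement $\bar D$ in $V \times V$ is constructible as well. Set $B \coloneqq W \setminus W^{\geq \dim X}$, an open subset of $W$ all of whose $p$-fibres have dimension $< \dim X$, and note that over $\bar D$ one has $W = B$.

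Next I would choose $y_1, \dots, y_n$ one at a time, with $n = \dim \bar D + 1 \leq 2\dim V + 1$. Having chosen $y_1, \dots, y_{k-1}$, put
\[
	T_{k-1} = \set{(v,v') \in \bar D}{(v, v', y_i) \in B \ \textrm{for } i = 1, \dots, k-1} \, ,
\]
a constructible subset of $V \times V$ with $T_0 = \bar D$, and let $J_{k-1} = \set{(v,v',x) \in B}{(v,v') \in T_{k-1}}$, whose $p$-fibres still have dimension $< \dim X$, so $\dim J_{k-1} \leq \dim T_{k-1} + \dim X - 1$. Applying the fibre-dimension theorem to $q \colon J_{k-1} \to X$ (component by component, and taking $y_k$ outside $\overline{q(J_{k-1})}$ if $q$ is not dominant), a general $y_k \in X$ satisfies $\dim q^{-1}(y_k) \leq \dim J_{k-1} - \dim X \leq \dim T_{k-1} - 1$; since $q^{-1}(y_k)$ maps bijectively onto $T_k$ via $(v,v',y_k)\mapsto(v,v')$, this gives $\dim T_k \leq \dim T_{k-1} - 1$, and after $n$ steps $T_n = \varnothing$. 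To arrange the nonemptiness of $(V \times \{y\}) \cap \lociso(\theta_n)$, I would fix a general $v_0 \in V$, for which $U_{v_0} \coloneqq \set{x \in X}{(v_0, x) \in U}$ is dense open in $X$, and additionally require $y_k \in U_{v_0}$ at each step; this is compatible with generality, and then $(v_0, y) \in \lociso(\theta_n)$.

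Finally, if $(v, y), (v', y) \in \lociso(\theta_n)$ and $\rho_{\theta_n}(v)(y) = \rho_{\theta_n}(v')(y)$, then $(v,v',y_i) \in W$ for all $i$; were $\theta_v \neq \theta_{v'}$ we would have $(v,v') \in \bar D$ with $(v,v',y_i) \in B$ for all $i$, i.e.\ $(v,v') \in T_n = \varnothing$, a contradiction. Hence $\theta_v = \theta_{v'}$, so $y$ is a Ramanujam point. I expect the main obstacle to be making the dimension-drop rigorous across the constructible, non-closed loci $T_k$ and handling the domains of definition uniformly; the fact that carries the whole argument is that over $\bar D$ the agreement incidence has fibre dimension strictly below $\dim X$, which is precisely what allows a single general point of $X$ to cut the ``bad-pair'' parameter space by one dimension.
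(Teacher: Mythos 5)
Your argument is correct, but it proceeds by a genuinely different mechanism than the paper's proof. The paper works entirely with closed subsets of $V \times V$: for each $x \in X$ it defines the set $S_x$ of pairs $(v,v')$ that agree at $x$ whenever both are defined there, observes that each $S_x$ is closed in $V^2$ and that $\bigcap_{x \in X_0} S_x = S \coloneqq \set{(v,v')}{\rho_\theta(v) = \rho_\theta(v')}$ for any dense $X_0 \subseteq X$ (this uses the same key fact as yours, namely that two distinct rational maps agree only on a nowhere dense closed subset of their common domain), and then extracts finitely many points $y_1, \dots, y_n$ with $S_{y_1} \cap \cdots \cap S_{y_n} = S$ purely from the Noetherian descending chain condition on closed subsets of $V^2$. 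You instead build the incidence set $W \subseteq V \times V \times X$, separate the ``equal'' from the ``unequal'' pairs via upper semicontinuity of fibre dimension and Chevalley's constructibility theorem, and run an induction in which each general point of $X$ cuts the dimension of the bad-pair locus by one. Both routes are sound; the trade-off is that your dimension count yields an explicit bound $n \leq 2\dim V + 1$ which the paper's Noetherian argument does not provide, while the paper's argument is considerably shorter and avoids the machinery you need (semicontinuity of fibre dimension, constructibility of images, and the bookkeeping of generic fibre dimensions over the non-closed loci $T_k$, which you correctly flag as the delicate part). One cosmetic point: your notation $\bar D$ for the complement of $D$ invites confusion with closure, and you should note the trivial edge case where all members of the family coincide (so that your bad locus is empty and one must still take $n \geq 1$); also, your handling of the nonemptiness condition $(V \times \{y\}) \cap \lociso(\theta_n) \neq \varnothing$ by forcing the $y_k$ into a fixed dense open slice $U_{v_0}$ is exactly the right fix and is, in effect, what the paper does by choosing its points in a dense subset of $X$ compatible with $\lociso(\theta)$.
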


In case the algebraic family $\theta$ is an isomorphism, this can be found in 
\cite[Lemma~1]{Ra1964A-note-on-automorp}. The Ramanujamn point $y = (y_1, \ldots, y_n) \in X^n$ 
induces a rational orbit map $\lambda \colon V \dashrightarrow X^n$, and we have the following
commutative diagram
\[
	\xymatrix@R=15pt{
		V_0  \ar[rrrrd]^-{\lambda} \ar[d]_-{\rho_\theta} \\
		\rho_{\theta}(V_0) \ar[rrrr]_-{\varphi \mapsto (\varphi(y_1), \ldots, \varphi(y_n))} &&&&  X^n 
	}
\]
for an open dense subset $V_0 \subseteq V$, where the horizontal map is injective. 

\begin{proof}[Proof of Lemma~\ref{Lem.FinitelyManyPoints}]
	Let $S \subseteq V \times V$ be the set of those $(v, v')$ such that
	$\rho_{\theta}(v) = \rho_{\theta}(v')$. Using that the diagonal is closed
	in $\Bir(X)^2$ (see Proposition~\ref{Prop.Prel}\eqref{Prop.Prel_diagonal}) we obtain that $S$
	is closed in $V^2$. For $x \in X$, let us consider:
	\[
		S_x \coloneqq \set{(v, v') \in V^2}
		{(v, x), (v', x) \in \lociso(\theta) \ \implies \ 
		\rho_{\theta}(v)(x) =  \rho_{\theta}(v')(x) } \, .
	\]
	Then the set $S_x$ is closed in $V^2$, as 
	it consist of the complement in $V^2$ of the open subset 
	$U \coloneqq (\lociso(\theta) \cap (V \times \{x\}))^2$ and a closed subset in $U$.
	Moreover, note that
	\[
		\bigcap_{x \in X_0} S_x = S
	\]
	for every dense subset $X_0 \subseteq X$. Hence, 
	we may find points $y_1, \ldots, y_n$ in the projection of $\lociso(\theta)$ to $X$ 
	such that $S_{y_1} \cap \ldots \cap S_{y_n} = S$
	and thus $y = (y_1, \ldots, y_n) \in X^n$ is a Ramanujam point for $\theta$.
\end{proof}

\begin{proposition}
	\label{Prop.Nice_Parametrization_closed_alg_subset}
	Let $Z \subseteq \Bir(X)$ be a closed, irreducible, algebraic subset.
	Then there exists an irreducible $W$, a 
	closed morphism $\rho \colon W \to \Bir(X)$ with $\rho(W) = Z$ and 
	an open dense subset $W' \subseteq W$ such that $\rho |_{W'}$ decomposes as
	\[
		\rho^{-1}(\rho(W')) = W' \xrightarrow{\lambda} U 
		\xrightarrow{\eta} \rho(W') \stackrel[\substack{\textrm{open} \\ \textrm{dense}}]{}{\subseteq} Z \, ,
	\]	
	where $\lambda$ is a finite, flat, surjective morphism (of varieties) and 
	$\eta \colon U \to \Bir(X)$ is a rationally universal morphism that induces a homeomorphism 
	$U \to \rho(W')$.
\end{proposition}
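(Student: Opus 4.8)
The plan is to combine the Hilbert-scheme decomposition of Corollary~\ref{Cor.Decomp_of_morphism_weak} with the Ramanujam-point technique of Lemma~\ref{Lem.FinitelyManyPoints}, and then to cover all of $Z$ by a closed, generically finite parametrization coming from the $H_d$. First I would reduce to the case that $X$ is projective: conjugating by a birational map $X \dashrightarrow X'$ to a projective model yields a group isomorphism $\Bir(X) \to \Bir(X')$ that preserves algebraic families, hence a homeomorphism carrying closed irreducible algebraic subsets to closed irreducible algebraic subsets, so the decomposition transfers back. Assuming $X$ projective, I choose an irreducible variety $V_0$ and a morphism $\rho_0 \colon V_0 \to \Bir(X)$ with $\overline{\rho_0(V_0)} = Z$ (take an irreducible component dominating $Z$ of a parametrization of the algebraic subset $Z$). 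Applying Corollary~\ref{Cor.Decomp_of_morphism_weak} to $\rho_0$ gives a dominant rational map $\lambda_0 \colon V_0 \dashrightarrow U$ and an injective, rationally universal morphism $\eta \colon U \to \Bir(X)$ with $\rho_0 = \eta \circ \lambda_0$, where $U$ is an irreducible locally closed subvariety of some $\Hilb_p$ and $\eta = \iota_p|_U$ (Corollary~\ref{Cor.Universal_property_Hilb}). Since $\lambda_0$ is dominant, $\overline{\eta(U)} = Z$.

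The crucial and most delicate step is to show that, after shrinking, $\eta$ is a homeomorphism onto an open dense subset of $Z$; this is the step that controls the Zariski topology of $\Bir(X)$ by the variety topology of $U$, and I expect it to be the main obstacle. Here I would invoke Lemma~\ref{Lem.FinitelyManyPoints} to choose a Ramanujam point $y = (y_1, \ldots, y_n) \in X^n$ for the family defining $\eta$, and consider the orbit map $\omega \colon U \dashrightarrow X^n$, $u \dashmapsto (\eta(u)(y_1), \ldots, \eta(u)(y_n))$. On the open dense locus $U_0 \subseteq U$ where $(u, y) \in \lociso$, the map $\omega$ is a morphism of varieties and, by the defining property of a Ramanujam point, injective; thus $\omega|_{U_0}$ identifies $U_0$ with a locally closed subset of $X^n$. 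The point is then that evaluation at $y$ is continuous on families landing in $\eta(U_0)$: for any morphism $\sigma \colon A \to \Bir(X)$ with image in $\eta(U_0)$ the orbit map $a \mapsto (\sigma(a)(y_1), \ldots, \sigma(a)(y_n))$ is a morphism $A \to X^n$, since $y$ lies in the domain of every member of $\eta(U_0)$. Feeding this into the description of the Zariski topology (Corollary~\ref{Cor.Inductive-limit-topology}) shows that $\eta \colon U_0 \to \eta(U_0)$ is a homeomorphism and that $\eta(U_0)$ is open and dense in $Z$; I set $\rho(W') := \eta(U_0)$.

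To obtain a closed morphism onto \emph{all} of $Z$, I would use that $Z \subseteq \pi_D(H_D)$ for a single $D$: since $Z$ is the image of a morphism, Lemma~\ref{Lem.Family_factorizing_through_H_d} together with the increasing filtration $\pi_1(H_1) \subseteq \pi_2(H_2) \subseteq \cdots$ and quasi-compactness force the image into one $\pi_D(H_D)$. The fibres of $\pi_D$ are projective spaces (Lemma~\ref{Lem.fibres_of_pi_d}), so over the locus of constant generic fibre dimension I intersect $\pi_D^{-1}(Z)$ with the appropriate number of general hyperplanes of the ambient projective space and take an irreducible component $W$ dominating $Z$. Then $\rho := \pi_D|_W \colon W \to \Bir(X)$ is closed (a closed subset restriction of the closed map $\pi_D$, Corollary~\ref{Cor.Inductive-limit-topology}), surjects onto $Z$, and is generically finite onto $Z$.

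Finally I set $W' := \rho^{-1}(\eta(U_0))$, which is open and hence saturated, so $W' = \rho^{-1}(\rho(W'))$. Since $\rho|_{W'}$ has image in $\eta(U_0) \subseteq \eta(U)$, rational universality of $\eta$ produces a unique rational map $\lambda \colon W' \dashrightarrow U$ with $\eta \circ \lambda = \rho|_{W'}$; because $\eta$ is injective, $\lambda = \eta^{-1} \circ \rho|_{W'}$ set-theoretically, so its fibres coincide with those of $\rho|_{W'}$ and are finite by the hyperplane-section construction. Using the embedding $\omega|_{U_0}$, the composite $\text{ev}_y \circ \rho|_{W'} \colon W' \to X^n$ is a morphism landing in $\omega(U_0)$, whence $\lambda = (\omega|_{U_0})^{-1} \circ \text{ev}_y \circ \rho|_{W'}$ is a morphism of varieties; it is proper with finite fibres, hence finite, and after shrinking $U$ (and $W'$ accordingly) to a smooth locus, generic flatness together with miracle flatness make $\lambda$ finite, flat and surjective. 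This yields the asserted decomposition
\[
	\rho^{-1}(\rho(W')) = W' \xrightarrow{\lambda} U \xrightarrow{\eta} \rho(W') \subseteq Z \, ,
\]
with the topological control of the second paragraph being the essential difficulty.
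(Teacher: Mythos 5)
There is a genuine gap, and it sits exactly where you predicted the main obstacle would be: the claim that $\eta$ restricts to a homeomorphism onto an \emph{open} dense subset of $Z$. Your argument via the Ramanujam point breaks in two places. First, the continuity claim is unjustified: for an arbitrary morphism $\sigma \colon A \to \Bir(X)$ with image in $\eta(U_0)$, the fact that $y$ lies in the domain of every individual member $\sigma(a)$ does \emph{not} imply $A \times \{y\} \subseteq \lociso(\theta_\sigma)$ (or even the domain) of the family $\theta_\sigma$ attached to $\sigma$ --- the fibre of the family's lociso over $a$ can be strictly smaller than $\lociso(\sigma(a))$. Ramanujam points are only usable after shrinking the parameter space of one \emph{given} family (as in Lemma~\ref{Lem.FinitelyManyPoints} and its uses), and this shrinking is not available when you must test closedness against \emph{all} morphisms, as the definition of the Zariski topology requires; rational universality only repairs things on a dense open subset of $A$, which is useless for verifying closedness. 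Second, even granting continuity of evaluation, your test morphisms have image inside $\eta(U_0)$, so at best you control the subspace topology of $\eta(U_0)$; openness of $\eta(U_0)$ in $Z$ requires testing against morphisms with image in $Z$, whose members need not be defined at $y$ at all, and nothing in your argument addresses them. (A further soft spot: an injective morphism $\omega|_{U_0} \colon U_0 \to X^n$ need not be a topological embedding onto a locally closed set --- think of Frobenius in characteristic $p$ --- so even the subspace-topology step and your later use of $(\omega|_{U_0})^{-1}$ are gapped; likewise ``proper with finite fibres, hence finite'' for $\lambda$ is asserted without any properness argument.)

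The paper's proof avoids evaluation maps entirely and gets both the openness and the homeomorphism from a minimality trick combined with the closedness of $\pi_d$. Choose $d$ with $Z \subseteq \pi_d(H_d)$ (as you did) and take $W \subseteq H_d$ closed irreducible with $\pi_d(W) = Z$ and \emph{minimal} with this property, i.e.\ $\rho(A) \neq Z$ for every proper closed $A \subsetneq W$; then $\rho = \pi_d|_W$ is closed by Corollary~\ref{Cor.Inductive-limit-topology}. Corollary~\ref{Cor.Decomp_of_morphism_weak} gives $\rho|_{W'} = \eta \circ \lambda$ on a dense open $W'$, and minimality does all the remaining work: general fibres of $\lambda$ are finite (otherwise a proper closed $W'' \subsetneq W'$ would dominate $U$, so $\overline{W''}$ would map onto $Z$, contradicting minimality --- this replaces your hyperplane-section construction), so after shrinking $U$ one makes $\lambda$ finite, flat and surjective; and $\rho(W \setminus W')$ is a \emph{proper closed} subset of $Z$, so replacing $U$ by $\eta^{-1}(Z \setminus \rho(W \setminus W'))$ and $W'$ by $\lambda^{-1}(U)$ makes $\eta(U) = \rho(W') = Z \setminus \rho(W \setminus W')$ open in $Z$ and $W' = \rho^{-1}(\rho(W'))$ saturated. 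The homeomorphism is then free: for closed $C \subseteq U$ one has $\eta(C) = \rho(\lambda^{-1}(C))$, which is closed in $\rho(W')$ because $\rho$ is a closed map and $W'$ is saturated; a continuous closed bijection is a homeomorphism. Your reduction to projective $X$ and your use of Corollary~\ref{Cor.Decomp_of_morphism_weak} match the paper, but the topological heart of the proof needs to be replaced by this minimality-plus-closedness argument.
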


For the proof and also for future use we state the following consequence of Noether's 
normalization theorem:

\begin{lemma}[{\cite[Theorem~3.4.1]{Kr2016Algebraic-transfor}}]
	\label{Lem.Kraft}
	Let $X$, $Y$ be affine irreducible varieties and let $f \colon X \to Y$ be a dominant
	morphism. Then there exists $h \in \kk[Y]$ and a finite morphism
	$\rho \colon X_h \to Y_h \times \AA^d$ such that the following diagram
	commutes
	\[
	\xymatrix@R=15pt{
		X_h \ar[rd]_-{f} \ar[r]^-{\rho} & Y_h \times \AA^d \ar[d]^-{(y, v) \mapsto y} \\
		& Y_h \, ,
	}
	\]
	where $d = \dim X- \dim  Y$. \qed
\end{lemma}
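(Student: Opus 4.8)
The plan is to deduce this relative Noether normalization from the classical (absolute) Noether normalization applied over the function field of $Y$, followed by a standard spreading-out argument. Set $A = \kk[Y]$ and $B = \kk[X]$. Since $X$ and $Y$ are irreducible and $f$ is dominant, the comorphism $f^* \colon A \to B$ is injective, so I regard $A \subseteq B$ as an inclusion of finitely generated integral $\kk$-domains; write $B = A[b_1, \ldots, b_m]$. Let $K = \kk(Y)$ be the fraction field of $A$, and let $S = A \setminus \{0\}$.

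First I would pass to the generic fibre. The localization $B \otimes_A K = S^{-1}B$ is a finitely generated $K$-algebra which is again a domain (a localization of a domain), with fraction field $\kk(X)$. Its Krull dimension equals $\operatorname{trdeg}_K \kk(X) = \dim X - \dim Y = d$. By Noether normalization over the field $K$, there are elements $t_1, \ldots, t_d \in B \otimes_A K$, algebraically independent over $K$, such that $B \otimes_A K$ is a finite module over the polynomial ring $K[t_1, \ldots, t_d]$. Since every element of $S$ is a unit in $B \otimes_A K$, after rescaling each $t_i$ by a suitable element of $K^\times$ I may assume $t_1, \ldots, t_d \in B$, which changes neither the algebraic independence nor the polynomial subring $K[t_1, \ldots, t_d]$.

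Next comes the spreading-out step. Each generator $b_j$ is integral over $K[t_1, \ldots, t_d]$, satisfying a monic equation whose finitely many coefficients lie in $K[t_1, \ldots, t_d]$; collecting the denominators (elements of $A$) of all these coefficients over all $j$, let $h \in A$ be their product. Then each $b_j$ is integral over $A_h[t_1, \ldots, t_d]$ inside $B_h = B \otimes_A A_h$, so $B_h$ is integral and finitely generated as an algebra over $A_h[t_1, \ldots, t_d]$, hence a finite module over it. As $t_1, \ldots, t_d$ remain algebraically independent over $A_h$, the ring $A_h[t_1, \ldots, t_d]$ is genuinely $\kk[Y_h \times \AA^d]$. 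Taking $\Spec$, the finite ring map yields a finite morphism $\rho \colon X_h \to Y_h \times \AA^d$ (with $X_h = f^{-1}(Y_h) = \Spec B_h$), and the required triangle commutes because the composite inclusion $A_h \hookrightarrow A_h[t_1, \ldots, t_d] \hookrightarrow B_h$ is precisely the localization of $f^*$.

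I expect the only real content to be the spreading-out step: upgrading module-finiteness over the generic fibre $K[t_1, \ldots, t_d]$ to module-finiteness over $A_h[t_1, \ldots, t_d]$ for a single $h$. This is routine once one observes that only finitely many integral equations (one per generator $b_j$), each with finitely many coefficients, are involved, so a single common denominator $h$ suffices; the minor accompanying points are that the $t_i$ can be taken in $B$ (rescale by units of $K$) and that algebraic independence over $K$ passes to $A_h$.
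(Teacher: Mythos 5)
Your proof is correct. The paper states this lemma without proof, citing Kraft's Theorem~3.4.1, and your argument---Noether normalization over the function field $K = \kk(Y)$ for the generic fibre $B \otimes_A K$, rescaling the $t_i$ into $B$ by units of $K$, and then clearing the finitely many denominators of the integral equations of the generators $b_j$ to obtain module-finiteness of $B_h$ over $A_h[t_1, \ldots, t_d]$---is precisely the standard proof of the cited result, with all the delicate points (injectivity of $B_h \hookrightarrow S^{-1}B$ so the integral equations descend, and algebraic independence over $A_h$ inherited from $K$) handled correctly.
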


\begin{proof}[Proof of Proposition~\ref{Prop.Nice_Parametrization_closed_alg_subset}]
	By Corollary~\ref{Cor.Inductive-limit-topology}, there exists an integer $d$ with
	$Z \subseteq \pi_d(H_d)$. So we restrict $\pi_d$ to find an irreducible $W \subseteq H_d$ 
	and a closed morphism $\rho \colon W \to \Bir(X)$ with image equal to $Z$. 
	Up to replacing $W$ by a smaller closed irreducible subset, 
	we have that $\rho(A) \neq Z$ for all proper closed subsets $A \subsetneq W$.

	By Corollary~\ref{Cor.Decomp_of_morphism_weak} there is an open 
	dense subset $W' \subseteq W$, a dominant morphism $\lambda \colon W' \to U$,
	and an injective rationally universal 
	morphism $\eta \colon U \to \Bir(X)$ with $\rho |_{W'} = \eta \circ \lambda$.
	Note that general fibres of $\lambda$ are finite. Indeed, otherwise
there is a closed irreducible proper subset
	$W'' \subsetneq W'$ such that $\lambda(W'')$ is dense in $U$ 
	(e.g.~by Lemma~\ref{Lem.Kraft}); but this implies that $A \coloneqq \overline{W''}$
	is a proper closed subset of $W$ with $\rho(A) = Z$, contradiction.
	
	After shrinking $U$ (and replacing $W'$ by
	$\lambda^{-1}(U)$) we can assume that $\lambda$ is surjective, flat, and finite. Moreover,
	by assumption, $\rho(W \setminus W')$ is a proper closed subset of $Z$.
	After replacing $U$ by $\eta^{-1}(Z \setminus \rho(W \setminus W'))$ 
	(and $W'$ by $\lambda^{-1}(U)$) we obtain that $\eta(U) = \rho(W')$
	is open and dense in $Z$ and $W' = \rho^{-1}(\rho(W'))$. As
	$\rho \colon W \to Z$ is closed, this implies that
	\[
		\rho |_{W'} \colon W' = \rho^{-1}(\rho(W')) \xlongrightarrow{\lambda} U 
		\xlongrightarrow[\textrm{bij.}]{\eta} \rho(W')	= \eta(U)
	\]
	is closed as well. As a consequence, $\eta \colon U \to \eta(U)$ 
	is closed, and thus a homeomorphism.
\end{proof}

\begin{corollary}
	\label{Cor.Constructible_images}
	For every morphism to $\Bir(X)$, the image of a constructible set is again constructible. 
	In particular, every algebraic subset $Z \subseteq \Bir(X)$ contains a subset 
	that is open and dense in the closure $\overline{Z}$. 
\end{corollary}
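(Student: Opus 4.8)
The plan is to prove the two assertions of Corollary~\ref{Cor.Constructible_images} in sequence, deriving the constructibility statement from the ``open dense subset'' statement together with Noetherian-type induction via the filtration of $\Bir(X)$. Let me sketch both.

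\medskip

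\textbf{The open dense subset in $\overline{Z}$.} First I would reduce the ``In particular'' claim to the case of a closed irreducible algebraic subset. Given an algebraic subset $Z \subseteq \Bir(X)$, its closure $\overline{Z}$ is again algebraic by Corollary~\ref{Cor.Closure_alg_subset}. Writing $Z = \rho(V)$ for a morphism $\rho \colon V \to \Bir(X)$, I may assume $V$ is irreducible (replace $V$ by one of its components; there are finitely many and their images cover $Z$, and it suffices to find an open dense subset of $\overline{Z}$ contained in the image of the dominant component). Then $\overline{Z} = \overline{\rho(V)}$ is irreducible. Now apply Proposition~\ref{Prop.Nice_Parametrization_closed_alg_subset} to the closed irreducible algebraic set $\overline{Z}$: it yields a closed morphism $\rho' \colon W \to \Bir(X)$ with $\rho'(W) = \overline{Z}$ and an open dense $W' \subseteq W$ with $\rho'(W') =: O$ open and dense in $\overline{Z}$. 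The task is to check $O \subseteq Z$, or at least that $Z$ contains \emph{some} open dense subset of $\overline{Z}$. Here I would use the rational universality of $\eta \colon U \to \Bir(X)$ from the Proposition: since $\rho \colon V \to \Bir(X)$ has image inside $\overline{Z} = \rho'(W) \supseteq \eta(U)$, the universal property produces a rational map $V \dashrightarrow U$ through which $\rho$ factors generically, forcing $\rho(V)$ to contain an open dense subset of $\eta(U) = O$; since this open dense subset of $O$ is open dense in $\overline{Z}$, we are done.

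\medskip

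\textbf{Constructibility of images.} For the main statement, let $\rho \colon V \to \Bir(X)$ be a morphism and $C \subseteq V$ a constructible subset. Since a constructible set is a finite union of locally closed sets and images commute with unions, it suffices to treat $C$ locally closed, and then (restricting $\rho$ to $\overline{C}$ and intersecting with the open set cutting out $C$) to treat the case $C = V$ with $V$ a variety; further decomposing into irreducible components reduces to $V$ irreducible. So I must show $\rho(V)$ is constructible for irreducible $V$. Set $Z = \rho(V)$, an irreducible algebraic subset, and $\overline{Z}$ its closure. By the first part, $\overline{Z}$ contains an open dense $O \subseteq Z$. The complement $\overline{Z}\setminus O$ is a proper closed subset of $\overline{Z}$; its preimage $\rho^{-1}(\overline{Z}\setminus O)$ is closed in $V$ (morphisms are continuous), hence a proper closed subset $V_1 \subsetneq V$, and $\rho(V_1) = Z \setminus O$ (intersected appropriately). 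Thus $Z = O \sqcup \rho(V_1)$ with $\dim \overline{\rho(V_1)} < \dim \overline{Z}$, the latter because $\rho(V_1) \subseteq \overline{Z}\setminus O$ is not dense.

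\medskip

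\textbf{Terminating the induction.} I would now run a Noetherian induction on $\dim \overline{Z}$: having written $Z = O \cup \rho(V_1)$ with $O$ locally closed and $\overline{\rho(V_1)}$ of strictly smaller dimension, the inductive hypothesis applies to $\rho|_{V_1}$ (after decomposing $V_1$ into finitely many irreducible components, each mapping into an algebraic subset of strictly smaller-dimensional closure), giving that $\rho(V_1)$ is constructible; hence $Z$ is constructible. The base case is $\dim \overline{Z} = 0$, where $\overline{Z}$ is a single point and $Z$ is obviously constructible. \textbf{The main obstacle} I anticipate is the verification in the first part that the open dense subset produced by Proposition~\ref{Prop.Nice_Parametrization_closed_alg_subset} actually lies inside $Z = \rho(V)$ rather than merely inside $\overline{Z}$; this is exactly where the rational universality of $\eta$ must be invoked to factor the given morphism $\rho$ through $U$ up to a dense open set, and care is needed to track which open dense subsets survive when passing between $V$, $U$, and $\overline{Z}$. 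Once that factorization is in hand the dimension-counting induction is routine.
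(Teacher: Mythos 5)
Your overall strategy is essentially the paper's: use Proposition~\ref{Prop.Nice_Parametrization_closed_alg_subset} together with rational universality of $\eta \colon U \to \Bir(X)$ to produce a subset $O \subseteq \rho(V)$ that is open and dense in $\overline{\rho(V)}$, then remove its preimage and induct. On the point you yourself flag as delicate: as written, your invocation of the universal property is misstated, since rational universality of $\eta$ applies only to morphisms with image inside $\eta(U)$, not merely inside $\overline{Z}$. The repair is the one the paper makes: restrict $\rho$ to the open (hence irreducible, nonempty by density of $\rho(V)$) subset $\rho^{-1}(\eta(U)) \subseteq V$, whose image is $\rho(V) \cap \eta(U) \subseteq \eta(U)$; then the resulting rational map $f \colon \rho^{-1}(\eta(U)) \dashrightarrow U$ is dominant because $\rho(V) \cap \eta(U)$ is dense in $\eta(U)$ and $\eta$ is a homeomorphism onto $\eta(U)$, and classical Chevalley applied to $f$ yields an open dense $U_0 \subseteq U$ inside its image, whence $O = \eta(U_0) \subseteq \rho(V)$. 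So this part is correct in spirit and fixable exactly as you anticipated.

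The genuine gap is in your termination argument. You induct on $\dim \overline{Z}$ and justify $\dim \overline{\rho(V_1)} < \dim \overline{Z}$ by saying $\rho(V_1) \subseteq \overline{Z} \setminus O$ ``is not dense.'' Non-density only gives the proper inclusion $\overline{\rho(V_1)} \subsetneq \overline{Z}$; since dimension in $\Bir(X)$ is Ramanujam's (a supremum over injective morphisms), strict monotonicity of dimension under proper inclusion of closed irreducible algebraic subsets is \emph{not} formal --- it is precisely Corollary~\ref{Cor.Krull_dimensionI}, which appears after this corollary and whose proof requires the fibre-dimension machinery (Corollary~\ref{Cor.Fibre-dimension-formula} via Remark~\ref{Rem.Nice_parametrization}, and Corollary~\ref{Cor.Image_morphism_finite_diml}). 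Citing it would in fact be non-circular, as none of those results use constructibility of images, but as written your step is unjustified, and a blind argument cannot simply assert it. The paper sidesteps the issue entirely by inducting on $\dim V$ instead: $V_1 = V \setminus \rho^{-1}(O)$ is a proper closed subset of the irreducible variety $V$, so $\dim V_1 < \dim V$ is classical, and no dimension theory on $\Bir(X)$ is needed. (A minor further looseness: in your reduction of the ``in particular'' claim you assume a single dominant component, i.e.\ $\overline{Z}$ irreducible; for reducible $Z$ one should instead deduce the claim from constructibility of $\rho(V)$ applied componentwise, using Corollary~\ref{Cor.Closure_alg_subset}.)
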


\begin{proof}
	It is enough to show for an irreducible $V$
	and a morphism $\rho \colon V \to \Bir(X)$ that $\rho(V)$
	is constructible in $\Bir(X)$. We proceed by induction on $\dim V$, where
	the case $\dim V = 0$ is clear. Recall that $\overline{\rho(V)}$
	is an irreducible closed algebraic subset of $\Bir(X)$, see Corollary~\ref{Cor.Closure_alg_subset}.

	\begin{claim}
		\label{Claim.open_subset}
		There exists a subset $O \subseteq \rho(V)$ such that $O$ is dense and open in 
		$\overline{\rho(V)}$.
	\end{claim}

	\begin{proof}
		By Proposition~\ref{Prop.Nice_Parametrization_closed_alg_subset} we get
		an injective rationally universal morphism $\eta \colon U \to \Bir(X)$ such that 
		$\eta(U)$ is open and dense in $\overline{\rho(V)}$ and $\eta$
		induces a homeomorphism $U \to \eta(U)$. Note that the morphism
		\[
			\rho' \coloneqq \rho |_{\rho^{-1}(\eta(U))} \colon \rho^{-1}(\eta(U)) \to 
			\Bir(X)
		\]
		has as image the set $\rho(V) \cap \eta(U)$.
		By the rational universality of $\eta$, we find a dominant rational map
		$f \colon \rho^{-1}(\eta(U)) \dashrightarrow U$ with $\eta \circ f = \rho'$.
		Since $\rho(V) \cap \eta(U)$ is dense in $\eta(U)$ and $\eta \colon U \to \eta(U)$
		is a homeomorphism, it follows that $f$ is dominant. Hence,
		$U$ contains an open dense subset $U_0$ that lies in the image of $f$.
		Then $Z = \eta(U_0)$ is our desired subset.
	\end{proof}

	Let $O \subseteq \rho(V)$ be the subset of Claim~\ref{Claim.open_subset}.
	Then $V \setminus \rho^{-1}(O)$ is a proper, closed subset of $V$.
	By induction, $\rho(V \setminus \rho^{-1}(O)) = \rho(V) \setminus O$ is constructible in 
	$\Bir(X)$. As $O$ is constructible in $\Bir(X)$ the statement follows.
\end{proof}

\begin{corollary}
	\label{Cor.Image_morphism_finite_diml}
	If $\rho \colon  V \to \Bir(X)$ is a morphism and $V$ is an irreducible variety, then
	$\dim \overline{\rho(V)} \leq \dim V$. 
\end{corollary}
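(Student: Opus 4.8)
The plan is to reduce the statement to the dimension theory of ordinary varieties by exploiting the nice parametrization from Proposition~\ref{Prop.Nice_Parametrization_closed_alg_subset}, together with the Ramanujam point machinery of Lemma~\ref{Lem.FinitelyManyPoints}. The key point is that $\overline{\rho(V)}$ is a closed irreducible algebraic subset of $\Bir(X)$ (by Corollary~\ref{Cor.Closure_alg_subset}), so all the structural results of this section apply to it.

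First I would apply Proposition~\ref{Prop.Nice_Parametrization_closed_alg_subset} to $Z \coloneqq \overline{\rho(V)}$. This produces an irreducible variety $W$, a closed morphism $\sigma \colon W \to \Bir(X)$ with $\sigma(W) = Z$, and an open dense $W' \subseteq W$ with a factorization $W' \xrightarrow{\lambda} U \xrightarrow{\eta} \sigma(W')$, where $\lambda$ is finite flat surjective and $\eta$ induces a homeomorphism $U \to \sigma(W')$ onto an open dense subset of $Z$. In particular $\dim U = \dim W' = \dim W$, and $U$ is an ordinary variety that maps homeomorphically onto a dense open piece of $Z$; hence it is natural to declare $\dim U$ to be the dimension of $Z$ as a variety, so that $\dim \overline{\rho(V)} = \dim U$.

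The heart of the argument is then to bound $\dim U$ by $\dim V$. Since $\eta$ is rationally universal and $\rho(V) \cap \eta(U)$ is dense in $\eta(U)$, the composition $\rho$ restricted over $\eta(U)$ factors (up to the homeomorphism $\eta$) through a \emph{dominant} rational map $V \dashrightarrow U$: concretely, restricting $\rho$ to $\rho^{-1}(\eta(U))$ and invoking the rational universality of $\eta$ (exactly as in the proof of Claim~\ref{Claim.open_subset} inside Corollary~\ref{Cor.Constructible_images}) yields a dominant rational map $f \colon V \dashrightarrow U$ with $\eta \circ f = \rho|_{\rho^{-1}(\eta(U))}$. A dominant rational map of irreducible varieties cannot increase dimension, so $\dim U \leq \dim V$, which is the desired inequality.

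The main obstacle I anticipate is making the identification $\dim \overline{\rho(V)} = \dim U$ precise and consistent with whatever notion of dimension is in force for algebraic subsets of $\Bir(X)$ (the Ramanujam-style $\dim$ of the definition, versus the topological Krull dimension addressed later in Corollary~\ref{Cor.Krull_dimensionII}). If one reads $\dim$ as the Ramanujam dimension, then the inequality $\dim \overline{\rho(V)} \leq \dim V$ actually follows more directly: the homeomorphism $\eta \colon U \to \eta(U)$ together with the Ramanujam point of Lemma~\ref{Lem.FinitelyManyPoints} realizes $U$ (or a dense open subset) as parametrizing an injective morphism into $Z$, while dominance of $f$ forces $\dim U \le \dim V$; one then only needs that the Ramanujam dimension of $Z$ is computed by such a nice parametrization $\eta$, which is precisely the content of Proposition~\ref{Prop.Nice_Parametrization_closed_alg_subset}. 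Either way the technical care lies in passing cleanly between the parametrizing varieties $W'$, $U$, $V$ and the subset $Z$ of $\Bir(X)$, keeping track of open dense loci; the geometric content reduces entirely to the elementary fact that dominant rational maps do not raise dimension.
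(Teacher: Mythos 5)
Your overall architecture is sound, but there is one genuine gap, and it sits exactly where you yourself flagged it: the inequality $\dim \overline{\rho(V)} \leq \dim U$. The dimension of $Z = \overline{\rho(V)}$ is by definition the supremum of $\dim T$ over \emph{all} injective morphisms $\tau \colon T \to \Bir(X)$ with $\tau(T) \subseteq Z$; the homeomorphism $\eta \colon U \to \eta(U)$ exhibits just one such morphism and therefore only gives $\dim Z \geq \dim U$ --- the wrong direction. Your claim that the identification $\dim Z = \dim U$ is ``precisely the content of Proposition~\ref{Prop.Nice_Parametrization_closed_alg_subset}'' is not correct: that proposition asserts nothing about the Ramanujam dimension of $Z$. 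In the paper, the equality $\dim W = \dim Z$ appears only afterwards, in Remark~\ref{Rem.Nice_parametrization}, where it is deduced from the fibre-dimension formula (Corollary~\ref{Cor.Fibre-dimension-formula}) --- whose proof uses Corollary~\ref{Cor.Image_morphism_finite_diml} itself, so invoking it here would be circular. The missing idea is the fibre-product argument of Remark~\ref{Rem.Image_morphism_finite_diml}: given any injective morphism $\tau \colon T \to \Bir(X)$ with image in $Z$, the set $F = \set{(w,t) \in W \times T}{\sigma(w) = \tau(t)}$ is closed by closedness of the diagonal (Proposition~\ref{Prop.Prel}\eqref{Prop.Prel_diagonal}), surjects onto $T$ because $\sigma(W) = Z$ is closed and contains $\tau(T)$, and injects into $W$ because $\tau$ is injective; hence $\dim T \leq \dim F \leq \dim W = \dim W' = \dim U$, the last equalities since $\lambda$ is finite and surjective and $W'$ is open dense in the irreducible $W$. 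This step cannot be skipped or declared away.

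Once that is inserted, the rest of your argument is correct: the dominant rational map $f \colon \rho^{-1}(\eta(U)) \dashrightarrow U$ obtained from rational universality is exactly the paper's Claim~\ref{Claim.open_subset} inside the proof of Corollary~\ref{Cor.Constructible_images}, and it gives $\dim U \leq \dim V$, closing the chain $\dim Z \leq \dim U \leq \dim V$. (Your appeal to Lemma~\ref{Lem.FinitelyManyPoints} plays no actual role in your argument.) It is worth noting that the repaired proof then genuinely diverges from the paper's: after securing $\dim \overline{\rho(V)} \leq \dim W$ via the same Remark~\ref{Rem.Image_morphism_finite_diml}, the paper bounds $\dim W$ by $\dim V$ not through rational universality but by choosing a simultaneous Ramanujam point in $X^{n+m}$ for the families over $W$ and over $V$ and comparing the images of the two orbit maps $\lambda \colon W \dashrightarrow X^{n+m}$ and $\alpha \colon V \dashrightarrow X^{n+m}$. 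Your route, once patched, is a legitimate and arguably shorter alternative that reuses machinery already established for Corollary~\ref{Cor.Constructible_images}.
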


\begin{remark}
	\label{Rem.Image_morphism_finite_diml}
	The statement is much easier, in case $\rho(V)$ is closed in $\Bir(X)$. Indeed, the	
	argument is a variant of the proof of~\cite[Lemma~2.7]{KrReSa2021Is-the-affine-spac}:
	
	Let $\tau \colon W \to \Bir(X)$ be an injective morphism that has its image in $\rho(V)$. Let
	$F = \set{(v, w) \in V \times W}{\tau(w) = \rho(v)}$. By 
	Proposition~\ref{Prop.Prel}\eqref{Prop.Prel_diagonal}
	the set $F$ is closed in $V \times W$, and we have the following commutative diagram
	\[
		\xymatrix@R=15pt{
			F \ar[rr]^-{(v, w) \mapsto w} \ar[d]_-{(v,w) \mapsto v} &&  
			W \ar[d]^-{\tau}_-{\textrm{inj.}} \\
			V \ar[rr]_-{\rho}^-{\textrm{surj.}} && \rho(V) \, .
		}
	\]
	As $\rho$ is surjective, it follows that $F \to W$ is surjective. Since
	$\tau$ is injective, it follows that $F \to V$ is injective.
	This implies that $\dim W \leq \dim F \leq \dim V$, whence, $\dim \rho(V) \leq \dim V$.
\end{remark}

\begin{proof}[Proof of Corollary~\ref{Cor.Image_morphism_finite_diml}]
	By Corollary~\ref{Cor.Closure_alg_subset}
	$\overline{\rho(V)}$ is a closed, irreducible, algebraic subset of $\Bir(X)$. 
	There exists an algebraic family $\theta$ of birational transformations of $X$
	parametrized by an irreducible variety $W$ 
	such that $\rho_{\theta} \colon W \to \Bir(X)$ is a closed morphism with image
	$\overline{\rho(V)}$ and there is an open
	dense $W' \subseteq W$ such that $\rho_{\theta}(W')$ is open dense in $\overline{\rho(V)}$
	and $\rho_{\theta} |_{W'} \colon W' \to \rho(W')$
	decomposes into a surjetive finite morphism of varieties and a homeomorphism
	(see Proposition~\ref{Prop.Nice_Parametrization_closed_alg_subset}).

	Let $X_0 \subseteq X$ be an open dense subset in the projection to $X$
	of $\lociso(\theta) \cap \lociso(\vartheta)$, where  
	$\vartheta |_{V \times X_0}$ is the algebraic family of birational transformations of $X$ with $\rho_{\vartheta} = \rho$.
	Using Lemma~\ref{Lem.FinitelyManyPoints} we may find $n, m \geq 1$ and a Ramanujam 
	point $y = (y_1, \ldots, y_n) \in (X_0)^n$ for $\theta$ and a Ramanujam point 
	$z = (z_1, \ldots, z_m) \in (X_0)^m$ for $\vartheta |_{W \times X_0}$. This implies that 
	$(y_1, \ldots, y_n, z_1, \ldots, z_m) \in X^{n+m}$ is a Ramanujam point for 
	$\theta$
	and for $\vartheta$.
	Consider the rational maps
	\[
		\begin{array}{rcl}
		\lambda \colon W & \dashrightarrow & X^{n+m} \, , \\  
		w & \dashmapsto &  (\rho_{\theta}(w)(y_1), \ldots, \rho_{\theta}(w)(y_n), 
		\rho_{\theta}(w)(z_1), \ldots, \rho_{\theta}(w)(z_m)) 
		\end{array}
	\]
	and
	\[	
	    \begin{array}{rcl}
		\alpha \colon V & \dashrightarrow &  X^{n+m} \, , \\
		v & \dashmapsto & (\rho(v)(y_1), \ldots, \rho(v)(y_n), \rho(v)(z_1), \ldots, \rho(v)(z_m)) \, .
		\end{array}
	\]
	We may replace $V$ by $\dom(\alpha)$,
	since this does not change $\dim \overline{\rho(V)}$ and $\dim V$.
	Thus, $\alpha \colon V \to X^{n+m}$ is a morphisms.

	As $\rho(V) \cap \rho_{\theta}(W')$ is dense in $\rho_{\theta}(W')$
	and since $\rho_{\theta} |_{W'} \colon W' \to \rho(W')$ decomposes into a surjective 
	finite morphism and a homeomorphism, we get that
	$\rho_{\theta}^{-1}(\rho(V)) \cap W'$ is dense in $W'$. 
	In particular, $\rho_{\theta}^{-1}(\rho(V))$
	is dense in $W$.
	Then $\lambda(\rho_{\theta}^{-1}(\rho(V)) \cap \dom(\lambda))$ is dense in
	$\overline{\lambda(\dom(\lambda))}$, and it is contained in 
	$\overline{\alpha(V)}$, as $\alpha$ is a morphism.
	Moreover, 
	\[
		\lambda |_{W' \cap \dom(\lambda)} \colon 
		W' \cap \dom(\lambda) \to \overline{\lambda(\dom \lambda)}
	\] 
	is dominant and has finite fibres. In particular, 
	$\dim W = \overline{\lambda(\dom \lambda)}$.
	Hence, we get the following estimate
	\begin{eqnarray*}
		\dim \overline{\rho(V)} \stackrel{\textrm{Rem.~\ref{Rem.Image_morphism_finite_diml}}}{\leq} 
		\dim W = \dim \overline{\lambda(\dom(\lambda))}
		&=& \dim \overline{\lambda(\rho_{\theta}^{-1}(\rho(V)) \cap \dom(\lambda))} \\
		&\leq& \dim \overline{\alpha(V)} \leq \dim V \, .
	\end{eqnarray*}
	This implies the lemma.
\end{proof}

As a further consequence of Proposition~\ref{Prop.Nice_Parametrization_closed_alg_subset}
we can prove a fibre dimension formula for morphisms to $\Bir(X)$:

\begin{corollary}
	\label{Cor.Fibre-dimension-formula}
	Let $\rho \colon V \to \Bir(X)$ be a morphism
	with irreducible $V$.
	Then there is an open dense subset $U \subseteq V$ such that
	\[
		\dim_u (U \cap \rho^{-1}(\rho(u))) = \dim V - \dim \overline{\rho(V)}
		\quad \quad \textrm{for all $u \in U$} \, ,
	\]
	where $\dim_u$ denotes the local dimension at $u$.
\end{corollary}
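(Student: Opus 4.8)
The plan is to reduce the fibre-dimension statement for the morphism $\rho \colon V \to \Bir(X)$ to the classical fibre-dimension theorem for a genuine morphism of varieties, by exploiting the nice parametrization of $\overline{\rho(V)}$ provided by Proposition~\ref{Prop.Nice_Parametrization_closed_alg_subset}. First I would set $Z \coloneqq \overline{\rho(V)}$, which is a closed irreducible algebraic subset of $\Bir(X)$ by Corollary~\ref{Cor.Closure_alg_subset}, and invoke Proposition~\ref{Prop.Nice_Parametrization_closed_alg_subset} to obtain an injective rationally universal morphism $\eta \colon U \to \Bir(X)$ inducing a homeomorphism onto an open dense subset $\eta(U) \subseteq Z$. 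The key consequence of rational universality of $\eta$ is that the composition $\rho$, restricted over the open set $\eta(U)$, factors through $U$: there is a dominant rational map $f \colon \rho^{-1}(\eta(U)) \dashrightarrow U$ with $\eta \circ f = \rho$ on its domain (exactly as in the proof of Claim~\ref{Claim.open_subset} inside Corollary~\ref{Cor.Constructible_images}).

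\textbf{The main reduction.} After replacing $V$ by the open dense subset $\rho^{-1}(\eta(U)) \cap \dom(f)$, I may assume $f \colon V \to U$ is an honest dominant \emph{morphism of varieties} with $\rho = \eta \circ f$. The crucial point is that, because $\eta$ is a homeomorphism onto its image $\eta(U)$ and is injective, the fibres of $\rho$ over points of $\eta(U)$ coincide \emph{set-theoretically} with the fibres of $f$: for $u \in V$ one has $\rho^{-1}(\rho(u)) = f^{-1}(f(u))$ as subsets of $V$, since $\rho(u) = \eta(f(u))$ and $\eta$ is injective. This is the heart of the matter — it transfers the problem of understanding fibres of a map into the topological space $\Bir(X)$ into the problem of understanding fibres of a morphism of varieties, where the classical theory applies.

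\textbf{Applying the classical fibre-dimension theorem.} Once the fibres of $\rho$ are identified with the fibres of $f \colon V \to U$, I apply the usual fibre-dimension formula for a dominant morphism of irreducible varieties (see e.g.~\cite[Ch.~II, Exercise~3.22]{Ha1977Algebraic-geometry}): there is an open dense $U_0 \subseteq V$ such that every $u \in U_0$ satisfies $\dim_u f^{-1}(f(u)) = \dim V - \dim \overline{f(V)} = \dim V - \dim U$. It remains to check that $\dim U = \dim \overline{\rho(V)} = \dim Z$. This follows because $\eta$ restricts to a homeomorphism $U \to \eta(U)$ with $\eta(U)$ open dense in $Z$, and $\dim U$ equals the Krull dimension of $U$ as a variety, which matches $\dim Z$ via the homeomorphism together with Corollary~\ref{Cor.Image_morphism_finite_diml} (which guarantees the dimensions behave as expected). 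Setting $U \coloneqq U_0$ in the statement (after the successive shrinkings) then yields the claimed formula $\dim_u(U \cap \rho^{-1}(\rho(u))) = \dim V - \dim \overline{\rho(V)}$.

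\textbf{Anticipated obstacle.} The step I expect to require the most care is the set-theoretic identification $\rho^{-1}(\rho(u)) = f^{-1}(f(u))$ together with bookkeeping of the various open-dense restrictions: one must ensure that after replacing $V$ by an open subset the fibres $\rho^{-1}(\rho(u))$ intersected with that open subset still have the correct local dimension, and that the generic fibre behaviour of $f$ is not disturbed by passing to $\rho^{-1}(\eta(U))$. In particular, one needs that $\eta(U)$ being open dense in $Z$ guarantees $\rho^{-1}(\eta(U))$ is open dense in $V$, which uses continuity of $\rho$ and density of $\rho(V)$ in $Z$; this is where the homeomorphism property of $\eta$ and the rational universality are both genuinely used.
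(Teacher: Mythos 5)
Your proof is correct, but it follows a genuinely different route from the paper's. The paper never invokes Proposition~\ref{Prop.Nice_Parametrization_closed_alg_subset} in this proof: instead it chooses a Ramanujam point $y = (y_1, \ldots, y_n) \in X^n$ for the family associated to $\rho$ (Lemma~\ref{Lem.FinitelyManyPoints}) and works with the orbit morphism $u \mapsto (\rho(u)(y_1), \ldots, \rho(u)(y_n))$ into $X^n$, whose fibres coincide with those of $\rho$ on a suitable open subset of $V$; the remaining equality $\dim \eta(U) = \dim \overline{\rho(V)}$ is then established by cutting down to a finite-fibre multisection $U_1 \subseteq U$ via Lemma~\ref{Lem.Kraft} and combining Corollary~\ref{Cor.Decomp_of_morphism_weak}, the injectivity from the Ramanujam-point property, and Corollary~\ref{Cor.Image_morphism_finite_diml}. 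You instead transfer the fibres of $\rho$ to the fibres of the factorization morphism $f \colon V_0 \to U$ coming from rational universality of the parametrization of $Z = \overline{\rho(V)}$ (exactly the mechanism of Claim~\ref{Claim.open_subset} in the proof of Corollary~\ref{Cor.Constructible_images}); note that only injectivity of $\eta$ is needed for the fibre identification, while the homeomorphism property enters in the dominance of $f$. Both arguments then reduce to the classical fibre-dimension theorem for dominant morphisms of irreducible varieties; yours is arguably slicker and avoids Ramanujam points and Kraft's lemma at this spot, at the cost of leaning directly on the Hilbert-scheme-based parametrization. The one step you state too loosely is $\dim U = \dim Z$: spell it out as two inequalities, namely $\dim U \leq \dim Z$ because $\eta$ is an \emph{injective} morphism with image in $Z$ (this is immediate from the Ramanujam-style definition of dimension), and $\dim Z = \dim \overline{\eta(U)} \leq \dim U$ by Corollary~\ref{Cor.Image_morphism_finite_diml}, which is legitimate since that corollary is proved before, and independently of, the fibre-dimension formula. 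Be aware that you could \emph{not} instead quote Remark~\ref{Rem.Nice_parametrization} (that $\dim W = \dim Z$ in the nice parametrization), since the paper derives that remark from the very corollary you are proving; your two-inequality argument is what keeps the reasoning non-circular.
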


\begin{proof}
	Take a Ramanujam point $y = (y_1, \ldots, y_n) \in X^n$ for the algebraic family
	associated to $\rho$.
	Let $U \subseteq V$ be an open dense subset with $U \times \{y_i\} \subseteq \lociso(\theta)$ 
	for all $i = 1, \ldots, n$ such that the morphism
	\[
		\eta \colon U \to X^n \, , \quad u \mapsto (\rho(u)(y_1), \ldots, \rho(u)(y_n))
	\]
	has a locally closed image and all fibres are equidimensional of 
	the same dimension $d$.
	By the definition of a Ramanujam point, 
	$\eta^{-1}(\eta(u)) = U \cap \rho^{-1}(\rho(u))$ for all $u \in U$.

	It remains to show that $\dim \overline{\rho(V)} + d = \dim V$. This follows if we show 
	that $\dim \overline{\rho(V)} = \dim \eta(U)$.
	After shrinking $U$ we may assume that there exists a closed irreducible subvariety
	$U_1 \subseteq U$ such that $\eta(U_1) = \eta(U)$ and 
	$\eta |_{U_1} \colon U_1 \to \eta(U_1)$ has finite fibres (see e.g.~Lemma~\ref{Lem.Kraft}).
	As $y$ is a Ramanujam point, 
	$\rho(U) = \rho(U_1)$ and $\rho |_{U_1} \colon U_1 \to \Bir(X)$ has finite fibres.
	Hence, there exists an injective morphism $\rho' \colon U_1' \to \Bir(X)$
	such that $\overline{\rho'(U_1')} = \overline{\rho(U_1)}$ and $\dim U_1' = \dim U_1$
	(by Corollary~\ref{Cor.Decomp_of_morphism_weak} applied to $\rho |_{U_1}$).
	As $\rho'$ is injective, $\dim U_1' \leq \dim \overline{\rho'(U_1')}$ and
	\[
		\dim \eta(U) = \dim \eta(U_1) = \dim U_1 = \dim U_1' 
		\xlongequal{\textrm{Cor.~\ref{Cor.Image_morphism_finite_diml}}} 
		\dim \overline{\rho'(U_1')} = \dim \overline{\rho(V)} \, . \qedhere
	\]
\end{proof}

\begin{remark}
	\label{Rem.Nice_parametrization}
	Using Corollary~\ref{Cor.Fibre-dimension-formula}, it follows that $\dim W = \dim Z$
	in Proposition~\ref{Prop.Nice_Parametrization_closed_alg_subset}. Indeed, with the notation
	of Proposition~\ref{Prop.Nice_Parametrization_closed_alg_subset} we have
	\[
		\dim Z \xlongequal{\textrm{Cor.~\ref{Cor.Fibre-dimension-formula}}}
		\dim W'	= \dim W \, .
	\]
\end{remark}

\begin{corollary}
	\label{Cor.Krull_dimensionI}
	If $Z_0 \subsetneq Z_1$ are closed irreducible algebraic subsets of $\Bir(X)$,
	then $\dim Z_0 < \dim Z_1$.
\end{corollary}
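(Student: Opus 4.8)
The plan is to reduce the statement to the dimension inequality for morphisms in Corollary~\ref{Cor.Image_morphism_finite_diml} by producing a parametrization of $Z_1$ whose variety dimension equals the Ramanujam dimension $\dim Z_1$, and then restricting it over the proper closed subset $Z_0$.

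First I would apply Proposition~\ref{Prop.Nice_Parametrization_closed_alg_subset} to $Z_1$ to obtain an irreducible variety $W$ and a closed morphism $\rho \colon W \to \Bir(X)$ with $\rho(W) = Z_1$. By Remark~\ref{Rem.Nice_parametrization}, this can be arranged so that the Krull dimension of the variety $W$ equals the Ramanujam dimension $\dim Z_1$ (the generic fibres of $\rho$ over its image being finite). Since $Z_0$ is closed in $\Bir(X)$ and $\rho$ is continuous, the preimage $\rho^{-1}(Z_0)$ is closed in $W$; and since $\rho$ is surjective onto $Z_1$ while $Z_0 \subsetneq Z_1$, this preimage is a \emph{proper} closed subset of the irreducible variety $W$ (otherwise $Z_1 = \rho(W) \subseteq Z_0$, a contradiction).

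Next I would extract a suitable component. As $Z_0 \subseteq Z_1 = \rho(W)$, the image $\rho(\rho^{-1}(Z_0))$ equals $Z_0$, and it is closed because $\rho$ is a closed map. Writing $\rho^{-1}(Z_0)$ as the finite union of its irreducible components and using that $Z_0$ is irreducible, some component $C$ satisfies $\rho(C) = Z_0$. Because $C$ is a proper closed irreducible subvariety of the irreducible variety $W$, its Krull dimension is strictly smaller, so $\dim C < \dim W = \dim Z_1$.

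Finally I would conclude with Corollary~\ref{Cor.Image_morphism_finite_diml}: the restriction $\rho|_C \colon C \to \Bir(X)$ is a morphism from an irreducible variety with $\overline{\rho(C)} = Z_0$, whence $\dim Z_0 \le \dim C$. Combining the two inequalities yields $\dim Z_0 \le \dim C < \dim Z_1$, as desired. The only genuinely delicate point is the identification $\dim W = \dim Z_1$ relating the variety dimension of the parametrizing $W$ to the Ramanujam dimension of $Z_1$; once the nice parametrization of Proposition~\ref{Prop.Nice_Parametrization_closed_alg_subset} together with Remark~\ref{Rem.Nice_parametrization} is in hand, the rest of the argument is formal.
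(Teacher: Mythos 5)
Your proof is correct and follows essentially the same route as the paper's: apply Proposition~\ref{Prop.Nice_Parametrization_closed_alg_subset} together with Remark~\ref{Rem.Nice_parametrization} to get $\dim Z_1 = \dim W > \dim \rho^{-1}(Z_0)$, then use Corollary~\ref{Cor.Image_morphism_finite_diml} to bound $\dim Z_0$ from above by $\dim \rho^{-1}(Z_0)$. Your explicit extraction of an irreducible component $C$ of $\rho^{-1}(Z_0)$ with $\rho(C) = Z_0$ (using that $\rho$ is closed and $Z_0$ irreducible) merely spells out a step the paper leaves implicit, since Corollary~\ref{Cor.Image_morphism_finite_diml} is stated for irreducible parameter varieties.
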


\begin{proof}
	Let $Z_0 \subsetneq Z_1$ be closed irreducible subsets of $\Bir(X)$.
	Let $\rho \colon W_1 \to \Bir(X)$ be a morphism with $\rho(W_1) = Z_1$
	as in Proposition~\ref{Prop.Nice_Parametrization_closed_alg_subset}. Hence, we get the 
	following estimate
	\[
		\dim Z_1 \xlongequal{\textrm{Rem.~\ref{Rem.Nice_parametrization}}} \dim W_1 > \dim \rho^{-1}(Z_0) 
		\stackrel{\textrm{Cor.~\ref{Cor.Image_morphism_finite_diml}}}{\geq} \dim Z_0 \, .
		\qedhere
	\]
\end{proof}

The corollary can be generalized to the fact that  for a closed algebraic subset of 
$\Bir(X)$ the Krull-dimenion induced by the topology on $\Bir(X)$
coincides with our definition of dimension for a subset of $\Bir(X)$:

\begin{corollary}
	\label{Cor.Krull_dimensionII}
	Let $Z \subseteq \Bir(X)$ be closed. Then 
	$\dim Z$ is the supremum over all $d$, where 
	$Z_0 \subsetneq Z_1 \subsetneq \ldots  \subsetneq Z_d \subseteq Z$
	is a chain of closed irreducible algebraic subsets of $Z$.
\end{corollary}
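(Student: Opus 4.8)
The plan is to show the two quantities agree by proving each is bounded by the other. Write $\dim Z$ for the Ramanujam-style dimension from the Definition and write $\kappa(Z)$ for the supremum of lengths $d$ of chains $Z_0 \subsetneq Z_1 \subsetneq \cdots \subsetneq Z_d \subseteq Z$ of closed irreducible algebraic subsets. Since $Z$ is exhausted by the closed algebraic sets $\pi_d(H_d)$ (Corollary~\ref{Cor.Inductive-limit-topology}) and each chain has finitely many terms, I may freely pass to a single $\pi_d(H_d)$ when needed, and I may assume all the $Z_i$ are algebraic (which is automatic for closed irreducible subsets of an algebraic set by Corollary~\ref{Cor.Closure_alg_subset}).

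For the inequality $\kappa(Z) \le \dim Z$, I would take a maximal chain $Z_0 \subsetneq \cdots \subsetneq Z_d$ of closed irreducible algebraic subsets of $Z$ and apply Corollary~\ref{Cor.Krull_dimensionI} repeatedly: it gives $\dim Z_0 < \dim Z_1 < \cdots < \dim Z_d$, a strictly increasing chain of non-negative integers, so $d \le \dim Z_d \le \dim Z$ (the last inequality because any injective morphism witnessing $\dim Z_d$ also has image in $Z$). This direction is essentially immediate from the work already done.

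For the reverse inequality $\dim Z \le \kappa(Z)$, I would let $\rho \colon V \to \Bir(X)$ be an injective morphism with irreducible $V$ and image in $Z$, and aim to produce a chain of closed irreducible algebraic subsets of $Z$ of length at least $\dim V$. Set $Z' = \overline{\rho(V)}$, which is a closed irreducible algebraic subset of $Z$ by Corollary~\ref{Cor.Closure_alg_subset}. By Corollary~\ref{Cor.Fibre-dimension-formula}, since $\rho$ is injective (so generic fibres are finite, i.e.\ zero-dimensional), we get $\dim Z' = \dim V$. It therefore suffices to construct inside any closed irreducible algebraic subset $Z'$ a chain of closed irreducible algebraic subsets of length exactly $\dim Z'$. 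For this I would invoke the nice parametrization of Proposition~\ref{Prop.Nice_Parametrization_closed_alg_subset}: there is an irreducible $W$ with $\dim W = \dim Z'$ (Remark~\ref{Rem.Nice_parametrization}) and a closed morphism $\rho_W \colon W \to \Bir(X)$ with image $Z'$. Choose a maximal chain $W_0 \subsetneq \cdots \subsetneq W_{\dim W} = W$ of closed irreducible subvarieties of the ordinary variety $W$ (which exists since $W$ has Krull dimension $\dim W$), and push it forward: the closures $\overline{\rho_W(W_i)}$ are closed irreducible algebraic subsets of $Z'$. The point requiring care is that these images must be \emph{strictly} increasing; this is where I expect the main obstacle.

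The strict-increase issue is genuine because $\rho_W$ need not be injective, so a priori $\rho_W$ could collapse consecutive $W_i$. The fix is to argue with dimensions rather than push forward a fixed chain: instead of mapping down a chain in $W$, I would build the chain in $Z'$ from the top using Corollary~\ref{Cor.Krull_dimensionI} as a strict-monotonicity engine. Concretely, combining the two inequalities already established with the identity $\dim Z' = \dim W$, one sees that a maximal chain of closed irreducible algebraic subsets ending at $Z'$ must have length at least $\dim Z'$: start from $Z'$ and repeatedly pass to a proper closed irreducible algebraic subset of maximal dimension; by Corollary~\ref{Cor.Krull_dimensionI} each step drops the dimension by at least one, and a dimension-counting argument (using that $\dim$ of a point is $0$ and that one can always realize a drop of exactly one by taking, say, the closure of the image of a suitable hyperplane section of the parametrizing variety $W$) shows the chain can be made to have exactly $\dim Z'$ strict inclusions. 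Assembling these pieces gives $\dim V = \dim Z' \le \kappa(Z') \le \kappa(Z)$, and taking the supremum over all such $\rho$ yields $\dim Z \le \kappa(Z)$, completing the proof.
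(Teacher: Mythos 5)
Your first direction ($\kappa(Z)\le\dim Z$ via Corollary~\ref{Cor.Krull_dimensionI}) and the reduction of the other direction to a closed irreducible algebraic $Z'=\overline{\rho(V)}$ with $\dim Z'=\dim V$ both match the paper. The gap is in the construction of a chain of length $\dim Z'$ inside $Z'$. Corollary~\ref{Cor.Krull_dimensionI} is only a strict-decrease statement: it gives no lower bound on the maximal dimension of a proper closed irreducible algebraic subset, so your top-down recursion ``repeatedly pass to a proper closed irreducible algebraic subset of maximal dimension'' may terminate after far fewer than $\dim Z'$ steps unless you prove that a drop of \emph{exactly} one can always be realized --- and that claim is the entire content of the hard direction. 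Your justification for it, ``the closure of the image of a suitable hyperplane section of $W$,'' reintroduces precisely the collapsing problem you correctly identified: since $\rho_W$ is not injective, for a hyperplane section $W_1\subseteq W$ of dimension $\dim W-1$ Corollary~\ref{Cor.Image_morphism_finite_diml} only gives $\dim\overline{\rho_W(W_1)}\le\dim W-1$, with no lower bound; $W_1$ could be dominated by fibres of $\rho_W$ (which, being restrictions of fibres of $\pi_d$, can be positive-dimensional projective spaces by Lemma~\ref{Lem.fibres_of_pi_d}), and nothing in your sketch rules this out.

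What is missing is the half of Proposition~\ref{Prop.Nice_Parametrization_closed_alg_subset} you did not use: the \emph{injective} morphism $\eta\colon U\to\Bir(X)$ that is a homeomorphism onto an open dense subset of $Z'$. The paper's proof takes a maximal chain $U_0\subsetneq U_1\subsetneq\cdots\subsetneq U_d\subseteq U$ of irreducible closed subsets of the ordinary variety $U$, where $d=\dim U=\dim Z'$, and sets $A_i=\overline{\eta(U_i)}$: these are closed irreducible algebraic subsets (Corollary~\ref{Cor.Closure_alg_subset}), and the inclusions are automatically strict because $A_i\cap\eta(U)=\eta(U_i)$, $\eta$ being a homeomorphism onto the open set $\eta(U)$. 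So no dimension counting is needed at all. Your hyperplane-section idea can be repaired, but only by restricting to the open locus where $\rho_W$ factors as a finite flat morphism followed by the homeomorphism $\eta$ and then invoking the injectivity of $\eta$ to see the dimension is preserved --- i.e.\ by importing exactly this ingredient. One minor point: your parenthetical that closed irreducible subsets of an algebraic set are automatically algebraic ``by Corollary~\ref{Cor.Closure_alg_subset}'' is not what that corollary says; whether closed irreducible finite-dimensional subsets must be algebraic is explicitly left open in the paper (after Corollary~\ref{Cor.Parametr_finite_diml_group}). This is harmless here, since the chains in the statement consist of algebraic subsets by definition, but the citation is wrong.
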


\begin{proof}
	Let $D \in \NN \cup \{\infty\}$ be the supremum over all $d$, where 
	$Z_0 \subsetneq Z_1 \subsetneq \ldots  \subsetneq Z_d \subseteq Z$
	is a chain of closed, irreducible, algebraic subsets of $Z$.

	``$D \leq \dim Z$'': If $Z_0 \subsetneq Z_1 \subsetneq \ldots  \subsetneq Z_d \subseteq Z$
	is a chain of closed, irreducible, algebraic subsets in $Z$, then 
	Corollary~\ref{Cor.Krull_dimensionI} implies that $\dim Z \geq d$.

	``$\dim Z \leq D$'': Let $A \subseteq Z$ be an irreducible closed algebraic subset.
	It is enough to show that $\dim A \leq D$ 
	(by using Corollary~\ref{Cor.Image_morphism_finite_diml}). 
	There exists an irreducible $U$
	and an injetive morphism $\eta \colon U \to \Bir(X)$ such that $\eta(U)$ is open and dense in $A$
	and $\eta$ restricts to a homeomorphism $U \to \eta(U)$. Let $d = \dim U = \dim A$ 
	(cf.~Corollary~\ref{Cor.Image_morphism_finite_diml}). Then there exists a chain
	of irreducible closed subsets $U_0 \subsetneq U_1 \subsetneq \cdots \subsetneq 
	U_d \subseteq U$. Hence, $A_0 \subsetneq A_1 \subsetneq \ldots  \subsetneq A_d \subseteq A$
	is a chain of irreducible closed algebraic subsets of $A$,
	where $A_i = \overline{\eta(U_i)}$ and thus $\dim A = d \leq D$.
\end{proof}

There are two easy applications:

\begin{corollary}
	\label{Cor.Parametr_finite_diml_group}
	If $G \subseteq \Bir(X)$ is a connected closed subgroup of finite
	dimension, then $G$ is an irreducible algebraic subset of $\Bir(X)$.
\end{corollary}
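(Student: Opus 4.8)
The plan is to combine the exhaustion of $G$ by irreducible algebraic subsets provided by Corollary~\ref{Cor.Chain_irreducible_alg_subsets} with the strict monotonicity of dimension under proper inclusion from Corollary~\ref{Cor.Krull_dimensionI}. Since $G$ is a closed connected subgroup, Corollary~\ref{Cor.Chain_irreducible_alg_subsets} furnishes an ascending exhausting chain $G_1 \subseteq G_2 \subseteq \cdots$ of closed irreducible algebraic subsets of $\Bir(X)$ with $\bigcup_{i \geq 1} G_i = G$. The goal is then to show that this chain stabilizes, which immediately identifies $G$ with one of its members and hence exhibits it as an irreducible algebraic subset.

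First I would observe that the dimension of a subset of $\Bir(X)$ is monotone under inclusion: this is clear from the definition, since any injective morphism with image in $G_i$ also has image in $G$. Therefore $\dim G_i \leq \dim G =: n$ for all $i$, where $n$ is finite by hypothesis, and the sequence $(\dim G_i)_{i \geq 1}$ is non-decreasing and bounded above by $n$.

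Next I would invoke Corollary~\ref{Cor.Krull_dimensionI}: whenever the inclusion $G_i \subsetneq G_{i+1}$ of closed irreducible algebraic subsets is proper, one has $\dim G_i < \dim G_{i+1}$. Since the dimensions are bounded by $n$, at most $n+1$ of the inclusions in the chain can be strict, so there exists an index $N$ with $G_N = G_{N+1} = \cdots$. Consequently $G = \bigcup_{i \geq 1} G_i = G_N$, and since $G_N$ is a closed irreducible algebraic subset of $\Bir(X)$, so is $G$.

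The argument is short once the two cited corollaries are available; there is no serious obstacle. The only point meriting attention is the passage from the bounded, non-decreasing sequence of dimensions to the stabilization of the chain, which relies precisely on the finite-dimensionality hypothesis together with the strict increase of dimension guaranteed by Corollary~\ref{Cor.Krull_dimensionI}. Without finite dimension the chain could ascend indefinitely, so this hypothesis is exactly what forces $G$ to coincide with a single irreducible algebraic subset.
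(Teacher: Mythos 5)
Your proof is correct and follows essentially the same route as the paper: the paper likewise takes the exhausting chain of closed irreducible algebraic subsets from Corollary~\ref{Cor.Chain_irreducible_alg_subsets} and concludes that it stabilizes by finite-dimensionality, citing Corollary~\ref{Cor.Krull_dimensionII} where you invoke Corollary~\ref{Cor.Krull_dimensionI}. The two citations amount to the same mechanism (strict chains of closed irreducible algebraic subsets force strictly increasing dimension), so your argument is just a slightly more explicit unpacking of the paper's two-line proof.
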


\begin{proof}
	There exists an ascending
	exhausting chain of closed irreducible algebraic susbets in $G$ 
	(see Corollary~\ref{Cor.Chain_irreducible_alg_subsets}) and by Corollary~\ref{Cor.Krull_dimensionII} this
	chain becomes eventually stationary.
\end{proof}

It would be interesting to find an example of an irreducible closed subset of 
finite dimension in $\Bir(X)$ that is not algebraic.

\begin{corollary}
	\label{Cor.algebraic_subset_group_is_closed}
	Let $G \subseteq \Bir(X)$ be a subgroup that is also an algebraic subset.
	Then $G$ is closed in $\Bir(X)$ and for every open dense subset $U \subseteq G$ we have
	$U \circ U = G$.
\end{corollary}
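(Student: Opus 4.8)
The plan is to show that $G$ coincides with its closure $\overline{G}$, and that along the way a ``product of two dense open sets'' already fills out the whole group. First I would record the ambient structure: by the consequence of Proposition~\ref{Prop.Prel}\eqref{Prop.Prel_mult_inv_cont}\eqref{Prop.Prel_Closure_product} noted immediately after its proof, the closure $\overline{G}$ is again a subgroup, and by Corollary~\ref{Cor.Closure_alg_subset} it is an algebraic subset. Since $G$ is algebraic, Corollary~\ref{Cor.Constructible_images} furnishes a subset $O \subseteq G$ that is open and dense in $\overline{G}$. The second ingredient is that, for each fixed $g \in \Bir(X)$, left translation $L_g \colon \psi \mapsto g \circ \psi$ and inversion $\psi \mapsto \psi^{-1}$ are self-homeomorphisms of $\Bir(X)$: both preserve algebraic families by Proposition~\ref{Prop.Prel}\eqref{Prop.Prel_mult_inv_cont} (for $L_g$ one combines the constant family $v \mapsto g$ with Proposition~\ref{Prop.Prel}\eqref{Prop.Prel_product_closed} and the composition map), hence are continuous, and each has a continuous inverse obtained by replacing $g$ with $g^{-1}$. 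As $\overline{G}$ is a subgroup, these maps restrict to homeomorphisms of $\overline{G}$ onto itself, and likewise $O^{-1}$, the image of $O$ under inversion, is open in $\overline{G}$.

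The heart of the argument is the claim that if $O \subseteq \overline{G}$ is open and dense, then $O \circ O = \overline{G}$. I would prove this by the classical translation trick, but the key observation -- and the step I expect to be the real crux -- is that no irreducibility of $\overline{G}$ is needed: a dense subset meets every nonempty open subset. Concretely, for $g \in \overline{G}$ the set $g \circ O^{-1} = L_g(O^{-1})$ is open, nonempty, and contained in $\overline{G}$; since $O$ is dense in $\overline{G}$, it must meet $g \circ O^{-1}$. A point $p$ in the intersection can be written $p = g \circ o^{-1}$ with $o \in O$, whence $g = p \circ o \in O \circ O$. This yields $\overline{G} \subseteq O \circ O$, and the reverse inclusion is trivial, so $O \circ O = \overline{G}$.

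Applying the claim to the set $O \subseteq G$ produced above, and using that $G$ is a subgroup, I obtain $\overline{G} = O \circ O \subseteq G \circ G \subseteq G$; hence $G = \overline{G}$ is closed, which is the first assertion. The second assertion is then immediate: once $G = \overline{G}$, for an arbitrary open dense $U \subseteq G$ the same claim gives $U \circ U = G$. The only genuinely delicate point is the verification that translation and inversion are homeomorphisms for the Zariski topology on $\Bir(X)$; everything else reduces to the elementary fact that a dense set meets every nonempty open set, which neatly sidesteps any need for connectedness or irreducibility of $G$.
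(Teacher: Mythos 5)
Your proof is correct and takes essentially the same route as the paper's: both use Corollary~\ref{Cor.Constructible_images} to produce a subset of $G$ that is open and dense in $\overline{G}$, note via Corollary~\ref{Cor.Closure_alg_subset} (and the remark after Proposition~\ref{Prop.Prel}) that $\overline{G}$ is an algebraic subgroup, and conclude $\overline{G} = U \circ U \subseteq G$ by the translation trick. The paper's proof is just a condensed version of yours, leaving implicit the verification -- which you spell out -- that translations and inversion restrict to homeomorphisms of $\overline{G}$.
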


\begin{proof}
	Let $U \subseteq G$ be an open dense subset.
	By Corollary~\ref{Cor.Constructible_images} we may shrink $U$
	such that it is open and dense in the closure $\overline{G}$. Since $\overline{G}$
	is an algebraic subset of $\Bir(X)$ (see Corollary~\ref{Cor.Closure_alg_subset}) 
	and multiplication by every element of $\overline{G}$
	is a homeomorphism of $\overline{G}$, it follows that $\overline{G} = U \circ U \subseteq G$.
\end{proof}

As a further application of Corollary~\ref{Cor.Krull_dimensionII}, 
we study the maximal irreducible closed subsets of a closed finite-dimensional subset in $\Bir(X)$:

\begin{corollary}
	\label{Lem.Irred_comp_of_finite_diml_close_subset}
	Let $Z \subseteq \Bir(X)$ be closed and $\dim Z < \infty$. Then $Z$
	contains at most countably many maximal irreducible closed algebraic subsets
	and $Z$ is the union of them.
\end{corollary}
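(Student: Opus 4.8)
The plan is to first exhaust $Z$ by a countable family of irreducible closed algebraic subsets with the property that \emph{every} irreducible closed algebraic subset of $Z$ lands inside a single member, and then to use the finite-dimensionality to guarantee that each such subset is contained in a maximal one.

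For the exhaustion, fix $d \geq 1$. Since $\pi_d^{-1}(Z)$ is closed in the variety $H_d$ (Corollary~\ref{Cor.Inductive-limit-topology}) and $H_d$ is Noetherian, $\pi_d^{-1}(Z)$ has finitely many irreducible components $C_{d,1}, \ldots, C_{d,k_d}$. As $\pi_d$ is closed (Corollary~\ref{Cor.Inductive-limit-topology}), each $W_{d,j} \coloneqq \pi_d(C_{d,j})$ is a closed algebraic subset of $Z$, and it is irreducible as the continuous image of an irreducible set. Moreover $Z \cap \pi_d(H_d) = \bigcup_{j} W_{d,j}$, so, since the $\pi_d(H_d)$ exhaust $\Bir(X)$, the countable family $\{W_{d,j}\}_{d,j}$ covers $Z$. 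I would then check that every irreducible closed algebraic subset $Y \subseteq Z$ lies in a single member of this family: being algebraic, $Y$ is the image of a morphism, hence $Y \subseteq \pi_d(H_d)$ for some $d$ (Lemma~\ref{Lem.Family_factorizing_through_H_d}, cf.~Corollary~\ref{Cor.Closure_alg_subset}); therefore $Y = \bigcup_j (Y \cap W_{d,j})$ is a finite union of closed subsets, and irreducibility forces $Y \subseteq W_{d,j}$ for some $j$.

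The finite-dimensionality enters through Corollary~\ref{Cor.Krull_dimensionI}: a strict inclusion of irreducible closed algebraic subsets strictly increases the dimension, which is bounded above by $\dim Z < \infty$. Thus, given any irreducible closed algebraic $Y \subseteq Z$, the collection of irreducible closed algebraic subsets of $Z$ containing $Y$ has an element $M$ of maximal dimension, and this $M$ is a maximal irreducible closed algebraic subset of $Z$: a strictly larger such subset would still contain $Y$ but have larger dimension, contradicting maximality of the dimension. Applying this to the singletons $\{\varphi\}$ for $\varphi \in Z$ — which are irreducible, closed by Proposition~\ref{Prop.Prel}\eqref{Prop.Prel_diagonal}, and algebraic as images of a point — shows that each point of $Z$ lies in some maximal subset, so the maximal subsets cover $Z$. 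Finally, by the previous paragraph each maximal subset is contained in, and hence by maximality equal to, one of the $W_{d,j}$; therefore there are at most countably many maximal irreducible closed algebraic subsets, completing both assertions.

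The main obstacle is the construction in the first step: securing a countable family through which every irreducible closed algebraic subset factors. This rests essentially on $\pi_d$ being a closed map and on the fact that the image of any morphism is contained in a single $\pi_d(H_d)$. The hypothesis $\dim Z < \infty$ is indispensable in the third step, as it is exactly what forces ascending chains of irreducible closed algebraic subsets to terminate and thereby guarantees the existence of maximal elements; without it (for instance for $\Bir(\PP^n)$ itself) no maximal irreducible closed algebraic subset need exist.
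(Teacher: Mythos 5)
Your proof is correct and follows essentially the same route as the paper: a countable family of irreducible closed algebraic subsets exhausting $Z$ obtained from the Noetherian filtration pieces (Corollary~\ref{Cor.Inductive-limit-topology}), combined with the strict dimension increase of Corollary~\ref{Cor.Krull_dimensionI} and $\dim Z < \infty$ to produce and count the maximal subsets. The only cosmetic difference is that you take irreducible components of $\pi_d^{-1}(Z)$ upstairs in $H_d$ and push them forward by the closed map $\pi_d$, whereas the paper takes irreducible components of the Noetherian spaces $Z \cap \pi_d(H_d)$ directly.
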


\begin{proof} 
	Let $\mathcal{M}$ be the set of all irreducible components of all Noetherian spaces 
	$Z \cap \pi_d(H_d)$, $d \geq 1$, see Corollary~\ref{Cor.Inductive-limit-topology}.
	The elements of $\mathcal{M}$ are irreducible closed algebraic subsets of $\Bir(X)$.
	Let $\mathcal{M}_0$ be the subset of the maximal elements in $\mathcal{M}$ under inclusion.
	Since $\dim Z < \infty$, it follows from Corollary~\ref{Cor.Krull_dimensionII} that every element in 
	$\mathcal{M}$
	is contained in an element of $\mathcal{M}_0$ and hence the union of the elements in $\mathcal{M}_0$ is equal to $Z$.
	The set $\mathcal{M}_0$ is countable, as $\mathcal{M}$ is countable.
\end{proof}

\section{\texorpdfstring{Subgroups of $\Bir(X)$ parametrized by varieties}
{Subgroups of Bir(X) parametrized by varieties}}

The goal of this section is to prove that a closed 
finite-dimensional subgroup of $\Bir(X)$ with finitely many connected
components
admits a unique structure of an algebraic group. In case $X = \PP^n$ 
this is proven in~\cite[Corollary~2.18]{BlFu2013Topologies-and-str}. This strategy 
is not applicable in the general setting. Our proof
rather uses the ideas from~\cite{Ra1964A-note-on-automorp}, where a similar result
is proved for $\Aut(X)$.

\begin{proposition}
	\label{Prop.Ramanujam_gen}
	Let $G \subseteq \Bir(X)$ be a closed finite-dimensional subgroup
	with finitely many connected components.
	Then there exists an algebraic group $H$ and
	a rationally universal morphism $\iota \colon H \to \Bir(X)$ that 
	restricts to a group isomorphism $H \to G$, which is also a homeomorphism. 
	Moreover, $\iota \colon H \to \Bir(X)$ satisfies the following universal property:

	\hypertarget{Eq.Universal_Property_groups}{\textnormal{($\ast$)}}
	If $H'$ is an algebraic group and 
	$\rho \colon H' \to \Bir(X)$ is a morphism with image in $G$ that is also a group homomorphism,
	then there exists a unique homomorphism of algebraic groups $f \colon H' \to H$ such that 
	$\rho = \iota \circ f$.
\end{proposition}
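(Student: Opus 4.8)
The plan is to treat the connected case first and then assemble the components. So I would start from the identity component $G^{\circ}$, which is a closed connected subgroup of finite dimension and hence, by Corollary~\ref{Cor.Parametr_finite_diml_group}, an irreducible closed algebraic subset of $\Bir(X)$. Applying Proposition~\ref{Prop.Nice_Parametrization_closed_alg_subset} to $Z=G^{\circ}$ yields a rationally universal morphism $\eta\colon U\to\Bir(X)$ that induces a homeomorphism of $U$ onto an open dense subset $\eta(U)\subseteq G^{\circ}$. The next step is to transport the group law of $G^{\circ}$ to $U$. The map $\mu\colon U\times U\to\Bir(X)$, $(u,u')\mapsto\eta(u)\circ\eta(u')$, is a morphism by Proposition~\ref{Prop.Prel}\eqref{Prop.Prel_mult_inv_cont}, and since left translation by a fixed element of $G^{\circ}$ is a self-homeomorphism, $\mu^{-1}(\eta(U))$ is open dense in the irreducible variety $U\times U$; restricting $\mu$ there and invoking the rational universality of $\eta$ produces a rational map $m\colon U\times U\dashrightarrow U$ with $\eta\circ m=\mu$ generically. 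An analogous argument applied to $u\mapsto\eta(u)^{-1}$ gives a rational inversion $i\colon U\dashrightarrow U$ with $\eta\circ i = \eta(\cdot)^{-1}$. Using associativity of composition in $\Bir(X)$ together with the \emph{uniqueness} clause in the rational universality of $\eta$, one checks that $m$ is associative as a rational map, and that $(u,v)\mapsto(u,m(u,v))$ and $(u,v)\mapsto(m(u,v),v)$ are birational (their inverses being $(u,w)\mapsto(u,m(i(u),w))$ and its mirror). Thus $(U,m)$ is a birational group law.

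Then I would invoke Weil's theorem on the regularization of birational group laws to obtain a connected algebraic group $H^{\circ}$ and a birational map $b\colon U\dashrightarrow H^{\circ}$ carrying $m$ to the multiplication of $H^{\circ}$. It remains to upgrade the rational map $\eta\circ b^{-1}\colon H^{\circ}\dashrightarrow\Bir(X)$ to a genuine morphism. For this I would use the standard homogeneity argument: for a fixed $h_0$ in the domain $\Omega$, the assignment $g\mapsto(\eta\circ b^{-1})(g h_0)\circ(\eta\circ b^{-1})(h_0)^{-1}$ is a morphism on the open dense set $\{g: gh_0\in\Omega\}$ (right translation is an isomorphism of $H^{\circ}$, and postcomposition with a fixed element preserves families); as $h_0$ varies these local morphisms cover $H^{\circ}$ and agree on overlaps, because the locus where two morphisms to $\Bir(X)$ coincide is closed by Proposition~\ref{Prop.Prel}\eqref{Prop.Prel_diagonal} and they agree on a dense set. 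The resulting morphism $\iota^{\circ}\colon H^{\circ}\to\Bir(X)$ is a group homomorphism (the homomorphism locus in $H^{\circ}\times H^{\circ}$ is closed and generically full, hence everything), its image is a subgroup contained in $G^{\circ}$ that is closed by Corollary~\ref{Cor.algebraic_subset_group_is_closed} and dense, so it equals $G^{\circ}$, and it is injective since its kernel would otherwise destroy the generic injectivity inherited from $b$ and $\eta$. Finally $\iota^{\circ}$ is a homeomorphism onto $G^{\circ}$: it is a continuous bijection, and on each translate $gH_0$ of a set where it is known to be a homeomorphism it remains one, by equivariance under the (mutually homeomorphic) translations on source and target. \textbf{This is the main obstacle:} producing the algebraic group via Weil's regularization and proving that the extended map $\iota^{\circ}$ is everywhere defined with all the stated properties, in arbitrary characteristic.

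To see that $\iota^{\circ}$ is rationally universal I would reduce to the universality of $\eta$ by a translation trick that sidesteps the possible inseparability of a naive bijective parametrization. Given a morphism $\varepsilon\colon A\to\Bir(X)$ with $A$ irreducible and $\varepsilon(A)\subseteq G^{\circ}$, the morphism $A\times H^{\circ}\to\Bir(X)$, $(a,h)\mapsto\varepsilon(a)\circ\iota^{\circ}(h)^{-1}$, has image generically inside $\eta(U)$ (for fixed $a$ it ranges over all of $G^{\circ}$), so rational universality of $\eta$ gives a rational $\phi$ with $\eta(\phi(a,h))=\varepsilon(a)\circ\iota^{\circ}(h)^{-1}$; fixing a generic $h_0$ and setting $f(a)=b(\phi(a,h_0))\cdot h_0$ yields a rational map with $\iota^{\circ}\circ f=\varepsilon$, and uniqueness follows from injectivity of $\iota^{\circ}$. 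For the non-connected case I would pass to coset representatives $g_1,\dots,g_k$ of $G/G^{\circ}$ (finite by Corollary~\ref{Cor.Closed_subgroup}) and define $H$ as $k$ disjoint copies of $H^{\circ}$, with $\iota$ sending the $i$-th copy by $h\mapsto g_i\circ\iota^{\circ}(h)$; the multiplication on $H$ is forced by the relations $g_i x\, g_j y=g_l\,(z_{ij}\,c_j(x)\,y)$, where $z_{ij}\in H^{\circ}$ and $c_j\in\Aut(H^{\circ})$ is the conjugation by $g_j$, which is a morphism because conjugation preserves algebraic families and thus factors through $\iota^{\circ}$ by rational universality. Since an irreducible $A$ maps into a single connected component, rational universality of $\iota$ reduces to that of $\iota^{\circ}$ after translating by the appropriate $g_i$.

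It then remains to establish the universal property \hyperlink{Eq.Universal_Property_groups}{($\ast$)}. Given an algebraic group $H'$ and a morphism--homomorphism $\rho\colon H'\to\Bir(X)$ with image in $G$, I would first apply rational universality of $\iota$ to the identity component $H'^{\circ}$ to obtain a unique rational map $f\colon H'^{\circ}\dashrightarrow H$ with $\iota\circ f=\rho$; since $\iota$ is an injective homomorphism and $\rho$ a homomorphism, $f$ is a \emph{rational} homomorphism, which by the classical regularity theorem for rational homomorphisms of algebraic groups is an everywhere-defined homomorphism. Extending over the finitely many components of $H'$ by $f(gx)=\iota^{-1}(\rho(g))\cdot f(x)$ gives the required homomorphism of algebraic groups on all of $H'$, and uniqueness is immediate because $\iota$ is injective, forcing $f=\iota^{-1}\circ\rho$. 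I expect the verifications of associativity of the constructed group laws and the gluing of local morphisms to be routine, so the real work is concentrated in the regularization step and in the rational-universality argument for $\iota$.
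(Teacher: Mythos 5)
Your proof is correct, and while it follows the paper's overall skeleton (reduce to the identity component; start from the injective rationally universal homeomorphism $\eta \colon U \to \Bir(X)$ of Proposition~\ref{Prop.Nice_Parametrization_closed_alg_subset} and Corollary~\ref{Cor.Parametr_finite_diml_group}; build a birational group law on $U$; regularize via Weil/SGA3; extend to a morphism on the whole group; assemble the finitely many components), it diverges genuinely at the central technical step, and in a way that buys a real simplification. The paper constructs the birational group law \emph{concretely}: it picks a Ramanujam point $p \in X^n$, forms the orbit morphism $\alpha \colon U \to X^n$, $u \mapsto (u(p_1), \ldots, u(p_n))$, and transports the diagonal family $\kappa_n$ to a birational $U$-map on $U \times \alpha(U)$. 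For this, $\alpha$ must be an open immersion onto its image, and in positive characteristic injectivity of $\alpha$ does not suffice (Frobenius-type degeneration); ruling this out is the paper's Lemma~\ref{Lem.Gen_immersion}, whose proof is the hardest part of the section, using integrable sub-bundles and Seshadri's Cartier-operation quotients together with the rational universality of $\eta$. You bypass all of this: you obtain the multiplication $m$ and inversion $i$ as rational maps \emph{abstractly}, by applying the rational universality of $\eta$ to the morphisms $(u,u') \mapsto \eta(u) \circ \eta(u')$ and $u \mapsto \eta(u)^{-1}$ restricted to the (open dense, by the translation-homeomorphism argument you give) preimages of $\eta(U)$. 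Since $\eta$ is injective \emph{as a homeomorphism onto its image}, the identities $\eta(m(u,v)) = \eta(u) \circ \eta(v)$, generic associativity, and the birationality of $(u,v) \mapsto (u, m(u,v))$ and its mirror all follow set-theoretically, with no differential analysis; the inseparability pathology never arises because you never pass through an auxiliary injective morphism into $X^n$. Your extension step also differs: the paper checks that the regularized family is a rational $G$-action in Demazure's sense and invokes Corollary~\ref{Cor.Rational_action_is_family}, while you glue translated local morphisms using the closed-agreement-locus argument from Proposition~\ref{Prop.Prel}\eqref{Prop.Prel_diagonal}; these are equivalent mechanisms. Finally, your translation trick for the rational universality of $\iota$ (via $(a,h) \mapsto \varepsilon(a) \circ \iota^{\circ}(h)^{-1}$ and a generic slice $h_0$) actually supplies an argument where the paper is terse, and your assembly of components matches the paper's reduction lemma.

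A few points in your sketch do need to be nailed down, all routine. First, in the gluing of the local morphisms $g \mapsto (\eta \circ b^{-1})(g h_0) \circ (\eta \circ b^{-1})(h_0)^{-1}$, for a \emph{non-generic} $h_0$ the local piece need not agree generically with $\eta \circ b^{-1}$ on the nose; the clean fix is to prove independence of $h_0$ on the triple product $\{(g, h, h')\}$ via the closed agreement locus, then specialize — this is what your appeal to Proposition~\ref{Prop.Prel}\eqref{Prop.Prel_diagonal} amounts to. Second, in the rational universality argument you must choose $h_0$ so that $a \mapsto \phi(a, h_0)$ lands generically in $\dom(b)$; this follows since $\phi$ restricted to a generic slice $\{a\} \times H^{\circ}$ is dominant onto $U$, and one should note that $\iota^{\circ} \circ b = \eta$ holds on all of $\dom(b)$ (not just generically) by the same closed-agreement argument. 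Third, in the non-connected assembly, regularity of the conjugation automorphisms $c_j$ must come from rational universality of $\iota^{\circ}$ combined with the regularity theorem for rational homomorphisms — as you indeed invoke — and not from the universal property~\hyperlink{Eq.Universal_Property_groups}{($\ast$)}, which you establish only afterwards; your write-up respects this order. With these completions your argument is a valid, and at the key juncture more elementary, alternative to the paper's proof.
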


\begin{lemma}
	It is enough to prove Proposition~\ref{Prop.Ramanujam_gen} 
	for connected $G$.
\end{lemma}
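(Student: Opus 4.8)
The plan is to reduce the general case (finitely many connected components) to the connected case $G = G^\circ$, which is the statement of Proposition~\ref{Prop.Ramanujam_gen} restricted to connected subgroups. The idea is that since $G$ has only finitely many connected components, the connected component $G^\circ$ of the identity is a closed finite-dimensional subgroup of finite index in $G$, and we can build the algebraic group structure on $G$ from the one on $G^\circ$ together with the finite quotient $G/G^\circ$.

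Concretely, I would argue as follows. Suppose we have already established the proposition for the connected subgroup $G^\circ$, yielding an algebraic group $H^\circ$ together with a rationally universal morphism $\iota^\circ \colon H^\circ \to \Bir(X)$ that restricts to a group isomorphism and homeomorphism onto $G^\circ$. Since $G^\circ$ is closed of finite index in $G$ (it is open and closed, and by hypothesis there are finitely many components; cf.\ Corollary~\ref{Cor.Closed_subgroup}), choose coset representatives $g_1 = \id_X, g_2, \ldots, g_m$ so that $G = \bigsqcup_{j} g_j G^\circ$. First I would define $H$ as a disjoint union of $m$ copies of $H^\circ$, indexed by the cosets, and declare the map $\iota \colon H \to \Bir(X)$ on the $j$-th copy to be $h \mapsto g_j \cdot \iota^\circ(h)$. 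One checks this is a morphism on each component (composition with a fixed birational map preserves algebraic families by Proposition~\ref{Prop.Prel}\eqref{Prop.Prel_mult_inv_cont}), that it is injective with image $G$, and that it is a homeomorphism onto $G$ by the disjointness and openness of the components.

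The main work is then to transport the \emph{group law} of $G$ back to $H$ so that $H$ becomes an algebraic group and $\iota$ a group isomorphism. The multiplication on $G$ is determined, via the bijection $\iota$, by a collection of maps $g_j G^\circ \times g_k G^\circ \to g_{\sigma(j,k)} G^\circ$ (where $\sigma$ encodes the multiplication on the finite group $G/G^\circ$); each such map, conjugated through $\iota^\circ$, corresponds to a morphism $H^\circ \times H^\circ \to H^\circ$ of varieties. The key point is that these structure maps are algebraic: this follows because the group operations on $\Bir(X)$ preserve algebraic families (Proposition~\ref{Prop.Prel}\eqref{Prop.Prel_mult_inv_cont}) and because $\iota^\circ$ is rationally universal, which lets us lift the composition morphisms $H^\circ \times H^\circ \to \Bir(X)$ through $\iota^\circ$ to genuine morphisms of varieties into $H^\circ$ after identifying the relevant cosets. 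The hard part will be verifying that these lifted maps are everywhere-defined morphisms of varieties (a priori rational universality only yields rational maps), and that they satisfy the group axioms; here one uses that an abstract group law which is a morphism on a dense open set of a group variety extends to an algebraic group structure (a standard rigidity / rational-map-of-group-varieties argument, à la Weil's group chunk theorem).

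Finally I would verify the universal property \hyperlink{Eq.Universal_Property_groups}{($\ast$)} and rational universality for $H$. Given a morphism $\rho \colon H' \to \Bir(X)$ from an algebraic group that is a group homomorphism with image in $G$, its image lies in finitely many cosets of $G^\circ$; restricting to the preimage of $G^\circ$ (a finite-index closed subgroup of $H'$) and applying the universal property already known for $H^\circ$ produces the homomorphism on that component, which then extends uniquely over all of $H'$ by compatibility with the coset decomposition. Rational universality of $\iota$ follows from rational universality of $\iota^\circ$ together with the fact that an irreducible variety mapping into $G$ lands in a single coset $g_j G^\circ$, reducing to the connected case. I expect the genuine obstacle to be purely in the connected case (which is assumed here), so that the content of this lemma is the routine but careful bookkeeping of coset representatives and the extension of the group structure from $H^\circ$ to $H$.
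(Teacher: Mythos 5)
Your proposal is correct and follows the same skeleton as the paper's proof: choose coset representatives $g_1,\dots,g_m$ of $G^\circ$ in $G$, set $H=\coprod_{i=1}^m H_0^{(i)}$ with $\iota|_{H_0^{(i)}}\colon h\mapsto g_i\,\iota_0(h)$, transport the group law, and verify the universal property by restricting a given $\rho\colon H'\to\Bir(X)$ to the open-closed finite-index subgroup $\rho^{-1}(G^\circ)$ — all of which matches the paper. The one substantive divergence is precisely at the step you flag as ``the hard part''. The paper avoids it entirely: since the connected case is assumed \emph{together with} the universal property~\hyperlink{Eq.Universal_Property_groups}{\textnormal{($\ast$)}}, for each $g\in G$ the map $h\mapsto g^{-1}\circ\iota_0(h)\circ g$ is a morphism (Proposition~\ref{Prop.Prel}\eqref{Prop.Prel_mult_inv_cont}) and a group homomorphism with image in the normal subgroup $G^\circ$, so~\hyperlink{Eq.Universal_Property_groups}{\textnormal{($\ast$)}} yields an \emph{everywhere-defined} automorphism $c_g$ of the algebraic group $H_0$; the component multiplications are then $(h,h')\mapsto h_0\,c_{g_j}(h)\,h'$ and the inversions $h\mapsto h_1\,c_{g_i^{-1}}(h^{-1})$, which are manifestly morphisms, with no extension of rational maps needed. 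Your fallback — rational universality plus rigidity — can be made to work, but note that your cited tool is the wrong one: Weil's group chunk theorem constructs a group birational to a given birational group law (this is what the paper invokes, via SGA3, inside the \emph{connected} case itself), whereas your route needs the easier translation-trick fact that a rational map of connected algebraic groups which is abstractly a group homomorphism (here, the conjugations $c_g$, which agree with the rational lifts on a dense open set) extends to a morphism; the twisted multiplication maps themselves are not homomorphisms, so the extension must be applied to the $c_g$ and the rest written as translations. With that substitution your argument closes, at the cost of an extension argument the paper's direct use of~\hyperlink{Eq.Universal_Property_groups}{\textnormal{($\ast$)}} renders unnecessary — a useful reminder that the universal property is part of the inductive hypothesis and should be exploited.
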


\begin{proof}
	We assume that there is a rationally universal morphism $\iota_0 \colon H_0 \to \Bir(X)$
	that restricts to a group isomorphism $H_0 \to G^\circ$, where $G^\circ$
	denotes the connected component of the identity of $G$, and $\iota_0$ 
	satisfies the universal property~\hyperlink{Eq.Universal_Property_groups}{\textnormal{($\ast$)}}
	from Proposition~\ref{Prop.Ramanujam_gen}.
	In particular, for all $g \in G$ there exists an automorphism $c_g$ of the algebraic group 
	$H_0$ such that $\iota_0(c_g(h)) = g^{-1} \circ \iota_0(h) \circ g$
	for all $h \in H_0$.

	Let $g_1, \ldots, g_m \in G$ be representatives of the cosets of $G_0$ in $G$
	and define 
	\[
		\iota \colon H \coloneqq \coprod_{i = 1}^m H_0^{(i)} \to \Bir(X) \quad
		\textrm{via} \quad
		\iota |_{H_0^{(i)}} \colon H_0^{(i)} \coloneqq H_0 \xrightarrow[]{\iota_0} G^\circ 
		\xrightarrow[]{g \mapsto g_i g} g_i G^\circ \, .
	\]
	Then $\iota \colon H \to \Bir(X)$ is a rationally universal 
	morphism that restricts to a homeomorphism
	$H \to G$. Moreover, we endow $H$ with the group structure such that $\iota$ becomes a group
	isomorphism. 
	
	\begin{claim}
		The group $H$ is an algebraic group.
	\end{claim}

	\begin{proof}
	Indeed, for $1 \leq i, j \leq m$
	there exists a unique integer  $1 \leq k \leq m$ such that 
	$g_i G^\circ g_j G^{\circ} = g_k G^\circ$ inside $G$. In particular, there
	exists a unique $g_0 \in G^\circ$ with $g_i g_j = g_k g_0$.
	Let $h_0 \in H_0$ be the preimage of $g_0$ under $\iota_0$.
	Now, the multiplication map $H \times H \to H$ restricts to the morphism
	\[
		H_0^{(i)} \times H_0^{(j)} \to H_0^{(k)} \, , \quad 
		(h, h') \mapsto h_0 c_{g_j}(h) h' \, ,
	\]  
	and hence, the multiplication map
	$H \times H \to H$ is a morphism. 
	
	Similarly, for $1 \leq i \leq m$ there exists a unique
	$1 \leq l \leq m$ such that $(g_i G^\circ)^{-1} = g_l G^\circ$ and hence, we may choose
	$h_1 \in H_0$ with $g_i^{-1} = g_l \iota_0(h_1)$. Now, the inversion map
	$H \to H$ restricts to the morphism 
	\[
		H_0^{(i)} \to H_0^{(l)} \, , \quad h \mapsto h_1 c_{g_i^{-1}}(h^{-1}) \, ,
	\] 
	and hence, the inversion map $H \to H$ is an automorphism 
	of algebraic groups.
	\end{proof}

	If $\rho \colon H' \to \Bir(X)$ is a morphism that is also a group homomorphism,
	then there exists a unique group homomorphism 
	$f \colon H' \to H$ with $\rho = \iota \circ f$.
	By assumption, the restriction 
	$f |_{\rho^{-1}(G^\circ)} \colon \rho^{-1}(G^\circ) \to H$ is a homomorphism
	of algebraic groups. This implies that $f$ is a homomorphism of algebraic groups.
\end{proof}

From now on we assume that $G$ is connected.
For the proof of Proposition~\ref{Prop.Ramanujam_gen}, 
we take an injective rationally universal morphism 
$\eta \colon U \to \Bir(X)$ that induces 
a homeomorphism onto an open dense subset of $G$
(see Proposition~\ref{Prop.Nice_Parametrization_closed_alg_subset} and Corollary~\ref{Cor.Parametr_finite_diml_group}).
Moreover, $\dim G = \dim U$ (see e.g.~Corollary~\ref{Cor.Fibre-dimension-formula}).
We will identify $U$ with its (open) image in $G$ under $\eta$ and hence
$U \circ U = G$ (see Corollary~\ref{Cor.algebraic_subset_group_is_closed}).

Denote by $\kappa$ the algebraic family of birational transformations of 
$\Bir(X)$ para\-metrized by $U$ associated to $\eta$, 
and let $p = (p_1, \ldots, p_n) \in X^n$ be a Ramanujam point for $\kappa$. 
After shrinking $U$ we may assume that
$U$ is smooth and affine, and $U \times \{p\} \subseteq \lociso(\kappa_n)$,
where $\kappa_n$ denotes the diagonal family on $X^n$ induced by $\kappa$.
Hence,
\[
	 \alpha \colon U \to X^n \, , \quad u 
	 \mapsto (u(p_1), \ldots, u(p_n))
\]
is an injective morphism. Note that all these properties are 
preserved if we pass to an open dense subset of $U$, which we will frequently do.
\begin{lemma}
	\label{Lem.Gen_immersion}
	For general $u \in U$, the differential 
	$\textrm{d}_{u} \alpha \colon T_{u} U \to T_{\alpha(u)} X^n$
	of $\alpha$ at $u$ is injective.
\end{lemma}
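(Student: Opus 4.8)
The plan is to show that the differential of the orbit map $\alpha$ is generically injective by using that $\alpha$ is itself injective, together with the characteristic-free tool that a dominant separable morphism has generically smooth fibres. Since $\alpha \colon U \to X^n$ is an injective morphism from an irreducible variety, its image $\overline{\alpha(U)}$ has dimension equal to $\dim U$, and $\alpha \colon U \to \overline{\alpha(U)}$ is birational onto its image. I would reduce to showing that this birational map is generically an immersion.

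First I would replace $U$ by a smaller open dense subset on which $\alpha$ behaves as nicely as possible: since $U$ is already smooth and $\alpha$ is injective, the Zariski main theorem (or Lemma~\ref{Lem.Kraft} together with finiteness of the fibres) lets me assume $\alpha$ is an open immersion onto a locally closed subvariety of $X^n$ after shrinking $U$, \emph{provided the ground field has characteristic zero}. In that case generic injectivity of the differential is automatic, since a birational injective morphism between smooth varieties in characteristic zero is an open immersion on a dense open set by generic smoothness. The genuinely delicate point is positive characteristic, where a birational bijective morphism can fail to be an immersion (the Frobenius is the standard obstruction), so injectivity of $\alpha$ alone does \emph{not} force $\mathrm{d}_u\alpha$ to be injective.

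The hard part will therefore be ruling out inseparability. Here I expect to exploit the group structure: because $U \circ U = G$ and $U$ carries an algebraic-group germ coming from $\eta$ being rationally universal, the orbit map $\alpha$ is equivariant for the left-translation action, so the rank of $\mathrm{d}_u\alpha$ is the same for all $u$ in a dense open set and equals its generic value. If $\alpha$ were inseparable onto its image, the extension of function fields $\kk(\overline{\alpha(U)}) \subseteq \kk(U)$ would be purely inseparable of degree $>1$, contradicting that $\alpha$ is injective \emph{and} that the associated morphism $U \to \Bir(X)$ is rationally universal: the universal property forces $\kk(U)$ to be the function field of the image with no inseparable defect. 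Concretely, I would argue that rational universality of $\eta$ gives a rational section of $\alpha$ over its image, which makes the field extension trivial, hence $\alpha$ is generically separable, and then generic smoothness of the separable dominant map $U \to \overline{\alpha(U)}$ between the smooth $U$ and (the smooth locus of) its image yields that $\mathrm{d}_u\alpha$ is injective for general $u$.

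To assemble these, I would proceed as follows. Shrink $U$ so that it is smooth and $\overline{\alpha(U)}$ is a variety of the same dimension $\dim U$ with $\alpha(U)$ open in it; invoke the definition of a Ramanujam point to see that $\alpha$ separates the birational transformations parametrized by $U$, so $\alpha$ is injective; use rational universality of $\eta$ to produce a rational inverse to $\alpha$ over its image, establishing that $\kk(U)/\kk(\overline{\alpha(U)})$ is an isomorphism and in particular separable; finally apply generic smoothness to the dominant separable morphism from the smooth variety $U$ to conclude that the differential $\mathrm{d}_u\alpha \colon T_u U \to T_{\alpha(u)} X^n$ has rank $\dim U$, i.e.\ is injective, for $u$ in a dense open subset. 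Restricting $U$ to this subset proves the lemma.
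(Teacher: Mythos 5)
Your reduction to positive characteristic is right, and your characteristic-zero argument matches the paper's (which also dispatches that case via Zariski's main theorem). But the pivotal step of your proposal --- ``rational universality of $\eta$ gives a rational section of $\alpha$ over its image, which makes the field extension trivial'' --- is a genuine gap. Rational universality only quantifies over \emph{morphisms to $\Bir(X)$}, i.e.\ over algebraic families of birational transformations; it says nothing until you exhibit a variety $A$, an algebraic family parametrized by $A$ with image inside $\eta(U)$, and then factor it through $U$. The closed set $\overline{\alpha(U)} \subseteq X^n$ carries no such family a priori: the set-theoretic inverse $\alpha(U) \to U \to \Bir(X)$ is just an abstract map, and there is no formal reason the family $\kappa$ descends along a purely inseparable quotient of its parameter space. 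Indeed, descent along such quotients fails in general: over $\kk$ of characteristic $p$, the translation family $(t,x) \mapsto (t, x+t)$ on $\AA^1$ does not factor through the Frobenius $t \mapsto t^p$, even though that is a bijective morphism on parameters. So injectivity of $\alpha$ plus universality of $\eta$ cannot, by themselves, rule out that $\kk(U)/\kk(\overline{\alpha(U)})$ is purely inseparable of degree $>1$; your argument is circular exactly where the real work begins.

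The paper's proof supplies precisely the missing descent. It considers the sub-bundle $\ker(\mathrm{d}\alpha) \subseteq TU$, checks it is an integrable (restricted) foliation, and forms the Seshadri quotient $\xi \colon U \to U'$ with $\ker(\mathrm{d}\xi) = \ker(\mathrm{d}\alpha)$ \cite{Se1958Loperation-de-Cart}. A local computation with a $p$-basis $y_1, \ldots, y_m$ and the derivations $\partial/\partial y_i$ shows that $(\pr_2 \circ \kappa_n)^{\ast}(f)$ is killed by all $\partial/\partial y_i$ --- this uses that $\alpha$ is the restriction of $\pr_2 \circ \kappa_n$ to $U \times \{p\}$, so the foliation directions are infinitesimally invisible to the family --- whence the family descends to a family $\kappa'$ on $U'$ with $\rho_{\kappa'} \circ \xi = \eta$. \emph{Only now} is rational universality applicable, namely to the genuine morphism $\rho_{\kappa'} \colon U' \to \Bir(X)$: it yields that $\xi$ is birational, hence (being bijective, by Zariski's main theorem) an isomorphism, forcing $\ker(\mathrm{d}\alpha) = 0$. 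If you want to repair your write-up along your own lines, the statement you must prove is exactly this descent of $\kappa$ to the foliation quotient of $U$; everything after that point in your proposal (universality gives a section, then generic smoothness of a separable map) then goes through.
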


\begin{proof}
	The statement is true in case $\car(\kk) = 0$ (e.g.~by Zariski's main theorem, 
	see~\cite[Corollary~12.88]{GoWe0Algebraic-geometryI}) 
	and hence, we may assume that $\car(\kk) > 0$.
	Let $\ker(\textrm{d} \alpha) \subseteq T U$ be the kernel in the tangent bundle $TU$ 
	of the differential 
	$\textrm{d} \alpha \colon TU \to TX^n$. 
	After shrinking $U$ we may assume that $\ker(\textrm{d} \alpha)$
	is a sub-bundle of $TU$. One shows that $\ker(\textrm{d} \alpha)$ is an integrable
	sub-bundle of $TU$ in the sense of \cite[\S3]{Se1958Loperation-de-Cart}, 
	i.e.~$\ker(\textrm{d} \alpha)$ is a restricted Lie-subalgebra of $TU$; the conditions
	are directly seen to be satisfied if we interpret vector fields on smooth affine varieties
	as derivations of the coordinate ring. 
	
	Fix $u_1 \in U$. 
	By \cite[Théorème~2, Proposition~7]{Se1958Loperation-de-Cart} there is a bijective 
	morphism $\xi \colon U \to U'$ to a smooth, irreducible variety $U'$ 
	such that the kernel of $\textrm{d} \xi$ 
	is equal to $\ker(\textrm{d} \alpha)$ and there exist
	$y_1, \ldots, y_m \in \OO_{U, u_1} \subseteq \OO_{U \times X^n, (u_1, p)}$ that form a 
	$\car(\kk)$-basis
	of $\OO_{U \times X^n, (u_1, p)}$ over $\OO_{U' \times X^n, (u_1', p)}$ (where $u_1' = \xi(u_1)$)
	and there exist $\OO_{U' \times X^n, (u_1', p)}$-derivations 
	$\frac{\partial}{\partial y_1}, \ldots, \frac{\partial}{\partial y_m}$
	of $\OO_{U \times X^n, (u_1, p)}$ such that $\frac{\partial y_j}{\partial y_i}$
	is equal to the Kronecker delta for all  $(i, j)$. In particular, the elements
	$y_1^{\rho_1} \cdots y_m^{\rho_m}$ where $(\rho_1, \ldots, \rho_m)$ runs through all 
	indices of $\{0, \ldots, \car(\kk)-1\}^m$
	form a basis of the
	$\OO_{U' \times X^n, (u_1', p)}$-module $\OO_{U \times X^n, (u_1, p)}$ 
	(see e.g.~\cite[Remark~15.1]{Ku1986Kahler-differentia}) and hence,
	\begin{equation}
		\label{Eq.Intersection}
		\bigcap_{i = 1}^m \ker\left(\frac{\partial}{\partial y_i}\right) = 
		\OO_{U' \times X^n, (u_1', p)} \, .
	\end{equation}
	This implies that $\frac{\partial}{\partial y_1}, \ldots, \frac{\partial}{\partial y_m}$ are linearly independent elements in $T_{u_1} U \times T_p X^n$ contained in the kernel of
	\[
		\textrm{d}_{(u_1, p)}(\xi \times \id_{X^n}) \colon 
		T_{(u_1, p)}(U \times X^n) \to T_{(u_1', p)}(U' \times X^n) \, .
	\]
	Note that the kernel of the above linear map is given by
	\begin{eqnarray*}
		\ker(\textrm{d}_{u_1} \xi) \times \{0\} &=& \ker(\textrm{d}_{u_1} \alpha)  \times \{0\} \\
		&=&
		\ker( \textrm{d}_{(u_1, p)}(\pr_2 \circ\kappa_n |_{U \times \{p\}})) \\
		&=& \ker(\textrm{d}_{(u_1, p)}(\pr_2 \circ\kappa_n ))
		\cap T_{u_1} U  \times \{ 0 \} \subseteq T_{u_1} U \times T_p X^n  \, ,
	\end{eqnarray*}
	where $\pr_2 \colon U \times X^n \to X^n$ denotes the projection to the second factor.
	Hence, 
	\[
		\frac{\partial (\pr_2 \circ \kappa_n)^\ast(f) }{\partial y_i} = 0
		\quad \quad
		\textrm{for all $f \in \OO_{X^n, u_1(p)}$ and $i = 1, \ldots, m$} \, .
	\]
	Using~\eqref{Eq.Intersection},
	$(\pr_2 \circ \kappa_n)^\ast(f) \in \OO_{U' \times X^n, (u_1', p)}$
	for all $f \in \OO_{X^n, u_1(p)}$. Hence, there exists a rational
	map $\mu \colon U' \times X^n \dashrightarrow X^n$ such that 
	$\pr_2  \circ \kappa_n = \mu \circ (\xi \times \id_{X^n})$. By restriction to $U \times X$,
	$U' \times X$ and $X$ (where we embed $X$ diagonally into $X^n$) we get a rational self-map 
	$\kappa'$ of $U' \times X$ such that 
	$\kappa' \circ (\xi \times \id_X) = (\xi \times \id_X) \circ \kappa$. As $\kappa$ is birational,
	it follows that $\kappa'$ is birational as well and by further shrinking $U$ we may assume
	that $\kappa'$ is an algebraic family of birational transformations of $\Bir(X)$
	parametrized by $U'$. By construction, we get now
	$\rho_{\kappa'} \circ \xi = \eta \colon U \to \Bir(X)$. Since $\eta$ is an injective rationally 
	universal morphism, it follows that $\xi \colon U \to U'$ is birational. Using that
	$\xi$ is a bijective morphism, we conclude that $\xi$ is an isomorphism by Zariski's main theorem.
	This shows that $\ker(\textrm{d} \alpha) = 0$ and thus
	$\textrm{d}_u \alpha$ is injective for all $u \in U$.
\end{proof}

\begin{proof}[Proof of Proposition~\ref{Prop.Ramanujam_gen}]
	We use the setup introduced before Lemma~\ref{Lem.Gen_immersion}.
	Using Lemma~\ref{Lem.Gen_immersion} we may shrink $U$ such that
	$\alpha$ becomes an open embedding $U \to \overline{\alpha(U)}$. 
	Fix $u_0 \in U$ with $(u_0, p) \in \lociso(\kappa_n)$. 
	By post composing $\eta$ with multiplication 
	by $u_0^{-1} \in G$ (and possibly shrinking $U$ further) 
	we may assume that $\kappa_n(u_0, p) = (u_0, p)$.

	\begin{claim}
		\label{Claim.Restriction}
		$\kappa_n \colon U \times X^n \dashrightarrow U \times X^n$ 
		restricts to a birational self-map of $U \times \alpha(U)$.
	\end{claim}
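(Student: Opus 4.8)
The plan is to transport $\kappa_n$ along the orbit map $\alpha$ and recognize it as a shear of the group $G$. Recall that, after the shrinking performed using Lemma~\ref{Lem.Gen_immersion}, the map $\alpha \colon U \to \alpha(U)$ is an open embedding, so
\[
	\beta \coloneqq \id_U \times \alpha \colon U \times U \to U \times \alpha(U)
\]
is an isomorphism onto the locally closed, irreducible subvariety $U \times \alpha(U)$ of $U \times X^n$. Under the identification of $U$ with its image in $G$, the diagonal family is $\kappa_n(u, x_1, \ldots, x_n) = (u, u(x_1), \ldots, u(x_n))$ wherever it is defined; hence for $u, u' \in U$ with $u \circ u' \in U$ a direct computation gives
\[
	\kappa_n(u, \alpha(u')) = (u, (u \circ u')(p_1), \ldots, (u \circ u')(p_n)) = (u, \alpha(u \circ u')) \, .
\]
In other words, on a dense open subset of $U \times U$ we have the identity of rational maps $\kappa_n \circ \beta = \beta \circ s$, where $s \colon U \times U \dashrightarrow U \times U$ is the shear $s(u, u') = (u, u \circ u')$.

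The second step is to observe that $s$ is birational. Its inverse is the rational map $(u, w) \mapsto (u, u^{-1} \circ w)$, which is well defined because composition and inversion in $\Bir(X)$ preserve algebraic families (Proposition~\ref{Prop.Prel}\eqref{Prop.Prel_mult_inv_cont}) and because, since $U \circ U = G$ and $U$ is dense in $G$, both $u \circ u'$ and $u^{-1} \circ w$ lie in $U$ for generic arguments. Granting this, $\kappa_n |_{U \times \alpha(U)} = \beta \circ s \circ \beta^{-1}$ is a birational self-map of $U \times \alpha(U)$, which is the assertion of the claim. I prefer to exhibit an explicit two-sided inverse rather than merely argue generic injectivity, since in positive characteristic a dominant generically injective map need not be birational.

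The step requiring the most care — and the main obstacle — is to confirm that $U \times \alpha(U)$ is genuinely $\kappa_n$-stable as a rational map, i.e.\ that a dense open subset of $U \times \alpha(U)$ lies in $\dom(\kappa_n)$ and is not pushed outside $U \times \alpha(U)$. For the former, the normalization $\kappa_n(u_0, p) = (u_0, p)$ with $(u_0, p) \in \lociso(\kappa_n)$ gives $\alpha(u_0) = p$, so $(u_0, \alpha(u_0)) = (u_0, p)$ belongs to $\lociso(\kappa_n) \cap (U \times \alpha(U))$; this intersection is thus a nonempty — hence dense — open subset of the irreducible variety $U \times \alpha(U)$, and so $\kappa_n |_{U \times \alpha(U)}$ is a well-defined rational map. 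For the latter, the displayed formula shows that the image of a generic point $(u, \alpha(u'))$ is $(u, \alpha(u \circ u'))$, which lies in $U \times \alpha(U)$ exactly on the dense locus where $u \circ u' \in U$. With both density statements in hand, the identity $\kappa_n \circ \beta = \beta \circ s$ of rational maps, together with the birationality of $s$ and of $\beta$, completes the argument.
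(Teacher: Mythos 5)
Your final paragraph is, in substance, exactly the paper's proof of this claim: the paper also observes that $(u_0, p) = (u_0, \alpha(u_0))$ lies in $\lociso(\kappa_n) \cap (U \times \alpha(U))$, and gets density of the stable locus from the continuity and surjectivity of $\varepsilon \colon U \times \alpha(U) \to G$, $(u, q) \mapsto u \circ \alpha^{-1}(q)$ (surjectivity via $U \circ U = G$), so that $\varepsilon^{-1}(U)$ is a nonempty, hence dense, open subset of the irreducible variety $U \times \alpha(U)$. That half of your argument is fine, modulo spelling out that the locus $\{u \circ u' \in U\}$ is open because composition is continuous and $\eta(U)$ is open in $G$.

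The gap is in your birationality step. You write that the inverse of the shear $s$ is ``the rational map $(u, w) \dashmapsto (u, u^{-1} \circ w)$, which is well defined because composition and inversion preserve algebraic families.'' But Proposition~\ref{Prop.Prel}\eqref{Prop.Prel_mult_inv_cont} only gives you a \emph{morphism to $\Bir(X)$}, namely $(u, w) \mapsto \eta(u)^{-1} \circ \eta(w)$; it does not give you a rational map of \emph{varieties} $U \times U \dashrightarrow U$. Transporting a morphism to $\Bir(X)$ whose image lies in $\eta(U)$ back to a rational map into the parameter variety $U$ is precisely what the rational universality of $\eta$ (Proposition~\ref{Prop.Nice_Parametrization_closed_alg_subset}) buys; the mere fact that $\eta \colon U \to \eta(U)$ is a bijective homeomorphism would not suffice — and this matters most in positive characteristic, the very setting you flag as your reason for wanting an explicit inverse. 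The same objection applies to $s$ itself if you define it by the formula rather than as $\beta^{-1} \circ \kappa_n \circ \beta$. Note also that in the paper the logical order is the reverse of yours: the rationality of the shear $\varphi(u_1, u_2) = (u_1, u_1 \circ u_2)$ and of its inverse is \emph{deduced from} this claim by conjugating $\kappa_n$ with $\id_U \times \alpha$, so your proof of the claim must not presuppose it.

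There are two short repairs. Either invoke rational universality explicitly: restrict the morphism $(u, w) \mapsto \eta(u)^{-1} \circ \eta(w)$ to the open dense locus where its image lies in $\eta(U)$ and factor it rationally through $U$, which legitimizes your explicit inverse. Or, more economically and closer to the paper, avoid the abstract shear altogether: run your stability paragraph verbatim for $\kappa_n^{-1}$, whose restriction over $u$ is the diagonal action of $u^{-1}$, using that $(u_0, p) \in \lociso(\kappa_n^{-1})$ and that the locus where $u^{-1} \circ \alpha^{-1}(q) \in U$ is dense by the same continuity-plus-surjectivity argument. Then the restrictions of $\kappa_n$ and $\kappa_n^{-1}$ to $U \times \alpha(U)$ are rational self-maps that are generically mutually inverse, which gives the claimed birational self-map without any generic-injectivity argument.
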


	\begin{proof}
	Note that $\lociso(\kappa_n)$ has a non-trivial intersection with 
	$U \times \alpha(U)$
	(both contain $(u_0, p)$). 
	Let $\varepsilon \colon U \times \alpha(U) \to G$ be defined
	by $\varepsilon(u, q) = u \circ \alpha^{-1}(q)$. As $\varepsilon$ is continuous and
	$\varepsilon$ is surjective (since $U \circ U = G$), 
	it follows that $\varepsilon^{-1}(U)$ is a dense open subset of 
	$U \times \alpha(U)$. For $(u_1, \alpha(u_2)) \in \varepsilon^{-1}(U) \cap \lociso(\kappa_n)$
	we get $u_1 \circ u_2 \in U$ and hence
	\[
		\kappa_n(u_1, \alpha(u_2)) = \kappa_n(u_1, (u_2(p_1), \ldots, u_2(p_n)))
		= (u_1, (u_1 \circ u_2)(p_1), \ldots, (u_1 \circ u_2)(p_n))
	\] 
	is contained in $U \times \alpha(U)$. This shows the claim.
	\end{proof}

	By Claim~\ref{Claim.Restriction},
	\[
		\varphi \colon U \times U \xrightarrow[\sim]{\id_U \times \alpha} U \times \alpha(U) 
		\stackrel{\kappa_n}{\dashrightarrow} 
		U \times \alpha(U) \xrightarrow[\sim]{\id_U \times \alpha^{-1}} U \times U
	\]
	is a birational $U$-map. Let
	\[
		L \coloneqq 
		\lociso(\varphi) \cap \set{(u_1, u_2) \in U \times U}{u_1 \circ u_2 \in U}
		\subseteq U \times U
	\]
	and
	\[
		L^{-1} \coloneqq 
		\lociso(\varphi^{-1}) \cap \set{(u_1, u_2) \in U \times U}{u_1^{-1} \circ u_2 \in U}
		\subseteq U \times U
	\]
	Then $L$, $L^{-1}$ are open dense subsets of $U \times U$, 
	$\varphi \colon L \to L^{-1}$ is an isomorphism and
	\[
		\varphi(u_1, u_2) = (u_1, u_1 \circ u_2) \, ,	\quad
		\varphi^{-1}(u_1, u_2) = (u_1, u_1^{-1} \circ u_2) \, .
	\]
	Moreover, $U \dashrightarrow U$, $u \dashmapsto u^{-1}$
	is a birational transformation. Indeed, fix $u_0 \in U$ such that $U \times \{ u_0 \}$ intersects 
	$L^{-1}$ and $U \times \{ u_0^{-1} \}$ intersects $L$. Then $u \dashmapsto u^{-1}$
	is the composition of the dominant rational maps $U \dashrightarrow U$ given by 
	$u \dashmapsto u^{-1} \circ u_0$ and $u \dashmapsto u \circ u_0^{-1}$, respectively.
	Since $u \dashmapsto u^{-1}$ is an involution, we conclude that it is birational.

	Hence, there exists an open dense subset $M \subseteq U \times U$, such that
	\[
		M \to U \, , \quad (u_1, u_2) \mapsto u_2 \circ u_1^{-1}
	\]
	is a morphism.

	\begin{claim}
		\label{Claim.Group_chunk}
		There is a connected algebraic group $H$ and a birational map 
		$i \colon U \dashrightarrow H$ such that
		for general $(u_1, u_2) \in U \times U$ we have that $i(u_1) i(u_2) = i(u_1 \circ u_2)$.
	\end{claim}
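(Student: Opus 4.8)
The plan is to recognize the rational composition on $U$ as a \emph{birational group law} in the sense of Weil and then to invoke the group chunk theorem. Concretely, set
$m \colon U \times U \dashrightarrow U$, $(u_1, u_2) \mapsto u_1 \circ u_2$; by Claim~\ref{Claim.Restriction} this is a well-defined rational map, since $u_1 \circ u_2 \in U$ on a dense open subset of $U \times U$. First I would verify that $m$ is associative as a rational map $U^3 \dashrightarrow U$. This is forced by the honest associativity of composition in $\Bir(X)$ together with the injectivity of $\eta$: the two rational maps $(u_1, u_2, u_3) \dashmapsto (u_1 \circ u_2) \circ u_3$ and $(u_1, u_2, u_3) \dashmapsto u_1 \circ (u_2 \circ u_3)$ agree on the dense open locus of $U^3$ on which both are defined and take values in $U$, so they coincide as rational maps.

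Next I would check the two translation maps demanded by Weil's theorem. The left translation $(u_1, u_2) \mapsto (u_1, u_1 \circ u_2)$ is exactly $\varphi$, which was already shown to be birational with inverse $(u_1, u_2) \mapsto (u_1, u_1^{-1} \circ u_2)$. For the right translation $(u_1, u_2) \mapsto (u_1 \circ u_2, u_2)$ I would use that the inversion $U \dashrightarrow U$, $u \dashmapsto u^{-1}$, is birational (already established) and that composition is rational: the candidate inverse $(v, u_2) \dashmapsto (v \circ u_2^{-1}, u_2)$ is then a rational map, and one verifies on a dense open subset, using $U \circ U = G$, that the two are mutually inverse. Thus $(U, m)$ carries a birational group law.

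By Weil's group chunk theorem (Weil, \emph{On algebraic groups of transformations}; see also Bosch--L\"utkebohmert--Raynaud, \emph{N\'eron Models}, \S5.1), a variety equipped with a birational group law is birationally equivalent to an algebraic group in a way intertwining the two multiplications. Applying this to $(U, m)$ yields an algebraic group $H$ together with a birational map $i \colon U \dashrightarrow H$ satisfying $i(u_1) \, i(u_2) = i(u_1 \circ u_2)$ for general $(u_1, u_2) \in U \times U$. Since $U$ is irreducible, so is $H$, whence $H$ is connected, as required.

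Given how much structure has already been produced, the genuine external input is only the classical group chunk theorem; the remaining work is bookkeeping. I expect the one slightly delicate point to be the domain-of-definition argument in the associativity check and, to a lesser extent, the birationality of the right translation map, but both are dictated by the honest group law of $G$ and by the already-proven birationality of $\varphi$ and of inversion.
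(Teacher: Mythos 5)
Your proposal is correct and takes essentially the same route as the paper: the paper likewise verifies the birational-group-law axioms --- associativity plus birationality of both translation maps, packaged there as the locally closed set $W \subseteq U^3$ of triples $(u_1,u_2,u_3)$ with $u_1 \circ u_2 = u_3$ whose three projections to $U^2$ are open embeddings --- and then invokes the group-chunk theorem, citing the group-germ results of SGA3, Exp.~XVIII, where you cite Weil and Bosch--L\"utkebohmert--Raynaud. The choice of reference for this classical theorem is immaterial, and the smoothness of $U$ needed for it was already arranged earlier in the paper's setup.
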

	
	\begin{proof}
		Let $W$ be the locally closed subset of $U^3$ defined by:
		\[
			W = \Bigset{(u_1, u_2, u_3) \in U^3}{
							\begin{array}{l}			
								u_1 \circ u_2 = u_3, \, (u_1, u_2) \in L, \\
								(u_1, u_3) \in L^{-1}, \, (u_2, u_3) \in M
							\end{array} } \, .
		\]
		Note that $W$ is the intersection of the graphs of the morphisms
		\[
			\begin{array}{lclcl}
			L &\to& U \, , \quad (u_1, u_2) &\mapsto& u_1 \circ u_2 \\
			L^{-1} &\to& U \, , \quad (u_1, u_3) &\mapsto& u_1^{-1} \circ u_3 \\
			M &\to& U \, , \quad (u_2, u_3) &\mapsto& u_3 \circ u_2^{-1} \, .
			\end{array}
		\]
		Hence, for every $i = 1, 2, 3$, the projection
		$p_i \colon W \to U^2$, where
		the $i$-th factor is omitted, is an open embedding.
		Moreover, associativity holds, i.e. if $u_1, u_2, u_3 \in U$ and
		\[
			\begin{array}{rcl}
			(u_1, u_2) \, ,  \ (u_2,  u_3)\, ,  \ (u_1 \circ u_2, u_3) \, , \ 
			(u_1, u_2 \circ u_3) &\in& L
			\end{array}
		\]
		then $(u_1 \circ u_2) \circ u_3 = u_1 \circ (u_2 \circ u_3)$. As $\kk$ is algebraically closed,
		it follows from \cite[Proposition~3.2, Remarques~3.1(b), Exp.~XVIII]{DeGr1970Schemas-en-groupes} that there exist open dense subsets $U' \subseteq U$ and
		$W' \subseteq W \cap (U')^3$ such that $(U', W')$ is a group germ in the sense
		of \cite[D\'efinition~3.1, Exp.~XVIII]{DeGr1970Schemas-en-groupes}. 
		Now, ~\cite[Proposition~3.6, Th\'eor\`eme~3.7, Corollaire 3.13, Exp.~XVIII]{DeGr1970Schemas-en-groupes} imply that
		there exists an open embedding $i \colon U' \to H$ into a connected algebraic group $H$ such that
		for all $(u_1, u_2) \in p_3(W') \subseteq (U')^2$ we have that 
		$i(u_1) i(u_2) = i(u_1 \circ u_2)$.
		Hence, the claim follows.
	\end{proof}
		
	By further shrinking $U$ we may and will assume that 
	$i \colon U \to H$ is an everywhere defined dominant open embedding.
	Hence, $\kappa$ extends via 
	$i \times \id_X \colon U \times X \to H \times X$ to a birational transformation
	\[
		\vartheta \colon H \times X \dashrightarrow H \times X \, .
	\]
	As $H$ is a connected algebraic group, it follows that $i(U) i(U) = H$.
	
	\begin{claim}
		The rational map $\alpha \coloneqq \pr_X \circ \vartheta \colon G \times X \dashrightarrow X$
		defines a rational $G$-action, where $\pr_X$ denotes the projection to $X$, and 
		$\rho_{\vartheta} \colon H \to \Bir(X)$ is a group homomorphism with image $G$.
	\end{claim}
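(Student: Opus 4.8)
The plan is to verify the two defining conditions of a rational action for $\alpha = \pr_X \circ \vartheta$ and then read off the remaining assertions from Corollary~\ref{Cor.Rational_action_is_family}. First I would record that, since $\vartheta$ extends the $U$-family $\kappa$ via $i \times \id_X$, it is by construction an $H$-map, i.e.~its first component is the identity on $H$. Consequently the map $(h, x) \dashmapsto (h, \alpha(h, x))$ built from $\alpha$ in the definition of a rational action is exactly $\vartheta$ itself. As $\vartheta$ is birational, this map is dominant, so the dominance requirement in the definition of a rational $H$-action is automatic.

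The heart of the matter is the cocycle identity $\alpha(h_1 h_2, x) = \alpha(h_1, \alpha(h_2, x))$ as rational maps $H \times H \times X \dashrightarrow X$. I would verify it on the dense locus arising from $i \times i \colon U \times U \to H \times H$. The relation $\vartheta \circ (i \times \id_X) = (i \times \id_X) \circ \kappa$ together with $\rho_\kappa = \eta$ gives $\alpha(i(u), x) = \eta(u)(x) = u(x)$, under the identification of $U$ with its open dense image in $G$. Using the group-chunk identity $i(u_1) i(u_2) = i(u_1 \circ u_2)$ from Claim~\ref{Claim.Group_chunk}, both sides evaluate to $u_1(u_2(x))$ on this locus: the left-hand side is $\alpha(i(u_1 \circ u_2), x) = (u_1 \circ u_2)(x)$, while the right-hand side is $\alpha(i(u_1), u_2(x)) = u_1(u_2(x))$. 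Since $i$ is dominant, the image of $i \times i$ is dense, so the two rational maps agree on a dense subset and therefore coincide. This establishes that $\alpha$ is a rational $H$-action.

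Once $\alpha$ is known to be a rational $H$-action, Corollary~\ref{Cor.Rational_action_is_family} immediately gives that $\vartheta$ is an algebraic family of birational transformations and that $\rho_\vartheta \colon H \to \Bir(X)$ is a group homomorphism. For the image, the same computation yields $\rho_\vartheta(i(u)) = \eta(u) = u$, so $U \subseteq \rho_\vartheta(H)$; and since $i(U) i(U) = H$ while $\rho_\vartheta$ is a homomorphism, we obtain $\rho_\vartheta(H) = U \circ U = G$ by Corollary~\ref{Cor.algebraic_subset_group_is_closed}.

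The step I expect to demand the most care is the cocycle identity: one must ensure that the loci of definition of the various compositions are non-empty and dense, so that the equality verified on the image of $i \times i$ genuinely propagates to an equality of rational maps, and one must keep careful track of the identification of $U$ with an open dense subset of $G$ throughout the evaluations.
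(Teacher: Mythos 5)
Your proof is correct and takes essentially the same route as the paper: you verify the cocycle identity for $\alpha$ on the dense image of $i \times i$ using $\alpha(i(u), x) = u(x)$ together with the group-chunk identity $i(u_1)i(u_2) = i(u_1 \circ u_2)$, then invoke Corollary~\ref{Cor.Rational_action_is_family}, and finally compute the image via $\rho_{\vartheta}(H) = \rho_{\vartheta}(i(U)i(U)) = U \circ U = G$, exactly as the paper does. The only cosmetic difference is that you make the dominance requirement explicit (observing that $(h, x) \dashmapsto (h, \alpha(h, x))$ is $\vartheta$ itself, which is birational), a point the paper leaves implicit.
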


	\begin{proof}
		Let $\beta \coloneqq \pr_X \circ \kappa \colon U \times X \dasharrow X$. Then we have for general 
		$x \in X$ and $(u_1, u_2) \in L$ that $\beta(u_1, \beta(u_2, x)) = \beta(u_1 \circ u_2, x)$.
		Since $i(u_1 \circ u_2) = i(u_1)i(u_2)$ for general $(u_1, u_2) \in U \times U$,
		it follows that $\alpha(h_1, \alpha(h_2, x)) = \alpha(h_1 h_2, x)$ for general $x \in X$
		and general  $(h_1, h_2) \in H$. Now, Corollary~\ref{Cor.Rational_action_is_family} implies
		that $\vartheta$ is an algebraic family and $\rho_{\vartheta} \colon H \to \Bir(X)$ is a 
		group homomorphism. The last statement in the claim follows from
		$\rho_{\vartheta}(H) = \rho_{\vartheta}(i(U)i(U)) = 
		\rho_{\vartheta}(i(U)) \circ \rho_{\vartheta}(i(U)) = 
		U \circ U = G$.
	\end{proof}

By Proposition~\ref{Prop.Prel}\eqref{Prop.Prel_diagonal} the kernel $K$ of $\rho_{\vartheta} \colon H \to \Bir(X)$
is a closed normal subgroup of $H$.
Let $k \in K$. As $k i(U)$ and $i(U)$ are both open and dense in $H$,
there exist $u_1, u_2 \in U$ with $k i(u_1) = i(u_2)$. Hence, 
$u_1 = \rho_{\vartheta}(k i(u_1)) = \rho_{\vartheta}(i(u_2)) = u_2$. Consequently, $k$ is trivial
and thus $\rho_\vartheta \colon H \to \Bir(X)$ is injective. 
As $\eta(U)$ is open in $G$ and $\rho_{\vartheta} \circ i = \eta \colon U \to \eta(U)$ is a homeomorphism, we get that the restriction
$\rho_{\vartheta} |_{i(U)} \colon i(U) \to G$ is an open embedding. Hence,
the group isomorphism $\iota \coloneqq \rho_\vartheta \colon H \to G$ is a homeomorphism. 
Recall that 
\[
	\eta \colon U \xrightarrow{i} H \xrightarrow{\iota} \Bir(X)
\]
is a rationally universal morphism and hence, $\iota$ is a rationally universal morphism.

\medskip

	Now, let $\rho \colon H' \to \Bir(X)$ be a morphism that is also a group homomorphism.
	As $\iota$ is a rationally universal 
	injective group homomorphism, there exist a unique group homomorphism 
	$f \colon H' \to H$ with $\rho = \iota \circ f$ and a dense open subset
	$V \subseteq H'$ such that $f|_{V}$ is a morphism.  
	This implies that $f$ is a morphism.
\end{proof}

\section{Birational transformations preserving a fibration}\label{Sec.fibration}

Let $\pi \colon X \to Y$ be a dominant morphism of irreducible varieties.
The goal of this section is to study birational transformations
preserving the general fibres of $\pi$.

A birational transformation $\varphi$ of $X$ \emph{preserves the fibres of $\pi$} if there
is an open dense subset $U \subseteq \lociso(\varphi)$ such that $\varphi$ maps every fibre of
$U \to Y$ into a fibre of $\varphi(U) \to Y$.
In this case, $\varphi$ preserves all fibres of $\lociso(\varphi) \to Y$,
as the subset of those $(x_1, x_2) \in \lociso(\varphi) \times_Y \lociso(\varphi)$
with $\pi(\varphi(x_1)) = \pi(\varphi(x_2))$ is closed in 
$\lociso(\varphi) \times_Y \lociso(\varphi)$ and contains the open dense subset
$U \times_Y U$. Let
\begin{align*}
	\Bir(X, \pi\fib) &= \set{\varphi \in \Bir(X)}{\textrm{$\varphi$ preserves the fibres of $\pi$} } \, , \\
	\Bir(X, \pi) &= \set{\varphi \in \Bir(X)}{
	\textrm{there exists $\bar{\varphi} \in \Bir(Y)$ with $\bar{\varphi} \circ \pi = \pi \circ \varphi$} 
	} \, , \\
	\Bir(X/Y) &= \set{\varphi \in \Bir(X)}{\pi = \pi \circ \varphi} \, . 	
\end{align*}
Note that in general the inclusion $\Bir(X, \pi) \subseteq \Bir(X, \pi\fib)$ is proper:
Let $\car(\kk) = p > 0$ and take $\pi \colon \AA^2 \to \AA^2$, 
$(x,y) \mapsto (x^p, y)$. Then, for example, the isomorphism of $\AA^2$ that exchanges both factors
does not descend to a birational transformation of $\AA^2$.
However, let us define the following Property~\hyperlink{Eq.NiceFibration}{($\ast$)}
for the morphism $\pi$, which will turn out to be a sufficient condition for equality to hold.

\begin{quote}
	 \hypertarget{Eq.NiceFibration}{($\ast$)}
	 For a normal proper $\kk(Y)$-birational model $Z$ of 
	 the generic fibre of $\pi \colon X \to Y$, we have:
	 The finite field extension $\kk(Y) \subseteq \OO_Z(Z)$ is separable.\footnote{The ring of global functions
	 $\OO_Z(Z)$ is always a finite field extension of $\kk(Y)$, see e.g.~\cite[Theorem~12.65]{GoWe0Algebraic-geometryI}.}
\end{quote}
Note that Property~\hyperlink{Eq.NiceFibration}{($\ast$)} is independent of the choice of a 
normal proper $\kk(Y)$-birational model of the 
generic fibre of $\pi$. Indeed, a birational map between normal proper irreducible varieties over a field induces an isomorphism
between the fields of global functions, see~e.g.~\cite[Theorem~12.60]{GoWe0Algebraic-geometryI}.
Property~\hyperlink{Eq.NiceFibration}{($\ast$)} has the following geometrical interpretation:

\begin{lemma}
	\label{Lem.Stein_factorization}
	Assume that $X$ is normal, that $\pi \colon X \to Y$ is proper, and let $X \to Y' \to Y$ be its Stein factorization.
	Then $\pi$ satisfies Property~\hyperlink{Eq.NiceFibration}{($\ast$)} if and only if
	$Y' \to Y$ is generically \'etale, i.e.~$\kk(Y) \subseteq \kk(Y')$ is separable.
\end{lemma}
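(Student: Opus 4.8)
The plan is to realize the generic fibre of $\pi$ itself as an admissible model $Z$ in Property~\hyperlink{Eq.NiceFibration}{($\ast$)} and then to identify its ring of global functions $\OO_Z(Z)$ with $\kk(Y')$. Let $\eta$ be the generic point of $Y$ and let $X_\eta = X \times_Y \Spec\kk(Y)$ be the generic fibre. Since $X$ is integral and normal and both properties, together with properness, are preserved under the flat localization $\Spec\kk(Y) \to Y$, the scheme $X_\eta$ is an integral, normal, proper $\kk(Y)$-scheme. In particular it is a normal proper $\kk(Y)$-birational model of the generic fibre, so by the model-independence of Property~\hyperlink{Eq.NiceFibration}{($\ast$)} already noted, we may take $Z = X_\eta$; the task then reduces to understanding $\OO_Z(Z) = H^0(X_\eta, \OO_{X_\eta})$.

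The next step is to compute this via the Stein factorization $X \xrightarrow{f} Y' \xrightarrow{g} Y$, where $Y' = \Spec_Y(\pi_\ast \OO_X)$ and $g$ is finite. Because $\pi$ is proper, $\pi_\ast \OO_X$ is coherent and its formation commutes with the flat base change $\Spec\kk(Y) \to Y$; hence $H^0(X_\eta, \OO_{X_\eta}) = (\pi_\ast \OO_X)_\eta$, the stalk of $\pi_\ast\OO_X$ at $\eta$, which is the coordinate ring of the fibre $g^{-1}(\eta)$. Now $Y'$ is integral: it is reduced since $\OO_X$ is reduced, and irreducible since $X$ is irreducible and $f$ is surjective. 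As $g$ is finite and dominant, the generic point $\eta'$ of $Y'$ is the unique point lying over $\eta$, so $(\pi_\ast\OO_X)_\eta = \OO_{Y',\eta'} = \kk(Y')$. This gives $\OO_Z(Z) = \kk(Y')$ as finite extensions of $\kk(Y)$; in passing this re-proves the footnote, since the global functions of an integral proper $\kk(Y)$-scheme form a finite-dimensional $\kk(Y)$-algebra that is a domain, hence a finite field extension of $\kk(Y)$.

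With this identification in hand, Property~\hyperlink{Eq.NiceFibration}{($\ast$)} — the separability of $\kk(Y) \subseteq \OO_Z(Z)$ — becomes precisely the separability of $\kk(Y) \subseteq \kk(Y')$. To close the equivalence I would recall that for the finite dominant morphism $g$ between integral varieties the generic fibre is $\Spec\kk(Y')$, so $g$ is étale over $\eta$ if and only if $\kk(Y')/\kk(Y)$ is separable; since étaleness is an open condition and generic flatness of a finite morphism is automatic, this is equivalent to $g \colon Y' \to Y$ being generically étale. I expect the only genuinely delicate point to be the bookkeeping in the middle paragraph: justifying that $\pi_\ast\OO_X$ commutes with localization at $\eta$ and that $Y'$ is integral with $\OO_{Y',\eta'}$ equal to the field $H^0(X_\eta,\OO_{X_\eta})$. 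Everything else is standard.
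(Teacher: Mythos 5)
Your proof is correct and follows essentially the same route as the paper: both arguments take $Z = X_\eta$, the generic fibre itself (normal since localization preserves normality, proper by base change), and identify $\OO_Z(Z)$ with $\kk(Y')$ by pulling back the Stein factorization along the flat morphism $\Spec \kk(Y) \to Y$. The only difference is that the paper cites the compatibility of Stein factorization with flat base change (G\"ortz--Wedhorn, Remark~24.48) where you verify the needed special case by hand, via flat base change for $\pi_\ast \OO_X$ and the integrality of $Y'$ --- a harmless, self-contained expansion of the same idea.
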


\begin{proof}
	Since $X$ is normal, the generic fibre $Z$ of $\pi$ is normal as well. As the  pull-back of the Stein factorization
	of $\pi$ is again the Stein factorization of $Z \to \Spec(\kk(Y))$ (see \cite[Remark~24.48]{GoWe0Algebraic-geometryII}), 
	the statement follows.
\end{proof}

Note that Lemma~\ref{Lem.Stein_factorization} implies in particular 
that Property~\hyperlink{Eq.NiceFibration}{($\ast$)} holds for $\pi$ in the following situations:
\begin{enumerate}[left=0pt]
	\item $\car(\kk) = 0$;
	\item $\pi$ is proper, $X$ is normal, and $\pi_{\ast} \OO_X = \OO_Y$;
	\item $\pi$ is generically \'etale;
	\item $\kk(Y)$ is inseparably closed in $\kk(X)$, 
	i.e.~if $\kk(Y) \subseteq L \subseteq \kk(X)$ is an intermediate field such that 
	$\kk(Y) \subseteq L$ is finite, then $\kk(Y) \subseteq L$
	is separable,
\end{enumerate}
where for the last two situations we look at the normalization of a completion of $\pi$.
Note that the last situation also covers the cases when $\kk(Y) \subseteq \kk(X)$ is separable (but not necessarily algebraic) or when the geometric generic fibre of $\pi$ is integral, see
\cite[Proposition~5.51]{GoWe0Algebraic-geometryI}.
Property~\hyperlink{Eq.NiceFibration}{($\ast$)} is preserved in the following situations:

\begin{lemma} $ $
	\label{Lem.NiceFibration_permanence}
	\begin{enumerate}[left=0pt]
		\item 	
		\label{Lem.NiceFibration_permanence1}
		Let $\pi' \colon X' \to Y'$ be a dominant morphism of irreducible varieties
		such that there exist birational maps $\varphi \colon X' \dashrightarrow X$,
		$\psi \colon Y' \dashrightarrow Y$ with $\pi \circ \varphi = \psi \circ \pi'$.
		Then $\pi$ satisfies~\hyperlink{Eq.NiceFibration}{($\ast$)} if and only if
		$\pi'$ satisfies~\hyperlink{Eq.NiceFibration}{($\ast$)}.
		\item 
		\label{Lem.NiceFibration_permanence2}
		If $\pi$ satisfies~\hyperlink{Eq.NiceFibration}{($\ast$)} and $V$
		is an irreducible normal variety, then 
		$\id_V \times \pi  \colon V \times X \to V \times Y$
		satisfies~\hyperlink{Eq.NiceFibration}{($\ast$)} as well.
	\end{enumerate}
\end{lemma}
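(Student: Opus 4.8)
The plan is to deduce both statements from the separability criterion for Stein factorizations in Lemma~\ref{Lem.Stein_factorization}, after noting that part~\ref{Lem.NiceFibration_permanence1} is essentially the assertion that Property~\hyperlink{Eq.NiceFibration}{($\ast$)} is a birational invariant. Throughout I use the standard description of the global functions on a normal proper model $Z$ of the generic fibre: $\OO_Z(Z)$ is the relative algebraic closure of $\kk(Y)$ in $\kk(X)$, so that Property~\hyperlink{Eq.NiceFibration}{($\ast$)} says precisely that this relative algebraic closure is separable over $\kk(Y)$.

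For part~\ref{Lem.NiceFibration_permanence1}, I would first restrict $\varphi$ and $\psi$ over the generic points. Since $\psi$ is birational it induces an isomorphism $\kk(Y)\simeq\kk(Y')$, and since $\pi\circ\varphi=\psi\circ\pi'$ the birational map $\varphi$ induces an isomorphism $\kk(X)\simeq\kk(X')$ compatible with $\pi^\ast$ and $\pi'^\ast$. Thus the field extensions $\kk(Y)\subseteq\kk(X)$ and $\kk(Y')\subseteq\kk(X')$ are isomorphic, so a normal proper $\kk(Y)$-model $Z$ of the generic fibre of $\pi$ serves, via this isomorphism, as a normal proper $\kk(Y')$-model of the generic fibre of $\pi'$. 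By the independence of Property~\hyperlink{Eq.NiceFibration}{($\ast$)} from the chosen model, it then suffices to observe that $\kk(Y)\subseteq\OO_Z(Z)$ is separable if and only if $\kk(Y')\subseteq\OO_Z(Z)$ is, which is immediate from the isomorphism $\kk(Y)\simeq\kk(Y')$.

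For part~\ref{Lem.NiceFibration_permanence2}, I would first use part~\ref{Lem.NiceFibration_permanence1} to replace $\pi$ by a normal proper model over $Y$, so that I may assume $X$ is normal and $\pi$ is proper. Then $V\times X$ is again normal (a product of normal varieties over the algebraically closed field $\kk$) and $\id_V\times\pi$ is proper (a base change of $\pi$ along the flat projection $p\colon V\times Y\to Y$), so Lemma~\ref{Lem.Stein_factorization} applies to both $\pi$ and $\id_V\times\pi$. Writing the Stein factorization of $\pi$ as $X\to Y''\to Y$, flat base change along $p$ gives $(\id_V\times\pi)_\ast\OO_{V\times X}=p^\ast(\pi_\ast\OO_X)$, whence the Stein factorization of $\id_V\times\pi$ is $V\times X\to V\times Y''\to V\times Y$. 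By Lemma~\ref{Lem.Stein_factorization} it therefore remains to prove: if $\kk(Y)\subseteq\kk(Y'')$ is separable, then so is $\kk(V\times Y)\subseteq\kk(V\times Y'')$.

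This last implication is the field-theoretic heart of the argument, and I expect it to be the main point requiring care. Set $K=\kk(Y)$, $K''=\kk(Y'')$ and $\tilde K=\kk(V\times Y)=\operatorname{Frac}(\kk(V)\otimes_\kk K)$; by hypothesis $K''/K$ is a finite separable extension, i.e.\ $K''$ is an \'etale $K$-algebra. Then $\tilde K\otimes_K K''$ is \'etale over $\tilde K$, being a base change; on the other hand it is a localization of $\kk(V)\otimes_\kk K''$, which is a domain because $V\times Y''$ is irreducible (here I use that $\kk$ is algebraically closed, so that products of irreducible varieties are irreducible). An \'etale algebra over a field that is a domain is a finite separable field extension, and since it is a localization of $\kk(V)\otimes_\kk K''$ inside $\operatorname{Frac}(\kk(V)\otimes_\kk K'')$ while containing $\kk(V)\otimes_\kk K''$, it must equal $\kk(V\times Y'')$. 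Hence $\kk(V\times Y'')/\kk(V\times Y)$ is separable, as required. The two places needing attention are the commutation of the Stein factorization with the product $V\times(-)$, handled by flat base change, and the irreducibility and normality of the products $V\times X$ and $V\times Y''$, handled by the algebraic closedness of $\kk$; the separability itself then follows formally from the stability of \'etale algebras under base change.
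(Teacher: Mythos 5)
Your proposal is correct and takes essentially the same route as the paper: part~(1) via the birational invariance of Property ($\ast$) (the paper reduces to open embeddings, you phrase it through the induced isomorphisms of function fields and the identification of $\OO_Z(Z)$ with the relative algebraic closure of $\kk(Y)$ in $\kk(X)$), and part~(2) by first passing to a normal proper model using~(1), then invoking the compatibility of the Stein factorization with the flat base change along $V \times Y \to Y$, and concluding with Lemma~\ref{Lem.Stein_factorization}. The only difference is that you make explicit, via \'etale algebras over $\kk(V\times Y)$, the step that separability of $\kk(Y) \subseteq \kk(Y'')$ passes to $\kk(V \times Y) \subseteq \kk(V \times Y'')$ -- a point the paper leaves implicit as stability of generic \'etaleness under this base change -- and your verification of it (irreducibility of $V\times Y''$ over the algebraically closed field $\kk$, a localization argument identifying $\kk(V\times Y)\otimes_{\kk(Y)}\kk(Y'')$ with $\kk(V\times Y'')$) is sound.
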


\begin{proof}
	\eqref{Lem.NiceFibration_permanence1}:
	It is enough to consider the case where $\psi$ and $\varphi$ are open embeddings.
	However, in this case it is enough to note that $\kk(Y') = \kk(Y)$ and that the
	generic fibre of $\pi'$ is an open dense subset of the generic fibre of $\pi$.
	
	\eqref{Lem.NiceFibration_permanence2}: Using~\eqref{Lem.NiceFibration_permanence1}
	we may replace $\pi$ by the normalization of a completion of $\pi$. As the Stein factorization
	is compatible with flat base-change (see Remark~\cite[Remark~24.48]{GoWe0Algebraic-geometryII}),
	the statement follows from Lemma~\ref{Lem.Stein_factorization}.
\end{proof}

\begin{lemma}
	\label{Lem.Bir_Y_X_closed}
	The subgroups $\Bir(X, \pi\fib)$ and $\Bir(X/Y)$ are closed in $\Bir(X)$.
\end{lemma}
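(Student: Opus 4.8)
The plan is to reduce to the case that $X$ is projective and $\pi$ a morphism, and then to apply the closedness criterion of Corollary~\ref{Cor.Inductive-limit-topology}: it suffices to show that $\pi_d^{-1}(\Bir(X/Y))$ and $\pi_d^{-1}(\Bir(X, \pi\fib))$ are closed in $H_d$ for every $d \geq 1$. For the reduction, I would use that conjugation by a birational map preserves algebraic families, hence is a homeomorphism of the respective groups of birational transformations, and that both fibre-preservation and $\pi$-invariance depend only on the birational class of the fibration $\pi \colon X \to Y$. Thus I may replace $X$ by a projective model carrying a morphism onto a projective model of $Y$, and compose with a projective embedding of the target, so that $X \subseteq \PP^n$ and $\pi = [\pi_0 : \cdots : \pi_M] \colon X \to \PP^M$ is given by forms $\pi_j$ of some degree $e$ in $R \coloneqq \kk[x_0, \ldots, x_n]/I(X)$ without common zero on $X$.

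For $\Bir(X/Y)$ this is quick. With the notation of Definition~\ref{Def.W_dH_d}, write $\pi_i(f) \coloneqq \pi_i(f_0, \ldots, f_n) \in R_{de}$ for $f = (f_0, \ldots, f_n) \in W_d$. An element $f \in H_d$ lies in $\pi_d^{-1}(\Bir(X/Y))$ if and only if $\pi \circ \psi_f = \pi$, i.e.~$[\pi_i(f)] = [\pi_i(x)]$ as rational maps $X \dashrightarrow \PP^M$. Since $R$ is a domain, this is equivalent to the vanishing in $R_{(d+1)e}$ of all the elements $\pi_i(f)\,\pi_j(x) - \pi_j(f)\,\pi_i(x)$, each of which is homogeneous of degree $e$ in the coordinates of $f$. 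These are closed conditions on $P_d$, so $\pi_d^{-1}(\Bir(X/Y))$ is closed in $H_d$.

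For $\Bir(X, \pi\fib)$ I would argue geometrically. Let $B \subseteq X \times X$ be the union of those irreducible components of $X \times_Y X$ that dominate $Y$; it is closed in $X \times X$. On $P_d \times B$ consider, for all $i, j$, the section
\[
	s_{ij}(f, x_1, x_2) \coloneqq \pi_i(f)(x_1)\,\pi_j(f)(x_2) - \pi_j(f)(x_1)\,\pi_i(f)(x_2)
\]
of the corresponding line bundle, and set $Z \coloneqq \bigcap_{i, j} \{ s_{ij} = 0 \}$, a closed subset. For $f \in H_d$ the composition $\pi \circ \psi_f$ is dominant, so the $\pi_i(f)$ do not all vanish on $X$, and hence for generic $(x_1, x_2) \in B$ both $[\pi_\bullet(f)(x_1)]$ and $[\pi_\bullet(f)(x_2)]$ are well-defined points of $\PP^M$. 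Therefore $\psi_f$ preserves the fibres of $\pi$ exactly when these two points agree for generic $(x_1, x_2)$ on each component of $B$, which (as $Z$ is closed and the components of $B$ are irreducible) happens exactly when $\{ f \} \times B \subseteq Z$. Consequently $\pi_d^{-1}(\Bir(X, \pi\fib)) = H_d \cap \{ f \in P_d : \{ f \} \times B \subseteq Z \}$.

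It remains to see that the set $\{ f \in P_d : \{ f \} \times B \subseteq Z \}$ is closed, and this is the crux. Its complement is the image of the open set $(P_d \times B) \setminus Z$ under the projection $\pr \colon P_d \times B \to P_d$; since $B$ is of finite type over $\kk$, the projection $\pr$ is flat and of finite type, hence an open morphism, so the complement is open. Intersecting with $H_d$ then finishes the proof. I expect this last point to be the main obstacle: the fibre-preservation condition is a priori only a condition on a general fibre of $B \to P_d$, which naively yields mere constructibility (via upper semicontinuity of fibre dimension). The decisive move is to recast it as the fibrewise-containment locus $\{ f : \{ f \} \times B \subseteq Z \}$ and to use that the projection off the factor $B$ is an open map to obtain genuine closedness.
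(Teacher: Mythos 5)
Your argument is correct in outline but takes a genuinely different route from the paper's. The paper works directly with an arbitrary algebraic family $\theta$ parametrized by any variety $V$, and with an arbitrary (not necessarily projective) $X$: for each pair $(x_1, x_2) \in X \times_Y X$ it observes that the condition ``if $\theta_v$ is defined at $x_1$ and $x_2$, then the images lie in the same fibre'' cuts out a closed subset $V_{x_1, x_2} \subseteq V$ (the union of the complement of an open set with a subset closed in that open set), and the preimage of the subgroup under $\rho_\theta$ is $\bigcap_{(x_1,x_2) \in X \times_Y X} V_{x_1,x_2}$; closedness then follows from the very definition of the Zariski topology, with no reduction step and no use of the $H_d$'s. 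Your route instead passes through the conjugation-invariance reduction to projective models and Corollary~\ref{Cor.Inductive-limit-topology}, in exchange for explicit equations cutting out $\pi_d^{-1}(\Bir(X/Y))$ and $\pi_d^{-1}(\Bir(X, \pi\fib))$ in $H_d$, which is more constructive. Note, though, that your ``crux'' is cheaper than you make it: $\{ f \in P_d : \{f\} \times B \subseteq Z \} = \bigcap_{b \in B} \{ f : (f, b) \in Z \}$ is an intersection of fibres of the closed set $Z$, so no openness (or properness) of the projection is needed --- and this pointwise-intersection trick is precisely the mechanism of the paper's proof, executed there on $V$ itself rather than on $P_d$.

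Two points in your write-up need repair. First, the normalization that $\pi$ is given by forms of a single degree \emph{without common zero} on $X$ is unachievable in general: for the Segre embedding $\PP^1 \times \PP^1 \subseteq \PP^3$, the first projection cannot be represented by a base-point-free tuple of forms of one degree (any representing pair is $(ag, bg)$ for some form $g$, which necessarily vanishes somewhere on $X$). Fortunately you never truly need it: a nonzero tuple of forms representing $\pi$ merely as a rational map suffices, since all your conditions are imposed at generic points, and $\pi_\bullet(f)$ is a nonzero tuple for $f \in H_d$ because $\psi_f$ is dominant. Second, and more substantively, your claim that for generic $(x_1, x_2) \in B$ both $[\pi_\bullet(f)(x_1)]$ and $[\pi_\bullet(f)(x_2)]$ are well-defined --- and the ``exactly when'' identifying fibre-preservation with $\{f\} \times B \subseteq Z$ --- require that every irreducible component of $B$ has \emph{both} projections dominant onto $X$, and that $B$ contains the fibre squares $\pi^{-1}(y) \times \pi^{-1}(y)$ for general $y \in Y$. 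Neither is addressed, and your stated justification (that the $\pi_i(f)$ do not all vanish on $X$) does not give this. Both facts do hold, via generic flatness: over a dense open $Y_0 \subseteq Y$ where $\pi$ is flat, the projections of $(X \times_Y X)|_{Y_0}$ are flat, hence open, so every component of $X \times_Y X$ dominating $Y$ meets this locus densely and projects dominantly to $X$ on both factors, while all of $(X \times_Y X)|_{Y_0}$ lies in $B$. With these insertions your equivalence is correct; indeed, your restriction to the dominating components $B$ is exactly what the paper's (generic) definition of fibre-preservation demands, since components of $X \times_Y X$ lying over special points of $Y$ must be excluded --- a point on which your formulation is, if anything, more careful than the paper's proof, which imposes its condition on all pairs of $X \times_Y X$, leaning on the remark preceding the definition of $\Bir(X, \pi\fib)$.
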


\begin{proof}
	Let $\theta$ be an algebraic family of birational transformations of $X$ parametrized by some variety $V$. For $(x_1, x_2) \in X \times_Y X$, consider the following subset of $V$:
	\[
		V_{x_1, x_2} = 
		\Bigset{v \in V}{ 
		\begin{array}{l}
		(v, x_1), (v, x_2) \in \lociso(\theta) \implies \\
		\pi(\pr_2(\theta(v, x_1))) = \pi(\pr_2(\theta(v, x_2)))
		\end{array}
		} \, ,
	\]
	where $\pr_2 \colon V \times X \to X$ denotes the natural projection.
	Note that the intersection of $V_{x_1, x_2}$ with the open subset
	\[
		U_{x_1, x_2} \coloneqq \set{v \in V}{(v, x_1), (v, x_2) \in \lociso(\theta)}	
	\]
	is closed in $U_{x_1, x_2}$ and since $V_{x_1, x_2}$
	contains the complement of $U_{x_1, x_2}$ in $V$, we get that 
	$V_{x_1, x_2}$ is closed in $V$. However, by definition
	\[
		\bigcap_{(x_1, x_2) \in X \times_Y X} V_{x_1, x_2} = \set{v \in V}{\rho_{\theta}(v) \in \Bir(X, \pi\fib)} \subseteq V \, ,
	\]
	and thus $\Bir(X, \pi\fib)$ is closed in $\Bir(X)$.
	Using a similar argument, where $V_{x_1, x_2}$, $U_{x_1, x_2}$ are replaced by
	$V_x = \set{v \in V}{(v, x) \in \lociso(\theta) \implies \pi(\pr_2(\theta(v, x))) = \pi(x)}$ and
	$U_x = \set{v \in V}{(v, x) \in \lociso(\theta)}$ shows that $\Bir(X/Y)$ is closed in $\Bir(X)$.
\end{proof}

As a direct consequence, we get that computing the invariants of a set and its closure is the same:

\begin{corollary}
	\label{Cor.Closure_subset_invariants}
	For every subset $S \subseteq \Bir(X)$ we get $\kk(X)^{\overline{S}} = \kk(X)^S$.
\end{corollary}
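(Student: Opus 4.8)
The plan is to reduce the statement to the closedness result of Lemma~\ref{Lem.Bir_Y_X_closed}. One inclusion is immediate: since $S \subseteq \overline{S}$, every rational function invariant under $\overline{S}$ is in particular invariant under $S$, so $\kk(X)^{\overline{S}} \subseteq \kk(X)^S$. For the reverse inclusion it suffices to fix a single $f \in \kk(X)^S$ and show that $f \in \kk(X)^{\overline{S}}$. To this end I would prove that the stabilizer
\[
	G_f \coloneqq \set{\varphi \in \Bir(X)}{\varphi^\ast f = f}
\]
is closed in $\Bir(X)$. Granting this, the containment $S \subseteq G_f$ (which is exactly the hypothesis $f \in \kk(X)^S$) yields $\overline{S} \subseteq \overline{G_f} = G_f$, that is, $\varphi^\ast f = f$ for all $\varphi \in \overline{S}$; as $f$ was an arbitrary element of $\kk(X)^S$ this gives $\kk(X)^S \subseteq \kk(X)^{\overline{S}}$ and hence equality.

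It remains to see that $G_f$ is closed. If $f$ is constant this is trivial, since then $G_f = \Bir(X)$. Otherwise $f$ is transcendental over $\kk$, so the subfield $\kk(f) \subseteq \kk(X)$ is the function field of a curve $Y$, and $f$ defines a dominant rational map $\pi_f \colon X \dashrightarrow Y$. A birational transformation $\varphi$ satisfies $\varphi^\ast f = f$ if and only if $\varphi^\ast$ fixes $\kk(f)$ pointwise, i.e.\ if and only if $\pi_f \circ \varphi = \pi_f$; thus $G_f$ is precisely the group $\Bir(X/Y)$ attached to the fibration $\pi_f$. After replacing $X$ by a birational model $X'$ on which $\pi_f$ becomes a morphism $\pi \colon X' \to Y$, and conjugating by the corresponding birational map $X \dashrightarrow X'$ (conjugation by a birational map preserves algebraic families and is therefore a homeomorphism, carrying $G_f$ onto the analogous stabilizer in $\Bir(X')$), the closedness of $G_f$ follows from Lemma~\ref{Lem.Bir_Y_X_closed}, which asserts that $\Bir(X'/Y)$ is closed.

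The only point that requires care, and which I expect to be the main obstacle, is this passage from the rational fibration defined by $f$ to the morphism setting in which Lemma~\ref{Lem.Bir_Y_X_closed} is stated. I expect it to be harmless: conjugation by a birational map is a topological automorphism of the relevant groups of birational transformations and intertwines their actions on the function fields via the induced isomorphism $\kk(X) \cong \kk(X')$, so both closedness and invariant fields correspond. Alternatively, one can bypass the model entirely and establish the closedness of $G_f$ by repeating verbatim the argument of Lemma~\ref{Lem.Bir_Y_X_closed} with the morphism $\pi$ replaced by the rational function $f$: for an algebraic family $\theta$ parametrized by a variety $V$ and each $x$ in the domain of $f$, the set $\set{v \in V}{(v,x) \in \lociso(\theta) \implies f(\pr_2(\theta(v,x))) = f(x)}$ is closed in $V$, and the intersection of these sets over a dense set of such $x$ equals $\rho_\theta^{-1}(G_f)$; by Corollary~\ref{Cor.Inductive-limit-topology} this shows that $G_f$ is closed.
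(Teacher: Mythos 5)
Your proposal is correct and takes essentially the same route as the paper: the paper likewise reduces the statement to the closedness of $\Bir(X/Y')$ from Lemma~\ref{Lem.Bir_Y_X_closed}, except that it takes $Y'$ to be a model of the \emph{whole} invariant field $\kk(X)^S$ (using that this field is finitely generated over $\kk$) and shrinks $X$ so that the quotient map becomes a morphism, whereas you stabilize one function $f$ at a time over a curve base, which harmlessly sidesteps the finite-generation input. One small patch to your alternative direct argument: the implication defining $V_x$ must also be conditioned on $\pr_2(\theta(v,x)) \in \dom(f)$, since otherwise its consequent is undefined; with that fix $V_x$ is still the union of the complement of an open set with a relatively closed subset of it, hence closed, exactly as in the proof of Lemma~\ref{Lem.Bir_Y_X_closed}, and intersecting over all $x \in \dom(f)$ recovers $\rho_\theta^{-1}(G_f)$ because two rational functions agreeing on a dense open set coincide.
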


\begin{proof}
	Let $\pi' \colon X \dashrightarrow Y'$
	be a dominant rational map with $\kk(Y) = \kk(X)^S$ (the existence follows from
	the fact, that $ \kk \subseteq \kk(X)^S$ is a finitely generated field extension,
	see e.g.~\cite[Theorem~24.9]{Is2009Algebra:-a-graduat}). After shrinking $X$, 
	we may assume that 
	$\pi' \colon X \to Y'$ is a morphism. 
	Note that $S$ is contained in $\Bir(X/Y')$ and that $\kk(X)^{\Bir(X/Y')} = \kk(Y)$.
	The statement follows now from the fact that $\Bir(X/Y')$ is closed in $\Bir(X)$.
\end{proof}

We now formulate our main result of this section,
which enables us to push forward 
algebraic families in $\Bir(X, \pi\fib)$ parametrized by normal varieties 
to algebraic families in $\Bir(Y)$:

\begin{proposition}
	\label{Prop.Birational_maps_preserving_a_fibration}
	Assume that $\pi$ satisfies Property~\hyperlink{Eq.NiceFibration}{($\ast$)}.
	\begin{enumerate}[left=0pt]
	\item  \label{Prop.Birational_maps_preserving_a_fibration1} 
	We have $\Bir(X, \pi\fib) = \Bir(X, \pi)$. 
	\item  \label{Prop.Birational_maps_preserving_a_fibration2} 
	The natural group homomorphism
	$\Bir(X, \pi) \to \Bir(Y)$ is continuous and 
	preserves algebraic families parametrized by normal varieties.
	\end{enumerate}
\end{proposition}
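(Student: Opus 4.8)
The plan is to prove both statements by reducing to the generic fibre and using that Property~\hyperlink{Eq.NiceFibration}{($\ast$)} forces the field of global functions of the generic fibre to be separable over $\kk(Y)$. For part~\eqref{Prop.Birational_maps_preserving_a_fibration1}, the inclusion $\Bir(X, \pi) \subseteq \Bir(X, \pi\fib)$ is clear from the definitions, so the content is the reverse inclusion. Let $\varphi \in \Bir(X, \pi\fib)$; I would pass to a normal proper $\kk(Y)$-birational model $Z$ of the generic fibre, so that $\varphi$ induces a $\kk(Y)$-birational self-map of $Z$ commuting with the structure morphism to $\Spec(\kk(Y))$ only up to a permutation of the factors of the Stein factorization. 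The key observation is that $\OO_Z(Z)$ is a finite field extension of $\kk(Y)$ (by the footnoted \cite[Theorem~12.65]{GoWe0Algebraic-geometryI}), and that $\varphi$ induces a $\kk(Y)$-automorphism of this extension, since by Lemma~\ref{Lem.Stein_factorization} the Stein factorization $X \to Y' \to Y$ has $\kk(Y')=\OO_Z(Z)$ generically \'etale over $\kk(Y)$. Because $\kk(Y) \subseteq \kk(Y')$ is separable, the induced automorphism of $\kk(Y')$ descends compatibly, and composing $\varphi$ with the corresponding deck transformation shows that $\varphi$ preserves $\kk(Y') \subseteq \kk(X)$, hence induces $\bar\varphi \in \Bir(Y')$; projecting down $Y' \to Y$ then yields the required $\bar\varphi \in \Bir(Y)$ with $\bar\varphi \circ \pi = \pi \circ \varphi$, so $\varphi \in \Bir(X,\pi)$.

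For part~\eqref{Prop.Birational_maps_preserving_a_fibration2}, recall that by Proposition~\ref{Prop.Prel} and the definition of \emph{preserving algebraic families} it suffices to show: given a morphism $\rho_\theta \colon V \to \Bir(X)$ from a \emph{normal} variety $V$ with image in $\Bir(X,\pi)$, arising from an algebraic family $\theta \colon V \times X \dashrightarrow V \times X$, the composite $V \to \Bir(Y)$ is again a morphism, i.e.\ arises from an algebraic family of birational transformations of $Y$ parametrized by $V$. I would realize this by producing a $V$-birational self-map $\bar\theta \colon V \times Y \dashrightarrow V \times Y$ making the square with $\theta$ over $\id_V \times \pi$ commute. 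To get $\bar\theta$, I work over the generic point of $V$: the family $\theta$ is a $\kk(V)$-birational self-map of $X_{\kk(V)} = V \times X$ localized at the generic point, and since each fibre lies in $\Bir(X,\pi)$, it preserves the fibres of $\id_V \times \pi$. Applying Lemma~\ref{Lem.NiceFibration_permanence}\eqref{Lem.NiceFibration_permanence2} — which uses normality of $V$ — the morphism $\id_V \times \pi$ still satisfies Property~\hyperlink{Eq.NiceFibration}{($\ast$)}, so part~\eqref{Prop.Birational_maps_preserving_a_fibration1} applied to $\id_V \times \pi$ produces $\bar\theta \in \Bir(V \times Y, \id_V\times\pi)$. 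One then checks $\bar\theta$ is a $V$-map, i.e.\ compatible with the first projection to $V$, which holds because $\theta$ is, and that its domain surjects onto $V$ after shrinking; this exhibits $\bar\theta$ as an algebraic family and hence $V \to \Bir(Y)$ as a morphism. Continuity of $\Bir(X,\pi) \to \Bir(Y)$ follows formally, since preserving algebraic families implies continuity, applied over each chart of an arbitrary morphism into $\Bir(X,\pi)$ after passing to the normalization (which does not change the Zariski topology on the image).

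The main obstacle I anticipate is the careful bookkeeping in part~\eqref{Prop.Birational_maps_preserving_a_fibration2} when an arbitrary, possibly non-normal, parametrizing variety appears: the cleanest statement (preserving families parametrized by \emph{normal} $V$) is exactly what Lemma~\ref{Lem.NiceFibration_permanence}\eqref{Lem.NiceFibration_permanence2} is tailored for, and continuity for general morphisms must be deduced separately by normalizing the source, invoking that a morphism from $V$ factors through its normalization up to an open cover and that the topology on $\Bir(X)$ is insensitive to this. A secondary subtlety is verifying that the descended family $\bar\theta$ genuinely has a domain surjecting onto $V$ and that $\rho_{\bar\theta}$ agrees pointwise with the set-theoretic map $v \mapsto \overline{\rho_\theta(v)}$; this requires tracking the construction of $\bar\varphi$ from part~\eqref{Prop.Birational_maps_preserving_a_fibration1} in families, which is where separability of $\kk(Y) \subseteq \kk(Y')$ is used uniformly across the fibres.
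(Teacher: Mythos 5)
Your overall architecture for part~\eqref{Prop.Birational_maps_preserving_a_fibration2} (descend the family over the generic point of $V$, invoke Lemma~\ref{Lem.NiceFibration_permanence}\eqref{Lem.NiceFibration_permanence2}, and reduce continuity for arbitrary parameter spaces to normal ones via normalization) matches the paper's strategy, but there are two genuine gaps. First, your argument for part~\eqref{Prop.Birational_maps_preserving_a_fibration1} is circular at the decisive step: you assert that $\varphi$ ``induces a $\kk(Y)$-automorphism'' of $\OO_Z(Z)=\kk(Y')$, but for $\varphi^{\ast}$ to act on anything as a $\kk(Y)$-algebra map you must already know that $\varphi^{\ast}$ preserves the subfield $\pi^{\ast}\kk(Y)\subseteq\kk(X)$ --- and that is precisely the content of $\Bir(X,\pi\fib)=\Bir(X,\pi)$. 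A priori, a fibre-preserving $\varphi$ only yields a \emph{set-theoretic} map $g$ on the base with $g\circ\pi=\pi\circ\varphi$ (the Frobenius example $\pi(x,y)=(x^p,y)$ in the paper shows $g$ can genuinely fail to be rational), so the missing ingredient is a statement like the paper's Lemma~\ref{Lem.HPrational}: if $g\circ\pi$ is rational and $\pi$ satisfies~\hyperlink{Eq.NiceFibration}{($\ast$)}, then $g$ is rational --- proved there via the Stein factorization and generic \'etaleness from Lemma~\ref{Lem.Stein_factorization}. (A correct field-theoretic shortcut does exist --- for $f\in\kk(Y)$, the function $\varphi^{\ast}(\pi^{\ast}f)$ is constant on general fibres, hence purely inseparable algebraic over $\kk(Y)$, hence lies in $\kk(Y)$ by~\hyperlink{Eq.NiceFibration}{($\ast$)} --- but your route through ``deck transformations'' of $Y'\to Y$ does not supply it, and in any case a birational self-map of $Y'$ does not descend along $Y'\to Y$ by ``projecting down.'')

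Second, in part~\eqref{Prop.Birational_maps_preserving_a_fibration2} the step you dismiss with ``one then checks \ldots that its domain surjects onto $V$ after shrinking'' is the actual core of the proof, and shrinking is not available: an algebraic family must restrict to an isomorphism between open subsets surjecting onto \emph{all} of $V$. Being a $V$-birational map is strictly weaker --- the paper's Example~\ref{Exa.Lociso_too_small} illustrates exactly this failure mode in a closely related pull-back situation. The paper closes this gap with a chain of arguments absent from your proposal: flatness of $\pi_0$ and the fibrewise isomorphisms produce a set-theoretic bijection $\vartheta\colon W_1\to W_2$ between sets $W_i=(\id_V\times\pi_0)(\lociso(\theta_0^{\pm 1}))$ that surject onto $V$; Lemma~\ref{Lem.HPrational} (applied to $\id_V\times\pi$, using the permanence lemma) makes $\vartheta$ rational; flatness makes $\id_V\times\pi_0$ a quotient map so $\vartheta$ is a homeomorphism; and normality of $W_1,W_2$ (which uses normality of $V$ a second time, beyond Lemma~\ref{Lem.NiceFibration_permanence}\eqref{Lem.NiceFibration_permanence2}, via \cite[Corollary~23.9]{Ma1986Commutative-ring-t}) combined with Lemma~\ref{Lem.Abstract_bijection_isomorphism} (continuous $+$ rational $+$ normal source $\Rightarrow$ morphism, by Zariski's main theorem) upgrades $\vartheta$ to an isomorphism. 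Note also that the paper proves the family statement directly and obtains part~\eqref{Prop.Birational_maps_preserving_a_fibration1} as the special case where $V$ is a point, which sidesteps your acknowledged but unresolved issue of ``tracking the construction of $\bar\varphi$ in families.''
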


For the proof of Proposition~\ref{Prop.Birational_maps_preserving_a_fibration} 
we need the following two ingredients:

\begin{lemma}
	\label{Lem.Abstract_bijection_isomorphism}
	Assume that $V, Z$  are irreducible varieties and that $V$ is normal. 
	Then every continuous map $\varphi \colon V \to Z$ that is also a rational map is a morphism.
\end{lemma}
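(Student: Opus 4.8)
The plan is to show that $\varphi$ extends the underlying rational map to a morphism by means of its graph and Zariski's Main Theorem, the normality of $V$ being exactly what upgrades a bijective birational morphism to an isomorphism. First I would reduce to a convenient local situation. The assertion is local on $V$, so I fix $v_0 \in V$ and, using that $\varphi$ is continuous, choose an affine open $Z_0 \subseteq Z$ containing $\varphi(v_0)$; then $\varphi^{-1}(Z_0)$ is an open neighbourhood of $v_0$, and after replacing $V$ by an affine open, still normal, neighbourhood of $v_0$ inside $\varphi^{-1}(Z_0)$ and $Z$ by $Z_0$, I may assume that $V$ is affine and normal, that $Z$ is affine with a fixed closed embedding $Z \subseteq \overline{Z}$ into a projective variety $\overline{Z}$, and that $\varphi(V) \subseteq Z$. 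Let $U \subseteq V$ be a dense open subset on which the rational map underlying $\varphi$ is a morphism $\varphi|_U \colon U \to Z$.

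Next I would pass to the graph. Let $\overline{\Gamma} \subseteq V \times \overline{Z}$ be the closure of the graph of $\varphi|_U$, and let $p \colon \overline{\Gamma} \to V$ and $q \colon \overline{\Gamma} \to \overline{Z}$ be the two projections. Since $\overline{Z}$ is proper, $p$ is a proper birational morphism onto the normal variety $V$; in particular $p_\ast \OO_{\overline{\Gamma}} = \OO_V$, so all fibres of $p$ are connected. By Zariski's Main Theorem, $p$ restricts to an isomorphism over the open locus where its fibres are finite, and since each fibre is connected, it is an isomorphism precisely over the locus where the fibres are single points. Thus it suffices to prove that $p^{-1}(v)$ is a single point for every $v \in V$: then $p$ is an isomorphism and $\varphi = q \circ p^{-1}$ is a morphism $V \to \overline{Z}$, and as it coincides with the continuous map $\varphi$ its image lies in $Z$, giving the desired morphism $V \to Z$.

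The heart of the argument—and the step I expect to be the main obstacle—is to show, using continuity, that $\overline{\Gamma}$ equals the graph of the everywhere-defined map $\varphi$, i.e.\ that $q(p^{-1}(v)) = \{\varphi(v)\}$ for all $v$. I would argue by specialisation: a point $(v, z) \in \overline{\Gamma}$ is a specialisation of the generic point of $\overline{\Gamma}$, so there is a discrete valuation ring $R$ with fraction field $\kk(V)$ and a morphism $\Spec R \to \overline{\Gamma}$ carrying the closed point to $(v,z)$; composing with $p$ and with $q$ yields morphisms $\Spec R \to V$ and $\Spec R \to \overline{Z}$ sending the closed point to $v$ and to $z$ respectively. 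Since $\overline{Z}$ is separated, the valuative criterion makes the extension $\Spec R \to \overline{Z}$ of the generic point unique, and continuity of $\varphi$ forces its value $z$ at the closed point to coincide with the limit $\varphi(v)$ prescribed by the total map; hence $z = \varphi(v)$. This identifies $\overline{\Gamma}$ with the graph of $\varphi$, so every fibre of $p$ is a single point and the previous paragraph applies. The two places where the hypotheses are essential are the normality of $V$ (to pass from a bijective birational morphism to an isomorphism via Zariski's Main Theorem) and the separatedness of $Z$ together with the continuity of $\varphi$ (to pin down the value at each point through the valuative criterion).
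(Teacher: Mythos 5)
Your overall architecture---pass to a graph and apply Zariski's main theorem over the normal base $V$---is in the same spirit as the paper's, and your reduction steps, the properness of $p$, the connectedness of its fibres via $p_\ast \OO_{\overline{\Gamma}} = \OO_V$, and the final ZMT step are all sound. The genuine gap is exactly at the step you single out as the heart: the claim that $(v,z) \in \overline{\Gamma}$ forces $z = \varphi(v)$. The valuative criterion cannot deliver this. Its uniqueness assertion compares two \emph{morphisms} $\Spec R \to \overline{Z}$ extending a given map on the generic point; you have exactly one such morphism, namely $q$ composed with $\Spec R \to \overline{\Gamma}$, and its closed-point value is $z$ by construction. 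Since $\varphi$ is at this stage only a Zariski-continuous map and not a morphism, it induces no morphism $\Spec R \to \overline{Z}$ at all, so there is no second extension with closed-point value $\varphi(v)$ for uniqueness to compare against; the phrase ``the limit $\varphi(v)$ prescribed by the total map'' has no mathematical content. What Zariski-continuity actually provides is only specialization containments of the form $\varphi(\overline{S}) \subseteq \overline{\varphi(S)}$, which can place $\varphi(v)$ and $z$ in a common closed set but never yields the equality $z = \varphi(v)$; moreover $z$ may lie in $\overline{Z} \setminus Z$, where continuity of $\varphi \colon V \to Z$ says literally nothing, and your argument does not exclude such boundary points. A sanity check that the step as written proves too much: for $\varphi'(x,y) = [x:y]$ on $\AA^2 \setminus \{0\}$, the fibre of $p$ over the origin is all of $\PP^1$; your derivation uses nothing about $\varphi$ beyond its being some total extension of $\varphi'$, so it would ``show'' $z = \varphi(0)$ for every $z$ in that $\PP^1$, which is absurd. (In fact no continuous total extension exists there, and detecting exactly this is the real work that continuity must do.)

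By contrast, the paper's proof stays inside $V \times Z$ and works with the graph $\Gamma_\varphi$ of the \emph{total} map: continuity is invoked to see that $\Gamma_\varphi$ is closed in $V \times Z$ and that the projection $\Gamma_\varphi \to V$ is a homeomorphism; the graph of $\varphi|_U$ is then an open dense subset of $\Gamma_\varphi$, so the projection is a bijective birational morphism onto the normal variety $V$, hence an isomorphism by Zariski's main theorem, and $\varphi$ is the composite of its inverse with the second projection. In other words, the paper's substitute for your missing step is the closedness of the full graph, and this is the one place where continuity genuinely enters. If you establish that closedness, your compactification and connected-fibre detour become unnecessary: from $\overline{\Gamma_{\varphi|_U}} \cap (V \times Z) \subseteq \Gamma_\varphi$ the fibres of $p$ over points of $Z$ are pinned down directly. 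Your use of normality matches the paper's and is indeed indispensable: for $V$ a cuspidal cubic and $\varphi$ the inverse of the normalization $\AA^1 \to V$, the map $\varphi$ is continuous and rational, every fibre of $p$ is the correct singleton, and only the ZMT step fails.
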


\begin{proof}
	Since $\varphi$ is continuous, its graph
	$\Gamma_{\varphi}$ is closed in $V \times Z$. As $\varphi$ is rational,
	one can take an open, dense subset $U \subseteq V$ such that
	$\varphi' \coloneq \varphi |_U \colon U \to Z$ is a morphism. The graph 
	$\Gamma_{\varphi'}$ is then an open dense subset of $\Gamma_{\varphi}$.
	The projection $\Gamma_{\varphi} \to V$ is a homeomorphism that restricts to 
	an isomorphism $\Gamma_{\varphi'} \xrightarrow{\sim} U$. 
	Since $V$ is normal the projection $\Gamma_{\varphi} \to V$ is an isomorphism, by Zariski's main
	theorem, and hence,
	$\varphi$ is a morphism.
\end{proof}

\begin{lemma}
	\label{Lem.HPrational}
	Assume that $\pi\colon X\to Y$ satisfies Property~\hyperlink{Eq.NiceFibration}{($\ast$)}.
	Let $g \colon Y \to W$ be an abstract map to a variety $W$. If
	$g \circ \pi \colon X \to W$ is rational, then $g$ is rational.
\end{lemma}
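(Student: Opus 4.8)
The plan is to reduce the statement to rational \emph{functions} and then to descend a rational function that is constant on the fibres of $\pi$, using the separability provided by Property~\hyperlink{Eq.NiceFibration}{($\ast$)}.

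First I would reduce to the case $W = \AA^1$. Since $h \coloneqq g \circ \pi$ is rational, choose a dense open $X_0 \subseteq X$ on which $h$ is a morphism; after shrinking $X_0$ I may assume $h(X_0)$ lies in an affine open $W_0 \subseteq W$. For each coordinate function $w$ on $W_0$ the composite $w \circ h = (w \circ g) \circ \pi$ is a rational function on $X$, so $f \coloneqq w \circ g \colon Y \to \kk$ is an abstract function whose pull-back $f \circ \pi$ lies in $\kk(X)$. Granting the key claim that every such $f$ lies in $\kk(Y)$, I argue as follows: $\pi(X_0)$ is constructible (by Chevalley) and dense, hence contains a dense open $Y_1 \subseteq Y$, and $g$ maps $Y_1$ into $W_0$ because $g(\pi(x)) = h(x) \in W_0$ for $x \in X_0$. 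On the dense open subset of $Y_1$ where all the rational functions $w \circ g$ are regular, the assignment $y \mapsto (\ldots, (w\circ g)(y), \ldots) \in W_0$ is a morphism agreeing with $g$; thus $g$ is rational. So it suffices to prove: if $f \colon Y \to \kk$ satisfies $u \coloneqq f \circ \pi \in \kk(X)$, then $f \in \kk(Y)$.

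Next I analyse $u$. Write $K = \kk(Y) \subseteq L = \kk(X)$. By construction $u$ is constant on the fibres of $\pi$, since $u(x) = f(\pi(x))$ wherever defined. I first note that $u$ is algebraic over $K$: otherwise $(\pi, u) \colon X \dashrightarrow Y \times \AA^1$ would be dominant, forcing $u$ to be non-constant on a general fibre of $\pi$, a contradiction. Hence $u$ lies in the relative algebraic closure $\tilde{K}$ of $K$ in $L$, which is finite over $K$. Now for a normal proper $K$-model $Z$ of the generic fibre one has $\kk(Z) = L$ and in fact $\OO_Z(Z) = \tilde{K}$: the inclusion $\OO_Z(Z) \subseteq \tilde{K}$ is clear since $\OO_Z(Z)$ is finite over $K$ (see the footnote to Property~\hyperlink{Eq.NiceFibration}{($\ast$)}), and conversely any element of $\tilde{K}$ is integral over $K \subseteq \OO_Z(Z)$, hence a global section by normality of $Z$. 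Therefore Property~\hyperlink{Eq.NiceFibration}{($\ast$)} says exactly that $\tilde{K}/K$ is separable, and in particular $u$ is separable over $K$.

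Finally I descend. Suppose towards a contradiction that $u \notin K$ and set $d = [K(u):K] \geq 2$, so the minimal polynomial of $u$ over $K$ is separable of degree $d$. Let $\nu \colon Y'' \to Y$ be the normalization of $Y$ in $K(u)$; then $\nu$ is finite and generically \'etale of degree $d$, so over a general $y \in Y$ the fibre $\nu^{-1}(y)$ consists of $d$ distinct $\kk$-points, on which $u$ takes the $d$ distinct roots of its minimal polynomial (distinct because the discriminant is a nonzero element of $K$). Since $K(u) \subseteq L$, there is a dominant rational map $\tau \colon X \dashrightarrow Y''$ with $\pi = \nu \circ \tau$, and for general $y$ this $\tau$ is defined on a dense subset of the fibre $X_y$ and surjects onto the finite set $\nu^{-1}(y)$. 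Choosing $x_1, x_2 \in X_y$ (in the domains of $u$ and $\tau$) mapping to two different points of $\nu^{-1}(y)$ yields $u(x_1) \neq u(x_2)$ although $\pi(x_1) = \pi(x_2)$, contradicting that $u$ is constant on fibres. Hence $u \in K$, i.e.\ $u = v \circ \pi$ for some $v \in \kk(Y)$; comparing on the dense open where $u = v\circ\pi$ and pushing down along $\pi$ shows $f = v$ on a dense open of $Y$, so $f$ is rational. I expect the last paragraph to be the main obstacle: the real content is turning separability into the geometric fact that $u$ assumes two distinct values inside a single fibre of $\pi$, together with the verification that $\tau$ is defined and surjective on a general fibre. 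The inseparable map $\AA^1 \to \AA^1$, $x \mapsto x^p$ shows that Property~\hyperlink{Eq.NiceFibration}{($\ast$)} cannot be omitted.
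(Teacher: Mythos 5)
Your proof is correct, but it takes a genuinely different route from the paper's. The paper argues geometrically: after completing and normalizing so that $\pi$ is proper and $g \circ \pi$ is a morphism, it takes the Stein factorization $X \xrightarrow{f} Y' \xrightarrow{\varepsilon} Y$, invokes Property~\hyperlink{Eq.NiceFibration}{($\ast$)} through Lemma~\ref{Lem.Stein_factorization} to make $\varepsilon$ étale over a dense open, pushes the coordinate functions of $g \circ \pi$ down to $Y'$ via $f_\ast \OO_X = \OO_{Y'}$, observes that $g$ is continuous (as $\pi$ is flat and surjective), and then compares the closed graphs of $g \circ \varepsilon$ and of $g$, finishing with the fact that an injective étale morphism is an open immersion. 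You instead reduce coordinatewise to the claim that an abstract function $f \colon Y \to \kk$ with $u \coloneqq f \circ \pi \in \kk(X)$ lies in $\kk(Y)$, and run a field-theoretic descent: $u$ is constant on general fibres, hence algebraic over $K = \kk(Y)$, hence lies in the relative algebraic closure $\tilde{K}$ of $K$ in $\kk(X)$; your identification $\OO_Z(Z) = \tilde{K}$ is correct (normality of $Z$ makes every element of $L$ integral over $K$ a global section), so Property~\hyperlink{Eq.NiceFibration}{($\ast$)} is \emph{exactly} separability of $\tilde{K}/K$ --- which, incidentally, shows that the paper's sufficient condition that $\kk(Y)$ be inseparably closed in $\kk(X)$ is in fact equivalent to~\hyperlink{Eq.NiceFibration}{($\ast$)}. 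Then, if $[K(u):K] = d \geq 2$, the generically étale degree-$d$ cover $\nu \colon Y'' \to Y$ forces $u$ to take $d$ distinct values on a single general fibre of $\pi$, a contradiction. Both proofs convert separability into a generically étale cover and then separate points inside a fibre, but yours stays at the level of function fields and avoids Stein factorization, the continuity of $g$, and the graph/étale argument, at the price of the specialization details you flag; these are routine: definedness of $\tau$ on a dense subset of a general fibre is the paper's own Lemma~\ref{Lem.open}, and surjectivity onto $\nu^{-1}(y)$ holds because the image of the dominant $\tau$ contains a dense open $V'' \subseteq Y''$ while $\nu(Y'' \setminus V'')$ is a proper closed subset of $Y$ since $\nu$ is finite, and any preimage of $z \in \nu^{-1}(y)$ lies automatically in $X_y$ because $\nu \circ \tau = \pi$. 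One point you should make explicit: the points $x_1, x_2$ must lie not only in the domains of $u$ and $\tau$ but also in the dense open where $u$ agrees with $f \circ \pi$ and where $u = \tilde{u} \circ \tau$; this is harmless, as all these loci are dense open and the same generic argument applies after intersecting them.
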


\begin{proof}
	Using Lemma~\ref{Lem.NiceFibration_permanence}\eqref{Lem.NiceFibration_permanence1} we first perform several reduction steps.
	Let $\overline{\pi} \colon \overline{X} \to \overline{Y}$ be a 
	completion of $\pi$. We may assume that $W$ is proper and hence after
	resolving the indeterminacy of the rational map
	$g \circ \pi \colon \overline{X} \dashrightarrow W$ we may assume that 
	it is a morphism.
	After replacing $\pi \colon X \to Y$ by the restriction of 
	$\overline{\pi}$ over $Y$ we may assume that $\pi$ is proper
	and $g \circ \pi \colon X \to W$ is a morphism.
	Moreover, we may assume that $X$ is normal after precomposing $\pi$ with the
	normalization of $X$. Let 
	\[
		X \xrightarrow{f} Y' \xrightarrow{\varepsilon} Y
	\]
	be the Stein-factorization of $\pi$. By Lemma~\ref{Lem.Stein_factorization}, $\varepsilon$
	is generically \'etale. After
	restricting $\pi$ to the inverse image of an open dense subset of $Y$,
	we may assume that $Y$ is smooth, the finite 
	morphism $\varepsilon$ is \'etale (and hence $Y'$ is smooth) and $f \colon X \to Y'$
	is flat and surjective. As $\pi = \varepsilon \circ f$ is flat and surjective, $g \colon Y \to W$
	is continuous.
	Let $W_0 \subseteq W$ be an open affine subset that intersects the
	image of $g \colon Y \to W$ densely and consider the open dense
	subsets
	$Y_0 \coloneqq g^{-1}(W_0)$, $Y_0' \coloneqq \varepsilon^{-1}(Y_0)$,
	$X_0 \coloneqq f^{-1}(Y'_0)$ of $Y, Y', W$, respectively.
	Moreover, we consider $W_0$ as a closed subset of $\AA^n$.
	
	As $(g \circ \pi) |_{X_0} \colon X_0 \to W_0$ 
	is a morphism, there exist 
	$h_1, \ldots, h_n \in \OO_X(X_0)$ such that 
	$(g \circ \pi)(x) = (h_1(x), \ldots, h_n(x))$ for all $x \in X_0$. 
	As $f_\ast \OO_X = \OO_{Y'}$,
	we get $\OO_X(X_0) = \OO_{Y_0'}(Y')$ and hence, 
	$(g \circ \varepsilon) |_{Y_0'} \colon Y'_0 \to W_0$ is a morphism.

	Let $\Gamma' \subseteq Y_0' \times W_0$ and $\Gamma \subseteq Y_0 \times W_0$
	be the graphs of $(g \circ \varepsilon) |_{Y_0'}$ and $g |_{Y_0}$, respectively. Both graphs are closed, since 
	both maps are continuous.
	Note that we have the following commutative diagram
	\[
		\xymatrix@R=15pt{
			\Gamma' \ar[d]_-{\textrm{isom.}} \ar[rrr]^-{(y', w) \mapsto (\varepsilon(y'), w)} &&& \Gamma \ar[d]^-{\textrm{homeo.}} \\
			Y'_0 \ar[rrr]^-{\varepsilon}_-{\textrm{\'etale}} &&& Y_0
		}
	\]
	where the vertical arrows are the natural projections. 
	Let $\Gamma_0 \subseteq \Gamma$ be the open dense subset of smooth points of $\Gamma$.
	Hence, the restriction
	of $\Gamma \to Y$ to $\Gamma_0$ 
	yields an injective dominant morphism 
	$\Gamma_0 \to Y$ of smooth varieties with surjective differentials, 
	i.e.~$\Gamma_0 \to Y$ is injective and \'etale. 
	Using that \'etale morphisms are locally standard 
	(see e.g.~\cite[Lemma 29.36.15]{St2024The-Stacks-project}), we conclude that the 
	latter map is an open embedding. This implies that $g \colon Y \to W$ is rational.
\end{proof}

\begin{proof}[{Proof of Proposition~\ref{Prop.Birational_maps_preserving_a_fibration}}]
	Let $\theta$ be an algebraic family of birational transformations of $X$ parametrized 
	by some normal variety $V$ such that $\rho_{\theta}(v) \in \Bir(X, \pi\fib)$ for all $v \in V$.
	We show that there exists a unique algebraic family $\bar{\theta}$ of birational transformations
	of $Y$ parametrized by $V$ such that
	$(\id_V \times \pi) \circ \theta = \bar{\theta} \circ (\id_V \times \pi)$.
	This will then imply the proposition: Indeed, for~\eqref{Prop.Birational_maps_preserving_a_fibration1} we consider the case where $V$ is a point
	and for~\eqref{Prop.Birational_maps_preserving_a_fibration2} we note that for checking 
	closedness of a subset of $\Bir(X)$ it is enough to consider only morphisms from normal varieties.

	\medskip

	We may assume that $V$ is irreducible,
	since the irreducible components of $V$ are pairwise disjoint by the normality of $V$. 
	Using Lemma~\ref{Lem.NiceFibration_permanence}\eqref{Lem.NiceFibration_permanence2}
	it follows that $\id_V \times \pi \colon V \times X \to V \times Y$ 
	satisfies~\hyperlink{Eq.NiceFibration}{($\ast$)}.

	\begin{claim}
		\label{Claim.Settheoretic_map}
		There exist open dense subsets $U_1, U_2 \subseteq V \times Y$ and a bijection $\rho \colon U_1 \to U_2$
		of the closed points such that the following diagram commutes,
		\begin{equation} \label{Eq.Diagram}
			\begin{gathered}
			\xymatrix@R=15pt{
				\lociso(\theta) \ar[d]_-{\eta_1} \ar@{}[r]|-{\supseteq}
				& \eta_1^{-1}(U_1) \ar[d] \ar[r]^-{\theta}_-{\sim} & \eta_2^{-1}(U_2) \ar[d]
				\ar@{}[r]|-{\subseteq} & \lociso(\theta^{-1})  \ar[d]^-{\eta_2} \\
				V \times Y \ar@{}[r]|-{\supseteq} & U_1 \ar[r]^-{\rho} & 
				U_2  \ar@{}[r]|-{\subseteq} & V \times Y
			}	
			\end{gathered}
		\end{equation}
		 where $\eta_1, \eta_2$ are the restrictions of $\id_V \times \pi$.
	\end{claim}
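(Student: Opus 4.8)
The plan is to obtain the bijection $\rho$ by descending $\theta$ along the two copies of $\id_V\times\pi$, and the whole point where Property~\hyperlink{Eq.NiceFibration}{($\ast$)} will enter is to guarantee that this descent is a \emph{rational} map rather than merely a set map. First I would introduce the morphism $\sigma\colon\lociso(\theta)\to V\times Y$, $(v,x)\mapsto\eta_2(\theta(v,x))=(v,\pi(\pr_2\theta(v,x)))$, which is a morphism because it is the restriction of $(\id_V\times\pi)\circ\theta$. The key observation is that $\sigma$ factors set-theoretically through $\eta_1=(\id_V\times\pi)|_{\lociso(\theta)}$: if $(v,x_1),(v,x_2)\in\lociso(\theta)$ have $\pi(x_1)=\pi(x_2)$, then restricting the $V$-isomorphism $\theta$ to the fibre over $v$ shows $x_1,x_2\in\lociso(\rho_\theta(v))$, so the hypothesis $\rho_\theta(v)\in\Bir(X,\pi\fib)$ forces $\pi(\rho_\theta(v)(x_1))=\pi(\rho_\theta(v)(x_2))$, i.e.\ $\sigma(v,x_1)=\sigma(v,x_2)$. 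Hence there is a well-defined map of sets $\rho$ on the image $\eta_1(\lociso(\theta))\subseteq V\times Y$ with $\sigma=\rho\circ\eta_1$, and extending $\rho$ arbitrarily to all of $V\times Y$ does not disturb this identity on $\lociso(\theta)$.

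The main step, and the only place Property~\hyperlink{Eq.NiceFibration}{($\ast$)} is used, is to promote $\rho$ from a set map to a rational map. After the harmless reduction to irreducible normal $V$, Lemma~\ref{Lem.NiceFibration_permanence}\eqref{Lem.NiceFibration_permanence2} shows that $\id_V\times\pi\colon V\times X\to V\times Y$ again satisfies~\hyperlink{Eq.NiceFibration}{($\ast$)}. Since $\rho\circ(\id_V\times\pi)$ agrees with the morphism $\sigma$ on the dense open subset $\lociso(\theta)$, it is rational, so Lemma~\ref{Lem.HPrational}, applied to the abstract map $\rho$ and the fibration $\id_V\times\pi$, yields that $\rho$ is rational; that is, $\rho$ restricts to a morphism on some open dense $U_1^0\subseteq V\times Y$. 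I expect this to be the crux: without~\hyperlink{Eq.NiceFibration}{($\ast$)} the set map $\rho$ need not be rational—a purely inseparable twist along the fibres is exactly what $(\ast)$ rules out—and Lemma~\ref{Lem.HPrational} is precisely the tool that converts the fibrewise data into a rational map on the base.

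Finally I would run the identical construction for $\theta^{-1}$, whose members $\rho_\theta(v)^{-1}$ again lie in $\Bir(X,\pi\fib)$, to obtain a rational map $\rho'$, a morphism on some open dense $U_2^0$, satisfying $\eta_1\circ\theta^{-1}=\rho'\circ\eta_2$ on $\lociso(\theta^{-1})$. A direct chase shows that $\rho$ and $\rho'$ are mutually inverse as set maps wherever both are defined: for $(v,y)=\eta_1(v,x)$ one computes $\rho'(\rho(v,y))=\eta_1(\theta^{-1}\theta(v,x))=(v,y)$, and symmetrically $\rho(\rho'(v,y'))=(v,y')$. Setting $U_1:=U_1^0\cap\rho^{-1}(U_2^0)$ and $U_2:=U_2^0\cap(\rho')^{-1}(U_1^0)$—both open dense, since $\rho,\rho'$ are dominant morphisms there—the mutual-inverse relation gives that $\rho\colon U_1\to U_2$ is a bijection with inverse $\rho'$.

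Commutativity of the diagram in~\eqref{Eq.Diagram} is then built in. Indeed, on $\eta_1^{-1}(U_1)$ we have $\eta_2\circ\theta=\sigma=\rho\circ\eta_1$ by construction of $\rho$, and the same computation shows that $\theta$ carries $\eta_1^{-1}(U_1)$ onto $\eta_2^{-1}(U_2)$: a point $(v,x)$ with $\eta_1(v,x)\in U_1$ maps to $\theta(v,x)$ with $\eta_2(\theta(v,x))=\rho(\eta_1(v,x))\in U_2$, and conversely via $\theta^{-1}$ and $\rho'$. Since $\theta\colon\lociso(\theta)\to\lociso(\theta^{-1})$ is an isomorphism, its restriction $\eta_1^{-1}(U_1)\xrightarrow{\sim}\eta_2^{-1}(U_2)$ is one as well. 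The only genuinely delicate points are the fibrewise well-definedness of $\rho$ and the rationality furnished by $(\ast)$; the assembling of $U_1,U_2$ and the verification of the diagram are routine bookkeeping.
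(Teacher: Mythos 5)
Your argument is correct in substance, but it takes a genuinely different and strictly stronger route than the paper's. The paper proves the claim purely set-theoretically, without invoking Property~\hyperlink{Eq.NiceFibration}{($\ast$)} at all: since each $\rho_{\theta}(v)$ preserves the fibres of $\pi$, the $V$-isomorphism $\theta$ carries fibres of $\eta_1$ isomorphically onto fibres of $\eta_2$; one then chooses $U_1, U_2$ over which $\eta_1, \eta_2$ are flat (generic flatness) with $\theta(\eta_1^{-1}(U_1)) = \eta_2^{-1}(U_2)$, and the bijection $\rho$ falls out by fibrewise descent --- three lines, with no algebraicity asserted for $\rho$. Property~\hyperlink{Eq.NiceFibration}{($\ast$)}, Lemma~\ref{Lem.HPrational}, and Lemma~\ref{Lem.Abstract_bijection_isomorphism} are deliberately reserved for the subsequent step of the proposition, where the set map $\vartheta$ is upgraded first to a birational map and then, via continuity, normality of $W_1, W_2$, and Zariski's main theorem, to an isomorphism. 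You instead front-load ($\ast$) and Lemma~\ref{Lem.HPrational} into the claim itself, applying the lemma twice (to $\theta$ and to $\theta^{-1}$) to produce rational maps $\rho, \rho'$ that are mutually inverse; this yields $\rho$ not merely as a bijection of closed points but as an isomorphism $U_1 \xrightarrow{\sim} U_2$. That is legitimate here, since ($\ast$) is in force throughout the proposition, and it buys a real simplification: the inverse morphism comes for free from $\theta^{-1}$, so you bypass the paper's homeomorphism/normality/Zariski-main-theorem step (Lemma~\ref{Lem.Abstract_bijection_isomorphism}) entirely. The costs are that your proof obscures the fact that the bare claim holds without ($\ast$) (only fibre preservation is needed for the set-level descent), and that one small patch is required in your bookkeeping: the identity $\rho' \circ \rho = \id$ is established a priori only at points of $\eta_1(\lociso(\theta))$, where the canonical descent formula applies and your arbitrary extension is irrelevant; to propagate it to all of $U_1 = U_1^0 \cap \rho^{-1}(U_2^0)$ (as your definition of $U_1, U_2$ and the inclusion $\rho(U_1) \subseteq U_2$ require) you should note that $\eta_1(\lociso(\theta))$ contains a dense open subset and that two morphisms into a separated target agreeing on a dense set of closed points of a reduced scheme coincide. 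Likewise, the density of $\rho^{-1}(U_2^0)$ needs the short observation that $\rho$ is dominant because $\rho \circ \eta_1 = \eta_2 \circ \theta$ on $\lociso(\theta)$ has dense image. With these routine insertions, your assembly of $U_1, U_2$ and the verification of the diagram go through as you indicate.
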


	\begin{proof}
		Since for all $v \in V$ the birational transformation 
		$\rho_{\theta}(v) \in \Bir(X)$ preserves general fibres of $\pi$,
		it follows that $\theta$ maps the fibres of $\eta_1$ isomorphically onto the 
		fibres of $\eta_2$.
		For $i =1, 2$, choose an open dense subset $U_i \subseteq V \times Y$ such that 
		$\eta_i^{-1}(U_i) \to U_i$ is flat and 
		$\theta(\eta_1^{-1}(U_1)) = \eta_2^{-1}(U_2)$. Hence, there exists
		a bijection $\rho \colon U_1 \to U_2$ that makes the diagram~\eqref{Eq.Diagram} 
		commutative. 
	\end{proof}

	Let $Y_0 \subseteq Y$, $X_0 \subseteq X$ be open dense subvarieties with 
	$\pi(X_0) = Y_0$ such that
	$\pi_0 \coloneqq \pi |_{X_0} \colon X_0 \to Y_0$ is flat.
	Denote by $\theta_0$ the restriction of $\theta$ to $V \times X_0$.
	Consider the open dense  
	subsets $W_1 \coloneqq (\id_V \times \pi_0)(\lociso(\theta_0))$ and 
	$W_2 \coloneqq (\id_V \times \pi_0)(\lociso(\theta_0^{-1}))$ of $V \times Y$.
	Since $\theta_0$ preserves the general fibres of $\id_V \times \pi_0$, there is 
	a bijection $\vartheta \colon W_1 \to W_2$
	such that the diagram 
	\[
		\xymatrix@R=15pt{
			\lociso(\theta_0) \ar@{->>}[d]_-{\id_V \times \pi_0} \ar[r]^-{\theta_0}_-{\sim} & \lociso(\theta_0^{-1})
			\ar@{->>}[d]^-{\id_V \times \pi_0} \\
			W_1 \ar[r]^-{\vartheta}_{\textrm{bij.}} & W_{2} \\
		}	
	\]
	commutes. 	Using that $\id_V \times \pi$ satisfies~\hyperlink{Eq.NiceFibration}{($\ast$)}
	we deduce from Claim~\ref{Claim.Settheoretic_map} and Lemma~\ref{Lem.HPrational}
	that $\vartheta$ is birational. 
	Since $X$ and $V$ are normal and $\pi_0$ is flat, it follows that $W_1$ and $W_2$
	are normal (see e.g.~\cite[Corollary~23.9]{Ma1986Commutative-ring-t}) and 
	$\vartheta$ is a homeomorphism. By Lemma~\ref{Lem.Abstract_bijection_isomorphism} 
	we conclude that $\vartheta$
	is in fact an isomorphism. As $W_1, W_2$ project surjectively onto $V$, we get 
	our desired algebraic family.
\end{proof}

When studying $\Bir(X/Y)$ it is sometimes useful to look at the induced birational maps
on the geometric generic fibre.
Let $K$ be an algebraic closure of $\kk(Y)$. We 
denote by $X_K$ the geometric generic fibre of $\pi \colon X \to Y$, which is 
the pull-back of $\pi$ via $\Spec(K) \to Y$. Assume that $X_K$ 
is an irreducible $K$-variety (this is e.g. the case if $\car(\kk) = 0$ and
$\kk(Y)$ is algebraically closed in $\kk(X)$, 
see~\cite[Proposition~5.51]{GoWe0Algebraic-geometryI}).
Now, every $\varphi \in \Bir(X/Y)$
pulls back to a birational map $\varphi_K \in \Bir_K(X_K)$, and we get
a natural injective group homomorphism
\begin{equation}
	\label{Eq.Pullback}
	\varepsilon \colon \Bir(X/Y) \to \Bir_K(X_K) \, , \quad 
	\varphi \mapsto \varphi_K \, .
\end{equation}

\begin{proposition}
	\label{Prop.Pull-back_continuous}
 	If the geometric generic fibre $X_K$ of $\pi \colon X \to Y$ is
	an irreducible $K$-variety, then 
	the injective group homomorphism in~\eqref{Eq.Pullback} is continuous.
\end{proposition}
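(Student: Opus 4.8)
The plan is to verify continuity directly from the definition of the Zariski topology. Since $\Bir(X/Y)$ is closed in $\Bir(X)$ and carries the subspace topology (Lemma~\ref{Lem.Bir_Y_X_closed}), and since a subset of $\Bir(X)$ is closed if and only if its preimage under every morphism is closed (Corollary~\ref{Cor.Inductive-limit-topology}), it suffices to fix a closed subset $C \subseteq \Bir_K(X_K)$ and a morphism $\rho \colon V \to \Bir(X)$ with irreducible $V$ and image in $\Bir(X/Y)$, and to show that $(\varepsilon \circ \rho)^{-1}(C)$ is closed in $V$. The general case then follows by intersecting with the closed set $\rho^{-1}(\Bir(X/Y))$ and passing to the finitely many irreducible components.

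The key step is to turn $\rho$ into a morphism over $K$. Let $\theta$ be the algebraic family associated to $\rho$. Since each $\rho_{\theta}(v)$ lies in $\Bir(X/Y)$, the map $\theta$ is a birational self-map of $V \times X$ over $V \times Y$, and in particular over $Y$ via $q = \pi \circ \pr_X \colon V \times X \to Y$. Base changing $q$ along the flat morphism $\Spec K \to \Spec \kk(Y) \to Y$ and using the identification $(V \times X) \times_Y \Spec K = V_K \times_K X_K$, where $V_K = V \times_{\kk} \Spec K$, we obtain a birational self-map $\theta_K$ of $V_K \times_K X_K$ over $V_K$. Here $K$ is algebraically closed and $V$ is geometrically irreducible (as $\kk$ is algebraically closed), so $V_K$ is irreducible; together with the hypothesis that $X_K$ be irreducible this makes $V_K \times_K X_K$ irreducible. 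Flat base change preserves open immersions, isomorphisms, and surjectivity onto the parameter space, and carries the dense open domains of $\theta$ to dense opens of the irreducible variety $V_K \times_K X_K$; hence $\theta_K$ is again an algebraic family and $\sigma \coloneq \rho_{\theta_K} \colon V_K \to \Bir_K(X_K)$ is a morphism of $K$-varieties. Since taking the fibre over a point commutes with base change, we get $\sigma(v_K) = \rho(v)_K = \varepsilon(\rho(v))$ for every $v \in V(\kk)$, where $v_K \in V_K(K)$ is the $K$-point induced by $v$.

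It remains to descend closedness from $V_K$ to $V$. As $\sigma$ is a morphism, $C' \coloneq \sigma^{-1}(C)$ is closed in $V_K$, and by the compatibility above $(\varepsilon \circ \rho)^{-1}(C) = \set{v \in V}{v_K \in C'}$. Working on an affine chart $V = \Spec A$ and writing $C' = V(\mathfrak{a})$ for an ideal $\mathfrak{a} \subseteq A \otimes_{\kk} K$, we fix a $\kk$-basis $(e_\lambda)$ of $K$ and expand each $f \in \mathfrak{a}$ as $f = \sum_\lambda a_\lambda \otimes e_\lambda$ with $a_\lambda \in A$. Evaluating at $v_K$ yields $\sum_\lambda a_\lambda(v)\, e_\lambda \in K$ with $a_\lambda(v) \in \kk$, which vanishes precisely when $a_\lambda(v) = 0$ for all $\lambda$; hence $\set{v \in V}{v_K \in C'}$ is the zero locus of the ideal of $A$ generated by all these components $a_\lambda$, and in particular closed. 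This proves continuity.

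The main obstacle is the middle paragraph: one must check carefully that base change of the algebraic family along the generic point of $Y$ and then to $K$ again produces an algebraic family in the sense of the paper. This is exactly where the hypothesis that $X_K$ be irreducible is used, since it guarantees that the base-changed domains stay dense in an irreducible total space; without it the construction of $\sigma$ would break down. Once $\sigma$ is available, both the compatibility $\sigma(v_K) = \varepsilon(\rho(v))$ and the final descent of closedness along the $\kk$-basis expansion are routine.
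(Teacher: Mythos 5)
Your middle paragraph contains the one genuine gap, and it is precisely the pitfall the paper flags. You assert that flat base change preserves ``surjectivity onto the parameter space,'' so that $\theta_K$ is an algebraic family parametrized by all of $V_K$. This is false: the base change is taken along $\Spec K \to Y$ through $q = \pi \circ \pr_X$, which does \emph{not} factor through $V$, so the pulled-back isomorphism loci are $U_i \times_Y \Spec K$ and not $U_i \times_V V_K$; surjectivity of $U_i \to V$ therefore says nothing about surjectivity of $(U_i)_K \to V_K$. The paper's Example~\ref{Exa.Lociso_too_small} is an explicit counterexample: for $V = \AA^1$, $X = \AA^1 \times \PP^1 \to Y = \AA^1$ and $\theta(t, u, [x:y]) = (t, u, [(tu-1)x : y])$, every $\rho_{\theta}(t)$ lies in $\Bir(X/Y)$ and $\theta$ is a family over $V$, yet $\lociso(\theta_K) = (\AA^1_K \setminus \{u^{-1}\}) \times_K \PP^1_K$ misses the $K$-point $t = u^{-1}$, so $\theta_K$ is \emph{not} a family parametrized by $V_K$ and your $\sigma = \rho_{\theta_K}$ is simply not defined on all of $V_K$. (Irreducibility of $X_K$ does not repair this; its role is only to make $\Bir_K(X_K)$ and the notion of a family over $K$ meaningful.)

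The repair is exactly the paper's Lemma~\ref{Lem.Lociso_of_pull_back_family}: $\theta_K$ \emph{is} an algebraic family over an open dense $W \subseteq V \times K$ containing all $\kk$-rational points of $V$ --- the point being that the images of $\lociso(\theta)$ and $\lociso(\theta^{-1})$ in $V \times Y$ surject onto $V$, so their pull-backs contain $\eta(v)$ for every $v \in V(\kk)$ (Lemma~\ref{Lem.Map-k-rational-K-rational_points}\eqref{Lem.Map-k-rational-K-rational_points2}). Since only $\kk$-points of $V$ enter your computation, this suffices: define $\sigma$ on $W$, note $C' \coloneqq \sigma^{-1}(C)$ is closed in $W$, and replace $C'$ by its closure $\overline{C'}$ in $V_K$, which is legitimate because every $v_K$ lies in $W$ and $C' = \overline{C'} \cap W$. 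With that modification your argument goes through, and your final paragraph deserves credit: the coordinate expansion of the ideal of $\overline{C'}$ along a $\kk$-basis of $K$ is a correct and genuinely more elementary substitute for the paper's Lemma~\ref{Lem.Map-k-rational-K-rational_points}\eqref{Lem.Map-k-rational-K-rational_points1}, whose proof of the continuity of $\eta \colon V(\kk) \to (V \times K)(K)$ passes through Weil's lemma on $\Aut(K/\kk)$-invariant ideals. So the overall architecture matches the paper's proof; the single missing ingredient is the (false, as stated) surjectivity of $\lociso(\theta_K)$ onto $V_K$, which must be weakened to the statement about an open subset containing the $\kk$-rational points.
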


For the proof of 
Proposition~\ref{Prop.Pull-back_continuous} 
we need to pull back algebraic families. Let 
$\theta \colon V \times X \dashrightarrow V \times X$ be an algebraic family
of birational transformations of $X$ parametrized by some variety $V$ such that
$(\pi \circ \pr_X) \circ \theta = \pi \circ \pr_X$, where $\pr_X$ denotes the projection
to $X$. Then we get via pull-back
a birational transformation
\begin{equation}
	\label{Eq.Pullback_family}
	(V \times K) \times_K X_K =  (V \times X)_K \stackrel{\theta_K}{\dashrightarrow} 
	(V \times X)_K = (V \times K) \times_K X_K \, ,
\end{equation}
where $V \times K$ denotes the fibre product 
$V \times_{\Spec(\kk)} \Spec(K)$.
In general, $\theta_K$ is not an algebraic family parametrized by $V \times K$, 
as $\lociso(\theta_K)$
does not surject onto $V \times K$, see Example~\ref{Exa.Lociso_too_small}. 
However, $\theta_K$ is an algebraic family
parametrized by an open dense subset of $V \times K$ that contains all $\kk$-rational
points of $V$, see Lemma~\ref{Lem.Lociso_of_pull_back_family}.

\begin{example}
	\label{Exa.Lociso_too_small}
	Let $V = \AA^1$, $X = \AA^1 \times \PP^1$, $Y = \AA^1$ and $\pi \colon X \to Y$
	the projection to the first factor and let $K$ be an algebraic closure of 
	$\kk(Y) = \kk(u)$.
	Consider the algebraic family
	\[
		\theta \colon \AA^1 \times (\AA^1 \times \PP^1) \dashrightarrow
		\AA^1 \times (\AA^1 \times \PP^1) \, , \quad
		(t, u, [x:y]) \dashmapsto (t, u, [(tu-1)x:y]) \, .
	\]
	Then, the pull-back is given by
	\[
		\theta_K \colon \AA^1_K \times_K \PP^1_K \dashrightarrow 
		\AA^1_K \times_K \PP^1_K \, , \quad
		(t, [x:y]) \dashmapsto (t, [(tu-1)x:y]) \, .
	\]
	Now, $\lociso(\theta_K) = \lociso(\theta_K^{-1}) = 
	(\AA^1_K \setminus \{ u^{-1} \}) \times_K \PP^1_K$ does not surject onto $\AA^1_K$.
\end{example}

\begin{lemma}
	\label{Lem.Map-k-rational-K-rational_points}
	With the notation introduced above we have:
	\begin{enumerate}[left=0pt]
		\item \label{Lem.Map-k-rational-K-rational_points1} 
		The natural map $\eta \colon V(\kk) \to (V \times K)(K)$ from the $\kk$-rational
		points of $V$ to the $K$-rational points of $V \times K$ is 
		a homeomorphism onto its image.
		\item \label{Lem.Map-k-rational-K-rational_points2} 
		If $U \subseteq V \times Y$ is an open subset that surjects onto $V$,
		then the preimage of $U$ under $V \times K \to V \times Y$
		contains the image of $\eta$, i.e.~the $\kk$-rational points of $V$.
	\end{enumerate}
\end{lemma}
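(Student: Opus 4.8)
The plan is to pass to an affine chart $V = \Spec A$ and to describe everything through the functor of points. By the universal property of the fibre product, a $K$-point of $V \times K = \Spec(A \otimes_\kk K)$ is the same datum as a $\kk$-algebra homomorphism $A \to K$, and under this identification $\eta$ is the inclusion sending a $\kk$-algebra homomorphism $v \colon A \to \kk$ to the composite $A \xrightarrow{v} \kk \hookrightarrow K$; in particular $\eta$ is injective. The two topologies in play are the Zariski topologies on the closed points $V(\kk)$ of $V$ and on the closed points $(V \times K)(K)$ of the $K$-variety $V \times K$.

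For \eqref{Lem.Map-k-rational-K-rational_points1} I would fix a $\kk$-basis $(c_\alpha)_\alpha$ of $K$ and expand an arbitrary element $g \in A \otimes_\kk K$ as $g = \sum_\alpha a_\alpha \otimes c_\alpha$ with $a_\alpha \in A$ (almost all zero). Evaluating the corresponding $K$-algebra homomorphism $A \otimes_\kk K \to K$, $a \otimes c \mapsto v(a)c$, on $g$ gives $\sum_\alpha v(a_\alpha)\, c_\alpha$, and since each $v(a_\alpha)$ lies in $\kk$ while the $c_\alpha$ are $\kk$-linearly independent, this vanishes precisely when $v(a_\alpha) = 0$ for all $\alpha$. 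Hence the $\eta$-preimage of the zero locus of $g$ is the closed common zero locus in $V(\kk)$ of the finitely many $a_\alpha$, which gives continuity of $\eta$. Conversely, writing $Z_I \subseteq V(\kk)$ and $Z_I^K \subseteq (V \times K)(K)$ for the zero loci of an ideal $I \subseteq A$ and of its extension $I \cdot (A \otimes_\kk K)$, the same computation yields $\eta(Z_I) = \eta(V(\kk)) \cap Z_I^K$. As every closed subset of $V(\kk)$ is of the form $Z_I$, this shows $\eta$ is closed onto its image, hence a homeomorphism onto its image. The general case follows by gluing along an affine open cover of $V$.

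For \eqref{Lem.Map-k-rational-K-rational_points2} the crucial step is to identify the image point $q(\eta(v))$, where $q = \id_V \times (\Spec K \to Y) \colon V \times K \to V \times Y$. Composing the section $\eta(v) \colon \Spec K \to V \times K$ with $q$ and projecting to the two factors, the $V$-component is the closed point $v$ (with residue field $\kk$) and the $Y$-component is the structure morphism $\Spec K \to Y$, whose image is the generic point $\xi$ of $Y$ because $\kk(Y) \subseteq K$. Since $v$ is $\kk$-rational, its fibre under $V \times Y \to V$ is $\{v\} \times Y \cong Y$, and $q(\eta(v))$ is exactly the generic point of this fibre. Now if $U \subseteq V \times Y$ is open and surjects onto $V$, then $v$ lies in the image of $U \to V$, so $U$ contains a point over $v$, i.e. $U \cap (\{v\} \times Y) \neq \varnothing$; being a nonempty open subset of the irreducible variety $Y$, it necessarily contains the generic point $q(\eta(v))$. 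Therefore $\eta(v) \in q^{-1}(U)$ for every $v \in V(\kk)$, which is the assertion.

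I expect the only genuinely delicate point to be the bookkeeping between the functor-of-points descriptions and the Zariski topologies. In \eqref{Lem.Map-k-rational-K-rational_points1} the infinite degree of the extension $K/\kk$ is precisely what the expansion in a $\kk$-basis is designed to tame, and in \eqref{Lem.Map-k-rational-K-rational_points2} everything hinges on recognising $q(\eta(v))$ as the generic point of the fibre $\{v\} \times Y$ — the place where both hypotheses, that $v$ is $\kk$-rational and that $\Spec K \to Y$ hits the generic point of $Y$, enter.
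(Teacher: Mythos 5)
Your proof is correct. Part \eqref{Lem.Map-k-rational-K-rational_points2} is essentially the paper's argument: the paper also observes that $U\cap(\{v\}\times Y)$ is a nonempty (hence dense) open subset of the irreducible fibre $\{v\}\times Y$ and that the point $\eta(v)$ of $V\times K$ sits over the generic point of that fibre, so it lands in the preimage of $U$. Part \eqref{Lem.Map-k-rational-K-rational_points1}, however, follows a genuinely different and more elementary route. The paper proves continuity by a Galois-descent argument: given an ideal $I\subseteq K[V]$ cutting out a closed set, it replaces $I$ by its $\Aut(K/\kk)$-invariant hull $J=\sum_{\sigma}\sigma(I)$ (which has the same $\eta$-preimage) and invokes a lemma from Weil's \emph{Foundations} to conclude $(\kk[V]\cap J)K=J$, so the preimage is the zero set of $\kk[V]\cap J$; it then gets the homeomorphism property for free, since the projection $V\times K\to V$ restricts to a continuous retraction $\omega$ with $\omega\circ\eta=\id$. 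Your argument replaces all of this with linear algebra: expanding $g=\sum_\alpha a_\alpha\otimes c_\alpha$ in a $\kk$-basis $(c_\alpha)$ of $K$ and using that $v(a_\alpha)\in\kk$ shows at once that $\eta^{-1}(Z(g))$ is the common zero locus of the $a_\alpha$, giving continuity, and the same computation yields $\eta(Z_I)=\eta(V(\kk))\cap Z_I^K$, giving closedness onto the image. Your version is self-contained (no Galois action, no reference to Weil), while the paper's version buys an explicit description of the preimage ideal and a slicker one-line proof of the homeomorphism step via the retraction; both are complete proofs.
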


\begin{proof}
	For the proof of both statements we may assume that $V$ is affine. 

	\eqref{Lem.Map-k-rational-K-rational_points1}:
	We denote by $K[V]$ the $K$-valued functions on $V$, i.e. the coordinate ring
	of $V \times K$ over $K$. The map $\eta$ is given by
	\[
		\begin{array}{rcl}
			\{ \, \textrm{maximal ideals in $\kk[V]$} \, \} & \to & 
			\{ \, \textrm{maximal ideals in $K[V] = \kk[V] \otimes_{\kk} K$} \, \} \\
			 \mathfrak{m} & \mapsto & \mathfrak{m} K  \, .
		\end{array}
	\]
	First, we show that $\eta$ is continuous.
	Let $Z$ be a closed subset of $V \times K$ (defined over $K$) and let 
	$I \subseteq K[V]$ be its vanishing ideal. Note that for 
	$\mathfrak{m} \in V(\kk)$ we have
	\[
		\mathfrak{m} \in \eta^{-1}(Z) \iff \mathfrak{m} K \supseteq I 
		\iff \mathfrak{m} K \supseteq J \coloneqq 
		\sum_{\sigma \in \Aut(K / \kk)} \sigma(I)  \, .
	\]
	Thus, the preimages under $\eta$ of the vanishing sets of $I$ and $J$ in 
	$V \times K$ coincide.
	Therefore, we may replace $I$ by $J$ and may assume that $I$ is invariant under $\Aut(K / \kk)$.
	Let $\nu \colon \kk^{[m]} \to \kk[V]$ be a $\kk$-algebra surjection, where 
	$\kk^{[m]}$ denotes the polynomial ring over $\kk$ in $m$ variables. Then $\nu$
	extends to a $K$-algebra surjection 
	$\nu_K \colon K^{[m]} \to K[V]$
	and $\nu_K^{-1}(I)$ is an $\Aut(K / \kk)$-invariant ideal
	of $K^{[m]}$. By \cite[Chp. I, \S7, Lemma~2]{We1946Foundations-of-Alg}
	we have 
	\[
		(\kk^{[m]} \cap \nu_K^{-1}(I))K = \nu_K^{-1}(I) \, .
	\] 
	After applying $\nu_K$ we get thus
	$\nu(\kk^{[m]} \cap \nu_K^{-1}(I))K = I$. Note that
	$\nu(\kk^{[m]} \cap \nu_K^{-1}(I)) = \kk[V] \cap I$
	and therefore $(\kk[V] \cap I)K = I$.
	This shows that
	\[
		\mathfrak{m} \in \eta^{-1}(Z) \iff \mathfrak{m} K \supseteq I 
		\iff \mathfrak{m} \supseteq \kk[V] \cap I
	\]
	and hence the continuity of $\eta$ follows.

	Now it is enough to note that the natural morphism $V \times K \to V$ restricted to
	$\eta(V(\kk))$ yields a continuous map $\omega \colon \eta(V(\kk)) \to V(\kk)$
	such that $\omega \circ \eta = \id_{V(\kk)}$.

	\eqref{Lem.Map-k-rational-K-rational_points2}:  
	Let $W \subseteq V \times Y$ be the preimage of $U$ under 
	$\xi \colon V \times K \to V \times Y$ and let $v \in V$ be a $\kk$-rational point.
	Then $\xi$ restricts to a dominant morphism 
	$\xi^{-1}(\{v\} \times Y) \to \{v\} \times Y$
	of schemes. By assumption $U \cap (\{v\} \times Y)$ is open and dense
	in $\{v\} \times Y$ and therefore $W$ contains the point 
	$\xi^{-1}(\{v\} \times Y)$, which corresponds to $\eta(v)$. 
\end{proof}

\begin{lemma}
	\label{Lem.Lociso_of_pull_back_family}
	The algebraic family $\theta_K$ of~\eqref{Eq.Pullback_family} is parametrized
	by an open dense subset of $V \times K$ that contains all 
	$\kk$-rational points of $V$.
\end{lemma}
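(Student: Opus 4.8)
The plan is to exhibit an open dense subset $W \subseteq V \times K$ that contains all $\kk$-rational points of $V$ and on which the pulled-back map $\theta_K$ is a genuine algebraic family; by definition the latter amounts to arranging that the isomorphism locus $\lociso(\theta_K)$ surjects onto $W$. We may assume $V$ is irreducible. Write $\xi \colon V \times K \to V \times Y$ for the flat morphism induced by $\Spec(K) \to Y$ and $q = \id_V \times \pi \colon V \times X \to V \times Y$. The hypothesis $(\pi \circ \pr_X) \circ \theta = \pi \circ \pr_X$ says precisely that $\theta$ is a birational self-map over $V \times Y$, and $\theta_K$ is its base change along $\xi$. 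Since open immersions are stable under base change, we get $\lociso(\theta) \times_{V \times Y} (V \times K) \subseteq \lociso(\theta_K)$, and hence the image of $\lociso(\theta_K)$ under the projection $\pr_2$ onto $V \times K$ contains $\xi^{-1}(C)$, where $C := q(\lociso(\theta)) \subseteq V \times Y$ is the (constructible) image of the isomorphism locus.

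The first step is to reduce the statement to the following claim: there is an \emph{open} subset $U \subseteq V \times Y$ with $U \subseteq C$ that surjects onto $V$ under the first projection. Granting this, I would set $W := \xi^{-1}(U)$. Then $W$ is open and dense in $V \times K$, and $W \subseteq \xi^{-1}(C) \subseteq \pr_2(\lociso(\theta_K))$, so $\lociso(\theta_K)$ surjects onto $W$. Together with the fact that $\theta_K$ is $W$-birational — the open dense subsets of $V \times X$ on which $\theta$ restricts to an isomorphism pull back along $\xi$ to open dense subsets of $(V \times K) \times_K X_K$ surjecting onto $W$ — this shows that $\theta_K$ restricts to an algebraic family of birational transformations of $X_K$ parametrized by $W$. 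Moreover, since $U$ is open and surjects onto $V$, Lemma~\ref{Lem.Map-k-rational-K-rational_points}\eqref{Lem.Map-k-rational-K-rational_points2} guarantees that $W = \xi^{-1}(U)$ contains the images of all $\kk$-rational points of $V$, as required.

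To prove the claim I would invoke generic flatness to choose a dense open $Y_0 \subseteq Y$ such that $\pi_0 \colon X_0 := \pi^{-1}(Y_0) \to Y_0$ is flat. Then $q_0 = \id_V \times \pi_0 \colon V \times X_0 \to V \times Y_0$ is flat and of finite type, hence open, so $U := q_0\big(\lociso(\theta) \cap (V \times X_0)\big)$ is open in $V \times Y_0$, therefore in $V \times Y$, and is visibly contained in $C$. The essential point is the surjectivity of $U$ onto $V$: for an arbitrary, in particular non-generic, point $v \in V$, the fibre $\lociso(\theta)_v = \lociso(\theta) \cap (\{v\} \times X)$ is nonempty because $\lociso(\theta)$ surjects onto $V$ (as $\theta$ is an algebraic family), hence it is a nonempty open, thus dense, subset of $\{v\} \times X \cong X$; intersecting it with the dense open $X_0$ and applying the dominant morphism $\pi_0$ yields $U \cap (\{v\} \times Y_0) \neq \varnothing$. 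This gives $v \in \pr_V(U)$ for every $v$, proving the claim.

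The main obstacle, as Example~\ref{Exa.Lociso_too_small} makes clear, is that $\lociso(\theta_K)$ genuinely fails to surject onto all of $V \times K$: the bad locus sits over the generic point of $Y$ and meets no $\kk$-rational fibre. The crux is therefore to control every fibre of $\pi$ simultaneously, including the special ones. This is exactly what forces the two-step argument above — generic flatness alone only controls the generic fibres of $q$ over $V \times Y$, so it is the density of each individual $\lociso(\theta)_v$ in $\{v\} \times X$, combined with Lemma~\ref{Lem.Map-k-rational-K-rational_points}\eqref{Lem.Map-k-rational-K-rational_points2} relating open sets surjecting onto $V$ to the $\kk$-rational points inside $V \times K$, that does the work.
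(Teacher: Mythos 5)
Your argument is correct and is essentially the paper's own proof: generic flatness makes the image $U$ of $\lociso(\theta)$ under $\id_V \times \pi$ an open subset of $V \times Y$ surjecting onto $V$ (the paper simply shrinks $X$ and $Y$ at the outset so that $\pi$ itself is flat, rather than restricting to $X_0 = \pi^{-1}(Y_0)$ as you do), then passes to $W = \xi^{-1}(U) = U_K$, which contains all $\kk$-rational points by Lemma~\ref{Lem.Map-k-rational-K-rational_points}\eqref{Lem.Map-k-rational-K-rational_points2}, and obtains the surjectivity of $\lociso(\theta_K)$ onto $W$ from the stability of surjections under base change, exactly as in your reduction. The difference between your fibrewise density argument for the surjectivity of $U$ onto $V$ and the paper's appeal to the definition of an algebraic family after shrinking is purely cosmetic.
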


\begin{proof}
	After shrinking $X$ and $Y$, we may assume that $\pi \colon X \to Y$ is flat.
	Since $(\id_V \times \pi) \circ \theta = \id_V \times \pi$, it follows that
	the image of $\lociso(\theta)$ and $\lociso(\theta^{-1})$ under 
	$\id_V \times \pi$ coincide. Let us denote this set by $U \subseteq V \times Y$.
	By assumption, $U$ surjects onto $V$. Note that $\theta_K$ restricts
	to an isomorphism $\lociso(\theta)_K \to \lociso(\theta^{-1})_K$
	and the subsets $\lociso(\theta)_K$ and $\lociso(\theta^{-1})_K$ surject
	onto $U_K \subseteq (V \times Y)_K = V \times K$, since pull-backs of 
	surjections are again surjections, see~\cite[Lemma~29.9.4]{St2024The-Stacks-project}.
	Now, the statement follows from 
	Lemma~\ref{Lem.Map-k-rational-K-rational_points}\eqref{Lem.Map-k-rational-K-rational_points2}, since $U_K$ is the preimage of
	$U$ under $V \times K \to V \times Y$.
\end{proof}

\begin{proof}[Proof of Proposition~\ref{Prop.Pull-back_continuous}]
	For proving the continuity of $\varepsilon$, we take a closed 
	$F \subseteq \Bir_K(X_K)$ 
	and an algebraic family $\theta$ of birational transformations of $X$
	parametrized by a variety $V$. We have to show that
	\begin{equation}
		\label{Eq.Subset_V}
		\set{v \in V}{ \rho_{\theta}(v)_K \in F}
	\end{equation}
	is a closed subset of $V$. Denote by $\eta \colon V(\kk) \to (V \times K)(K)$ 
	the natural inclusion.
	Let $W \subseteq V \times K$ be the open subset which is 
	the parameter variety of $\theta_K$.
	By Lemma~\ref{Lem.Lociso_of_pull_back_family}, $W$ contains all $\kk$-rational
	points of $V$.
	Thus, the subset of $V$ in~\eqref{Eq.Subset_V} is equal to 
	\[
		\set{v \in V}{ \eta(v) \in \rho_{\theta_K}^{-1}(F)}	\, ,
	\]
	since $\rho_{\theta}(v)_K = \rho_{\theta_K}(\eta(v))$ for all $v \in V$.
	As $\rho_{\theta_K}^{-1}(F)$ is closed in $W(K)$, this follows from the continuity of
	$\eta \colon V(\kk) \to W(K)$, see Lemma~\ref{Lem.Map-k-rational-K-rational_points}\eqref{Lem.Map-k-rational-K-rational_points1}. 
\end{proof}

\par\bigskip
\renewcommand{\MR}[1]{}
\bibliographystyle{amsalpha}
\bibliography{newBIB}

\providecommand{\bysame}{\leavevmode\hbox to3em{\hrulefill}\thinspace}
\providecommand{\MR}{\relax\ifhmode\unskip\space\fi MR }
\providecommand{\MRhref}[2]{%
  \href{http://www.ams.org/mathscinet-getitem?mr=#1}{#2}
}
\providecommand{\href}[2]{#2}
\begin{thebibliography}{KRvS21}

\bibitem[BF13]{BlFu2013Topologies-and-str}
J\'{e}r\'{e}my Blanc and Jean-Philippe Furter, \emph{Topologies and structures of the {C}remona groups}, Ann. of Math. (2) \textbf{178} (2013), no.~3, 1173--1198. \MR{3092478}

\bibitem[Bla17]{Bl2017Algebraic-structur}
J\'{e}r\'{e}my Blanc, \emph{Algebraic structures of groups of birational transformations}, Algebraic groups: structure and actions, Proc. Sympos. Pure Math., vol.~94, Amer. Math. Soc., Providence, RI, 2017, pp.~17--30. \MR{3645066}

\bibitem[Dem70]{De1970Sous-groupes-algeb}
Michel Demazure, \emph{Sous-groupes alg\'{e}briques de rang maximum du groupe de {C}remona}, Ann. Sci. \'{E}cole Norm. Sup. (4) \textbf{3} (1970), 507--588. \MR{284446}

\bibitem[DG70]{DeGr1970Schemas-en-groupes}
Michel Demazure and Alexander Grothendieck, \emph{Sch{\'e}mas en groupes (sga 3). {Tome II}: {Groupes} de type multiplicatif, et structure des sch{\'e}mas en groupes g{\'e}n{\'e}raux. {Expos{\'e}s} {VIII} {\`a} {XVIII}. {S{\'e}minaire} de {G{\'e}om{\'e}trie} {Alg{\'e}brique} du {Bois} {Marie} 1962/64 ({SGA} 3) dirig{\'e} par {Michel} {Demazure} et {Alexander} {Grothendieck}. {Revised} reprint}, Lect. Notes Math., vol. 152, Springer, Cham, 1970 (French).

\bibitem[FK18]{FuKr2018On-the-geometry-of}
Jean-Philippe Furter and Hanspeter Kraft, \emph{On the geometry of the automorphism groups of affine varieties}, \url{https://arxiv.org/abs/1809.04175}, 2018.

\bibitem[GR03]{GrRa2003Revetements-etales}
Alexander Grothendieck and Mich{\`e}le Raynaud, \emph{Rev\^{e}tements \'{e}tales et groupe fondamental ({SGA} 1)}, Documents Math\'{e}matiques (Paris) [Mathematical Documents (Paris)], vol.~3, Soci\'{e}t\'{e} Math\'{e}matique de France, Paris, 2003, S\'{e}minaire de g\'{e}om\'{e}trie alg\'{e}brique du Bois Marie 1960--61. [Algebraic Geometry Seminar of Bois Marie 1960-61], Directed by A. Grothendieck, With two papers by M. Raynaud, Updated and annotated reprint of the 1971 original.

\bibitem[GW20]{GoWe0Algebraic-geometryI}
Ulrich G\"{o}rtz and Torsten Wedhorn, \emph{Algebraic geometry {I}. {S}chemes---with examples and exercises}, second ed., Springer Studium Mathematik---Master, Springer Spektrum, Wiesbaden, [2020] \copyright 2020. \MR{4225278}

\bibitem[GW23]{GoWe0Algebraic-geometryII}
\bysame, \emph{Algebraic geometry {II}. {C}ohomology of schemes}, first ed., Springer Studium Mathematik---Master, Springer Spektrum, Wiesbaden, [2023] \copyright 2023.

\bibitem[Han87]{Ha1987On-the-birational-}
Masaki Hanamura, \emph{On the birational automorphism groups of algebraic varieties}, Compositio Mathematica \textbf{63} (1987), no.~1, 123--142 (en). \MR{906382}

\bibitem[Har77]{Ha1977Algebraic-geometry}
Robin Hartshorne, \emph{Algebraic geometry}, Graduate Texts in Mathematics, vol.~52, Springer-Verlag, New York, 1977.

\bibitem[HM24]{HaMo2024Open-loci-of-ideal}
S.~Hamid Hassanzadeh and Maral Mostafazadehfard, \emph{Open loci of ideals with applications to birational maps}, Proc. Amer. Math. Soc. \textbf{152} (2024), no.~5, 1841--1856. \MR{4728456}

\bibitem[HS17]{HaSi2017Bounds-on-degrees-}
S.~H. Hassanzadeh and A.~Simis, \emph{Bounds on degrees of birational maps with arithmetically {C}ohen-{M}acaulay graphs}, J. Algebra \textbf{478} (2017), 220--236. \MR{3621670}

\bibitem[Isa09]{Is2009Algebra:-a-graduat}
I.~Martin Isaacs, \emph{Algebra: a graduate course}, Graduate Studies in Mathematics, vol. 100, American Mathematical Society, Providence, RI, 2009, Reprint of the 1994 original. \MR{2472787}

\bibitem[Kra16]{Kr2016Algebraic-transfor}
Hanspeter Kraft, \emph{Algebraic transformation groups: An introduction}, \url{https://kraftadmin.wixsite.com/hpkraft}, 2016.

\bibitem[KRvS21]{KrReSa2021Is-the-affine-spac}
Hanspeter Kraft, Andriy Regeta, and Immanuel van Santen, \emph{Is the affine space determined by its automorphism group?}, Int. Math. Res. Not. IMRN (2021), no.~6, 4280--4300. \MR{4230395}

\bibitem[Kun86]{Ku1986Kahler-differentia}
Ernst Kunz, \emph{K\"{a}hler differentials}, Advanced Lectures in Mathematics, Friedr. Vieweg \& Sohn, Braunschweig, 1986. \MR{864975}

\bibitem[Mat86]{Ma1986Commutative-ring-t}
Hideyuki Matsumura, \emph{Commutative ring theory}, Cambridge Studies in Advanced Mathematics, vol.~8, Cambridge University Press, Cambridge, 1986.

\bibitem[MO67]{MaOo1967Representability-o}
H.~Matsumura and F.~Oort, \emph{Representability of group functors, and automorphisms of algebraic schemes}, Invent. Math. \textbf{4} (1967), 1--25 (English).

\bibitem[PR13]{PaRi2013Some-remarks-about}
Ivan Pan and Alvaro Rittatore, \emph{Some remarks about the zariski topology of the cremona group}, \url{https://arxiv.org/abs/1212.5698}, 2013.

\bibitem[Ram64]{Ra1964A-note-on-automorp}
C.~P. Ramanujam, \emph{A note on automorphism groups of algebraic varieties}, Math. Ann. \textbf{156} (1964), 25--33.

\bibitem[RUvS24]{ReUrSa2024Groups-of-Biration}
Andriy Regeta, Christian Urech, and Immanuel van Santen, \emph{Group theoretical characterizations of rationality}, 2024.

\bibitem[Ser09]{Se2009Le-groupe-de-Cremo}
Jean-Pierre Serre, \emph{Le groupe de cremona et ses sous-groupes finis}, S{\'e}minaire Bourbaki \textbf{2008} (2009), 75--100.

\bibitem[Ses59]{Se1958Loperation-de-Cart}
Conjeerveram~Srirangachari Seshadri, \emph{L'op\'eration de {Cartier.} {Applications}}, S\'eminaire Claude Chevalley \textbf{4} (1958-1959), 1--26 (fr), Talk no. 6, \url{http://www.numdam.org/item/SCC_1958-1959__4__A6_0/}.

\bibitem[{Sta}24]{St2024The-Stacks-project}
The {Stacks project authors}, \emph{The stacks project}, \url{https://stacks.math.columbia.edu}, 2024.

\bibitem[Wei46]{We1946Foundations-of-Alg}
Andr\'{e} Weil, \emph{Foundations of {A}lgebraic {G}eometry}, American Mathematical Society Colloquium Publications, vol. Vol. 29, American Mathematical Society, New York, 1946. \MR{23093}

\end{thebibliography}

\end{document}